\documentclass[12pt,a4paper]{amsart}
\usepackage{amsxtra,amssymb}

\pagestyle{plain}
\raggedbottom

\textwidth=36pc
\calclayout

\hyphenation{con-tra-mod-ule con-tra-mod-ules con-tra-mod-ule-ness
con-tra-ad-justed con-tra-ad-just-ed-ness}

\emergencystretch=2em

\newcommand{\+}{\nobreakdash-}
\renewcommand{\:}{\colon}
\renewcommand{\.}{\mskip.5\thinmuskip}

\DeclareMathOperator{\Hom}{Hom}
\DeclareMathOperator{\Ext}{Ext}
\DeclareMathOperator{\Tor}{Tor}
\DeclareMathOperator{\Spec}{Spec}
\DeclareMathOperator{\Ass}{Ass}
\DeclareMathOperator{\Ann}{Ann}

\DeclareMathOperator{\coker}{coker}
\DeclareMathOperator{\id}{id}
\DeclareMathOperator{\im}{im}
\DeclareMathOperator{\pd}{pd}

\newcommand{\Ab}{\mathsf{Ab}}

\newcommand{\rarrow}{\longrightarrow}
\newcommand{\larrow}{\longleftarrow}
\newcommand{\ot}{\otimes}

\newcommand{\Z}{\mathbb Z}
\newcommand{\Q}{\mathbb Q}

\newcommand{\A}{\mathsf A}
\newcommand{\B}{\mathsf B}
\newcommand{\F}{\mathsf F}
\newcommand{\C}{\mathsf C}
\newcommand{\D}{\mathsf D}
\newcommand{\sS}{\mathsf S}
\newcommand{\T}{\mathsf T}

\renewcommand{\b}{\mathsf b}

\newcommand{\cM}{\mathcal M}
\newcommand{\cK}{\mathcal K}

\newcommand{\m}{\mathfrak m}
\newcommand{\p}{\mathfrak p}
\newcommand{\q}{\mathfrak q}

\newcommand{\modl}{{\operatorname{\mathsf{--mod}}}}
\newcommand{\modr}{{\operatorname{\mathsf{mod--}}}}

\newcommand{\tors}{{\operatorname{\mathsf{-tors}}}}
\newcommand{\ctra}{{\operatorname{\mathsf{-ctra}}}}
\newcommand{\secmp}{{\operatorname{\mathsf{-secmp}}}}

\newcommand{\filt}{\operatorname{\mathsf{filt}}}
\newcommand{\fl}{\mathsf{fl}}
\newcommand{\vfl}{\mathsf{vfl}}
\renewcommand{\cot}{\mathsf{cot}}
\newcommand{\ctaa}{\mathsf{ctaa}}

\renewcommand{\div}{\mathrm{div}}
\newcommand{\red}{\mathrm{red}}

\newcommand{\boL}{\mathbb L}
\newcommand{\boR}{\mathbb R}

\newcommand{\bu}{{\text{\smaller\smaller$\scriptstyle\bullet$}}}
\newcommand{\lrarrow}{\.\relbar\joinrel\relbar\joinrel\rightarrow\.}
\newcommand{\llarrow}{\.\leftarrow\joinrel\relbar\joinrel\relbar\.}

\newcommand{\Section}[1]{\bigskip\section{#1}\medskip}

\theoremstyle{plain}
\newtheorem{thm}{Theorem}[section]

\newtheorem{lem}[thm]{Lemma}

\newtheorem{cor}[thm]{Corollary}
\theoremstyle{definition}
\newtheorem{ex}[thm]{Example}
\newtheorem{exs}[thm]{Examples}
\newtheorem{rem}[thm]{Remark}

\begin{document}

\title{Contraadjusted modules, contramodules, \\ and reduced
cotorsion modules}

\author{Leonid Positselski}

\address{Department of Mathematics, Faculty of Natural Sciences,
University of Haifa, Mount Carmel, Haifa 31905, Israel; and
\newline\indent Laboratory of Algebraic Geometry, National Research
University Higher School of Economics, Moscow 119048; and
\newline\indent Sector of Algebra and Number Theory, Institute for
Information Transmission Problems, Moscow 127051, Russia}

\email{posic@mccme.ru}

\begin{abstract}
 This paper is devoted to the more elementary aspects of
the contramodule story, and can be viewed as an extended introduction
to the more technically complicated~\cite{Pmgm}.
 Reduced cotorsion abelian groups form an abelian category, which is in
some sense covariantly dual to the category of torsion abelian groups.
 An abelian group is reduced cotorsion if and only if it is isomorphic
to a product of $p$\+contramodule abelian groups over prime numbers~$p$.
 Any $p$\+contraadjusted abelian group is $p$\+adically complete, and
any $p$\+adically separated and complete group is a $p$\+contramodule,
but the converse assertions are not true.
 In some form, these results hold for modules over arbitrary commutative
rings, while other formulations are applicable to modules over
one-dimensional Noetherian rings.
\end{abstract}

\maketitle

\tableofcontents

\allowdisplaybreaks

\section*{Introduction}
\medskip

 An abelian group $C$ is said to be \emph{cotorsion} if for any
torsion-free abelian group $F$ one has $\Ext^1_\Z(F,C)=0$.
 Considering the short exact sequence $F\rarrow\Q\ot_\Z F\rarrow
\Q/\Z\ot_\Z F$ and having in mind that the category of abelian groups
has homological dimension~$1$, one easily concludes that $C$ is
cotorsion if and only if $\Ext^1_\Z(\Q,C)=0$.

 An abelian group is said to be \emph{reduced} if it has no divisible
subgroups.
 Clearly, any abelian group $B$ has a unique maximal divisible subgroup
$B_\div\subset B$; the quotient group $B_\red=B/B_\div$ is the maximal
reduced quotient group of~$B$.
 The short exact sequence $B_\div\rarrow B\rarrow B_\red$ is
(noncanonically) split.
 For any element $b\in B_\div$, one can construct an abelian group
homomorphism $\Q\rarrow B$ taking $1$ to~$b$.
 So the subgroup $B_\div\subset B$ can be computed as the image of
the restriction map $\Hom_\Z(\Q,B)\rarrow B$ induced by the embedding
$\Z\rarrow\Q$.
 In particular, an abelian group $B$ is reduced if and only if
$\Hom_\Z(\Q,B)=0$.

 For any abelian category $\A$ and any object $U$ of projective
dimension~$\le 1$ in $\A$, the full subcategory $\C\subset\A$
consisting of all the objects $C$ satisfying $\Hom_\A(U,C)=0=
\Ext^1_\A(U,C)$ is closed under the kernels and cokernels of morphisms
between its objects, and also extensions and infinite products,
taken in~$\A$.
 It follows that the category $\C$ is abelian and its embedding
functor $\C\rarrow\A$ is exact.

 We have explained that \emph{the category of reduced cotorsion
abelian groups is abelian}.
 One can try to read this assertion between the lines of
the exposition in the book~\cite[\S54]{Fuc}.
 The feeling that this fact deserves to be emphasized was one of
the motivating impulses for writing this paper.
 In fact, the term ``co\+torsion group'' first appeared in
the paper~\cite{Harr}, where it meant what we now call
``reduced cotorsion''. 

 Notice that the category of torsion abelian groups, consisting of
all the groups $T$ satisfying $\Q\ot_\Z T=0$, has even stronger
properties: it contains all the subobjects and quotients of its
objects, in addition to extensions (and infinite direct sums) in
the category of abelian groups $\Ab$; so it is a Serre subcategory.
 That is because the group $\Q$ has flat dimension~$0$, but
projective dimension~$1$.

 Just as the category of torsion abelian groups, the category of
reduced cotorsion abelian groups splits into a Cartesian product
of categories indexed by the prime numbers.
 In fact, the functor right adjoint to the embedding of the full
subcategory of torsion abelian groups (i.~e., the maximal torsion 
subgroup functor) can be computed as $\Tor^1_\Z(\Q/\Z,-)$, and
the functor left adjoint to the embedding of the full subcategory
of reduced cotorsion abelian groups into $\Ab$ can be computed as
$\Ext^1_\Z(\Q/\Z,-)$ (cf.~\cite[Theorem~7.1]{Nun}).
 Decomposing $\Q/\Z$ into the direct sum of its $p$\+primary
components $\bigoplus_p\Q_p/\Z_p$, one concludes that an abelian
group $C$ is reduced cotorsion if and only if it can be (always
uniquely) presented as a product of abelian groups of the form
$\Ext^1_\Z(\Q_p/\Z_p,A)$, where $A\in\Ab$.
 Moreover, one can take $A=C$.

 Abelian groups of the form $\Ext^1_\Z(\Q_p/\Z_p,A)$ form an abelian
category which we call the \emph{category of $p$\+contramodule
abelian groups}.
 It can be described as the full subcategory of all abelian groups
$C$ satisfying $\Hom_\Z(\Z[p^{-1}],C)=0=\Ext^1_\Z(\Z[p^{-1}],C)$.
 Actually, both the abelian categories of $p$\+contramodule and
reduced cotorsion abelian groups lie in the intersection of
two natural classes of abelian categories (so there are two
independent ways to explain why they are abelian).
 Besides their above description as the $\Ext^{0,1}$\+orthogonality
classes, they can be also defined as the categories of
\emph{contramodules over the topological rings} of $p$\+adic integers
$\Z_p$ and finite integral ad\`eles $\prod_p\Z_p$, respectively.
 These are modules with infinite summation operations, as
introduced in~\cite[Remark~A.3]{Psemi} and~\cite{Pweak},
and overviewed in~\cite{Prev}.

 An abelian group $C$ is called \emph{$p$\+contraadjusted} if
$\Ext^1_\Z(\Z[p^{-1}],C)=0$.
 Similarly one defines $s$\+contraadjusted abelian groups for
any natural number~$s$; and a group is called \emph{contraadjusted}
if it is $s$\+contraadjusted for all $s\ge2$.
 Contraadjusted modules were introduced in connection with
contraherent cosheaves~\cite{Pcosh} and further
studied in~\cite{Sl,ST}.
 For any $p$\+contraadjusted abelian group $C$, the natural map
into the $p$\+adic completion $C\rarrow\varprojlim_n C/p^nC$ is
surjective.
 When this map is an isomorphism, the group $C$ is even
a $p$\+contramodule.
 The converse implications to both these assertions fail in general,
but they are true for $p$\+torsion-free abelian groups.

 Most of the results mentioned above extend to modules over
an arbitrary commutative ring $R$ with a fixed element $s\in R$,
or even with a fixed ideal $I\subset R$.
 The results based on the decomposition of the quotient module
$Q/R$, where $Q$ is the ring/field of fractions of $R$, into
a direct sum over the maximal ideals, are the main exception.
 These require $R$ to be a Noetherian ring of Krull dimension~$1$.

 The original motivation for our study of the cotorsion and
contraadjusted modules comes from algebraic geometry.
 Given a commutative ring $R$, the category of contraherent cosheaves
over the affine scheme $\Spec R$ is equivalent to the category of
contraadjusted $R$\+modules, while the category of locally cotorsion
contraherent cosheaves on $\Spec R$ is equivalent to the category
of cotorsion $R$\+modules~\cite{Pcosh}.
 One would like to have a supply of examples and easily computable
special (e.~g., low-dimensional) cases, just for developing
an intuition of what the contraherent cosheaves are.
 From this geometric point of view, cotorsion and contraadjusted
abelian groups are nothing but the (global cosections of) contraherent
cosheaves over $\Spec\Z$.

 The experience seems to teach that the locally cotorsion contraherent
cosheaves behave well on Noetherian schemes, while for schemes of
more general nature there are not enough of these, and one wishes to
consider arbitrary (i.~e., locally contraadjusted) contraherent
cosheaves.
 Hence the importance of cotorsion modules over Noetherian rings and
contraadjusted modules over both Noetherian and non-Noetherian rings,
from our geometric standpoint.
 Flat cotorsion modules, corresponding geometrically to projective
locally cotorsion contraherent cosheaves, over Noetherian rings were
classified by Enochs~\cite{En} (see~\cite[Section~5.1]{Pcosh}
for the nonaffine case).
 Arbitrary cotorsion modules over Noetherian rings of Krull
dimension~$1$ are described (and contraadjusted modules discussed)
in the present paper, based on the approaches originated by
Nunke~\cite{Nun} and Sl\'avik--Trlifaj~\cite{Sl,ST}.

 Concerning the answers that we seem to obtain, the following vague
analogy may be illuminating.
 Quasi-coherent sheaves over $\Spec\Z$ are the same thing as abelian
groups, and coherent sheaves correspond to finitely generated abelian
groups.
 Arbitrary abelian groups are hopelessly complicated, but the finitely
generated ones (or, say, finitely generated modules over a Dedekind
domain) can be classified.
 In fact, they are described as finite direct sums, where one summand
is a free group (respectively, a projective module) and the remaining
ones are torsion modules indexed by the prime numbers (sitting at
the closed points of the spectrum).

 Likewise, contraadjusted abelian groups may be too complicated to
describe, but one can say a lot about cotorsion groups or
cotorsion modules over Dedekind domains.
 In fact, the latter are decribed as the infinite products of
a divisible group (\,$=$~injective module) and contramodules
sitting at the prime numbers (closed points).
 So one obtains a geometric picture of locally cotorsion contraherent
cosheaves over smooth curves bearing some vague similarity to
the classification of coherent sheaves over such curves.
 For Noetherian rings of Krull dimension~$1$ (\,$=$~singular
curves) there is essentially the same description of reduced
cotorsion modules as the products of contramodules (sitting at
the points), while an injective direct summand turns into
a more complicated (noninjective) divisible submodule.

\medskip

\textbf{Acknowledgement.}
 This paper grew out of my visits to Prague for a conference in
September and then for a longer stay in November--December~2015.
 I~would like to express my gratitude to Jan Trlifaj and Alexander
Sl\'avik for stimulating discussions.
 In particular, Alexander's presentations influenced the present
work, and he told me about the reference~\cite{Nun}.
 I~learned from Jan about the $R$\+topology and
the references~\cite[\S54\+-55]{Fuc} and~\cite{FS}.
 A large part of this paper is but an extended writeup of my last
talk in Prague in December.

 Subsequently in Israel, I~had a conversation with Joseph Bernstein,
and I~want to thank him for pointing out that abelian
categories are rare and interesting.
 Last but not least, I~am grateful to Vladimir Hinich and
the Israeli Academy of Sciences for warm hospitality in offering me
a postdoctoral position at the University of Haifa, with excellent
working conditions and financial support.
 I also want to thank the anonymous referee for a number of helpful
remarks and suggestions.
 The author was supported by the ISF grant~\#\,446/15 in Israel and
by the Grant Agency of the Czech Republic under the grant
P201/12/G028 in Prague.

\Section{Orthogonality Classes as Abelian Categories}

 All rings and modules in this paper are unital.
 
 Let $R$ be an associative ring.
 We denote by $R\modl$ and $\modr R$ the categories of (arbitrarily
large) left and right $R$\+modules, respectively.

 Given an abelian category $\A$, a full subcategory $\C\subset\A$ is
said to be closed under subobjects (respectively, quotient objects)
if every subobject (resp., quotient object) of an object of the class
$\C$ in the category $\A$ also belongs to~$\C$.
 A full subcategory closed under subobjects, quotients, and extensions
is called a \emph{Serre subcategory}.
 For such subcategories $\C\subset\A$, an abelian quotient category
$\A/\C$ is defined.

 A full subcategory $\C\subset\A$ is said to be closed under the kernels
(respectively, cokernels), if for every morphism $f\:C\rarrow D$ in $\A$
between two objects $C$, $D\in\C$ the kernel $\ker_\A(f)$ (resp.,
the cokernel $\coker_\A(f)$) of the morphism~$f$ computed in
the category $\A$ belongs to~$\C$.
 A full subcategory $\C$ closed under the kernels, cokernels, and
finite direct sums in an abelian category $\A$ is also an abelian
category with an exact embedding functor $\C\rarrow\A$.

\begin{thm} \label{serre-subcategory}
\textup{(a)} Let $U$ be a projective left $R$\+module.
 Then the full subcategory\/ $\C\subset R\modl$ formed by all
the left $R$\+modules $C$ for which\/ $\Hom_R(U,C)=0$ is closed
under subobjects, quotient objects, extensions, and infinite
products in $R\modl$.
 In particular, $\C\subset R\modl$ is a Serre subcategory. \par
\textup{(b)} Let $U$ be a flat right $R$\+module.
 Then the full subcategory\/ $\T\subset R\modl$ formed by all
the left $R$\+modules $T$ for which $U\ot_R T=0$ is closed
under subobjects, quotient objects, extensions, and infinite
direct sums in $R\modl$.
 In particular, $\T\subset R\modl$ is a Serre subcategory.
\end{thm}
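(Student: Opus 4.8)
The plan is to base both parts on a single observation, applied in two dual forms: when $U$ is a projective left $R$\+module the functor $\Hom_R(U,-)\:R\modl\rarrow\Ab$ is \emph{exact}, and when $U$ is a flat right $R$\+module the functor $U\ot_R-\:R\modl\rarrow\Ab$ is \emph{exact}; moreover the first of these functors preserves arbitrary products and the second preserves arbitrary direct sums. Granting this, each of the required closure properties becomes a one-line three-term diagram chase, and the Serre property follows for free, since according to the definition recalled above a full subcategory closed under subobjects, quotient objects and extensions is precisely a Serre subcategory.

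For part~(a) I would apply the exact functor $\Hom_R(U,-)$ to an arbitrary short exact sequence $0\rarrow A\rarrow B\rarrow C\rarrow0$ in $R\modl$ to obtain a short exact sequence $0\rarrow\Hom_R(U,A)\rarrow\Hom_R(U,B)\rarrow\Hom_R(U,C)\rarrow0$ of abelian groups. If $B\in\C$, then injectivity on the left gives $\Hom_R(U,A)=0$ (closure under subobjects) and surjectivity on the right gives $\Hom_R(U,C)=0$ (closure under quotient objects); if instead $A,C\in\C$, then $\Hom_R(U,B)$ is squeezed between two zero groups in this sequence and hence vanishes (closure under extensions). Closure under products is the canonical isomorphism $\Hom_R(U,\prod_i C_i)\cong\prod_i\Hom_R(U,C_i)$. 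Part~(b) is then proved by the identical argument with $U\ot_R-$ in place of $\Hom_R(U,-)$ and with $U\ot_R\bigoplus_i T_i\cong\bigoplus_i(U\ot_R T_i)$ in place of the product formula.

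I do not anticipate a genuine obstacle here --- the statement is purely formal --- so instead of a hard step let me record the one point worth watching, namely exactly where the hypotheses on $U$ enter. Left exactness of $\Hom_R(U,-)$ and right exactness of $U\ot_R-$ hold for every module, so closure under subobjects, extensions and products in~(a), and closure under quotient objects, extensions and direct sums in~(b), require no hypothesis on $U$ at all; it is precisely closure under quotient objects in~(a) that uses $\Ext^1_R(U,-)=0$ (i.e., projectivity of $U$) and closure under subobjects in~(b) that uses $\Tor^1_R(U,-)=0$ (i.e., flatness of $U$). This is also why projectivity cannot be relaxed to projective dimension $\le1$ in part~(a): in the $\Ext^{0,1}$\+orthogonality setting mentioned in the Introduction one controls only the kernels and cokernels of morphisms \emph{between objects of the subcategory}, not arbitrary subobjects and quotients.
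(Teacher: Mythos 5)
Your proof is correct and follows essentially the same route as the paper: the paper simply observes that for any exact functor $F$ between abelian categories the full subcategory of objects killed by $F$ is a Serre subcategory, applied to $\Hom_R(U,-)$ and $U\ot_R-$, which is exactly the argument you spell out (together with the preservation of products, respectively direct sums). Your closing remark about where projectivity and flatness enter is accurate and consistent with the paper's treatment of the projective-dimension-$\le1$ case in the next theorem.
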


\begin{proof}
 More generally, for any abelian categories $\A$ and $\B$ and
any exact functor $F\:\A\rarrow\B$ the full subcategory
$\sS\subset\A$ formed by all the objects $S\in A$ for which
$F(S)=0$ is a Serre subcategory in~$\A$.
\end{proof}

 The following result can be found in~\cite[Proposition~1.1]{GL}.

\begin{thm} \label{ext-0-1-orthogonal}
\textup{(a)} Let $U$ be a left $R$\+module of projective
dimension~$\le 1$.
 Then the full subcategory\/ $\C\subset R\modl$ formed by all
the left $R$\+modules $C$ for which\/ $\Hom_R(U,C)=0=\Ext^1_R(U,C)$
is closed under the kernels, cokernels, extensions, and infinite
products in $R\modl$.
 In particular, $\C$ is an abelian category and
the embedding functor\/ $\C\rarrow R\modl$ is exact. \par
\textup{(b)} Let $U$ be a right $R$\+module of flat
dimension~$\le 1$.
 Then the full subcategory\/ $\T\subset R\modl$ formed by all
the left $R$\+modules $T$ for which $U\ot_R T=0=\Tor_1^R(U,T)$
is closed under the kernels, cokernels, extensions, and infinite
direct sums in $R\modl$.
 In particular, $\T$ is an abelian category and
the embedding functor\/ $\T\rarrow R\modl$ is exact.
\end{thm}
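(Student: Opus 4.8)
The plan is to reduce everything to the \emph{six-term} exact sequences that the hypotheses on $U$ provide. For part~(a), I would fix a projective resolution $0\to P_1\to P_0\to U\to0$, so that $\Ext^n_R(U,-)=0$ for all $n\ge2$; then any short exact sequence $0\to C'\to C\to C''\to0$ in $R\modl$ induces an exact sequence
\[
 0\to\Hom_R(U,C')\to\Hom_R(U,C)\to\Hom_R(U,C'')
 \to\Ext^1_R(U,C')\to\Ext^1_R(U,C)\to\Ext^1_R(U,C'')\to0.
\]
Closure of $\C$ under extensions is then immediate: if all four outer terms vanish for $C'$ and $C''$, the two middle terms for $C$ are caught between zeros. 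Closure under (finite or infinite) products needs only that $\Hom_R(U,-)$ and $\Ext^1_R(U,-)$ are additive with respect to products, which holds because $\Hom_R(P_0,-)$ and $\Hom_R(P_1,-)$ commute with products and the cohomology of a complex of abelian groups commutes with products.

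Next I would handle kernels and cokernels. Given $f\:C\to D$ with $C,D\in\C$, factor it as $C\twoheadrightarrow I\hookrightarrow D$ with $I=\im f$, and put $K=\ker f$, $Q=\coker f$. Feeding $0\to K\to C\to I\to0$ into the six-term sequence and using $\Hom_R(U,C)=0=\Ext^1_R(U,C)$ gives $\Hom_R(U,K)=0$, an isomorphism $\Hom_R(U,I)\cong\Ext^1_R(U,K)$, and $\Ext^1_R(U,I)=0$ (the last vanishing using exactness at the right-hand end of the six-term sequence, i.e.\ $\Ext^2_R(U,K)=0$). Feeding $0\to I\to D\to Q\to0$ into the six-term sequence and using $\Hom_R(U,D)=0=\Ext^1_R(U,D)$ together with $\Ext^1_R(U,I)=0$ then gives $\Hom_R(U,I)=0$ (whence $\Ext^1_R(U,K)=0$, so $K\in\C$), then $\Hom_R(U,Q)=0$, and finally $\Ext^1_R(U,Q)=0$ (again using the right-hand end, i.e.\ $\Ext^2_R(U,I)=0$); so $Q\in\C$. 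Now $\C$ is closed under kernels, cokernels, and finite direct sums in $R\modl$, so it is abelian with exact embedding functor by the general fact noted before the statement; this proves~(a).

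Part~(b) I would prove by the exact dual, replacing $\Hom_R(U,-)$, $\Ext^1_R(U,-)$, $\Ext^2_R(U,-)$ by $U\ot_R-$, $\Tor^R_1(U,-)$, $\Tor^R_2(U,-)$: a flat resolution $0\to F_1\to F_0\to U\to0$ of the right module $U$ gives $\Tor^R_n(U,-)=0$ for $n\ge2$, so each short exact sequence $0\to T'\to T\to T''\to0$ of left modules yields an exact sequence
\[
 0\to\Tor^R_1(U,T')\to\Tor^R_1(U,T)\to\Tor^R_1(U,T'')
 \to U\ot_RT'\to U\ot_RT\to U\ot_RT''\to0,
\]
and one runs the same factorize-and-chase argument, this time using that $U\ot_R-$ and $\Tor^R_1(U,-)$ commute with direct sums. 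I expect the one step needing genuine care --- and the only place where the hypothesis on $U$ is really used, as opposed to the routine long exact sequences --- to be the cokernel step in~(a) (equivalently, after homological reindexing, the kernel step in~(b)): there one must peel off the image $I$ and invoke $\Ext^2_R(U,-)=0$ (resp.\ $\Tor^R_2(U,-)=0$) to kill the potential obstructions $\Ext^2_R(U,I)$ (resp.\ $\Tor^R_2(U,I)$). Closure under extensions and products (resp.\ direct sums) works for an arbitrary module $U$.
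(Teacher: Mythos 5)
Your proof is correct and follows essentially the same route as the paper: factor the morphism through its image, feed the two resulting short exact sequences into the six-term $\Hom/\Ext^1$ (resp.\ $\ot/\Tor_1$) sequence whose final surjectivity encodes the dimension~$\le1$ hypothesis, and chase; the paper merely packages both parts at once by phrasing the argument for an abstract cohomological $\delta$\+functor $(F^0,F^1)$ of dimension~$\le1$. Your observation that the hypothesis on $U$ is used only at the cokernel step (and that extensions, products/sums, and kernels need no assumption on $U$) matches the structure of the paper's argument.
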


\begin{proof}
 Generally, let $\A$, $\B$ be abelian categories and $(F^0,F^1)$
be a cohomological $\delta$\+functor (of cohomological
dimension~$\le1$) from $\A$ to $\B$, that is a pair of functors
$F^0$, $F^1\:\A\rarrow B$ together with a $6$\+term exact sequence
\begin{multline*}
 0\lrarrow F^0(X)\lrarrow F^0(Y)\lrarrow F^0(Z) \\
 \lrarrow F^1(X)\lrarrow F^1(Y)\lrarrow F^1(Z)\lrarrow0
\end{multline*}
in $\B$ defined functorially for any short exact sequence
$0\rarrow X\rarrow Y\rarrow Z\rarrow0$ in~$\A$.
 Then the full subcategory $\C\subset\A$ formed by all the objects
$C$ for which $F^0(C)=0=F^1(C)$ is closed under the kernels,
cokernels, and extensions in~$\A$.

 Indeed, the class of all object $X\in\A$ such that $F^0(X)=0$
is closed under subobjects and extensions, while the class of all
$Z\in\A$ such that $F^1(Z)=0$ is closed under extensions and
quotients.
 Now let $f\:C\rarrow D$ be a morphism between two objects
$C$, $D\in\C$ and let $E$ be the image of~$f$.
 Then $F^0(E)=0$, as $E$ is a subobject in $D$; and
$F^1(E)=0$, because $E$ is a quotient of~$C$.
 Finally, let $K$ and $L$ denote the kernel and cokernel of
the morphism~$f$.
 Considering the long exact sequence of functor $F^*$ for
the short exact sequences $0\rarrow K\rarrow C\rarrow E\rarrow0$
and $0\rarrow E\rarrow D\rarrow L\rarrow0$ in the category $\A$,
one concludes that the objects $K$ and $L$ belong to~$\C$.
\end{proof}

\begin{rem}
 It is clear from the proof of Theorem~\ref{ext-0-1-orthogonal} that
a module $U$ of projective (respectively, flat) dimension~$\le1$ in
its formulation can be replaced by a two-term complex
$U^{-1}\overset u\rarrow U^0$ of projective (resp., flat) $R$\+modules.
 In other words, a module can be replaced with an object of
the derived category of modules with the similar restriction on
the projective/flat dimension.
 Then it is claimed that the full subcategory of all modules $C$
for which $\Hom_R(u,C)$ is an isomorphism (resp., all modules $T$
for which $u\ot_RT$ is an isomorphism) is closed under the kernels,
cokernels, and extensions in $R\modl$; so it is an abelian category.
 One just applies the above argument to the $\delta$\+functor
$(F^0,F^1)$ with $F^0(X)=\ker\Hom_R(u,X)$ and $F^1(X)=\coker
\Hom_R(u,X)$.
 Similarly, if $V^0\overset v\rarrow V^1$ is a two-term complex of
injective left $R$\+modules, then the left $R$\+modules $T$ for which
$\Hom_R(T,v)$ is an isomorphism form a full subcategory closed under
the kernels, cokernels, extensions, and infinite direct sums in
$R\modl$, hence an abelian category.
 (Cf.\ the discussion of infinite systems of nonhomogeneous linear
equations in the next section.)
\end{rem}

\Section{$s$-Contraadjustedness, $s$-Contramoduleness,
and $s$-Completeness}

 Let $R$ be a commutative ring and $s\in R$ be a fixed element.
 In this section, we will consider $R$\+modules $C$ with
certain conditions imposed on the action of $s$ in~$C$.
 The action of the rest of $R$ will be less important, and
almost just as well we could be talking about abelian groups
$C$ endowed with an endomorphism $s\:C\rarrow C$.

 An $R$\+module $C$ is called \emph{$s$\+torsion-free} if
the operator $s\:C\rarrow C$ is injective.
 An $R$\+module $C$ is called \emph{$s$\+divisible} if
the map $s\:C\rarrow C$ is surjective.

 We denote by $R[s^{-1}]$ the localization of $R$ with respect to
the multiplicative system $\{1$, $s$, $s^2$,~\dots,
$s^n$,~\dots~$\}\subset R$.
 An $R$\+module $C$ is said to be \emph{$s$\+contraadjusted} if
$\Ext^1_R(R[s^{-1}],C)=0$.
 An $R$\+module $C$ is called an \emph{$s$\+contramodule} (or an
\emph{$s$\+contra\-module $R$\+module}) if $\Hom_R(R[s^{-1}],C)=0
=\Ext^1_R(R[s^{-1}],C)$.

 Clearly, one has $\Hom_R(R[s^{-1}],C)=0$ if and only if there are
no nonzero $s$\+divisible $R$\+submodules in $C$, or which is
equivalent, if there are no nonzero abelian subgroups $D\subset C$ 
for which $sD=D$.
 The following lemma~\cite[Lemma~B.7.1]{Pweak},
\cite[Lemma~5.1]{ST}  explains what does the condition
$\Ext^1_R(R[s^{-1}],C)=0$ mean.

\begin{lem}  \label{ext-r-s-minus-1-computed}
\textup{(a)} An $R$\+module $C$ is $s$\+contraadjusted if and only if
for any sequence of elements~$a_0$, $a_1$, $a_2$,~\dots~$\in C$
the infinite system of nonhomogeneous linear equations
\begin{equation} \label{a-b-main-equation-system}
 b_n-sb_{n+1}=a_n, \qquad n\ge0
\end{equation}
has a solution~$b_0$, $b_1$, $b_2$,~\dots~$\in C$. \par
\textup{(b)} An $R$\+module $C$ is an $s$\+contramodule if and only if
for any sequence of elements~$a_0$, $a_1$, $a_2$,~\dots~$\in C$
the infinite system of nonhomogeneous linear
equations\/~\eqref{a-b-main-equation-system} has a \emph{unique}
solution~$b_0$, $b_1$, $b_2$,~\dots~$\in C$.
\end{lem}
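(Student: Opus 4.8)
The plan is to exhibit an explicit free resolution of $R[s^{-1}]$ of length~$1$ and then read off both conditions from it. Recall that $R[s^{-1}]=\varinjlim\bigl(R\overset s\rarrow R\overset s\rarrow R\overset s\rarrow\dotsb\bigr)$, so the mapping telescope of this sequence should provide such a resolution. Concretely, let $F$ be the free $R$\+module with basis $e_0$, $e_1$, $e_2$,~\dots; let $\epsilon\:F\rarrow R[s^{-1}]$ be the $R$\+module map with $\epsilon(e_n)=s^{-n}$; and let $\phi\:F\rarrow F$ be the $R$\+module map with $\phi(e_n)=e_n-se_{n+1}$. Then $\epsilon$ is surjective (any element of $R[s^{-1}]$ has the form $rs^{-n}=\epsilon(re_n)$), the composition $\epsilon\phi$ vanishes (since $\epsilon(e_n-se_{n+1})=s^{-n}-s\cdot s^{-n-1}=0$), and $\phi$ is injective (if $\phi(\sum_n c_ne_n)=c_0e_0+\sum_{m\ge1}(c_m-sc_{m-1})e_m=0$, then $c_0=0$ and $c_m=sc_{m-1}$ for all $m\ge1$, hence all $c_m=0$ because the sum is finite). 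The one point that needs a small argument is exactness in the middle term, i.~e., the equality $\ker\epsilon=\im\phi$; this is the standard computation of a colimit indexed by the nonnegative integers, and it is the only place where the localization is really used. Granting it, one gets a free resolution
\[
 0\rarrow F\overset\phi\rarrow F\overset\epsilon\rarrow R[s^{-1}]\rarrow0
\]
of $R[s^{-1}]$ in $R\modl$ (in particular reconfirming that its projective dimension is~$\le1$).

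Next I would apply $\Hom_R(-,C)$. The natural isomorphism $\Hom_R(F,C)\cong\prod_{n\ge0}C$ identifies a homomorphism $g\:F\rarrow C$ with the sequence $(g(e_n))_{n\ge0}$, and under this identification the map $\phi^*=\Hom_R(\phi,C)$ becomes the map $\prod_{n\ge0}C\rarrow\prod_{n\ge0}C$ sending $(b_n)_{n\ge0}$ to $(b_n-sb_{n+1})_{n\ge0}$, since $(g\phi)(e_n)=g(e_n)-sg(e_{n+1})$. Hence the exact sequence obtained from the resolution reads
\[
 0\rarrow\Hom_R(R[s^{-1}],C)\rarrow\prod_{n\ge0}C\overset{\phi^*}\rarrow\prod_{n\ge0}C\rarrow\Ext^1_R(R[s^{-1}],C)\rarrow0,
\]
so that $\Hom_R(R[s^{-1}],C)=\ker\phi^*$ and $\Ext^1_R(R[s^{-1}],C)=\coker\phi^*$.

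It remains to translate the vanishing conditions. One has $\coker\phi^*=0$ if and only if for every sequence $(a_n)_{n\ge0}$ in $C$ there is a sequence $(b_n)_{n\ge0}$ in $C$ with $b_n-sb_{n+1}=a_n$ for all $n\ge0$, which is exactly part~(a). For part~(b), an $s$\+contramodule is by definition a module with $\Hom_R(R[s^{-1}],C)=0=\Ext^1_R(R[s^{-1}],C)$, that is, one for which $\phi^*$ is simultaneously injective and surjective; combining the two translations --- and noting that injectivity of $\phi^*$ says precisely that the homogeneous system $b_n-sb_{n+1}=0$ forces $b_n=0$ for all $n$, i.~e., that solutions of~\eqref{a-b-main-equation-system} are unique when they exist --- this holds if and only if the system~\eqref{a-b-main-equation-system} has a unique solution for every right-hand side.

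I do not anticipate any genuine difficulty beyond the middle-exactness check $\ker\epsilon=\im\phi$ mentioned above; the rest is formal manipulation with the identification $\Hom_R(F,C)\cong\prod_{n\ge0}C$ and the long exact sequence of $\Ext$.
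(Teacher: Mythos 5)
Your proposal is correct and follows essentially the same route as the paper: both build the telescope free resolution $0\to\bigoplus_n R\to\bigoplus_n R\to R[s^{-1}]\to 0$ with differential $e_n\mapsto e_n-se_{n+1}$, apply $\Hom_R(-,C)$, and identify $\Hom_R(R[s^{-1}],C)$ and $\Ext^1_R(R[s^{-1}],C)$ with the kernel and cokernel of the map $(b_n)\mapsto(b_n-sb_{n+1})$ on $\prod_{n\ge0}C$. Your extra care about injectivity of $\phi$ and middle exactness just makes explicit what the paper subsumes under the phrase ``telescope construction.''
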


\begin{proof}
 Computing $R[s^{-1}]\simeq\varinjlim_{n\ge0} R$ (with all the maps
$R\rarrow R$ being the multiplication with~$s$) using the telescope
construction for $\Z_{\ge0}$\+indexed inductive limits, one obtains
a free resolution of the form
$$
 0\lrarrow\bigoplus\nolimits_{n=0}^\infty Rf_n\lrarrow
 \bigoplus\nolimits_{n=0}^\infty Re_n\lrarrow R[s^{-1}]\lrarrow 0,
$$
with the differential taking the basis vector $f_n$ to $e_n-se_{n+1}$,
for the $R$\+module $R[s^{-1}]$.
 Applying the functor $\Hom_R({-},C)$, one computes the $R$\+modules
$\Hom_R(R[s^{-1}],C)$ and $\Ext^1_R(R[s^{-1}],C)$ as, respectively,
the kernel and the cokernel of the map
$$
 \prod\nolimits_{n=0}^\infty C\lrarrow\prod\nolimits_{n=0}^\infty C
$$
taking a sequence $(b_n)_{n=0}^\infty$ to the sequence
$(a_n=b_n-sb_{n+1})_{n=0}^\infty$.
\end{proof}

\begin{lem} \label{s-contraadjusted-closure-properties}
 The class of $s$\+contraadjusted $R$\+modules is closed under
quotients, extensions, and infinite products in $R\modl$.
\end{lem}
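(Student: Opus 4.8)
The plan is to derive all three closure properties from the single observation, already implicit in the proof of Lemma~\ref{ext-r-s-minus-1-computed}, that the $R$\+module $R[s^{-1}]$ has projective dimension~$\le1$. Indeed, the telescope resolution
$$
 0\lrarrow\bigoplus\nolimits_{n=0}^\infty Rf_n\lrarrow
 \bigoplus\nolimits_{n=0}^\infty Re_n\lrarrow R[s^{-1}]\lrarrow 0
$$
displays $R[s^{-1}]$ as the cokernel of an injective morphism of free $R$\+modules, so $\pd_R R[s^{-1}]\le1$ and $\Ext^2_R(R[s^{-1}],{-})=0$. Hence the pair $(F^0,F^1)=(\Hom_R(R[s^{-1}],{-}),\>\Ext^1_R(R[s^{-1}],{-}))$ is a cohomological $\delta$\+functor of cohomological dimension~$\le1$ from $R\modl$ to $R\modl$: the associated six-term exact sequence is simply the initial segment of the long exact sequence of $\Ext^*_R(R[s^{-1}],{-})$, truncated after the vanishing term $\Ext^2$. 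By definition, the $s$\+contraadjusted $R$\+modules are exactly the modules $C$ with $F^1(C)=0$.

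Closure under quotient objects and under extensions is then immediate from the long exact sequence, exactly as in the proof of Theorem~\ref{ext-0-1-orthogonal}. Given a short exact sequence $0\rarrow C'\rarrow C\rarrow C''\rarrow0$ in $R\modl$, the fragment $F^1(C')\rarrow F^1(C)\rarrow F^1(C'')\rarrow 0$ is exact (the final $0$ coming from $\Ext^2_R(R[s^{-1}],C')=0$). If $C'$ and $C''$ are $s$\+contraadjusted, then the two outer terms vanish, forcing $F^1(C)=0$; this is the extension case. If merely $C$ is $s$\+contraadjusted, then surjectivity of $F^1(C)\rarrow F^1(C'')$ already gives $F^1(C'')=0$; this is the quotient case. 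In contrast with Theorem~\ref{ext-0-1-orthogonal}, we neither need nor can assert closure under submodules or kernels here, since that would require the extra hypothesis $F^0=0$.

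For infinite products --- the one point not handled by the $\delta$\+functor bookkeeping --- let $(C_i)_{i\in I}$ be $s$\+contraadjusted $R$\+modules and choose a projective resolution $P_\bullet$ of $R[s^{-1}]$. There is an isomorphism of complexes $\Hom_R(P_\bullet,\prod_{i\in I}C_i)\cong\prod_{i\in I}\Hom_R(P_\bullet,C_i)$, and since infinite products are exact in $R\modl$, passing to cohomology commutes with them; therefore $\Ext^1_R(R[s^{-1}],\prod_{i\in I}C_i)\cong\prod_{i\in I}\Ext^1_R(R[s^{-1}],C_i)=0$, so $\prod_{i\in I}C_i$ is $s$\+contraadjusted.

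Alternatively, one can verify all three properties directly via the criterion of Lemma~\ref{ext-r-s-minus-1-computed}(a): solvability of the systems~\eqref{a-b-main-equation-system} passes to products coordinatewise; for a quotient $C\twoheadrightarrow C''$ one lifts the data $a_n$ to $C$, solves there, and projects; and for an extension $0\rarrow C'\rarrow C\rarrow C''\rarrow0$ one solves the projected system in $C''$, lifts a solution to a tuple $(b_n)$ in $C$, observes that $b_n-sb_{n+1}-a_n$ lies in $C'$, solves the resulting system in $C'$, and adds the two solutions. None of this is hard; the small diagram chase in the extension case is the only step requiring a moment's thought, and I expect it to be the main --- indeed essentially the sole --- obstacle.
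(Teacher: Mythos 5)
Your proposal is correct and follows essentially the same route as the paper: the paper's proof simply refers back to the $\delta$\+functor argument in the proof of Theorem~\ref{ext-0-1-orthogonal} (which is exactly your long-exact-sequence bookkeeping using $\pd_R R[s^{-1}]\le1$, with products handled by $\Ext^1_R(R[s^{-1}],\prod_i C_i)\simeq\prod_i\Ext^1_R(R[s^{-1}],C_i)$), and it likewise mentions, as an alternative, deducing closure under quotients from Lemma~\ref{ext-r-s-minus-1-computed}(a), just as in your second paragraph.
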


\begin{proof}
 See the proof of Theorem~\ref{ext-0-1-orthogonal}.
 Alternatively, the closedness with respect to quotients
can be easily deduced from Lemma~\ref{ext-r-s-minus-1-computed}(a).
\end{proof}

 An $R$\+module $C$ is said to be \emph{$s$\+adically complete}
(or simply \emph{$s$\+complete}) if the natural map from it to
its $s$\+adic completion
$$
 \lambda_{s,C}\:C\lrarrow\varprojlim\nolimits_{n\ge1} C/s^nC
$$
is surjective.
 The $R$\+module $C$ is called \emph{$s$\+adically separated}
(or $s$\+separated) if the map $\lambda_{s,C}$ is injective.
 Clearly, the kernel of $\lambda_{s,C}$ is the submodule $\bigcap_n s^nC
\subset C$.

 The following result can be found in~\cite[Lemma~4.4]{Sl}
and~\cite[Lemma~5.4]{ST}.

\begin{thm} \label{s-contraadjusted-complete}
\textup{(a)} Any $s$\+contraadjusted $R$\+module is $s$\+complete. 
(Hence, in particular, any $s$\+contramodule $R$\+module is
$s$\+complete.) \par
\textup{(b)} Any $s$\+torsion-free $s$\+complete $R$\+module is
$s$\+contraadjusted.
\end{thm}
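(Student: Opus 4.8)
The plan is to deduce both parts from Lemma~\ref{ext-r-s-minus-1-computed}(a), which rephrases $s$\+contraadjustedness as solvability of the telescopic system~\eqref{a-b-main-equation-system}, together with the observation that the partial sums of a ``formal series'' $\sum_n s^nd_n$ are precisely the $s$\+adic Cauchy sequences in~$C$. So the two conditions in the theorem ($s$\+contraadjustedness and, essentially, $s$\+completeness) are two ways of speaking about the same telescopic data, and the argument just translates between them, with $s$\+torsion-freeness supplying uniqueness where needed.

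For part~(a), I would start from an arbitrary element of $\varprojlim_{n\ge1}C/s^nC$, represented by elements $c_1,c_2,\dots\in C$ with $c_{n+1}-c_n\in s^nC$, and write $c_{n+1}-c_n=s^nd_n$ with $d_n\in C$. The goal is a single $c\in C$ with $c\equiv c_n\pmod{s^nC}$ for all~$n$, so that $\lambda_{s,C}(c)$ is the given element; heuristically $c=c_1+\sum_{n\ge1}s^nd_n$, and the content is that this ``sum'' is manufactured by $s$\+contraadjustedness. Concretely, applying Lemma~\ref{ext-r-s-minus-1-computed}(a) to the sequence $d_1,d_2,d_3,\dots$ (after the evident shift of indices) produces $e_1,e_2,e_3,\dots\in C$ with $e_n-se_{n+1}=d_n$ for $n\ge1$, and then $c:=c_1+se_1$ works: iterating the relations gives $se_1=\sum_{n=1}^{m-1}s^nd_n+s^me_m=(c_m-c_1)+s^me_m$, whence $c-c_m=s^me_m\in s^mC$. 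This proves $\lambda_{s,C}$ is surjective; the parenthetical assertion about $s$\+contramodules is then immediate, since an $s$\+contramodule is $s$\+contraadjusted by definition.

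For part~(b), I would run the same correspondence backwards. Given a sequence $a_0,a_1,a_2,\dots\in C$, set $\sigma_n=\sum_{k=0}^{n-1}s^ka_k$; since $\sigma_{n+1}-\sigma_n=s^na_n\in s^nC$, the residues $\overline\sigma_n\in C/s^nC$ define an element of $\varprojlim_{n\ge1}C/s^nC$, which by $s$\+completeness is $\lambda_{s,C}(b_0)$ for some $b_0\in C$; thus $b_0-\sigma_n\in s^nC$ for all~$n$. Here $s$\+torsion-freeness enters for the first time: as $s^n\:C\rarrow C$ is injective, there is a \emph{unique} $b_n\in C$ with $s^nb_n=b_0-\sigma_n$. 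Finally $s^n(b_n-sb_{n+1})=(b_0-\sigma_n)-(b_0-\sigma_{n+1})=\sigma_{n+1}-\sigma_n=s^na_n$, and injectivity of $s^n$ is used a second time to conclude $b_n-sb_{n+1}=a_n$ for all $n\ge0$ (the case $n=0$ reading $b_0=a_0+sb_1$). Hence~\eqref{a-b-main-equation-system} is always solvable, i.e.\ $C$ is $s$\+contraadjusted by Lemma~\ref{ext-r-s-minus-1-computed}(a).

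I do not anticipate a serious obstacle: the whole proof is careful bookkeeping with telescoping sums. The two points that need attention are the choice of auxiliary system in~(a) — it is the system for the \emph{differences} $d_n$, not for the $c_n$ themselves — and the fact that $s$\+torsion-freeness is genuinely used twice in~(b): once to define the higher $b_n$ unambiguously from $b_0$, and once to cancel the leading factor $s^n$ when checking the equations. (This second use also explains why $s$\+torsion-freeness cannot simply be dropped from part~(b).)
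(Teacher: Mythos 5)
Your proof is correct and is essentially the argument of the paper: both reduce everything to Lemma~\ref{ext-r-s-minus-1-computed}(a) via the same telescoping bookkeeping with partial sums, with $s$\+torsion-freeness used in part~(b) exactly to cancel the factors~$s^n$. The only cosmetic difference is that the paper packages the computation as an equivalence between $s$\+completeness and solvability of the auxiliary system~\eqref{a-b-bad-equation-system} (which the main system~\eqref{a-b-main-equation-system} always implies, and is implied by in the torsion-free case), whereas you verify the two implications directly.
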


\begin{proof}
 By the definition, $s$\+completeness of $C$ means that for any
sequence of elements $c_n\in C$, $n\ge1$ such that
$c_{n+1}-c_n\in s^nC$ for all $n\ge1$ there exists an element
$b\in C$ for which $b-c_n\in s^nC$ for all $n\ge1$.
 Set $a_0=c_1$; and pick elements $a_n\in C$ such that
$c_{n+1}=c_n+s^na_n$ for all $n\ge1$.
 Then we have $c_n=a_0+sa_1+\dotsb+s^{n-1}a_{n-1}$ for $n\ge1$.
 This shows that an $R$\+module $C$ is $s$\+complete if and only if
for every sequence of elements $a_n\in C$, $n\ge0$ there exists
an element $b\in C$ such that
$$
 b-a_0-sa_1-\dotsb-s^{n-1}a_{n-1}\in s^nC 
 \qquad\text{for all \ $n\ge1$}.
$$
 Set $b_0=b$.
 Now the latter condition on $C$ can be expressed as
the solvability of the system of linear equations
$$
 b_0-s^{n+1}b_{n+1} = a_0+sa_1+\dotsb+s^na_n, \qquad n\ge0
$$
in $b_0$, $b_1$, $b_2$,~\dots~$\in C$ for all
$a_0$, $a_1$, $a_2$~\dots~$\in C$.
 Finally, we can rewrite this system of equations equivalently as
\begin{equation} \label{a-b-bad-equation-system}
 s^n(b_n-sb_{n+1}) = s^na_n, \qquad n\ge0,
\end{equation}
because one always has $b_0-s^{n+1}b_{n+1}=
(b_0-sb_1)+s(b_1-sb_2)+\dotsb+s^n(b_n-sb_{n+1})$.

 We have shown that an $R$\+module $C$ is $s$\+complete if and only
if the system of nonhomogeneous linear
equations~\eqref{a-b-bad-equation-system} is solvable in
$(b_n\in C)_{n=0}^\infty$ for every sequence
$(a_n\in C)_{n=0}^\infty$.
 Now it is obvious that the system of
equations~\eqref{a-b-main-equation-system}
implies~\eqref{a-b-bad-equation-system}, and the converse
implication holds whenever $sc=0$ implies $c=0$ for $c\in C$.
 This proves both the assertions~(a) and~(b).
\end{proof}

 A generalization of Theorem~\ref{s-contraadjusted-complete}(a)
to the $I$\+adic completions for finitely generated nonprincipal ideals
$I$ will be given below in
Theorem~\ref{contraadjusted-for-ideal-generators-complete}.
 A precise criterion of contraadjustedness extending the one provided
by Theorem~\ref{s-contraadjusted-complete}(a\+b) to
the non-$s$-torsion-free module case can be found in
Corollary~\ref{s-contraadjusted-criterion}(b)
(see also Remark~\ref{bounded-torsion-remark}).

\begin{thm} \label{s-separ-complete-contra}
\textup{(a)} Any $s$\+separated $s$\+complete $R$\+module is
an $s$\+contramodule. \par
\textup{(b)} Any $s$\+torsion-free $s$\+contramodule $R$\+module
is $s$\+separated (and $s$\+complete).
\end{thm}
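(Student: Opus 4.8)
The plan is to dispatch part~(b) first, since it follows quickly from results already in hand, and then attack part~(a).

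For part~(b): the parenthetical clause ``$s$\+complete'' is immediate from Theorem~\ref{s-contraadjusted-complete}(a), because an $s$\+contramodule is in particular $s$\+contraadjusted. To prove $s$\+separatedness, let $c\in\bigcap_n s^nC$. Using that $C$ is $s$\+torsion-free, for each $n$ there is a \emph{unique} element $c_n\in C$ with $s^nc_n=c$ (existence since $c\in s^nC$; uniqueness since $s$, hence $s^n$, acts injectively). From $s^n c_n=c=s^{n+1}c_{n+1}=s^n(sc_{n+1})$ and injectivity of $s^n$ we get $c_n=sc_{n+1}$, so the sequence $(c_n)_{n\ge0}$ is a solution of the homogeneous version of~\eqref{a-b-main-equation-system}, i.e.\ lies in the kernel of the map $\prod_{n\ge0}C\rarrow\prod_{n\ge0}C$ of Lemma~\ref{ext-r-s-minus-1-computed}, which is $\Hom_R(R[s^{-1}],C)$. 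As $C$ is an $s$\+contramodule this group vanishes, so $c_0=0$; and since $c=c_0$ (the case $n=0$), we conclude $c=0$. Hence $\bigcap_n s^nC=0$.

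For part~(a): $s$\+separatedness already gives $\Hom_R(R[s^{-1}],C)=0$, because any abelian subgroup $D\subseteq C$ with $sD=D$ satisfies $D=s^nD\subseteq s^nC$ for every $n$, so $D\subseteq\bigcap_n s^nC=0$. Thus by Lemma~\ref{ext-r-s-minus-1-computed}(b) it remains to show the system~\eqref{a-b-main-equation-system} is solvable for every sequence $(a_n)_{n\ge0}$ in $C$ (uniqueness of the solution is then exactly the $\Hom$\+statement just proved). Given $(a_n)$, I would set $b_m$ to be the $s$\+adic limit of the partial sums $\sigma_{m,N}=\sum_{k=0}^{N-1}s^k a_{m+k}$: since $\sigma_{m,N+1}-\sigma_{m,N}=s^N a_{m+N}\in s^NC$, $s$\+completeness (spelled out as at the start of the proof of Theorem~\ref{s-contraadjusted-complete}) produces $b_m\in C$ with $b_m-\sigma_{m,N}\in s^NC$ for all $N\ge1$, and $s$\+separatedness makes $b_m$ unique. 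To check $b_m-sb_{m+1}=a_m$, note that modulo $s^NC$ one has $sb_{m+1}\equiv s\sigma_{m+1,N}$ and hence $b_m-sb_{m+1}\equiv\sigma_{m,N}-s\sigma_{m+1,N}=a_m-s^N a_{m+N}\equiv a_m$; as this holds for every $N$, the difference lies in $\bigcap_n s^nC=0$. This gives $\Ext^1_R(R[s^{-1}],C)=0$, completing~(a).

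The step I would flag as the main obstacle is precisely the passage in~(a) from $s$\+completeness to the \emph{honest} system~\eqref{a-b-main-equation-system}. Going through the equivalent reformulation exploited in the proof of Theorem~\ref{s-contraadjusted-complete} yields only the weakened system $s^n(b_n-sb_{n+1})=s^na_n$, and without $s$\+torsion-freeness one cannot cancel the leading $s^n$. The remedy is to bypass that reformulation and construct the solution directly as the convergent series $b_m=\sum_{k\ge0}s^ka_{m+k}$, letting $s$\+separatedness do double duty: it makes the $s$\+adic limit well defined and it is what finally pins down the identity $b_m-sb_{m+1}=a_m$ from the congruences modulo the submodules $s^nC$. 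This is the one-step-up analogue of how Theorem~\ref{s-contraadjusted-complete}(b) upgrades $s$\+completeness to $s$\+contraadjustedness under $s$\+torsion-freeness: the hypothesis ``$s$\+separated and $s$\+complete'' (the case where $\lambda_{s,C}$ is an isomorphism) here plays the role that ``$s$\+torsion-free and $s$\+complete'' played there.
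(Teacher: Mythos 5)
Your proposal is correct, but it takes a genuinely different route from the paper's own argument. The paper stays entirely within the language of the two infinite systems of linear equations: it reinterprets $s$\+separatedness as ``weak uniqueness'' of solutions of the auxiliary system~\eqref{a-b-bad-equation-system}, and then, given existence and weak uniqueness, it manufactures a solution of~\eqref{a-b-main-equation-system} by solving shifted copies of the auxiliary system and comparing two of its solutions; part~(b) is then deduced from the equivalence of the two systems for $s$\+torsion-free modules. You bypass the auxiliary system altogether: for existence in~(a) you build $b_m=\sum_{k\ge0}s^ka_{m+k}$ directly as an $s$\+adic limit of partial sums (completeness gives the limit, separatedness --- i.e.\ $\bigcap_n s^nC=0$ --- verifies the identity $b_m-sb_{m+1}=a_m$), while uniqueness follows because separatedness forbids $s$\+divisible subgroups, so $\Hom_R(R[s^{-1}],C)=0$; and in~(b) you argue directly that an element of $\bigcap_n s^nC$ yields, by torsion-freeness, an element of the kernel $\Hom_R(R[s^{-1}],C)=0$. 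Your checks are all sound (the congruence $b_m-sb_{m+1}\equiv a_m\pmod{s^NC}$ and the coherence of the partial sums are exactly right), and your route is essentially the ``standard'' one the author acknowledges in Remark~\ref{s-separ-complete-contra-easy-rem} and in the spirit of Example~\ref{s-power-summation-examples}\,(1), where infinite sums in separated complete modules are limits of partial sums; it is arguably more transparent. What the paper's longer argument buys is the explicit bookkeeping of the system~\eqref{a-b-bad-equation-system} and of weak uniqueness, which is deliberately exhibited for comparison with the proof of Theorem~\ref{s-contraadjusted-complete} and is reused in the surrounding discussion (e.g.\ Remark~\ref{reduced-separated-remark}).
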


\begin{proof}
 From the proof of Theorem~\ref{s-contraadjusted-complete}
one concludes that an $R$\+module $C$ is $s$\+sepa\-rated
if and only if for any sequence $a_0$, $a_1$,
$a_2$,~\dots~$\in C$ the element $b_0\in C$ is uniquely determined
by the system of equations~\eqref{a-b-bad-equation-system}.
 In other words, for any two solutions $(b_n'\in C)_{n=0}^\infty$ and
$(b_n''\in C)_{n=0}^\infty$ of the system~\eqref{a-b-bad-equation-system}
with the given sequence $(a_n\in C)_{n=0}^\infty$, one should have
$b_0'=b_0''$.
 We will say that ``a solution of
the system~\eqref{a-b-bad-equation-system} is weakly unique''
if this condition is satisfied.

 Assuming that this is the case for all sequences
$(a_n\in C)_{n=0}^\infty$, one can see that a solution
of the system of equations~\eqref{a-b-main-equation-system} is unique
(in the conventional sense of the word) in the $R$\+module~$C$.
 Indeed, for every $n\ge0$ the system~\eqref{a-b-main-equation-system}
implies
$$
 s^i(b_{n+i}-sb_{n+i+1}) = s^ia_{n+i}
 \qquad \text{for all \ $i\ge0$},
$$
which uniquely determines the element~$b_n$.

 Now assume that a solution of~\eqref{a-b-bad-equation-system}
exists and is weakly
unique for all sequences $(a_n\in C)_{n=0}^\infty$.
 In this case, in order to solve
the system~\eqref{a-b-main-equation-system}, one for every
$n\ge0$ solves the auxiliary equation system
\begin{equation} \label{aux-equation-system}
 s^i(d^{(n)}_i-sd^{(n)}_{i+1}) = s^ia_{n+i}, \qquad i\ge0,
\end{equation}
in $d^{(n)}_i\in C$, \ $n$, $i\ge0$; and puts $b_n=d^{(n)}_0$.
 Then, for every $n\ge0$, the sequence
$e^{(n)}_i=sd^{(n+1)}_i+a_{n+i}$ provides another solution
of the system~\eqref{aux-equation-system}, because
\begin{multline*}
 s^i(e^{(n)}_i-se^{(n)}_{i+1}) = s^i(sd^{(n+1)}_i + a_{n+i} -
 s^2d^{(n+1)}_{i+1} - sa_{n+i+1}) \\
 = ss^i(d^{(n+1)}_i - sd^{(n+1)}_{i+1}) + s^i(a_{n+i}-sa_{n+i+1}) \\
 = ss^ia_{n+i+1} + s^i(a_{n+i}-sa_{n+i+1}) = s^ia_{n+i}.
\end{multline*}
 Hence, due to the weak uniqueness condition on the solutions
of~\eqref{a-b-bad-equation-system}, we have
$$
 b_n=d^{(n)}_0=e^{(n)}_0=sd^{(n+1)}_0+a_n=sb_{n+1}+a_n
$$
and a solution of the equations~\eqref{a-b-main-equation-system}
is obtained.
 This proves part~(a).

 To check~(b), it remains to recall that in an $s$\+torsion-free
$R$\+module $C$ the systems~\eqref{a-b-main-equation-system}
and~\eqref{a-b-bad-equation-system} are equivalent, so (existence
and) uniqueness of solutions of~\eqref{a-b-main-equation-system}
implies (existence and) uniquence of solutions
of~\eqref{a-b-bad-equation-system}.
\end{proof}

\begin{rem} \label{s-separ-complete-contra-easy-rem}
 The assertion of Theorem~\ref{s-separ-complete-contra}(b) is
well-known; see Corollary~\ref{flat-contra-noetherian-cor}(b)
below for a generalization to Noetherian rings.
 Theorem~\ref{s-separ-complete-contra}(a) is even more standard,
and easier proved with the standard methods.
 It suffices to say that the class of $s$\+contramodule $R$\+modules
is closed under kernels and infinite products in $R\modl$ by
Theorem~\ref{ext-0-1-orthogonal}(a); hence it is also closed under
projective limits of all diagrams.
 Any $R$\+module $D$ for which $s^nD=0$ for some $n\ge1$ is
an $s$\+contramodule (since the action of~$s$ in $\Ext^*_R(R[s^{-1}],D)$
is then simultaneously invertible and nilpotent); thus
the $R$\+module $\varprojlim_n C/s^nC$ is an $s$\+contramodule, too
(cf.\ Lemma~\ref{I-completion-s-contra} below).
 The more complicated argument above is presented for comparison
with the proof of Theorem~\ref{s-contraadjusted-complete}, and
also in order to illustrate the workings of the technique of
infinite systems of nonhomogeneous linear equations.
\end{rem}

 One can sum up the results of
Theorems~\ref{s-contraadjusted-complete}\+-\ref{s-separ-complete-contra}
in the following diagram:
\begin{gather*}
\text{$s$-complete} \\*
\hphantom{\Downarrow\text{ if $s$-torsion-free}\qquad\quad}
\Uparrow\qquad\quad\Downarrow\text{ if $s$-torsion-free}
\\* \text{$s$-contraadjusted}
\\ \Uparrow \\ \text{$s$-contramodule} \\* 
\hphantom{\Downarrow\text{ if $s$-torsion-free}\qquad\quad}
\Uparrow\qquad\quad\Downarrow\text{ if $s$-torsion-free}
\\* \text{$s$-separated and $s$-complete}
\end{gather*}

\begin{rem}
 The two ``inner'' properties in the above diagram are defined in
terms of (the existence and/or uniqueness of solutions of)
the equation system~\eqref{a-b-main-equation-system}, while
the two ``outer'' properties translate into (the existence and/or
weak uniqueness of solutions of) the equation
system~\eqref{a-b-bad-equation-system}.
 The system~\eqref{a-b-bad-equation-system} does not look good,
though, and appears to be nothing more than an unsuccessful
na\"\i ve attempt to arrive at~\eqref{a-b-main-equation-system}.
 This is supposed to teach us that the $s$\+completeness and
$s$\+separatedness are not the right conditions to consider
unless $s$\+torsion\+freeness, or some weaker form of it guaranteeing
essential equivalence of~\eqref{a-b-main-equation-system}
and~\eqref{a-b-bad-equation-system}, is assumed.

 Indeed, it is known~\cite[Example~2.5]{Sim} that the class of
$s$\+separated and $s$\+complete $R$\+modules does \emph{not}
have good homological properties (of the kind listed
in Theorem~\ref{ext-0-1-orthogonal}(a) for the class of
$s$\+contramodules): it is not closed under cokernels (even under
the cokernels of injective morphisms) in $R\modl$, nor is it closed
under extensions.
 The category of $s$\+separated and $s$\+complete $R$\+module is
not abelian.
 Similarly, the counterexample below shows that the class of
$s$\+complete $R$\+modules is not closed under extensions
(cf.\ Lemma~\ref{s-contraadjusted-closure-properties}), even
though it is closed under quotients.

 Concerning the latter assertion, given a short exact sequence of
$R$\+modules $0\rarrow K\rarrow L\rarrow M\rarrow0$, the exact
sequence $0\rarrow K/(K\cap s^nL)\rarrow L/s^nL\rarrow M/s^nM\rarrow0$
with surjective maps between the modules in the projective system
$K/(K\cap s^nL)$ yields surjectivity of the map $\varprojlim_n L/s^nL
\rarrow\varprojlim_n M/s^nM$, so $s$\+completeness of $L$ implies
$s$\+completeness of~$M$.
\end{rem}

\begin{exs}  \label{s-contra-adjusted-counterex}
 (1) Let us start with reproducing the now-classical counterexample
of a nonseparated contramodule (see~\cite[Section~1.5]{Prev} and
the references therein).
 Set $R=\Z$ and choose a prime number~$p$.
 Let $C$ denote the subgroup in abelian group
$\prod_{n=0}^\infty\Z_p$ consisting of all sequences of $p$\+adic
integers $u_0$, $u_1$, $u_2$,~\dots, $u_n$,~\dots\ converging to zero
in the topology of~$\Z_p$.
 Let $D\subset C$ denote the group of all sequences of $p$\+adic
integers of the form $v_0$, $pv_1$, $p^2v_2$,~\dots, where
$v_n\in\Z_p$; and let $E\subset D$ be the subgroup of all sequences 
$u_n=p^nv_n$ such that $v_n\to 0$ in $\Z_p$ as $n\to\infty$.

 All the three groups $C$, $D$, $E$ are $p$\+contramodules; in fact,
they are $p$\+separated and $p$\+complete.
 Hence the quotient group $C/E$ is a $p$\+contramodule, too.
 However, it is not $p$\+separated; in fact, one has
$\bigcap_n p^n(C/E)=D/E\subset C/E$, so $\varprojlim_n (C/E)/p^n(C/E)
= C/D$.
 Furthermore, there is a short exact sequence $0\rarrow D/E\rarrow
C/E\rarrow C/D\rarrow 0$, where the groups $D/E\simeq
(\prod_{n=0}^\infty\Z_p)/C$ and $C/D\subset\prod_{n=0}^\infty\Z/p^n\Z$
are $p$\+separated and $p$\+complete, but the group $C/E$
is not~\cite{Sim}.

 Computing the cokernel of the embedding $E\rarrow C$ in
the category of $p$\+separated and $p$\+complete abelian groups
$\Ab_{p\secmp}$, one obtains the group $C/D$ and the kernel of
the morphism $C\rarrow C/D$ is $D$, while the kernel of
$E\rarrow C$ is, of course, zero, and the cokernel
of $0\rarrow E$ is~$E$.
 Thus the category $\Ab_{p\secmp}$ is not abelian.

\medskip

 (2) Now let us demonstrate an example of a $p$\+complete abelian
group that is not $p$\+contraadjusted.
 Choose a bijection $(\phi,\psi)\:\Z_{\ge0}\simeq
\Z_{\ge0}\times\Z_{\ge0}$, and assign to every eventually vanishing
sequence of $p$\+adic integers $w_0$, $w_1$, $w_2$,~\dots, $w_i$,~\dots\
the sequence $w_{\phi(0)}$, $pw_{\phi(1)}$, $p^2w_{\phi(2)}$,~\dots,
$p^nw_{\phi(n)}$,~\dots\ \
 This construction provides a homomorphism $\bigoplus_{n=0}^\infty\Z_p
\rarrow D$ whose composition with the projection
$D\rarrow D/E$ is still an injective map $\bigoplus_{n=0}^\infty\Z_p
\rarrow D/E$, because $w_{\phi(n)}\to0$ as $n\to\infty$ implies
$w_i=0$ for all~$i$.
 Denote the image of this embedding by $A\subset D/E$.
 Consider the surjective homomorphism $A\rarrow\Q_p$ taking
$(w_i)_{i=0}^\infty$ to $\sum_{i=0}^\infty\frac{w_i}{p^i}$, and extend it
to an abelian group homomorphism $f\:C/E\rarrow\Q_p$ in
an arbitrary way.
 Set $F=\ker(f)\subset C/E$, and denote by $G$ the kernel of
the composition of~$f$ with the projection $C\rarrow C/E$.

 Then from the exact sequence $0\rarrow F\rarrow C/E\rarrow \Q_p
\rarrow 0$ one concludes that $F/p^nF\simeq (C/E)/p^n(C/E)$, so
$\varprojlim_n F/p^nF=C/D$.
 The map $F\rarrow C/D$ is surjective, because the map
$D\rarrow C/G\simeq\Q_p$ is.
 Hence the group $F$ is $p$\+complete.
 Similarly, from the short exact sequence $0\rarrow G\rarrow C
\rarrow\Q_p\rarrow 0$ we get $G/p^nG\simeq C/p^nC$, so
$\varprojlim_n G/p^nG=C$ and $G$ is not $p$\+complete.
 Finally, we have a short exact sequence $0\rarrow D\rarrow G
\rarrow F\rarrow0$.
 Both the groups $D$ and $F$ are $p$\+complete, and $D$ is also
$p$\+separated.
 Thus $F$ is not $p$\+contraadjusted (for otherwise $G$ would
have to be $p$\+contraadjusted, too, but it is not even
$p$\+complete).
\end{exs}

\begin{rem} \label{reduced-separated-remark}
 In addition to Theorems~\ref{s-contraadjusted-complete}\+-%
\ref{s-separ-complete-contra}, it is instructive to compare
the two versions of the separatedness property.
 An $R$\+module is $s$\+separated if and only if it has no
\emph{elements} divisible by an arbitrary power of~$s$.
 This means weak uniqueness of solutions of the equation
system~\eqref{a-b-bad-equation-system}.
 An $R$\+module $C$ satisfies $\Hom_R(R[s^{-1}],C)=0$ if and
only if it has no $s$\+divisible \emph{submodules}.
 This is equivalent to uniqueness of solutions of the equation
system~\eqref{a-b-main-equation-system}.
 The former condition is ``na\"\i ve'' and the latter one
is its ``well-behaved version'', in that the class of
all $R$\+modules without $s$\+divisible submodules is closed
under extensions (see the proof of
Theorem~\ref{ext-0-1-orthogonal}), while the class of
$s$\+separated $R$\+modules is not (as the above
Example~\ref{s-contra-adjusted-counterex}\,(1) demonstrates).
 For $s$\+torsion-free modules, the two conditions are
equivalent.
\end{rem}

\Section{$s$-Power Infinite Summation Operations}
\label{s-power-summation-secn}

 Let $s$~be a formal symbol.
 We will say that an abelian group $C$ is endowed with
an \emph{$s$\+power infinite summation operation} if for every
sequence of elements $a_0$, $a_1$, $a_2$,~\dots~$\in C$
an element denoted formally by
$$
 \sum\nolimits_{n=0}^\infty s^na_n\in C
$$
is specified.
 The axioms of additivity
$$
 \sum\nolimits_{n=0}^\infty s^n(a_n+b_n) =
 \sum\nolimits_{n=0}^\infty s^na_n + \sum\nolimits_{n=0}^\infty s^nb_n
 \quad \text{for any $(a_n,b_n\in C)_{n=0}^\infty$},
$$
contraunitality
$$
 \sum\nolimits_{n=0}^\infty s^na_n = a_0
 \quad \text{when $a_1=a_2=a_3=\dotsb=0$ in $C$},
$$
and contraassociativity
$$
 \sum\nolimits_{i=0}^\infty
 s^i\left(\sum\nolimits_{j=0}^\infty s^ja_{ij}\right)=
 \sum\nolimits_{n=0}^\infty s^n\left(\sum\nolimits_{i+j=n} a_{ij}\right)
 \quad \text{for any $(a_{ij}\in C)_{i,j=0}^\infty$}
$$
have to be satisfied (cf.~\cite[Sections~1.3\+-1.4]{Prev}).

 Given an abelian group $C$ endowed with an $s$\+power infinite
summation operation, one defines an additive operator
(abelian group endomorphism) $s\:C\rarrow C$ by the rule
$$
 sa=\sum\nolimits_{i=0}^\infty s^ia_i, \quad
 \text{where $a_1=a$ and $a_i=0$ for $i\ne1$}.
$$
 It follows from the contraassociativity axiom that
the powers (iterated compositions) $s^n=s\circ\dotsb\circ s$ of
the endomorphism~$s$ can be obtained as
$$
 s^na=\sum\nolimits_{i=0}^\infty s^ia_i, \quad
 \text{where $a_n=a$ and $a_i=0$ for $i\ne n$}.
$$

\begin{exs}  \label{s-power-summation-examples}
 (1) For any abelian group $V$, the group of formal power series
$V[[z]]$ is naturally endowed with a $z$\+power infinite summation
operation.
 The group of $p$\+adic integers $\Z_p$ is naturally endowed with
a $p$\+power infinite summation operation.
 More generally, for any set $X$ the group $\prod_{x\in X}\Z_p$
is endowed with a $p$\+power infinite summation operation.
 The subgroup $C=\Z_p[[\Z_{\ge0}]]\subset\prod_{n=0}^\infty\Z_p$ of all
sequences of $p$\+adic integers converging to zero in the topology
of $\Z_p$ is preserved by the $p$\+power infinite sumation operation
in $\prod_{n=0}^\infty\Z_p$, hence also endowed with a $p$\+power
infinite summation operation.
 This generalizes to the case of the subgroup $\Z_p[[X]]\subset
\prod_{x\in X}\Z_p$ consisting of all the families of elements
$u_x\in\Z_p$, \ $x\in X$ such that for every $n\ge0$ one has
$u_x\in p^n\Z_p$ for all but a finite number of indices $x\in X$.
 In all these cases, the infinite sum can be computed as the limit
of finite partial sums in the $s$\+adic (i.~e., $z$\+adic or $p$\+adic,
resp.)\ topology of the group in question.

\medskip

 (2) For any two abelian groups $C$ and $D$ with $s$\+power infinite
summation operations and a group homomorphism $f\:C\rarrow D$
preserving the infinite summation operations, the groups
$\ker(f)$ and $\coker(f)$ inherit the $s$\+power infinite
summation operations of $C$ and~$D$.
 It follows that the category of abelian groups with $s$\+power
infinite summation operations is abelian.
 Furthermore, for any family of groups $C_\alpha$ with $s$\+power
infnite summation operations, there is a natural infinite
summation operation on the infinite product $\prod_\alpha C_\alpha$.

\medskip

 (3) The construction of Example~\ref{s-contra-adjusted-counterex}\,(1)
allows to show that there exists an abelian group $B$ with
an $s$\+power infinite summation operation and a sequence of elements
$b_0$, $b_1$, $b_2$,~\dots~$\in B$ such that $\sum_{n=0}^\infty s^nb_n
\ne0$ in $B$, but $s^nb_n=0$ for every $n\ge0$.
 Indeed, set $B=C/E$, and let $b_n=c_n+E$ be the coset of
the sequence $c_n=(u_0,u_1,u_2,\dots)$ with $u_0=0$, \,$u_1=0$,~\dots,
$u_n=1$, \,$u_{n+1}=0$,~\dots \
 Then $p^nc_n\in E$, but $\sum_{n=0}^\infty p^nc_n\notin E$, because
the sequence $u_0=1$, $u_1=p$, $u_2=p^2$,~\dots, $u_n=p^n$,~\dots\
does not have the form $u_n=p^nv_n$ with $v_n\to0$ in $\Z_p$
as $n\to\infty$.
 This counterexample shows that the $s$\+power infinite summation
operations, generally speaking, cannot be interpreted as any kind
of limit of finite partial sums.
\end{exs}

\begin{lem} \label{infsummation-nodivisible}
 Let $C$ be an abelian group endowed with an $s$\+power infinite
summation operation and $D\subset C$ be a subgroup for which $sD=D$.
 Then $D=0$.
\end{lem}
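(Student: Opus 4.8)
The plan is to play the $s$\+power summation operation against itself. First I would record two elementary identities valid in any abelian group $C$ equipped with an $s$\+power infinite summation operation, for an arbitrary sequence $a_0,a_1,a_2,\dots\in C$. Using additivity and contraunitality, one obtains
\[
 \sum\nolimits_{n=0}^\infty s^na_n \;=\; a_0+\sum\nolimits_{n=0}^\infty s^nb_n,
 \qquad b_0=0,\quad b_n=a_n\ \ (n\ge1),
\]
by splitting the sequence $(a_0,a_1,a_2,\dots)$ as $(a_0,0,0,\dots)+(0,a_1,a_2,\dots)$ and evaluating the first summand via contraunitality. Contraassociativity gives
\[
 \sum\nolimits_{n=0}^\infty s^n(sa_n) \;=\; \sum\nolimits_{n=0}^\infty s^nc_n,
 \qquad c_0=0,\quad c_n=a_{n-1}\ \ (n\ge1).
\]
Indeed, writing each $sa_n$ as $\sum_j s^jd^{(n)}_j$ with $d^{(n)}_1=a_n$ and $d^{(n)}_j=0$ for $j\ne1$, the contraassociativity axiom turns the left-hand side into $\sum_n s^n\big(\sum_{i+j=n}d^{(i)}_j\big)$, and in the inner sum only the term with $j=1$, $i=n-1$ survives (this sum being empty when $n=0$).

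With these two identities available, the argument is short. Let $D\subseteq C$ be a subgroup with $sD=D$, and fix an arbitrary element $d_0\in D$. Since $s$ maps $D$ onto $D$, I can choose recursively elements $d_1,d_2,d_3,\dots\in D$ with $sd_{n+1}=d_n$ for every $n\ge0$, and set $x=\sum_{n=0}^\infty s^nd_n\in C$. By the first identity, $x=d_0+\sum_{n=0}^\infty s^ne_n$ with $e_0=0$ and $e_n=d_n$ for $n\ge1$. On the other hand, the relations $d_n=sd_{n+1}$ say precisely that the sequence $(d_0,d_1,d_2,\dots)$ coincides with $(sd_1,sd_2,sd_3,\dots)$, so that $x=\sum_{n=0}^\infty s^n(sd_{n+1})$; applying the second identity to the sequence $(d_{n+1})_{n\ge0}$ rewrites this as $x=\sum_{n=0}^\infty s^ne_n$ with the very same sequence $e_\bullet$. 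Comparing the two expressions for $x$ forces $d_0=0$, and since $d_0\in D$ was arbitrary, $D=0$.

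The only step that really needs care is the second identity: getting the index bookkeeping in the contraassociativity axiom right, so that ``pulling the operator $s$ out of every term of $\sum_n s^na_n$'' becomes a shift of indices; everything else is formal and amounts to a one-line comparison. I would also remark that the conclusion $D=0$ is precisely the no-$s$\+divisible-submodule condition $\Hom_R(R[s^{-1}],C)=0$ singled out just before Lemma~\ref{ext-r-s-minus-1-computed}, so the present lemma records that every abelian group carrying an $s$\+power infinite summation operation automatically satisfies it — one ingredient in identifying such groups with $s$\+contramodules.
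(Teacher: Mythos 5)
Your proof is correct and follows essentially the same route as the paper: form the sum $\sum_{n=0}^\infty s^nd_n$ over a chain with $sd_{n+1}=d_n$, use contraassociativity (applied to an array supported on the column $j=1$) to shift the sum, and compare via additivity and contraunitality to get $d_0=0$. Packaging the shift as your two preliminary identities is only a cosmetic reorganization of the paper's argument.
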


\begin{proof}
 Let $a_0$, $a_1$, $a_2$~\dots\ be a sequence of elements
in $C$ satisfying $a_n=sa_{n+1}$ for every $n\ge0$.
 Our aim is to show that $a_n=0$ for all $n\ge0$.
 The idea is to consider the element $\sum_{n=0}^\infty s^na_n$ and
perform the transformations
$$
 \sum\nolimits_{n=0}^\infty s^na_n=\sum\nolimits_{n=0}^\infty s^nsa_{n+1}=
 \sum\nolimits_{n=0}^\infty s^{n+1}a_{n+1}=\sum\nolimits_{n=1}^\infty s^na_n,
$$
implying $a_0=0$ (which is clearly sufficient).
 To do it more rigorously, consider the double-indexed array of
elements $a_{ij}=a_{i+1}$ for $i\ge0$, $j=1$ and $a_{ij}=0$ for other
values of $i$, $j$, and apply the contraassociativity axiom,
$$
 \sum\nolimits_{i=0}^\infty s^ia_i =
 \sum\nolimits_{i=0}^\infty s^i\left
 (\sum\nolimits_{j=0}^\infty s^ja_{ij}\right)=
 \sum\nolimits_{n=0}^\infty s^n\left(\sum\nolimits_{i+j=n} a_{ij}\right)
 = \sum\nolimits_{n=0}^\infty s^n a_n',
$$
where $a_0'=0$ and $a_n'=a_n$ for $n\ge1$.
 Using the additivity and contraunitality axioms allows to deduce
the desired equation $a_0=0$.
\end{proof}

\begin{thm} \label{s-power-contra-equivalence}
\textup{(a)} An $s$\+power infinite summation operation on
an abelian group $C$ is uniquely determined by the endomorphism
$s\:C\rarrow C$.
 In other words, given an abelian group $C$ with an additive operator
$s\:C\rarrow C$, there exists at most one $s$\+power infinite
summation operation structure on $C$ restricting to the prescribed
action of the operator~$s$ in~$C$. \par
\textup{(b)} Given two abelian groups $C$ and $D$ endowed with
$s$\+power infinite summation operations, an abelian group
homomorphism $f\:C\rarrow D$ preserves the $s$\+power infinite
summation operations if and only if it commutes with
the endomorphisms~$s$ on $C$ and $D$, i.~e., $fs=sf$. \par
\textup{(c)} An endomorphism $s\:C\rarrow C$ of an abelian group $C$
can be extended to an $s$\+power infinite summation operation on $C$
if and only if one has\/ $\Hom_{\Z[s]}(\Z[s,s^{-1}],C)\allowbreak
=0=\Ext^1_{\Z[s]}(\Z[s,s^{-1}],C)$.
\end{thm}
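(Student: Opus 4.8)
The plan is to translate all three assertions into statements about the inhomogeneous linear system~\eqref{a-b-main-equation-system} from Lemma~\ref{ext-r-s-minus-1-computed}, applied over the ring $R=\Z[s]$, for which $R[s^{-1}]=\Z[s,s^{-1}]$. By the computation in the proof of Lemma~\ref{ext-r-s-minus-1-computed}, $\Hom_R(R[s^{-1}],C)$ and $\Ext^1_R(R[s^{-1}],C)$ are the kernel and cokernel of the $R$\+linear map $\beta\:\prod_{n\ge0}C\rarrow\prod_{n\ge0}C$ taking $(b_n)$ to $(b_n-sb_{n+1})$. The first thing I would record is that if $C$ carries an $s$\+power infinite summation operation, then for every sequence $(a_n)$ in $C$ the ``tail sums'' $b_n:=\sum_{i=0}^\infty s^ia_{n+i}$ satisfy $b_n=a_n+sb_{n+1}$ --- i.e., $(b_n)$ solves~\eqref{a-b-main-equation-system} --- while $b_0=\sum_{n=0}^\infty s^na_n$; this follows from additivity, contraunitality, and one application of contraassociativity to a suitable double array, exactly as in the proof of Lemma~\ref{infsummation-nodivisible}. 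At the same time, Lemma~\ref{infsummation-nodivisible}, combined with the recalled equivalence between the vanishing $\Hom_R(R[s^{-1}],C)=0$ and the absence of nonzero subgroups $D\subseteq C$ with $sD=D$, shows that $\beta$ is injective.

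Granting this, part~(a) is immediate: two $s$\+power summation operations on $C$ inducing the same operator $s$ produce, for each $(a_n)$, two solutions of~\eqref{a-b-main-equation-system}, which coincide since $\beta$ is injective, so in particular their zeroth components $\sum_n s^na_n$ agree. In part~(b), the ``only if'' direction is immediate from the definition of the operator attached to an operation (apply $f$ to $sa=\sum_i s^ia_i$ with $a_1=a$ and the other $a_i=0$). For ``if'', assume $fs=sf$; applying $f$ to the tail\+sum identities in $C$ shows that $(f(b_n))_n$ solves~\eqref{a-b-main-equation-system} in $D$ for the right\+hand side $(f(a_n))_n$, and so does $(\sum_i s^if(a_{n+i}))_n$, whence injectivity of the analogous map $\beta_D$ for $D$ forces $f(\sum_n s^na_n)=f(b_0)=\sum_n s^nf(a_n)$. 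For the ``only if'' half of~(c), the tail sums furnish a solution of~\eqref{a-b-main-equation-system} for \emph{every} right\+hand side, so $\beta$ is also surjective; hence $\Ext^1_R(R[s^{-1}],C)=\coker\beta=0$, and $\Hom_R(R[s^{-1}],C)=0$ was already noted.

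The remaining ``if'' half of~(c) is the substantive point. Assuming $\Hom_R(R[s^{-1}],C)=0=\Ext^1_R(R[s^{-1}],C)$, the map $\beta$ is bijective, and I would \emph{define} $\sum_{n=0}^\infty s^na_n:=b_0$ where $(b_n)_n=\beta^{-1}\big((a_n)_n\big)$ is the unique solution of~\eqref{a-b-main-equation-system}. Additivity is the additivity of $\beta^{-1}$; contraunitality and the identification of the induced operator with the given $s$ are checked by writing the unique solutions explicitly. The only axiom that takes effort is contraassociativity: given a double array $(a_{ij})$, set $c_i=\sum_j s^ja_{ij}$ and $f_n=\sum_{i+j=n}a_{ij}$, let $(d^{(i)}_k)_k$ and $(g_k)_k$ be the solutions of~\eqref{a-b-main-equation-system} for the right\+hand sides $(a_{ij})_j$ and $(f_n)_n$ (so $c_i=d^{(i)}_0$), and put $e_k:=g_k-\sum_{i=0}^{k-1}d^{(i)}_{k-i}$. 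Using $d^{(i)}_{k-i}-sd^{(i)}_{k-i+1}=a_{i,k-i}$, $g_k-sg_{k+1}=f_k$, and $f_k=a_{k,0}+\sum_{i=0}^{k-1}a_{i,k-i}$, a direct telescoping computation yields $e_k-se_{k+1}=c_k$, so $(e_k)_k$ is the solution of~\eqref{a-b-main-equation-system} for $(c_k)_k$; since $e_0=g_0$, we get $\sum_i s^ic_i=e_0=g_0=\sum_n s^nf_n$, which is contraassociativity. Getting the bookkeeping in the definition of $e_k$ right is the main obstacle; the rest is formal.
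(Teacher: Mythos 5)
Your proposal is correct and follows essentially the same route as the paper's proof: tail sums $b_n=\sum_i s^ia_{n+i}$ solve the system~\eqref{a-b-main-equation-system}, Lemma~\ref{infsummation-nodivisible} gives uniqueness of solutions, parts~(a) and~(b) follow from that uniqueness, and the converse in~(c) is obtained by defining the sum as $b_0$ of the unique solution and checking contraassociativity via the same double-array telescoping computation. Your bookkeeping $e_k=g_k-\sum_{i=0}^{k-1}d^{(i)}_{k-i}$ is just the mirror image of the paper's $e_n=c_{0,n}+\dotsb+c_{n-1,1}+d_n$ and works equally well.
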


 In other words, the theorem says that the category of abelian
groups with $s$\+power infinite summation operations is equivalent
(if one wishes, even isomorphic) to the category of $s$\+contramodule
$\Z[s]$\+modules (cf.~\cite[Lemma~B.5.1]{Pweak}).

\begin{proof}
 Let $C$ be an abelian group endowed with an $s$\+power infinite
summation operation.
 We already know from Lemma~\ref{infsummation-nodivisible} that
$\Hom_{\Z[s]}(\Z[s,s^{-1}],C)=0$; let us check that
$\Ext_{\Z[s]}^1(\Z[s,s^{-1}],C)=0$.
 According to Lemma~\ref{ext-r-s-minus-1-computed}(a), we have
to check that for any sequence of elements $a_0$, $a_1$,
$a_2$,~\dots~$\in C$ the system of
equations~\eqref{a-b-main-equation-system} can be solved in~$C$.
 Put
\begin{equation} \label{equation-solving-by-summation}
 b_n=\sum\nolimits_{i=0}^\infty s^ia_{n+i}.
\end{equation}
 We claim that~\eqref{a-b-main-equation-system} is satisfied.
 Indeed,
\begin{align*}
 b_n-sb_{n+1} &= \sum\nolimits_{i=0}^\infty s^ia_{n+i} -
 s\sum\nolimits_{i=0}^\infty s^ia_{n+1+i} \\ &=
 \sum\nolimits_{i=0}^\infty s^ia_{n+i} -
 \sum\nolimits_{i=0}^\infty s^{i+1}a_{n+i+1} = 
 \sum\nolimits_{i=0}^\infty s^ia_{n+i} -
 \sum\nolimits_{i=1}^\infty s^ia_{n+i} = a_n.
\end{align*}
 To make a rigorous argument out of this calculation, one can,
similarly to the above proof of Lemma~\ref{infsummation-nodivisible},
apply the contraassociativity axiom to the array of elements
$a_{ij}=a_{n+j+1}$ for $i=1$, $j\ge0$ and $a_{ij}=0$ for other values
of $i$ and~$j$.

 Conversely, let $C$ be an $s$\+contramodule $\Z[s]$\+module; so
the system of equations~\eqref{a-b-main-equation-system} is
uniquely solvable in $C$ for any sequence of elements
$(a_n\in C)_{n=0}^\infty$.
 Given such a sequence, solve
the system~\eqref{a-b-main-equation-system} and put
\begin{equation} \label{summation-by-equation-solving}
 \sum\nolimits_{n=0}^\infty s^na_n = b_0.
\end{equation}
 The additivity and contraunitality axioms being pretty straighforward,
let us check the contraassociativity axiom for the infinite summation
operation so defined.
 For this purpose, one has to compute the sum over the area $i+j\ge n$,
\ $i$, $j\ge0$ as the sum of the sums over the rows.
 Let us start with solving the equations
$$
 c_{i,m}-sc_{i,m+1}=a_{i,m}, \qquad c_{i,m}\in C, \quad i,\,m\ge0;
$$
so, according to our definition, $c_{i,m}=\sum_{j=0}^\infty s^ja_{i,m+j}$
and, in particular, $c_{i,0}=\sum_{j=0}^\infty s^ja_{ij}$.
 Furthermore, solve the equations
$$
 d_n-sd_{n+1}=c_{n,0}, \qquad d_n\in C, \quad n\ge0;
$$
so $d_n=\sum_{i=0}^\infty s^ic_{n+i,0}$ and, in particular,
$d_0=\sum_{i=0}^\infty s^i\bigl(\sum_{j=0}^\infty s^ja_{ij}\bigr)$.
 Finally, set $e_n=c_{0,n}+c_{1,n-1}+\dotsb+c_{n-1,1}+d_n$; this is our
sum over the area $i+j\ge n$, \ $i$, $j\ge0$.
 In particular, by the definition, we have $e_0=d_0$.
 On the other hand,
\begin{setlength}{\multlinegap}{0pt}
\begin{multline*}
 e_n-se_{n+1} \\ = (c_{0,n}-sc_{0,n+1})+(c_{1,n-1}-sc_{1,n})+
 \dotsb+(c_{n-1,1}-sc_{n-1,2}) - sc_{n,1}
 + (d_n-sd_{n+1}) \\
 = a_{0,n} + a_{1,n-1} + \dotsb + a_{n-1,1} -sc_{n,1} + c_{n,0} 
 \\ = a_{0,n} + a_{1,n-1} + \dotsb + a_{n-1,1} + a_{n,0},
\end{multline*}
hence $e_0=\sum_{n=0}^\infty s^n\bigl(\sum_{i+j=n}a_{ij}\bigr)$
and we are done.
\end{setlength}

 We have shown that the infinite system of nonhomogeneous linear
equations~\eqref{a-b-main-equation-system} is solvable
by~\eqref{equation-solving-by-summation} in any abelian group $C$
with an $s$\+power infinite summation operation.
 Moreover, the system~\eqref{a-b-main-equation-system} is uniquely
solvable in $C$, as the related system of homogeneous linear
equations has no nonzero solutions according to (the proof of)
Lemma~\ref{infsummation-nodivisible}.
 Furthermore, solving the equations~\eqref{a-b-main-equation-system}
allows to recover the $s$\+power infinite summation operation in $C$
by the rule~\eqref{summation-by-equation-solving}.

 Therefore, the infinite summation operation structure with
the prescribed map $s\:C\rarrow C$ is unique when it exists.
 Furthermore, any abelian group homomorphism $f\:C\rarrow D$
commuting with the operators~$s$ takes solutions of
the system~\eqref{a-b-main-equation-system} in $C$ to similar solutions
in $D$, and consequently preserves the infinite summation
operations.
 All the assertions of the theorem are now proved.
\end{proof}

\Section{$[s,t]$-Power Infinite Summation Operations}
\label{[st]-power-summation-secn}

 Let us now have two formal symbols~$s$ and~$t$.
 We say that an abelian group $C$ is endowed with
an \emph{$[s,t]$\+power infinite summation operation} if for
every array of elements $a_{mn}\in C$, \ $m$, $n\ge0$,
an element denoted formally by
$$
 \sum_{n=0}^\infty s^mt^n a_{mn}\in C
$$
is defined.
 The axioms of additivity
$$
 \sum_{m,n=0}^\infty s^mt^n(a_{mn}+b_{mn}) =
 \sum_{m,n=0}^\infty s^mt^na_{mn} +
 \sum_{m,n=0}^\infty s^mt^nb_{mn}
 \quad \text{for any $(a_{mn},b_{mn}\in C)_{m,n=0}^\infty$},
$$
contraunitality
$$
 \sum_{m,n=0}^\infty s^mt^na_{mn} = a_{00}
 \quad \text{whenever $a_{mn}=0$ in $C$ for all $(m,n)\ne(0,0)$},
$$
and contraassociativity
$$
 \sum_{i,j=0}^\infty s^it^j
 \left(\sum_{k,l=0}^\infty s^kt^la_{ij,kl}\right)=
 \sum_{m,n=0}^\infty s^mt^n\left(\sum_{i+k=m}^{j+l=n} a_{ij,kl}\right)
 \quad \text{for any $(a_{ij,kl}\in C)_{i,j,k,l=0}^\infty$}
$$
are imposed.

\begin{thm} \label{[st]-power-summation-equivalence}
 The category of abelian groups $C$ endowed with an $[s,t]$\+power
infinite summation operation is isomorphic to the category of
abelian groups $C$ endowed with a pair of commuting endomorphisms
$s$, $t\:C\rarrow C$, \ $st=ts$, such that both the $s$\+power
and the $t$\+power infinite summation operations exist in~$C$.
\end{thm}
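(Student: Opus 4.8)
The plan is to establish an equivalence of categories in both directions, using Theorem~\ref{s-power-contra-equivalence} as the main tool. Given an abelian group $C$ with an $[s,t]$\+power infinite summation operation, I would first extract the two endomorphisms $s\:C\rarrow C$ and $t\:C\rarrow C$ by specializing the defining operation to arrays supported at a single index: set $sa$ to be $\sum s^mt^na_{mn}$ with $a_{10}=a$ and all other entries zero, and similarly $ta$ with $a_{01}=a$. The contraassociativity axiom then yields $st=ts$ (both compositions equal the operation applied to the array supported at $(1,1)$) and identifies $s^mt^n$ acting on $C$ with the operation applied to the array supported at $(m,n)$. Next I would produce the $s$\+power infinite summation operation: given $(a_m)_{m\ge0}$ in $C$, set $\sum_m s^ma_m := \sum_{m,n} s^mt^n a_{mn}$ where $a_{m0}=a_m$ and $a_{mn}=0$ for $n\ge1$. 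The additivity and contraunitality axioms for this $s$\+power operation are immediate from the corresponding $[s,t]$\+axioms; contraassociativity of the $s$\+power operation follows from contraassociativity of the $[s,t]$\+operation applied to arrays concentrated in the $n=0$, $l=0$ rows. The $t$\+power operation is constructed symmetrically. This produces a functor from $[s,t]$\+summation groups to the target category.

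For the reverse direction, suppose $C$ carries commuting endomorphisms $s$, $t$ such that both the $s$\+power and $t$\+power infinite summation operations exist on $C$. By Theorem~\ref{s-power-contra-equivalence}(c), each of these operations is unique and, by part~(b), the operators $s$ and $t$ are morphisms of $s$\+power summation groups and of $t$\+power summation groups. I would define the $[s,t]$\+power operation by iterated summation: given $(a_{mn})_{m,n\ge0}$, first form the rows $r_m := \sum_{n} t^n a_{mn}$ using the $t$\+power operation, then set $\sum_{m,n} s^mt^n a_{mn} := \sum_m s^m r_m$ using the $s$\+power operation. Additivity is clear. Contraunitality: if only $a_{00}\ne0$, then $r_0=a_{00}$ and $r_m=0$ for $m\ge1$ by $t$\+contraunitality, and then $\sum_m s^m r_m = a_{00}$ by $s$\+contraunitality.

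The main obstacle is verifying the $[s,t]$\+contraassociativity axiom for this iterated construction, and confirming that the two functors are mutually inverse (in fact isomorphic to the identity). For contraassociativity, I expect to translate everything back into infinite systems of linear equations via Lemma~\ref{ext-r-s-minus-1-computed}: the $s$\+power sum $\sum_m s^m r_m$ equals $b_0$ where $b_m - s b_{m+1} = r_m$, and the uniqueness clause of Lemma~\ref{ext-r-s-minus-1-computed}(b) — available because $\Hom_{\Z[s]}(\Z[s,s^{-1}],C)=0$ by Lemma~\ref{infsummation-nodivisible} applied to the $s$\+power operation — forces the identity once both sides of the contraassociativity equation are shown to satisfy the same recursion. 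The key point here is that the inner sums $\sum_{i+k=m}^{\,j+l=n} a_{ij,kl}$ arising on the right-hand side are themselves honest finite sums over $(i,k)$ and $(j,l)$, so one can reorganize the double iterated summation by interchanging the order in which the $s$\+ and $t$\+recursions are solved; the commutation $st=ts$ and the uniqueness of solutions make the two orders agree. Finally, the round-trip in both directions recovers the original structure: starting from an $[s,t]$\+operation, extracting $s$, $t$, rebuilding the $s$\+ and $t$\+power operations and then the iterated $[s,t]$\+operation gives back the original one by the uniqueness in Theorem~\ref{s-power-contra-equivalence}(a) together with uniqueness of solutions of the linear systems; starting from $(C,s,t)$, the extracted operators are visibly $s$ and $t$ again. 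Since morphisms on both sides are exactly the group homomorphisms commuting with $s$ and $t$ (using Theorem~\ref{s-power-contra-equivalence}(b) on each side), the correspondence is an isomorphism of categories.
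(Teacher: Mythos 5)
Your proposal is correct and follows essentially the same route as the paper: extract the operators and the one-variable summation operations by restricting the $[s,t]$\+operation to rows and columns, define the $[s,t]$\+operation in the reverse direction by iterated summation, and reduce the contraassociativity check to the interchange of the $s$\+power and $t$\+power summations. The only divergence is in how that interchange and the round trip are verified: you take the explicit linear-systems/uniqueness route (the first of the two alternatives the paper mentions), whereas the paper spells out the more conceptual argument applying Theorem~\ref{s-power-contra-equivalence}(b) twice, and it clinches the round trip not by uniqueness alone but by one concrete application of the $[s,t]$\+contraassociativity axiom to the array with $a_{m,0,0,n}=a_{mn}$ and zeros elsewhere --- which is the step your appeal to Theorem~\ref{s-power-contra-equivalence}(a) should be understood to require.
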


 In other words, an abelian group $C$ with an $[s,t]$\+power
infinite summation operation is the same thing as
a $\Z[s,t]$\+module satisfying
$$
 \Hom_{\Z[s,t]}(\Z[s,s^{-1},t]\oplus\Z[s,t,t^{-1}],\>P) = 0
 = \Ext^1_{\Z[s,t]}(\Z[s,s^{-1},t]\oplus\Z[s,t,t^{-1}],\>P)
$$
(cf.~\cite[Lemma~B.6.1 and Theorem~B.1.1]{Pweak}).

\begin{proof}
 Given an abelian group $C$ with an $[s,t]$\+power infinite summation
operation, one defines an $s$\+power infinite infinite summation
operation and a $t$\+power infinite summation operation on $C$ by
the obvious rules
\begin{equation} \label{forget-st-to-s-and-t}
\begin{alignedat}{2}
 &\sum\nolimits_{m=0}^\infty s^ma_m &&=
 \sum\nolimits_{m,n=0}^\infty s^mt^na_{m,n} \quad
 \text{if $a_{m,0}=a_m$ and $a_{m,n}=0$ for $n>0$}, \\
 &\sum\nolimits_{n=0}^\infty t^na_n &&=
 \sum\nolimits_{m,n=0}^\infty s^mt^na_{m,n} \quad
 \text{if $a_{0,n}=a_n$ and $a_{m,n}=0$ for $m>0$}.
\end{alignedat}
\end{equation}
 Specializing further to arrays $(a_{m,n})_{m,n=0}^\infty$ with the only
nonzero component $a_{1,0}$ or $a_{0,1}$, one obtains the additive
operators $s\:C\rarrow C$ and $t\:C\rarrow C$ (as explained in
Section~\ref{s-power-summation-secn}).
 Applying the contraassociativity axiom to the arrays $(a_{ij,kl})$
with the only nonzero component $a_{1,0,0,1}$ or $a_{0,1,1,0}$, one
proves that $st=ts$.

 Conversely, suppose that $s$\+power infinite summation operations
and $t$\+power infinite summation operations are defined in $C$,
and the operators $s$ and $t$ in $C$ commute.
 Then one can define the $[s,t]$\+power infinite summation operation
on $C$ as
\begin{equation} \label{recover-st-from-s-and-t}
 \sum\nolimits_{m,n=0}^\infty s^mt^na_{m,n} =
 \sum\nolimits_{m=0}^\infty s^m
 \left(\sum\nolimits_{n=0}^\infty t^n a_{m,n}\right).
\end{equation}
 This obviously satisfies the additivity and contraunitality axioms.
 Checking the contraassociativity axiom,
\begin{setlength}{\multlinegap}{0pt}
\begin{multline*}
 \sum\nolimits_{i,j=0}^\infty s^it^j
 \left(\sum\nolimits_{k,l=0}^\infty s^kt^la_{ij,kl}\right) =
 \sum\nolimits_{i=0}^\infty s^i\left(\sum\nolimits_{j=0}^\infty t^j\left(
 \sum\nolimits_{k=0}^\infty s^k\left(\sum\nolimits_{l=0}^\infty t^la_{ij,kl}
 \right)\right)\right) \\
 = \sum_{i=0}^\infty s^i\left(\sum\nolimits_{k=0}^\infty s^k\left(
 \sum\nolimits_{j=0}^\infty t^j\left(\sum\nolimits_{l=0}^\infty t^la_{ij,kl}
 \right)\right)\right) \\
 = \sum\nolimits_{m=0}^\infty s^m\left(\sum\nolimits_{i+k=m}
 \left(\sum\nolimits_{n=0}^\infty t^n\left(\sum\nolimits_{j+l=n}a_{ij,kl}
 \right)\right)\right) \\
 = \sum\nolimits_{m=0}^\infty s^m\left(\sum\nolimits_{n=0}^\infty t^n\left(
 \sum\nolimits_{i+k=m}^{j+l=n}a_{ij,kl}\right)\right)
 = \sum\nolimits_{m,n=0}^\infty s^mt^n\left(
 \sum\nolimits_{i+k=m}^{j+l=n}a_{ij,kl}\right),
\end{multline*}
reduces to showing that the two infinite summation operations commute
with each other, that is
\begin{equation} \label{inf-summations-commute}
 \sum\nolimits_{j=0}^\infty t^j
 \left(\sum\nolimits_{k=0}^\infty s^k a_{jk}\right) =
 \sum\nolimits_{k=0}^\infty s^k
 \left(\sum\nolimits_{j=0}^\infty t^j a_{jk}\right)
 \quad \text{for any $(a_{jk}\in C)_{j,k=0}^\infty$}.
\end{equation}

 To prove~\eqref{inf-summations-commute}, one can do a computation
with infinite systems of nonhomogeneous linear equations similar
to the one in the proof of Theorem~\ref{s-power-contra-equivalence},
based on considering the sums over the areas $j\ge n$, \ $k\ge m$.
 Alternatively, there is the following conceptual argument based on
the assertion of Theorem~\ref{s-power-contra-equivalence}(b).
 The $t$\+power infinite summation is a group homomorphism
$$
 \sum\nolimits_t=\left((a_j)_{j=0}^\infty\mapsto
 \sum\nolimits_{j=0}^\infty t^ja_j\right)\:
 \prod\nolimits_{j=0}^\infty C\lrarrow C.
$$
 There is a natural (termwise) $s$\+power infinite summation operation
in the group $\prod_{j=0}^\infty C$ (see
Example~\ref{s-power-summation-examples}\,(2)).
 The equation~\eqref{inf-summations-commute} says that $\sum_t$
is a morphism of groups with $s$\+power infinite summation
operations.
 According to Theorem~\ref{s-power-contra-equivalence}(b), it
suffices to show that $\sum_t$ commutes with the operators~$s$ in
$\prod_{j=0}^\infty C$ and $C$, which means the equation
\begin{equation} \label{t-summation-commutes-with-s}
 \sum\nolimits_{j=0}^\infty t^j(sa_j) =
 s \sum\nolimits_{j=0}^\infty t^j a_j
 \quad \text{for any $(a_j\in C)_{j=0}^\infty$}.
\end{equation}
 Now, the equation~\eqref{t-summation-commutes-with-s} is also
equivalent to the assertion that $s\:C\rarrow C$ is a morphism
of groups with $t$\+power infinite summation operations.
 Applying Theorem~\ref{s-power-contra-equivalence}(b) again,
we finally conclude that this follows from $st=ts$.
\end{setlength}

 We have constructed functors in both directions between the two
categories in question; it remains to check that their compositions
are the identity functors.
 Here the essential part is to check that, for an abelian group $C$
with an $[s,t]$\+power infinite summation operation and
its restrictions to an $s$\+power and a $t$\+power infinite
summation operations provided by~\eqref{forget-st-to-s-and-t},
the equation~\eqref{recover-st-from-s-and-t} holds.
 For this purpose, it suffices to apply the contraassociativity
axiom to the array $(a_{ij,kl})_{i,j,k,l=0}^\infty$ with
$a_{m,0,0,n}=a_{mn}$ for all $m$, $n\ge0$ and
$a_{ij,kl}=0$ when $j>0$ or $k>0$.
\end{proof}

 The following lemma, extending Lemma~\ref{infsummation-nodivisible}
to the case of two variables, belongs to the class of results known
as the ``contramodule Nakayama lemma''.
 A number of other versions can be found in
the literature~\cite[Corollary~0.3]{PSY2},
\cite[Lemma~A.2.1]{Psemi}, \cite[Lemma~1.3.1]{Pweak},
\cite[Lemma~D.1.2]{Pcosh}, \cite[Lemma~6.14]{PR}.

\begin{lem} \label{nakayama}
 Let $C$ be an abelian group endowed with an $[s,t]$\+power infinite
summation operation and $D\subset C$ a subgroup such that $sD+tD=D$.
 Then $D=0$.
\end{lem}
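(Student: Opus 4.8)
The plan is to prove this two-variable ``contramodule Nakayama lemma'' by the same formal telescoping idea as in Lemma~\ref{infsummation-nodivisible}, but organized over a binary tree rather than a single sequence. The first thing to notice is that the obvious attempt --- to build a doubly-indexed family $(a_{mn}\in D)_{m,n\ge0}$ with $a_{mn}=sa_{m+1,n}+ta_{m,n+1}$ for all $m,n$ and then telescope $\sum_{m,n}s^mt^na_{mn}$ --- cannot be carried out, because the entry $a_{m+1,n+1}$ would receive incompatible constraints from the expansions of $a_{m+1,n}$ and $a_{m,n+1}$. One must therefore index the recursion by finite words in the two-letter alphabet $\{s,t\}$.

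So, fixing an element $d\in D$, I would use the hypothesis $sD+tD=D$ repeatedly to construct elements $x_w\in D$, one for each finite word $w$ over $\{s,t\}$, with $x_\varnothing=d$ and $x_w=sx_{ws}+tx_{wt}$ for every~$w$ (at each node one simply writes the current element of $D$ as a member of $sD+tD$). For $i,j\ge0$, set $a_{ij}=\sum_w x_w\in D$, the sum running over the finitely many words $w$ with exactly $i$ letters $s$ and $j$ letters $t$; in particular $a_{00}=d$. Then form $\sigma=\sum_{i,j=0}^\infty s^it^ja_{ij}\in C$, using the $[s,t]$-power infinite summation operation on~$C$.

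The computation proper then runs parallel to Lemma~\ref{infsummation-nodivisible}. Grouping the words of length $\ge1$ according to their last letter and applying the tree relations gives $a_{ij}=sa^s_{i+1,j}+ta^t_{i,j+1}$ for all $i,j\ge0$, where $a^s_{ij}$ (resp.\ $a^t_{ij}$) denotes the partial sum of the $x_w$ over words with $i$ letters~$s$ and $j$ letters~$t$ that end in~$s$ (resp.\ in~$t$); and $a^s_{ij}+a^t_{ij}=a_{ij}$ for all $(i,j)\ne(0,0)$, while $a^s_{00}+a^t_{00}=0$. Substituting the first identity into $\sigma$, invoking the additivity axiom, and shifting indices --- the shift $\sum_{i,j}s^{i+1}t^ja^s_{i+1,j}=\sum_{i,j}s^it^ja^s_{ij}$, and likewise for the $t$-part, being justified rigorously by applying the contraassociativity axiom to a suitable four-indexed array, exactly as the one-variable shift is handled in the proof of Lemma~\ref{infsummation-nodivisible} --- one arrives at $\sigma=\sum_{i,j}s^it^j(a^s_{ij}+a^t_{ij})$. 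The array $(a^s_{ij}+a^t_{ij})_{i,j}$ coincides with $(a_{ij})_{i,j}$ except that its $(0,0)$-entry is $0$ instead of~$d$, so additivity together with contraunitality yield $\sigma=\sigma+d$, hence $d=0$. As $d\in D$ was arbitrary, $D=0$.

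I expect the genuinely delicate part to be precisely this last paragraph: realizing that the tree indexing is forced, and then pushing the index shift cleanly through the contraassociativity axiom while correctly handling the degenerate positions $i=0$, $j=0$ and $(i,j)=(0,0)$, where $a^s$ or $a^t$ vanishes. Everything else is the same bookkeeping as in the one-variable case, and no phenomenon beyond Lemma~\ref{infsummation-nodivisible} is involved.
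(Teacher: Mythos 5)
Your proof is correct and is essentially the paper's own argument: the word-indexed family $x_w$ is just an unfolded form of the paper's diagonal-by-diagonal recursion, where each entry $a_{ij}$ is split into an $s$\+part and a $t$\+part and the next antidiagonal collects the relevant parts from the $(i-1,j)$- and $(i,j-1)$-neighbours, after which the same telescoping of $\sum_{i,j}s^it^ja_{ij}$ kills the $(0,0)$-term. The index shift and the handling of the boundary positions are treated at the same level of rigor as in the paper (which likewise refers back to the device used for Lemma~\ref{infsummation-nodivisible}), so there is nothing to correct.
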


\begin{proof}
 Let $a_0\in D$ be an element.
 Choose a pair of elements $b'_0$ and $b''_0\in D$ such that
$a_0=sb'_0+tb''_0$.
 Set $a_{1,0}=b'_0$ and $a_{0,1}=b''_0$.
 Choose two pairs of elements $b'_{1,0}$, $b''_{1,0}$, $b'_{0,1}$,
$b''_{0,1}\in D$ such that $a_{1,0}=sb'_{1,0}+tb''_{1,0}$ and
$a_{0,1}=sb'_{0,1}+tb''_{0,1}$.
 Set $a_{2,0}=b'_{1,0}$, \ $a_{1,1}=b''_{1,0}+b'_{0,1}$, 
and $a_{0,2}=b''_{0,1}$, etc.
 Proceeding by induction in~$n\ge0$, we choose for every
$i$, $j\ge0$, \ $i+j=n$, a pair of elements $b'_{i,j}$ and
$b''_{i,j}\in D$ such that
$$
 a_{ij}=sb'_{ij}+tb''_{ij}.
$$
 Then we set $b''_{n+1,-1}=b'_{-1,n+1}=0$ and
$$
 a_{i,j}=b'_{i-1,j}+b''_{i,j-1}
 \quad\text{for all $i$, $j\ge0$, \,$i+j=n+1$}.
$$
 Now we have
\begin{multline*}
 \sum_{i,j=0}^\infty s^it^ja_{ij} = 
 \sum_{i,j=0}^\infty s^it^j(sb'_{ij}+tb''_{ij}) =
 \sum_{i\ge1,j\ge0} s^it^jb'_{i-1,j} +
 \sum_{i\ge0,j\ge1} s^it^jb''_{i,j-1} \\
 = \sum_{i\ge0,j\ge0}^{i+j>0} s^it^j(b'_{i-1,j}+b''_{i,j-1})
 = \sum_{i\ge0,j\ge0}^{i+j>0} s^it^ja_{ij},
\end{multline*}
hence $a_0=0$.
\end{proof}

\begin{rem} \label{many-variables-power-summations}
 Similarly to the above exposition in this section, one can define
the notion of an $[s_1,\dotsc,s_m]$\+power infinite summation
operation in an abelian group $C$ for any $m\ge1$.
 Then one can prove that the category of abelian groups with
$[s_1,\dotsc,s_m]$\+power infinite summation operations is
isomorphic to the category of abelian groups $C$ with $m$~pairwise
commuting endomorphisms $s_j\:C\rarrow C$, \ $1\le j\le m$ such
that for every~$j$ there exists an $s_j$\+power infinite summation
operation structure on $C$ restricting to the prescribed
endomorphism~$s_j$.
 Furthermore, if $C$ is an abelian group with
an $[s_1,\dotsc,s_m]$\+power infinite summation operation and
$D\subset C$ a subgroup for which $D\subset s_1D+\dotsb+s_mD$,
then $D=0$.
\end{rem}

\Section{$[s_1,\dotsc,s_m]$-Contraadjustedness, $I$-Contramoduleness,
\\* and $I$-Adic Completeness}

 One of the aims of this paper is to discuss the following theorem.

\begin{thm} \label{ideal-contramodule-thm}
 Let $R$ be a commutative ring, $I\subset R$ an ideal generated by
some elements $s_j\in R$, and $C$ an $R$\+module.
 Assume that $C$ is an $s_j$\+contramodule for every~$j$.
 Then $C$ is an $s$\+contramodule for every $s\in I$.
\end{thm}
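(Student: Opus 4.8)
The plan is to reduce to finitely many generators and then solve the relevant infinite system of nonhomogeneous linear equations by hand, using the multivariable infinite summation operation supplied by the two preceding sections. Since any element of $I$ is a finite $R$\+linear combination of the generators, we may assume that $s=r_1s_1+\dotsb+r_ms_m$ for finitely many of the $s_j$, renamed $s_1,\dotsc,s_m$, with some $r_\ell\in R$; by hypothesis $C$ is an $s_\ell$\+contramodule for each $\ell$, and this is all we shall use. The elements $s_1,\dotsc,s_m$ commute pairwise ($R$ being commutative), so by the $m$\+variable version of Theorem~\ref{[st]-power-summation-equivalence} recorded in Remark~\ref{many-variables-power-summations} the abelian group $C$ carries an $[s_1,\dotsc,s_m]$\+power infinite summation operation $\sum s_1^{k_1}\dotsm s_m^{k_m}a_{k_1,\dotsc,k_m}$ restricting to the action of the operators $s_j$ on~$C$. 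Moreover, multiplication by any $r\in R$ commutes with all the $s_j$, hence preserves this summation operation (by Theorem~\ref{s-power-contra-equivalence}(b) applied in each variable, via the construction analogous to~\eqref{recover-st-from-s-and-t}); I shall use this to move elements of $R$ through infinite sums.

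It remains to verify $\Hom_R(R[s^{-1}],C)=0$ and $\Ext^1_R(R[s^{-1}],C)=0$. For the first vanishing, suppose $D\subseteq C$ is a nonzero $R$\+submodule with $sD=D$; such submodules exist exactly when $\Hom_R(R[s^{-1}],C)\ne0$ (see the remarks preceding Lemma~\ref{ext-r-s-minus-1-computed}). For $d\in D$ one has $sd=s_1(r_1d)+\dotsb+s_m(r_md)$ with every $r_\ell d\in D$, so $D=sD\subseteq s_1D+\dotsb+s_mD$. The ``contramodule Nakayama lemma'' for $[s_1,\dotsc,s_m]$\+power summations (Remark~\ref{many-variables-power-summations}) then forces $D=0$, a contradiction. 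Hence $\Hom_R(R[s^{-1}],C)=0$.

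For the second vanishing, by Lemma~\ref{ext-r-s-minus-1-computed}(a) it suffices to solve the system $b_n-sb_{n+1}=a_n$, $n\ge0$, for an arbitrary sequence $(a_n\in C)_{n\ge0}$. I would take
\begin{equation*}
 b_n=\sum_{\underline k\in\Z_{\ge0}^m} s_1^{k_1}\dotsm s_m^{k_m}
 \left(\binom{|\underline k|}{k_1,\dotsc,k_m}\,r_1^{k_1}\dotsm r_m^{k_m}\,a_{n+|\underline k|}\right),
 \qquad |\underline k|=k_1+\dotsb+k_m,
\end{equation*}
the rigorous meaning of the formal series $\sum_{i\ge0}s^ia_{n+i}$ after expanding $s^i=(r_1s_1+\dotsb+r_ms_m)^i$ by the multinomial theorem. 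Splitting off the $\underline k=0$ summand by additivity and contraunitality gives $b_n=a_n+\sum_{\underline k\ne0}s_1^{k_1}\dotsm s_m^{k_m}(\dotsb)$. On the other hand, applying multiplication by~$r_\ell$ and then the operator~$s_\ell$ to the defining sum of $b_{n+1}$ shifts the summation index by the $\ell$\+th basis vector $e_\ell$ --- the $m$\+variable analogue of the description of the operators $s^n$ in Section~\ref{s-power-summation-secn}, and, like it, a consequence of the contraassociativity axiom; collecting the $m$ resulting sums by additivity replaces the coefficient attached to a given $\underline k\ne0$ by $\sum_{\ell\,:\,k_\ell\ge1}\binom{|\underline k|-1}{k_1,\dotsc,k_\ell-1,\dotsc,k_m}$, which equals $\binom{|\underline k|}{k_1,\dotsc,k_m}$ by the multinomial Pascal recurrence. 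Therefore $sb_{n+1}=\sum_{\underline k\ne0}s_1^{k_1}\dotsm s_m^{k_m}(\dotsb)$ coincides with the part of $b_n$ indexed by $\underline k\ne0$, and $b_n-sb_{n+1}=a_n$. This yields $\Ext^1_R(R[s^{-1}],C)=0$ (the solution is then automatically unique, by the first vanishing together with Lemma~\ref{ext-r-s-minus-1-computed}(b)), and consequently $C$ is an $s$\+contramodule.

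The formal ingredients --- the reduction to finitely many generators, the passage to the $[s_1,\dotsc,s_m]$\+power summation operation, and the Nakayama argument for the $\Hom$ vanishing --- are routine. I expect the last paragraph to be the main obstacle: one has to set up the multivariable index\+shift identity with care and keep precise track of which entry of the array the operators $s_\ell$ and the ring elements $r_\ell$ act upon, so that the telescoping against the multinomial Pascal recurrence is manifestly valid. One could instead construct an $s$\+power infinite summation operation on~$C$ outright by the same multinomial formula and invoke Theorem~\ref{s-power-contra-equivalence}(c); this is more symmetric, but then the contraassociativity axiom must be checked, which calls for a Chu--Vandermonde\+type identity in place of the mere Pascal recurrence.
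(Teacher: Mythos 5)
Your argument is correct, and it runs on essentially the same engine as the paper's first proof of this theorem: the multivariable infinite summation operations of Remark~\ref{many-variables-power-summations} combined with binomial/multinomial coefficients. The difference is purely organizational. The paper packages the computation as the claim that $I_C=\{\,s\in R\mid C\text{ is an $s$-contramodule}\,\}$ is an ideal, proving closure under products and under sums separately (Lemmas~\ref{rs-infinite-summation} and~\ref{s+t-infinite-summation}, each of which constructs a new infinite summation operation --- the latter by exactly your binomial formula in two variables --- and leaves the axiom verification to the reader, after which Theorem~\ref{s-power-contra-equivalence}(c) yields both $\Ext^{0,1}$ vanishings at once). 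You instead expand $(r_1s_1+\dotsb+r_ms_m)^i$ in a single multinomial series, verify the solvability criterion of Lemma~\ref{ext-r-s-minus-1-computed}(a) directly via the index-shift identity and the multinomial Pascal recurrence, and obtain the $\Hom$-vanishing separately from the contramodule Nakayama Lemma~\ref{nakayama} in its several-variable form; this spares you checking contraassociativity for the new operation (which, as you note, would require a Vandermonde-type identity), at the cost of handling the two vanishings by two different arguments. Both routes rest on the same key inputs, and your bookkeeping of where the operators $s_\ell$ and the coefficients $r_\ell$ act is sound, so the proof goes through as written.
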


\begin{rem}
 Notice that the analogue of the assertion of
Theorem~\ref{ideal-contramodule-thm} is \emph{not} true for
$s$\+contraadjusted modules.
 E.~g., any $R$\+module is $1$\+contraadjusted, but it does not
have to be $s$\+contraadjusted for other elements $s\in R$.
 Or, to give another example, the abelian group $\Z[\frac{1}{2},
\frac{1}{3}]$ is $2$\+contraadjusted and $3$\+contraadjusted,
but not $5$\+contraadjusted (as one can readily check using
Theorem~\ref{s-contraadjusted-complete}).
\end{rem}

 Two proofs of Theorem~\ref{ideal-contramodule-thm} are given
in this paper (cf.~\cite[Section~2]{Pmgm}).
 One of them, based on the technique of infinite summation operations
developed in
Sections~\ref{s-power-summation-secn}\+-\ref{[st]-power-summation-secn},
is presented immediately below.
 The other one is postponed to
Sections~\ref{functor-delta-s-first-secn}\+-%
\ref{functor-delta-I-second-secn},
because it uses an explicit construction of the functor $\Delta$
left adjoint to the embedding of the category of contramodules
into $R\modl$, which will be introduced there.

 Yet another proof can be found in~\cite[Theorem~5 and
Lemma~7\,(1)]{Yek}).

\begin{proof}[First proof of Theorem~\ref{ideal-contramodule-thm}]
 Given a commutative ring $R$ and an $R$\+module $C$, denote by
$I_C$ the set of all elements $s\in R$ for which $C$ is
an $s$\+contramodule.
 We will show that $I_C$ is an ideal in~$R$.

\begin{lem} \label{rs-infinite-summation}
 Let $C$ be an abelian group and $r$, $s\:C\rarrow C$ be two
commuting endomorphisms of~$C$.
 Assume that $C$ admits an $s$\+power infinite summation operation.
 Then an $(rs)$\+power infinite summation operation also exists in~$C$.
\end{lem}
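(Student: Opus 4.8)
The plan is to build the $(rs)$\+power infinite summation operation on $C$ directly out of the given $s$\+power one and to check its three axioms, using Theorem~\ref{s-power-contra-equivalence} to control how the operator~$r$ interacts with the infinite summation.

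First I would record the two facts about the $s$\+power summation map
\[
 \Sigma\:\prod\nolimits_{n=0}^\infty C\rarrow C,\qquad
 (c_n)_{n\ge0}\mapsto\sum\nolimits_{n=0}^\infty s^nc_n
\]
that the argument needs. By the additivity axiom, $\Sigma$ is a homomorphism of abelian groups. By Theorem~\ref{s-power-contra-equivalence}(b) applied to the endomorphism $r\:C\rarrow C$ (which commutes with~$s$ by hypothesis), the map~$r$ preserves the $s$\+power summation; hence so does every power $r^i$, i.e., $r^i\sum_n s^nc_n=\sum_n s^n(r^ic_n)$ for all sequences $(c_n)_{n\ge0}$.

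Next I would define the candidate operation on $C$ by
\[
 \sum\nolimits_{n=0}^\infty(rs)^na_n\;:=\;\sum\nolimits_{n=0}^\infty s^n(r^na_n),
\]
that is, by applying $\Sigma$ to the twisted sequence $(r^na_n)_{n\ge0}$. Additivity and contraunitality are then immediate from the corresponding properties of $\Sigma$ (for contraunitality, $a_n=0$ for $n\ge1$ forces $r^na_n=0$ for $n\ge1$). For contraassociativity I would expand the left-hand side $\sum_i(rs)^i\bigl(\sum_j(rs)^ja_{ij}\bigr)$: the inner sum is $\Sigma\bigl((r^ja_{ij})_j\bigr)$, and after applying the outer $\Sigma$ and sliding the powers of~$r$ through it, the left-hand side becomes $\Sigma\Bigl(\bigl(\Sigma\bigl((r^{i+j}a_{ij})_j\bigr)\bigr)_i\Bigr)$; the contraassociativity axiom for the $s$\+power summation, applied to the array $(r^{i+j}a_{ij})$, rewrites this as $\Sigma\bigl(\bigl(\sum_{i+j=n}r^{i+j}a_{ij}\bigr)_n\bigr)=\Sigma\bigl(\bigl(r^n\sum_{i+j=n}a_{ij}\bigr)_n\bigr)$, using that $r^{i+j}=r^n$ on the diagonal $i+j=n$, and this last expression is by definition $\sum_n(rs)^n\bigl(\sum_{i+j=n}a_{ij}\bigr)$. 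Finally I would verify, using the description of the induced operator in Section~\ref{s-power-summation-secn}, that the operation just defined restricts to the endomorphism $r\circ s$ on~$C$, so that it is a genuine $(rs)$\+power infinite summation operation (and, by Theorem~\ref{s-power-contra-equivalence}(a), necessarily the unique one).

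I do not expect a real obstacle: the only computation with any content is the contraassociativity check, whose whole substance is the identity $r^{i+j}=r^n$ for $i+j=n$ together with the commutation of powers of~$r$ with~$\Sigma$, both of which are handed to us by Theorem~\ref{s-power-contra-equivalence}. Alternatively, one can argue in the language of Lemma~\ref{ext-r-s-minus-1-computed}: setting $b_n=\sum_{i=0}^\infty s^i(r^ia_{n+i})$ solves the system $b_n-rs\,b_{n+1}=a_n$ (by the same contraassociativity manipulation used in the proof of Theorem~\ref{s-power-contra-equivalence}), and uniqueness of the solution follows by a contramodule Nakayama argument: a solution of the homogeneous system $b_n=rs\,b_{n+1}$ generates a subgroup $D$ with $D\subseteq rs\,D$, and its $\Z[r,s]$\+saturation $D'=\sum_{i,j\ge0}r^is^jD$ then satisfies $sD'=D'$ and hence vanishes by Lemma~\ref{infsummation-nodivisible}.
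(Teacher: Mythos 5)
Your proposal is correct and follows essentially the same route as the paper: the paper's proof defines $\sum_{n}(rs)^na_n:=\sum_n s^n(r^na_n)$ and invokes Theorem~\ref{s-power-contra-equivalence}(b) (commutation of~$r$ with the $s$\+power summation) to verify contraassociativity, which is exactly your main argument with the details written out. Your closing alternative via Lemma~\ref{ext-r-s-minus-1-computed} and a Nakayama-style uniqueness argument is a valid extra, but it is not needed and is not how the paper proceeds.
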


\begin{proof}
 Set
$$
 \sum\nolimits_{n=0}^\infty (rs)^n a_n =
 \sum\nolimits_{n=0}^\infty s^n(r^na_n)
 \quad\text{for any $(a_n\in C)_{n=0}^\infty$}.
$$
 One will have to use Theorem~\ref{s-power-contra-equivalence}(b)
(the commutativity of $r$ with the $s$\+power infinite summation
operation) in order to check the contraassociativity axiom for
the $(rs)$\+power infinite summation operation so defined.
\end{proof}

\begin{lem} \label{s+t-infinite-summation}
 Let $C$ be an abelian group and $s$, $t\:C\rarrow C$ be two
commuting endomorphisms of it.
 Assume that $C$ admits an $s$\+power infinite summation operation
and a $t$\+power infinite summation operation.
 Then $C$ also admits an $(s+t)$\+power infinite summation operation.
\end{lem}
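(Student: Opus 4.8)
The plan is to exhibit an explicit $(s+t)$\+power infinite summation operation on $C$ by a binomial formula. Since $st=ts$ and $C$ already carries both an $s$\+power and a $t$\+power infinite summation operation, Theorem~\ref{[st]-power-summation-equivalence} equips $C$ with an $[s,t]$\+power infinite summation operation $(a_{mn})\mapsto\sum_{m,n}s^mt^na_{mn}$. Using it, I would set
$$
 \sum\nolimits_{n=0}^\infty(s+t)^na_n \,:=\,
 \sum\nolimits_{m,l=0}^\infty s^mt^l\binom{m+l}{m}a_{m+l}
$$
for every sequence $(a_n\in C)_{n=0}^\infty$, where $\binom{m+l}{m}a_{m+l}$ denotes the sum of $\binom{m+l}{m}$ copies of $a_{m+l}\in C$.

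Checking the additivity axiom for this operation is immediate from the additivity of the $[s,t]$\+power summation. For contraunitality: if $a_1=a_2=\dotsb=0$, then $\binom{m+l}{m}a_{m+l}$ vanishes for all $(m,l)\ne(0,0)$ and equals $a_0$ for $(m,l)=(0,0)$, so the right-hand side is $a_0$. Specializing the formula to the sequence with $a_1=c$ and $a_i=0$ otherwise, only the terms $s^1t^0\binom{1}{1}c=sc$ and $s^0t^1\binom{1}{0}c=tc$ survive, so the endomorphism of $C$ attached to the new operation is exactly $s+t$; by Theorem~\ref{s-power-contra-equivalence}(a) it is then the only $(s+t)$\+power summation on $C$ restricting to this operator, and it remains only to verify the contraassociativity axiom.

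This verification is where the real work lies, and I expect it to be the main obstacle. Expanding $\sum_i(s+t)^i\bigl(\sum_j(s+t)^ja_{ij}\bigr)$ by the definition above, computing the inner sums first and then pulling the integer coefficients through the $[s,t]$\+power summation by its additivity, one reduces (via the contraassociativity axiom of the $[s,t]$\+power summation) to determining the coefficient of $s^mt^n$ attached to each $a_{ij}$; it comes out as $\sum_{p+k=m}\binom{i}{p}\binom{j}{k}$, with the summation constraints forcing $i+j=m+n$. By the Vandermonde identity $\sum_{p+k=m}\binom{i}{p}\binom{j}{k}=\binom{i+j}{m}$ this coefficient equals $\binom{m+n}{m}$, and carrying out the analogous expansion of $\sum_n(s+t)^n\bigl(\sum_{i+j=n}a_{ij}\bigr)$ produces the same expression, so contraassociativity holds. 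The only nontrivial ingredient is this combinatorial bookkeeping — which is really just the identity $(s+t)^i(s+t)^j=(s+t)^{i+j}$ read off at the level of binomial expansions.

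One can also bypass the direct contraassociativity computation: with $b_n=\sum_{m,l}s^mt^l\binom{m+l}{m}a_{n+m+l}$, Pascal's rule gives $b_n-(s+t)b_{n+1}=a_n$, while Lemma~\ref{nakayama} applied to the subgroup of $C$ generated by a hypothetical nonzero solution of the corresponding homogeneous system shows the $b_n$ are unique. Viewing $C$ as a module over the polynomial ring $\Z[w]$ via $w\mapsto s+t$, Lemma~\ref{ext-r-s-minus-1-computed}(b) then shows that $C$ satisfies the hypothesis of Theorem~\ref{s-power-contra-equivalence}(c) for $w$, hence admits a $w$\+power (that is, $(s+t)$\+power) infinite summation operation.
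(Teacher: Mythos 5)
Your proposal is correct and follows the paper's own route: invoke Theorem~\ref{[st]-power-summation-equivalence} to obtain an $[s,t]$\+power infinite summation operation and define $\sum_n(s+t)^na_n=\sum_{i,j}s^it^j\bigl(\binom{i+j}{i}a_{i+j}\bigr)$, which is exactly the paper's formula (the paper leaves the axiom verification to the reader, which you carry out via Vandermonde's identity). Your alternative bypass through Lemma~\ref{ext-r-s-minus-1-computed}(b), Pascal's rule, and the Nakayama Lemma~\ref{nakayama} is a nice supplementary check but not a different method in substance.
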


\begin{proof}
 According to Theorem~\ref{[st]-power-summation-equivalence},
there is an $[s,t]$\+power infinite summation operation in~$C$.
 So we can put
$$
 \sum\nolimits_{n=0}^\infty (s+t)^na_n =
 \sum\nolimits_{i,j=0}^\infty s^it^j
 \left({\textstyle\binom{i+j}{i}a_{i+j}}\right)
 \quad\text{for any $(a_n\in C)_{n=0}^\infty$}.
$$
 We leave it to the reader to check the axioms.
\end{proof}

 In view of Theorem~\ref{s-power-contra-equivalence}(c), it follows
from Lemmas~\ref{rs-infinite-summation}\+-\ref{s+t-infinite-summation}
that $I_C$ is an ideal in~$R$.
 The first proof of Theorem~\ref{ideal-contramodule-thm} is finished.
\end{proof}

\begin{rem} \label{I-C-radical-ideal}
 Notice that the property of an $R$\+module $C$ to be
an $s$\+contramodule for some element $s\in R$ does not depend on
the $R$\+module structure on $C$, but only on the abelian group $C$
with the endomorphism~$s$, as it is clear from
Lemma~\ref{ext-r-s-minus-1-computed}.
 So we can choose the ring $R$ as we find convenient.
 Furthermore, for any $R$\+module $C$ the ideal $I_C\subset R$ is
a \emph{radical} ideal in $R$, that is, for any $s\in R$ and $n\ge1$
such that $s^n\in I_C$ one has $s\in I_C$.
 In other words, if $s\:C\rarrow C$ is an endomorphism of
an abelian group $C$ such that $C$ admits an $s^n$\+power
infinite summation operation, then $C$ also admits an $s$\+power
infinite summation operation.
 Indeed, the two related localizations of the ring $R$ coincide,
$R[s^{-1}]=R[(s^n)^{-1}]$, so the $s$\+contramodule and
$s^n$\+contramodule properties are equivalent.
\end{rem}

 Let $R$ be a commutative ring and $I$ an ideal in~$R$.
 We denote the $I$\+adic completion functor by
$$
 C\longmapsto\Lambda_I(C)=\varprojlim\nolimits_{n\ge1}C/I^nC.
$$
 An $R$\+module $C$ is called \emph{$I$\+adically complete} if
the natural map
$$
 \lambda_{I,C}\:C\lrarrow\Lambda_I(C)
$$
is surjective.
 The $R$\+module $C$ is called \emph{$I$\+adically separated} if
the map $\lambda_{I,C}$ is injective, i.~e., if the intersection
$\ker(\lambda_{I,C})=\bigcap_{n\ge1}I^nC$ vanishes.

 The following result generalizes
Theorem~\ref{s-contraadjusted-complete}(a) (by replacing a principal
ideal with a finitely generated one) and improves
upon~\cite[Theorem~10]{Yek} (by removing the irrelevant separatedness
assumption).

\begin{thm} \label{contraadjusted-for-ideal-generators-complete}
 Let $R$ be a commutative ring and $I\subset R$ be the ideal
generated by a finite set of elements $s_1$,~\dots, $s_m\in R$.
 Assume that an $R$\+module $C$ is $s_j$\+contraadjusted for every
$j=1$,~\dots,~$m$.
 Then the $R$\+module $C$ is $I$\+adically complete.
\end{thm}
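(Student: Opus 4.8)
The plan is to reduce the $I$-adic completeness of $C$ to the $s_j$-adic completeness of $C$ for the individual generators, using the standard comparison of the $I$-adic topology with the product of the $(s_j)$-adic topologies. By Theorem~\ref{s-contraadjusted-complete}(a), the hypothesis that $C$ is $s_j$-contraadjusted for each $j$ tells us that $C$ is $s_j$-complete, i.e.\ the map $\lambda_{s_j,C}\:C\rarrow\varprojlim_n C/s_j^nC$ is surjective. So the real content is to pass from simultaneous surjectivity of the maps $\lambda_{s_j,C}$ to surjectivity of $\lambda_{I,C}$.

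First I would recall that since $I=(s_1,\dotsc,s_m)$, the ideal $I^{mk}$ is contained in the ideal $(s_1^k,\dotsc,s_m^k)$, which in turn is contained in $I^k$; hence the $I$-adic topology on $C$ coincides with the linear topology defined by the descending chain of submodules $s_1^kC+\dotsb+s_m^kC$. Therefore $\Lambda_I(C)=\varprojlim_k C/(s_1^kC+\dotsb+s_m^kC)$, and it suffices to show that the natural map $C\rarrow\varprojlim_k C/(s_1^kC+\dotsb+s_m^kC)$ is surjective. I would prove this by induction on~$m$. For $m=1$ it is exactly Theorem~\ref{s-contraadjusted-complete}(a). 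For the inductive step, write $J=(s_1,\dotsc,s_{m-1})$, so $I=J+(s_m)$, and one wants to lift a compatible family of cosets modulo $J^kC+s_m^kC$; the idea is to first use $s_m$-completeness to lift modulo the powers of $s_m$, then correct using $J$-completeness, interleaving the two the way the proof of Theorem~\ref{s-contraadjusted-complete} interleaves the partial sums $a_0+s a_1+\dotsb$. Concretely, given $(c_k)$ with $c_{k+1}-c_k\in J^kC+s_m^kC$, I would split each difference as $c_{k+1}-c_k = x_k + s_m^k y_k$ with $x_k\in J^kC$ and reassemble: the "$s_m$-part" $\sum s_m^k y_k$ converges by $s_m$-completeness, and the "$J$-part" converges by the inductive hypothesis, provided one is careful that the splittings can be chosen so that the partial errors stay in the right submodules.

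An alternative, and probably cleaner, route is to invoke the infinite-summation machinery of Sections~\ref{s-power-summation-secn}--\ref{[st]-power-summation-secn} together with the reduction technique from the proof of Lemma~\ref{ext-r-s-minus-1-computed}: completeness with respect to a finitely generated ideal is governed by the solvability of a system of nonhomogeneous linear equations of the shape~\eqref{a-b-bad-equation-system} but with the monomials $s_1^{i_1}\dotsm s_m^{i_m}$ in place of the powers $s^n$, and one solves it by first solving the $s_1$-equations (possible by $s_1$-contraadjustedness), substituting, then solving the $s_2$-equations, and so on. The bookkeeping is the analogue of the telescoping identity $b_0-s^{n+1}b_{n+1}=\sum_{i=0}^n s^i(b_i-sb_{i+1})$ used in Theorem~\ref{s-contraadjusted-complete}, now in several variables.

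The main obstacle I expect is purely combinatorial: organizing the multi-index recursion so that at each stage the "remaining error" genuinely lies in a small enough submodule $s_1^{k}C+\dotsb+s_m^{k}C$ to guarantee convergence in $\Lambda_I(C)$, rather than merely in some coarser filtration. The comparison $I^{mk}\subseteq(s_1^k,\dotsc,s_m^k)\subseteq I^k$ is the key tool that makes the two filtrations interchangeable for this purpose, and getting the indices to line up — so that the inductive hypothesis on $J$ can be applied with the correct shift — is where care is needed. Everything else (the splitting of $I$-powers into products of powers of the generators, and the already-established equivalence "$s_j$-contraadjusted $\Rightarrow$ $s_j$-complete") is routine.
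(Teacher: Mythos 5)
Your first route is correct, and it is genuinely different from the paper's proof. The paper never converts the hypotheses into $s_j$\+completeness: it works directly with contraadjustedness via Lemma~\ref{ext-r-s-minus-1-computed}(a), expanding the differences $c_{n+1}-c_n$ in monomials $s_1^{n_1}\dotsm s_m^{n_m}$ and solving the resulting multi-index system of equations one variable at a time ($s_m$ first, then $s_{m-1}$, and so on), after which the approximation $b-c_n\in I^nC$ is verified by a telescoping computation, or equivalently by reducing modulo $I^nC$, where the systems~\eqref{a-b-main-equation-system} become uniquely solvable by finite sums. Your argument instead feeds each generator through Theorem~\ref{s-contraadjusted-complete}(a) and then runs an induction on~$m$ resting on two elementary observations: the filtrations $\{I^kC\}$ and $\{s_1^kC+\dotsb+s_m^kC\}$ are mutually cofinal (your inclusion $I^{mk}\subseteq(s_1^k,\dotsc,s_m^k)\subseteq I^k$, or $I^{2k}\subseteq J^k+(s_m^k)$ in the inductive step), and if $C$ surjects onto its completions with respect to two filtrations $\{F_k\}$ and $\{G_k\}$, then it surjects onto the completion with respect to $\{F_k+G_k\}$ --- split $c_{k+1}-c_k=f_k+g_k$ with $f_k\in F_k$, $g_k\in G_k$, sum the two Cauchy partial-sum sequences separately, and add the two limits. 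In particular, the caution you express about choosing the splittings so that ``partial errors stay in the right submodules'' is unnecessary: any splitting works, because the two series are handled completely independently; the only index care is the cofinality shift you already identified. What your route buys is a shorter, more modular argument with no multi-index bookkeeping; what the paper's route buys is an argument not relying on Theorem~\ref{s-contraadjusted-complete} as a black box, an explicit description of the approximating element, and the template that is later sharpened (Remark~\ref{contraadj-for-ideal-gens-adjunction-surjective}) to surjectivity of the adjunction morphism $C\rarrow\Delta_I(C)$. Your second suggested route is essentially the paper's proof, but as written it is only a sketch: the ``purely combinatorial'' bookkeeping you defer is exactly the displayed computation that constitutes the paper's argument, so if you pursue that route you would still have to carry it out.
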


\begin{proof}
 The idea is that $I$-adic completeness can be thought of in
the language of $[s_1,\dotsc,s_m]$\+power infinite sums and such
infinite sums obtained in terms of solutions of
the equations~\eqref{a-b-main-equation-system}, even if those
solutions are not unique and the infinite sums accordingly only
ambiguously defined.
 Specifically, let $(c_n\in C)_{n\ge1}$ be a sequence of elements
satisfying $c_{n+1}-c_n\in I^nC$ for all $n\ge1$.
 Then there exist elements $a_{n_1,\dotsc,n_m}\in C$, \
$n_1$,~\dots, $n_m\ge0$, \ $n_1+\dots+n_m\ge1$ such that
$$
 c_{n+1}-c_n =\sum\nolimits_{n_1\ge0,\dotsc,n_m\ge0}^{n_1+\dotsb+n_m=n}
 s_1^{n_1}\dotsm s_m^{n_m} a_{n_1,\dotsc,n_m}
 \quad\text{for all $n\ge1$}.
$$
 Set $a_{0,\dotsc,0}=c_1$.
 Applying Lemma~\ref{ext-r-s-minus-1-computed}(a) to the element
$s_m\in R$ and the $R$\+module $C$, for every
$n_1$,~\dots, $n_{m-1}\ge0$ choose elements
$b_{n_1,\dotsc,n_{m-1};k}^{(1)}\in C$, \ $k\ge0$ such that
$$
 b_{n_1,\dotsc,n_{m-1};k}^{(1)}-sb_{n_1,\dotsc,n_{m-1};k+1}^{(1)} = 
 a_{n_1,\dotsc,n_{m-1},k}
 \quad\text{for all $n_1$,~\dots, $n_{m-1}\ge0$, $k\ge0$}.
$$
 Proceeding by decreasing induction in~$j\le m-1$ and applying
Lemma~\ref{ext-r-s-minus-1-computed}(a) to $s_j\in R$ and
the module $C$, for every $n_1$,~\dots,~$n_{j-1}\ge0$ choose elements
$b_{n_1,\dotsc,n_{j-1};k}^{(m-j+1)}\in C$, \ $k\ge0$ such that
$$
 b_{n_1,\dotsc,n_{j-1};k}^{(m-j+1)}-sb_{n_1,\dotsc,n_{j-1};k+1}^{(m-j+1)} = 
 b_{n_1,\dotsc,n_{m-1},k;0}^{(m-j)}
 \quad\text{for all $n_1$,~\dots, $n_{j-1}\ge0$, $k\ge0$}.
$$
 Eventually, for $j=2$ we will obtain
$$
 b_{n_1;k}^{(m-1)}-sb_{n_1;k+1}^{(m-1)} = 
 b_{n_1,k;0}^{(m-2)}
 \quad\text{for all $n_1\ge0$, $k\ge0$},
$$
and for $j=1$,
$$
 b_k^{(m)}-sb_{k+1}^{(m)} =  b_{k;0}^{(m-1)}
 \quad\text{for all $k\ge0$}.
$$

 Set $b=b_0^{(m)}$.
 We claim that $b-c_n\in I^nC$ for all $n\ge1$; so the element $b\in C$
is a preimage of the element $c\in\varprojlim_n C/I^nC$ represented
by $(c_n)_{n\ge1}$ under the natural map $\lambda_{I,C}\:C\rarrow
\varprojlim_nC/I^nC$.
 Indeed,
\begin{align*}
 b_0^{(m)} &= b_{0;0}^{(m-1)}+sb_1^{(m)} =
 b_{0;0}^{(m-1)}+sb_{1;0}^{(m-1)} + s^2b_2^{(m)} = \dotsb = 
 \sum_{n_1=0}^{n-1}s^{n_1}b_{n_1;0}^{(m-1)} + s^nb_n^{(m)} \\
 &= \sum_{n_1=0}^{n-1}s^{n_1}\left(\sum_{n_2=0}^{n-n_1-1}
 s^{n_2}b_{n_1,n_2;0}^{(m-2)}\right) +
 s^n\sum_{n_1=0}^{n-1}b_{n_1;n-n_1}^{(m-1)}
 + s^nb_n^{(m)} = \dotsb \\
 &= \sum_{n_1\ge0,\dotsc,n_{m-1}\ge0}^{n_1+\dotsb+n_{m-1}<n}
 s^{n_1+\dotsb+n_{m-1}}b_{n_1,\dotsc,n_{m-1};0}^{(1)} +
 s^n \sum_{j=2}^m\left(
 \sum_{n_1\ge0,\dotsc,n_{m-j}\ge0,k\ge1}^{n_1+\dotsc+n_{m-j}+k=n}
 b_{n_1,\dotsc,n_{m-j};k}^{(j)}\right) \\
 &= \sum_{n_1\ge0,\dotsc,n_m\ge0}^{n_1+\dotsb+n_m<n}
 s^{n_1+\dotsb+n_m}a_{n_1,\dotsc,n_m}+
 s^n \sum_{j=1}^m\left(
 \sum_{n_1\ge0,\dotsc,n_{m-j}\ge0,k\ge1}^{n_1+\dotsc+n_{m-j}+k=n}
 b_{n_1,\dotsc,n_{m-j};k}^{(j)}\right) \\
 &= c_n + s^n \sum_{j=1}^m\left(
 \sum_{n_1\ge0,\dotsc,n_{m-j}\ge0,k\ge1}^{n_1+\dotsc+n_{m-j}+k=n}
 b_{n_1,\dotsc,n_{m-j};k}^{(j)}\right).
\end{align*}

 To explain the last step without long formulas, one can argue as
follows.
 Reducing solutions of the equation
systems~\eqref{a-b-main-equation-system} in $C$
modulo $I^nC$, one obtains solutions of the same equation
systems~\eqref{a-b-main-equation-system} in $C/I^nC$.
 For any $s\in I$, the equation
systems~\eqref{a-b-main-equation-system} are uniquely solvable in
$C/I^nC$ (e.~g., because $C/I^nC$ is $s$\+separated and
$s$\+complete, so Theorem~\ref{s-separ-complete-contra}(a) applies;
or for the reasons explained in
Remark~\ref{s-separ-complete-contra-easy-rem}).
 Furthermore, the solutions of~\eqref{a-b-main-equation-system}
in $C/I^nC$ can be expressed in
the form~\eqref{equation-solving-by-summation}
(where the sum is actually finite, as the action of~$s$
is nilpotent).
 Therefore, we have
\begin{align*}
 b_0^{(m)} &\equiv \sum\nolimits_{n_1=0}^{n-1}s^{n_1}b_{n_1;0}^{(m-1)}
 \equiv\sum\nolimits_{n_1,n_2=0}^{n-1}s^{n_1+n_2}b_{n_1,n_2;0}^{(m-2)}
 \equiv\dotsb \\ 
 &\equiv \sum\nolimits_{n_1,\dotsc,n_{m-1}=0}^{n-1}
 s^{n_1+\dotsb+n_{m-1}}b_{n_1,\dotsc,n_{m-1};0}^{(1)}\equiv
 \sum\nolimits_{n_1,\dotsc,n_m=0}^{n-1}
 s^{n_1+\dotsb+n_m}a_{n_1,\dotsc,n_m} \\
 &\equiv c_n \pmod {I^nC}.  \qedhere
\end{align*}
\end{proof}

 For a stronger version of
Theorem~\ref{contraadjusted-for-ideal-generators-complete},
see Remark~\ref{contraadj-for-ideal-gens-adjunction-surjective}
below.

\begin{lem} \label{I-completion-s-contra}
 Let $R$ be a commutative ring, $I\subset R$ an ideal, and
$C$ an $R$\+module.
 Then the $R$\+module $\Lambda_I(C)=\varprojlim_{n\ge1} C/I^nC$ is
an $s$\+contramodule for every element $s\in I$.
 In particular, any $I$\+adically separated and complete $R$\+module
is an $s$\+contramodule.
\end{lem}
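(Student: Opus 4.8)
The plan is to realize $\Lambda_I(C)$, for each fixed $s\in I$, as a projective limit of $s$\+contramodule $R$\+modules and to conclude by the closure properties of Theorem~\ref{ext-0-1-orthogonal}(a). Fix $s\in I$. The proof of Lemma~\ref{ext-r-s-minus-1-computed} exhibits a two-term free resolution of $R[s^{-1}]$, so $R[s^{-1}]$ has projective dimension~$\le1$ over $R$ and Theorem~\ref{ext-0-1-orthogonal}(a) applies with $U=R[s^{-1}]$: the full subcategory of $s$\+contramodule $R$\+modules is closed under kernels, cokernels, extensions, and infinite products in $R\modl$. Since the projective limit of any diagram in $R\modl$ is the kernel of a suitable morphism between two infinite products, this subcategory is also closed under arbitrary projective limits taken in $R\modl$.

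The second step is to check that every quotient $C/I^nC$ is an $s$\+contramodule. Since $s\in I$, we have $s^n\in I^n$, so $s^n$ annihilates $C/I^nC$. For any $R$\+module $D$ with $s^nD=0$, the action of $s^n$ on $\Ext^i_R(R[s^{-1}],D)$, for $i=0$ and $i=1$, agrees both with the map induced by the multiplication automorphism $s^n\:R[s^{-1}]\rarrow R[s^{-1}]$ of $R$\+modules (hence is invertible) and with the map induced by the zero endomorphism $s^n\:D\rarrow D$ (hence is zero); therefore $\Hom_R(R[s^{-1}],D)=0=\Ext^1_R(R[s^{-1}],D)$, i.e.\ $D$ is an $s$\+contramodule. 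This is the observation already recorded in Remark~\ref{s-separ-complete-contra-easy-rem}. Applying it to $D=C/I^nC$ for each $n\ge1$ completes the step.

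Combining the two steps, $\Lambda_I(C)=\varprojlim_{n\ge1}C/I^nC$ is a projective limit, formed in $R\modl$, of $s$\+contramodule $R$\+modules, hence itself an $s$\+contramodule, and this holds for every $s\in I$. The final assertion then follows at once, since an $I$\+adically separated and complete $R$\+module is isomorphic, via $\lambda_{I,C}$, to $\Lambda_I(C)$. I do not anticipate a real obstacle here; the two points deserving a moment's care are that $R[s^{-1}]$ genuinely has projective dimension~$\le1$ (so that Theorem~\ref{ext-0-1-orthogonal}(a), rather than only Theorem~\ref{serre-subcategory}, is available) and that closure under infinite products together with closure under kernels upgrades to closure under all projective limits.
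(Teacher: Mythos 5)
Your proof is correct and is essentially the paper's own argument: the paper likewise notes that $s$~acts nilpotently on each $C/I^nC$ (so these are $s$\+contramodules, the action of~$s$ on $\Ext^*_R(R[s^{-1}],{-})$ being simultaneously invertible and nilpotent), and that the class of $s$\+contramodules, being closed under kernels and infinite products by Theorem~\ref{ext-0-1-orthogonal}(a), is closed under projective limits of arbitrary diagrams. The paper merely adds an alternative construction of the $s$\+power infinite summation operation on $\varprojlim_n C/I^nC$ as the limit of finite partial sums in the projective limit topology; and your reading of the final assertion (applying it to $I$\+adically separated and complete modules, i.e.\ those isomorphic to $\Lambda_I(C)$ via~$\lambda_{I,C}$) is the appropriate one, as Example~\ref{s-contra-adjusted-counterex}\,(2) shows that surjectivity of~$\lambda_{I,C}$ alone would not suffice.
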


\begin{proof}
 Any $R$\+module in which $s$~acts nilpotently is
an $s$\+contramodule, and the projective limit of any diagram
of $s$\+contramodule $R$\+modules is an $s$\+contramodule
$R$\+module (see Remark~\ref{s-separ-complete-contra-easy-rem}).
 Alternatively, one can define an $s$\+power infinite summation
operation in $\varprojlim_n C/I^nC$ (see
Theorem~\ref{s-power-contra-equivalence}(c)) as the limit of
finite partial sums in the topology of the projective limit (of
discrete groups $C/I^nC$) on $\varprojlim_n C/I^nC$, where
the kernels of the projection maps $\varprojlim_i C/I^iC\rarrow
C/I^nC$ form a base of neighborhoods of zero.
\end{proof}

 Let us denote the full subcategory of $I$\+adically separated and
complete $R$\+modules by $R\modl_{I\secmp}\subset R\modl$.

\begin{thm} \label{lambda-adjoint}
 Let $R$ be a commutative ring and $I\subset R$ a finitely
generated ideal.
 Then the functor of $I$\+adic completion\/ $\Lambda_I\:R\modl\rarrow
R\modl_{I\secmp}$ is left adjoint to the fully faithful embedding
functor $R\modl_{I\secmp}\rarrow R\modl$.
\end{thm}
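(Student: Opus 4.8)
The plan is to exhibit the canonical morphism $\lambda_{I,C}\:C\rarrow\Lambda_I(C)$ as the unit of the adjunction. Two things must be checked: that $\Lambda_I(C)$ genuinely lies in $R\modl_{I\secmp}$, and that for every $M\in R\modl_{I\secmp}$ and every $R$\+module morphism $f\:C\rarrow M$ there is a \emph{unique} $R$\+module morphism $\bar f\:\Lambda_I(C)\rarrow M$ with $\bar f\circ\lambda_{I,C}=f$. Granting these, the rule $\bar f\mapsto\bar f\circ\lambda_{I,C}$ is the required bijection $\Hom_R(\Lambda_I(C),M)\cong\Hom_R(C,M)$, natural in $C$ and $M$, with $\lambda_{I,C}$ serving as the unit; the counit $\Lambda_I(M)\rarrow M$ is then the inverse of $\lambda_{I,M}$, reflecting full faithfulness of the embedding.

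That $\Lambda_I(C)$ is $I$\+adically complete is already available: fixing a finite generating set $s_1,\dotsc,s_m$ of $I$, the module $\Lambda_I(C)$ is an $s_j$\+contramodule by Lemma~\ref{I-completion-s-contra}, hence in particular $s_j$\+contraadjusted, for each $j$, so Theorem~\ref{contraadjusted-for-ideal-generators-complete} applies to $\Lambda_I(C)$ itself. For $I$\+adic separatedness, i.e.\ $\bigcap_{n\ge1}I^n\Lambda_I(C)=0$, I would first record the structural identity
$$
 I^n\Lambda_I(C)=\ker\bigl(\Lambda_I(C)\rarrow C/I^nC\bigr),\qquad n\ge1,
$$
as a lemma. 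The inclusion $\subseteq$ is formal; for $\supseteq$, writing $I^n$ in terms of the finitely many monomials of degree~$n$ in $s_1,\dotsc,s_m$, one assembles compatible lifts of an element of $\ker(\Lambda_I(C)\rarrow C/I^nC)$ into $\Lambda_I(C)$ by a Mittag--Leffler/diagonal argument---this is the single place where finite generation of $I$ genuinely enters. Given the identity, separatedness is immediate; as a bonus, the identity says exactly that $\lambda_{I,C}(C)+I^n\Lambda_I(C)=\Lambda_I(C)$ for every $n$, i.e.\ that the image of $C$ is dense in the $I$\+adic topology of $\Lambda_I(C)$, which is what the uniqueness argument below needs.

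For the universal property, existence is formal. Since $M\in R\modl_{I\secmp}$, the map $\lambda_{I,M}\:M\rarrow\Lambda_I(M)$ is an isomorphism, so applying the completion functor to $f$ and composing gives $\bar f:=\lambda_{I,M}^{-1}\circ\Lambda_I(f)\:\Lambda_I(C)\rarrow M$, and naturality of $\lambda_{I,-}$ forces $\bar f\circ\lambda_{I,C}=f$. For uniqueness, suppose $g\:\Lambda_I(C)\rarrow M$ also satisfies $g\circ\lambda_{I,C}=f$, and set $h=g-\bar f$, so $h\circ\lambda_{I,C}=0$. Because $h(I^n\Lambda_I(C))=I^nh(\Lambda_I(C))\subseteq I^nM$, the composite of $h$ with $M\rarrow M/I^nM$ factors through a map $\Lambda_I(C)/I^n\Lambda_I(C)\rarrow M/I^nM$ that annihilates the image of $C$; by the density noted above, that image is all of $\Lambda_I(C)/I^n\Lambda_I(C)$, so this map is zero, i.e.\ $h(\Lambda_I(C))\subseteq I^nM$ for all $n$. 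Since $M$ is $I$\+adically separated, $h=0$.

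The only real obstacle is the structural identity $I^n\Lambda_I(C)=\ker(\Lambda_I(C)\rarrow C/I^nC)$: it is the sole ingredient that uses finite generation of $I$ (and for which the statement fails in general), and it simultaneously supplies the separatedness of $\Lambda_I(C)$ and the density of the canonical image that the uniqueness part requires. Everything else is a formal consequence of $\Lambda_I$ being a functor and of $\lambda_{I,M}$ being invertible for $M\in R\modl_{I\secmp}$.
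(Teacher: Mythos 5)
Your proposal is correct and takes essentially the same route as the paper's proof: completeness of $\Lambda_I(C)$ from Lemma~\ref{I-completion-s-contra} combined with Theorem~\ref{contraadjusted-for-ideal-generators-complete}, the identity $I^n\Lambda_I(C)=\ker\bigl(\Lambda_I(C)\to C/I^nC\bigr)$ as the one step where finite generation of $I$ enters, and then a formal adjunction argument, which in your version runs through the unit and the density of the image of $C$, and in the paper through morphisms of projective systems --- both resting on the same consequence $\Lambda_I(C)/I^n\Lambda_I(C)\simeq C/I^nC$. The only place you are sketchier than the paper is the hard inclusion $\ker\bigl(\Lambda_I(C)\to C/I^nC\bigr)\subseteq I^n\Lambda_I(C)$, which the paper establishes by citing Yekutieli or by regrouping an $[s_1,\dotsc,s_m]$-power infinite sum into finitely many sums indexed by the degree-$n$ monomials; your grouping-by-monomials idea is exactly that argument and fills in routinely.
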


\begin{proof}
 The formulation of the theorem tacitly includes the claim that
the functor $\Lambda_I$ takes values in $R\modl_{I\secmp}$, i.~e.,
the $I$\+adic completion of any $R$\+module is $I$\+adically
separated and complete.
 This is the result of~\cite[Corollary~3.6]{Yek0}, which is
\emph{not} true without the assumption that $I$ is finitely
generated~\cite[Example~1.8]{Yek0}.
 The point is that the projective limit topology is, by
the definition, always complete on
$\Lambda_I(C)=\varprojlim_n C/I^nC$, but the $I$\+adic topology
on $\Lambda_I(C)$ can differ from the projective limit topology
(see~\cite[Example~1.13 and the preceding discussion]{Yek0}).

 The two topologies are the same when the ideal $I$ is finitely
generated.
 A more precise claim, which is of key importance here, is that
the submodule $I^n\Lambda_I(C)$ coincides with the kernel $K_n$
of the natural projection $\Lambda_I(C)\rarrow C/I^nC$.
 One obviously has $I^n\Lambda_I(C)\subset K_n$, so $\Lambda_I(C)$
is always $I$\+adically separated.
 Furthermore, when $I$ is finitely generated, combining the results
of Lemma~\ref{I-completion-s-contra} and
Theorem~\ref{contraadjusted-for-ideal-generators-complete}
proves that $\Lambda_I(C)$ is $I$\+adically complete.
 Then one can apply~\cite[Theorem~1.5]{Yek0} in order to deduce
the assertion that $K_n=I^n\Lambda_I(C)$.

 An explicit proof of the equation $K_n=I^n\Lambda_I(C)$ based on
the infinite summation operation technique would look as
follows.
 Let $c_i\in C$, $i\ge1$ be a sequence of elements such that
$c_{i+1}-c_i \in I^iC$ for $i\ge1$ and $c_i=0$ for $i\le n$.
 Let $s_1$,~\dots, $s_m$ be some set of generators of the ideal~$I$.
 Arguing as in the proof of
Theorem~\ref{contraadjusted-for-ideal-generators-complete},
there are elements $a_{i_1,\dotsc,i_m}\in C$, \ $i_1$,~\dots, $i_m\ge0$
such that $a_{0,\dotsc,0}=c_1$ and
$$
 c_{i+1}-c_i = \sum\nolimits_{i_1\ge0,\dotsc,i_m\ge0}^{i_1+\dotsb+i_m=i}
 s_1^{i_1}\dotsm s_m^{i_m}a_{i_1,\dotsc,i_m}
 \quad\text{for all $i\ge1$}.
$$
 In addition, we can assume that $a_{i_1,\dotsc,i_m}=0$ whenever
$i_1+\dotsb+i_m<n$.
 Following Lemma~\ref{I-completion-s-contra} and
Remark~\ref{many-variables-power-summations}, or just using
directly the limits of finite partial sums in the projective limit
topology on $\varprojlim_i C/I^iC$, we have
an $[s_1,\dotsc,s_m]$\+power infinite summation operation
in~$\Lambda_I(C)$.
 Hence the element $b\in\varprojlim_i C/I^iC$ represented by
the sequence $(c_i)_{i=1}^\infty$ can be expressed as
$$
 b = \sum\nolimits_{i_1,\dotsc,i_m=0}^\infty s_1^{i_1}\dotsm s_m^{i_m}
 a_{i_1,\dotsc,i_m},
$$ 
where the images of the elements $a_{i_1,\dotsc,i_m}$ in
$\varprojlim_i C/I^iC$ are
denoted for simplicity also by $a_{i_1,\dotsc,i_m}$.
 Finally, it is a standard exercise in combinatorics to rewrite
this infinite sum as a finite sum of $[s_1,\dotsc,s_j]$\+power
infinite sums, $1\le j\le m$, each of which is divisible by
a certain monomial of degree~$n$ in $s_1$,~\dots,~$s_m$.

 Now we can show that the two functors are adjoint.
 Let $D=\varprojlim_n D/I^nD$ be an $I$\+adically separated
and complete $R$\+module, and let $C$ be an arbitrary $R$\+module.
 Then $R$\+module homomorphisms $C\rarrow D$ correspond bijectively
to morphisms of projective systems of $R$\+modules
$(C/I^nC)_{n\ge1}\rarrow (D/I^nD)_{n\ge1}$, since any $R$\+module
homomorphism $C\rarrow D/I^nD$ factorizes through the surjection
$C\rarrow C/I^nC$ in a unique way.
 In particular, $R$\+module homomorphisms $\Lambda_I(C)\rarrow D$
correspond bijectively to morphisms of projective systems
$(\Lambda_I(C)/I^n\Lambda_I(C))_n\rarrow (D/I^nD)_n$.
 But we have just seen that $\Lambda_I(C)/I^n\Lambda_I(C) = 
\Lambda_I(C)/K_n = C/I^nC$.
 Hence the desired bijection $\Hom_R(\Lambda_I(C),D)=\Hom_R(C,D)$.
\end{proof}

\begin{rem}
 The category $R\modl_{I\secmp}$ is ``na\"\i ve'' and not well-behaved
(see Example~\ref{s-contra-adjusted-counterex}\,(1)); and so
the functor $\Lambda_I$ is ``na\"\i ve'' and not well-behaved
outside of the classical situation of finitely-generated modules
over Noetherian rings (when the Artin--Rees lemma is applicable).
 In fact, for $R=\Z$ and $I=p\Z$, applying the $p$\+adic completion
functor $\Lambda_p$ to the embedding $D/E\rarrow C/E$ from
Example~\ref{s-contra-adjusted-counterex}\,(1) produces the zero
map $D/E\rarrow C/D$, while applying the functor $\Lambda_p$ to
the short exact sequence $0\rarrow E\rarrow C\rarrow C/E\rarrow0$
one obtains the sequence $0\rarrow E\rarrow C\rarrow C/D\rarrow 0$,
which is not exact at the middle term~\cite[Example~3.20]{Yek0}.
 So, as pointed out in~\cite[Proposition~1.2]{Yek0}, preserving
surjections seems to be the only good property of
the functor~$\Lambda_I$ in general.
 When the ideal $I$~is finitely generated, one can also say that
$\Lambda_I$ preserves infinite products and is a reflector onto
its image (hence idempotent); but that is about it.

 The explanation is that the functor $\Lambda_I$ is constructed
by composing the right exact functor of reduction modulo $I^n$ with
the left exact functor of projective limit.
 Composing functors that are exact from different sides is not
generally a good procedure.
 Composing the derived functors between derived categories (and then
taking the degree~$0$ cohomology if needed) is advisable instead.
 In the situation at hand, this would mean applying the functor
$\Lambda_I$ to a flat resolution of a given $R$\+module~\cite{PSY},
which works well for many, but not all, finitely generated
ideals~$I$ (cf.\ the counterexample in~\cite[Example~2.6]{Pmgm}).
 The ``well-behaved version'' of the functor $\Lambda_I$, denoted
by $\Delta_I$, will be discussed in the next two sections.
\end{rem}

\Section{$s$-Torsion Modules and the Functor $\Gamma_s$, \\*
$s$-Contramodules and the Functor $\Delta_s$}
\label{functor-delta-s-first-secn}

 The fundamental idea of covariant duality between torsion modules
and contramodules, which lurked beneath the surface of our exposition
before, becomes explicitly utilized in this and the next section.
 Let $R$ be a commutative ring and $s\in R$ an element.
 An $R$\+module $M$ is said to be \emph{$s$\+torsion} if for each
element $x\in M$ there exists an integer $n\ge1$ such that $s^nx=0$
in~$M$.
 Equivalently, one can say that an $R$\+module $M$ is $s$\+torsion
if and only if $R[s^{-1}]\ot_RM=0$.

 The following lemma provides one of the simplest illustrations of
the torsion module-contramodule duality.

\begin{lem} \label{tensor-hom-torsion-contra-duality-lem}
\textup{(a)} Let $N$ and $M$ be $R$\+modules.
 Then the tensor product $R$\+module $N\ot_R M$ is $s$\+torsion
whenever either $N$, or $M$ is $s$\+torsion. \par
\textup{(b)} Let $M$ and $C$ be $R$\+modules.
 Then the\/ $\Hom$ $R$\+module\/ $\Hom_R(M,C)$ is an $s$\+contramodule
whenever either $M$ is $s$\+torsion, or $C$ is an $s$\+contramodule.
\end{lem}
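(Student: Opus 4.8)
The plan is to handle the two parts separately, reducing each to the closure properties of orthogonality classes recorded in Theorems~\ref{serre-subcategory} and~\ref{ext-0-1-orthogonal}, rather than by direct manipulation of elements.

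For part~(a), I would use the criterion (recalled in the text just above) that an $R$\+module is $s$\+torsion precisely when its tensor product with $R[s^{-1}]$ vanishes, together with the associativity isomorphisms $R[s^{-1}]\ot_R(N\ot_R M)\cong(R[s^{-1}]\ot_R N)\ot_R M\cong N\ot_R(R[s^{-1}]\ot_R M)$: if either $R[s^{-1}]\ot_R N$ or $R[s^{-1}]\ot_R M$ is zero, the whole triple tensor product is zero. Even more directly: an arbitrary element of $N\ot_R M$ is a finite sum $\sum_i n_i\ot m_i$, so if $M$ is $s$\+torsion one picks $k\ge1$ annihilating all the finitely many $m_i$ and gets $s^k\sum_i n_i\ot m_i=\sum_i n_i\ot s^km_i=0$. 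Either way, part~(a) is essentially immediate.

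For part~(b) in the case where $C$ is an $s$\+contramodule, I would choose a free presentation $\bigoplus_{j\in J}R\rarrow\bigoplus_{i\in I}R\rarrow M\rarrow0$ and apply $\Hom_R({-},C)$. This realizes $\Hom_R(M,C)$ as the kernel of a morphism $\prod_{i\in I}C\rarrow\prod_{j\in J}C$ between products of copies of~$C$. Since the class of $s$\+contramodule $R$\+modules is closed under infinite products and under kernels of morphisms by Theorem~\ref{ext-0-1-orthogonal}(a), it follows that $\Hom_R(M,C)$ is an $s$\+contramodule. In the case where $M$ is $s$\+torsion, I would write $M$ as the directed union $M=\varinjlim_{k\ge1}M[s^k]$ of its submodules $M[s^k]=\{x\in M:s^kx=0\}$ and pass to the limit, $\Hom_R(M,C)=\varprojlim_{k\ge1}\Hom_R(M[s^k],C)$. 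For each~$k$ the element $s^k$ annihilates $M[s^k]$, hence, by functoriality of $\Hom_R({-},C)$, acts by zero on $\Hom_R(M[s^k],C)$; thus $s$ acts nilpotently there, and an $R$\+module on which $s$ acts nilpotently is an $s$\+contramodule (see Remark~\ref{s-separ-complete-contra-easy-rem} and Lemma~\ref{I-completion-s-contra}). As the class of $s$\+contramodule $R$\+modules is closed under projective limits of all diagrams (again by Theorem~\ref{ext-0-1-orthogonal}(a)), the module $\Hom_R(M,C)$ is an $s$\+contramodule.

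I would also mention the more computational route via Lemma~\ref{ext-r-s-minus-1-computed}(b): given a sequence $(a_n)_{n\ge0}$ in $\Hom_R(M,C)$, the prescription $b_n(x)=\sum_{i\ge0}s^ia_{n+i}(x)$ for $x\in M$ produces the required unique solution of the system~\eqref{a-b-main-equation-system} — the sum being computed via the $s$\+power summation operation on~$C$ when $C$ is an $s$\+contramodule, and being a genuinely \emph{finite} sum (all terms with $s^ix=0$ dropping out) when $M$ is $s$\+torsion, cf.\ formula~\eqref{equation-solving-by-summation}. There is no serious obstacle in this lemma; the one point deserving a line of care in this computational variant is checking that $x\mapsto b_n(x)$ is again $R$\+linear, which uses commutativity of $R$ (so that $s$ commutes with multiplication by every $r\in R$) and, when $C$ is a contramodule, the uniqueness of solutions in~$C$ — a subtlety the conceptual argument above avoids altogether.
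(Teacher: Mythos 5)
Your proposal is correct. Part~(a) is exactly the paper's argument (associativity of the tensor product with the flat module $R[s^{-1}]$), and your closing computational sketch essentially reproduces the paper's ``alternative'' proof of part~(b) via the $s$\+power infinite summation operation on $\Hom_R(M,C)$. For the main proof of part~(b), however, you take a genuinely different route: the paper works in the derived category, using the adjunction $\boR\Hom_R(R[s^{-1}],\.\boR\Hom_R(M,C))=\boR\Hom_R(R[s^{-1}]\ot_RM,\>C)=\boR\Hom_R(M,\.\boR\Hom_R(R[s^{-1}],C))$ to kill $\Ext^{0,1}_R(R[s^{-1}],\Hom_R(M,C))$ in both cases at once, whereas you reduce everything to the closure properties of the class $R\modl_{s\ctra}$ from Theorem~\ref{ext-0-1-orthogonal}(a): when $C$ is an $s$\+contramodule you realize $\Hom_R(M,C)$ as the kernel of a morphism between products of copies of $C$ via a free presentation of $M$, and when $M$ is $s$\+torsion you write $\Hom_R(M,C)=\varprojlim_k\Hom_R(M[s^k],C)$ as a projective limit of modules annihilated by powers of~$s$ (which are $s$\+contramodules, the class being closed under all limits, cf.\ Remark~\ref{s-separ-complete-contra-easy-rem} and Lemma~\ref{I-completion-s-contra}). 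Both arguments are complete and correct; yours is more elementary in that it avoids derived categories entirely, at the price of treating the two hypotheses by different mechanisms, while the paper's derived-adjunction argument handles them uniformly and upgrades verbatim to complexes, which is exactly how the paper proves the generalization in Lemma~\ref{tor-ext-torsion-contra-lem}.
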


\begin{proof}
 Part~(a): one has
$$
 R[s^{-1}]\ot_R(N\ot_RM)=(R[s^{-1}]\ot_RN)\ot_RM,
$$
hence $R[s^{-1}]\ot_RN=0$ implies $R[s^{-1}]\ot_R(N\ot_RM)=0$.

 Part~(b): denoting the derived functor of $R$\+module homomorphisms,
viewed as a functor between the derived categories of $R$\+modules,
by $\boR\Hom_R$, one has
\begin{align*}
 \Hom_R(R[s^{-1}],\Hom_R(M,C)) &=
 H^0(\boR\Hom_R(R[s^{-1}],\.\boR\Hom_R(M,C))), \\
 \Ext^1_R(R[s^{-1}],\Hom_R(M,C)) &\subset
 H^1(\boR\Hom_R(R[s^{-1}],\.\boR\Hom_R(M,C))).
\end{align*}
 Hence from
\begin{align*}
 \boR\Hom_R(R[s^{-1}],\.\boR\Hom_R(M,C)) &=
 \boR\Hom_R(R[s^{-1}]\ot_R M,\>C) \\
 &= \boR\Hom_R(M,\.\boR\Hom_R(R[s^{-1}],C))
\end{align*}
we can conclude that either $R[s^{-1}]\ot_RM=0$ or
$\Ext^*_R(R[s^{-1}],C)=0$ is sufficient to imply
$\Ext^*_R(R[s^{-1}],\Hom_R(M,C))=0$. 

 Alternatively, one can define an $s$\+power infinite summation
operation on the $R$\+module $\Hom_R(M,C)$ by the rules
$$
 \left(\sum\nolimits_{n=0}^\infty s^nf_n\right)(x) =
 \sum\nolimits_{n=0}^\infty f_n(s^nx)
$$
for all $f_n\in\Hom_R(M,C)$, \ $x\in M$ if $M$ is $s$\+torsion
(so the sum in the right-hand side is finite, because $s^nx=0$
for large enough~$n$), or
$$
 \left(\sum\nolimits_{n=0}^\infty s^nf_n\right)(x) =
 \sum\nolimits_{n=0}^\infty s^n (f_n(x))
$$
if $C$ is an $s$\+contramodule (so the sum in the right-hand side
refers to the $s$\+power infinite summation operation on~$C$).
\end{proof}

 The next lemma is a generalization of the previous one that will be
useful in Section~\ref{krull-dim-1-secn}.
 In its context, the derived tensor product $N^\bu\ot_R^\boL M^\bu$ of
unbounded complexes is computed, as usually in the unbounded derived
category of modules, using homotopy flat resolutions, while
the $R$\+modules of homomorphisms in the unbounded derived category
$\Hom_{\D(R\modl)}(M^\bu,C^\bu[n])=H^n(\boR\Hom_R(M^\bu,C^\bu))$
can be computed using homotopy projective or homotopy injective
resolutions.

\begin{lem} \label{tor-ext-torsion-contra-lem}
\textup{(a)} Let $N^\bu$ and $M^\bu$ be two complexes of $R$\+modules
such that either the modules $H^n(N)$ are $s$\+torsion for all
$n\in\Z$, or the modules $H^n(M)$ are $s$\+torsion for all $n\in\Z$.
 Then the derived tensor product modules $H^n(N^\bu\ot_R^\boL M^\bu)$
are $s$\+torsion for all $n\in\Z$. \par
\textup{(b)} Let $M^\bu$ and $C^\bu$ be two complexes of $R$\+modules
such that either the modules $H^n(M)$ are $s$\+torsion for all $n\in\Z$,
or the modules $H^n(C)$ are $s$\+contramodules for all $n\in\Z$.
 Then the $R$\+modules\/ $\Hom_{\D(R\modl)}(M^\bu,C^\bu[n])$ are
$s$\+contramodules for all $n\in\Z$.
\end{lem}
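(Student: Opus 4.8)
The plan is to reduce both parts to two structural facts about the $R$\+module $R[s^{-1}]$, both already implicit in the proof of Lemma~\ref{ext-r-s-minus-1-computed}: namely, that $R[s^{-1}]$ is a \emph{flat} $R$\+module (being a localization of~$R$) and that it has projective dimension~$\le1$ (the two-term free resolution exhibited there). Flatness makes the derived functor $R[s^{-1}]\ot_R^\boL{-}$ agree with the exact underived functor $R[s^{-1}]\ot_R{-}$, so for any complex $X^\bu$ one has $H^n(R[s^{-1}]\ot_R X^\bu)=R[s^{-1}]\ot_R H^n(X^\bu)$; hence $R[s^{-1}]\ot_R^\boL X^\bu=0$ in $\D(R\modl)$ if and only if every $H^n(X^\bu)$ is $s$\+torsion. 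From $\pd_R R[s^{-1}]\le1$ it follows that the hyper-$\Ext$ spectral sequence $E_2^{p,q}=\Ext^p_R(R[s^{-1}],H^q(X^\bu))$ is concentrated in the columns $p=0,1$, hence degenerates, yielding functorial short exact sequences
\begin{equation*}
 0\lrarrow\Ext^1_R(R[s^{-1}],H^{n-1}(X^\bu))\lrarrow
 H^n(\boR\Hom_R(R[s^{-1}],X^\bu))\lrarrow
 \Hom_R(R[s^{-1}],H^n(X^\bu))\lrarrow0
\end{equation*}
for all $n$; in particular $\boR\Hom_R(R[s^{-1}],X^\bu)=0$ in $\D(R\modl)$ if and only if every $H^n(X^\bu)$ is an $s$\+contramodule. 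These are just the complex-level reformulations of the defining conditions for $s$\+torsionness and $s$\+contramoduleness.

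For part~(a) I would set $P^\bu=N^\bu\ot_R^\boL M^\bu$ and use associativity of the derived tensor product: $R[s^{-1}]\ot_R^\boL P^\bu\simeq N^\bu\ot_R^\boL(R[s^{-1}]\ot_R^\boL M^\bu)\simeq(R[s^{-1}]\ot_R^\boL N^\bu)\ot_R^\boL M^\bu$. If the $H^n(M)$ (respectively, the $H^n(N)$) are all $s$\+torsion, then $R[s^{-1}]\ot_R^\boL M^\bu=0$ (resp.\ $R[s^{-1}]\ot_R^\boL N^\bu=0$) by the first fact, so $R[s^{-1}]\ot_R^\boL P^\bu=0$, and applying that fact once more shows the modules $H^n(N^\bu\ot_R^\boL M^\bu)$ are all $s$\+torsion.

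For part~(b) I would put $X^\bu=\boR\Hom_R(M^\bu,C^\bu)$, so that the modules to be controlled are the $H^n(X^\bu)$, and use the derived tensor--hom adjunction in the unbounded derived category (computed via homotopy flat and homotopy injective resolutions, as recalled just before the lemma):
\begin{equation*}
 \boR\Hom_R(R[s^{-1}],X^\bu)\simeq
 \boR\Hom_R(R[s^{-1}]\ot_R^\boL M^\bu,\>C^\bu)\simeq
 \boR\Hom_R(M^\bu,\.\boR\Hom_R(R[s^{-1}],C^\bu)).
\end{equation*}
If the $H^n(M)$ are all $s$\+torsion, then $R[s^{-1}]\ot_R^\boL M^\bu=0$ makes the middle term vanish; if the $H^n(C)$ are all $s$\+contramodules, then $\boR\Hom_R(R[s^{-1}],C^\bu)=0$ by the second fact makes the right-hand term vanish. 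Either way $\boR\Hom_R(R[s^{-1}],X^\bu)=0$ in $\D(R\modl)$, and the second fact then gives that every $H^n(X^\bu)$ is an $s$\+contramodule.

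The real work is not in this argument but in the derived-category bookkeeping for \emph{unbounded} complexes: one must invoke the existence of homotopy projective/injective/flat resolutions and the associativity and adjunction isomorphisms in that generality, and one must note that the hyper-$\Ext$ spectral sequence against an object of projective dimension~$\le1$ genuinely converges --- which it does, since it has only two nonzero columns, so it is in effect just a two-step filtration (equivalently, a single application of the octahedral axiom to the two-term free resolution of $R[s^{-1}]$). A reader preferring to bypass spectral sequences can instead substitute that explicit resolution $\bigoplus_n Rf_n\rarrow\bigoplus_n Re_n$ and argue directly with the infinite systems of nonhomogeneous linear equations~\eqref{a-b-main-equation-system}, exactly along the lines of the alternative argument in the proof of Lemma~\ref{tensor-hom-torsion-contra-duality-lem}; that route is more elementary but considerably longer.
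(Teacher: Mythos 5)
Your proposal is correct and follows essentially the same route as the paper: both parts reduce to the facts that $R[s^{-1}]$ is flat (so $R[s^{-1}]\ot_R^\boL X^\bu$ vanishes iff all $H^n(X^\bu)$ are $s$\+torsion) and has projective dimension~$\le1$ (so $\boR\Hom_R(R[s^{-1}],X^\bu)$ vanishes iff all $H^n(X^\bu)$ are $s$\+contramodules, via the same two-column short exact sequences), combined with associativity of the derived tensor product for~(a) and the derived tensor--hom adjunction for~(b). The only difference is expository: the paper simply asserts the short exact sequences where you spell out the two-column degeneration, and it tensors with $R[s^{-1}]$ underived from the start rather than invoking flatness to identify the derived and underived functors.
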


\begin{proof}
 Part~(a): the $R$\+module $R[s^{-1}]$ is flat, so given a complex
of $R$\+modules $L^\bu$, the $R$\+modules $H^n(L^\bu)$ are $s$\+torsion
for all $n\in\Z$ if and only if the complex $R[s^{-1}]\ot_RL^\bu$
is acyclic.
 Now one has
$$
 R[s^{-1}]\ot_R(N^\bu\ot_R^\boL M^\bu)=
 (R[s^{-1}]\ot_RN^\bu)\ot_R^\boL M^\bu,
$$
hence acyclicity of the complex $R[s^{-1}]\ot_RN^\bu$ implies
acyclicity of the complex $R[s^{-1}]\ot_R(N^\bu\ot_R^\boL M^\bu)$.

 Part~(b): the $R$\+module $R[s^{-1}]$ has projective dimension
at most~$1$, so for any complex of $R$\+modules $B^\bu$ there are
short exact sequences
\begin{multline*}
 0\lrarrow\Ext^1_R(R[s^{-1}],H^{n-1}(B^\bu))\lrarrow \\
 H^n(\boR\Hom_R(R[s^{-1}],B^\bu))
 \lrarrow\Hom_R(R[s^{-1}],H^n(B^\bu))\lrarrow0.
\end{multline*}
 Therefore, the $R$\+modules $H^n(B^\bu)$ are $s$\+contramodules
for all $n\in\Z$ if and only if $\boR\Hom_R(R[s^{-1}],B^\bu)=0$
in $\D(R\modl)$.
 Now one has
\begin{align*}
 \boR\Hom_R(R[s^{-1}],\.\boR\Hom_R(M^\bu,C^\bu)) &=
 \boR\Hom_R(R[s^{-1}]\ot_R M^\bu,\>C^\bu) \\
 &= \boR\Hom_R(M^\bu,\.\boR\Hom_R(R[s^{-1}],C^\bu)),
\end{align*}
hence acyclicity of one of the complexes $R[s^{-1}]\ot_RM^\bu$ or
$\boR\Hom_R(R[s^{-1}],C^\bu)$ implies acyclicity of the complex
$\boR\Hom_R(R[s^{-1}],\.\boR\Hom_R(M^\bu,C^\bu))$.

 Alternatively, for bounded complexes $N^\bu$, $M^\bu$, and $C^\bu$
one can deduce Lemma~\ref{tor-ext-torsion-contra-lem} from
Lemma~\ref{tensor-hom-torsion-contra-duality-lem}.
 For example, let us explain the first case in part~(b).
 Since the class of $s$\+contramodule $R$\+modules is closed
the kernels, cokernels, and extensions, the question reduces to proving
that the $R$\+modules $\Ext^n_R(M,C)$ are $s$\+contramodules for every
$s$\+torsion $R$\+module $M$ and every $R$\+module~$C$.
 Here it suffices to choose an injective $R$\+module resolution
for $C$ and apply Lemma~\ref{tensor-hom-torsion-contra-duality-lem}(b).
\end{proof}

 Let $R\modl_{s\tors}\subset R\modl$ denote the full subcategory of
$s$\+torsion $R$\+modules in $R\modl$.
 We recall from Theorem~\ref{serre-subcategory}(b) that
$R\modl_{s\tors}$ is an abelian category, and, in fact, even
a Serre subcategory in $R\modl$.
 Denote by $\Gamma_s(M)\subset M$ the maximal $s$\+torsion submodule
of an $R$\+module~$M$.
 Then the functor $\Gamma_s$ is right adjoint to the fully faithful
embedding functor $R\modl_{s\tors}\rarrow R\modl$.

 We start from computing the functor $\Gamma_s$ and then proceed to
construct the dual-analogous functor $\Delta_s$.

\begin{lem} \label{gamma-s-lemma}
 For any $R$\+module $M$, the following $R$\+modules are naturally
isomorphic to each other:
\begin{enumerate}
\renewcommand{\theenumi}{\roman{enumi}}
\item the $R$\+module\/ $\Gamma_s(M)$;
\item the kernel of the $R$\+module morphism
$$
 \psi_s^M\:\bigoplus\nolimits_{n\ge0} M\lrarrow
 \bigoplus\nolimits_{n\ge1}M
$$
taking an eventually vanishing sequence $x_0$, $x_1$,
$x_2$,~\dots~$\in M$ to the sequence
$$
 (y_1,y_2,y_3,\dotsc)=\psi_s^M(x_0,x_1,x_2,\dotsc), \quad
 y_n=x_n-sx_{n-1}, \quad n\ge1;
$$
\item assuming that $s$~is not a zero-divisor in~$R$, the $R$\+module\/
$\Tor^R_1(R[s^{-1}]/R,\.M)$.
\end{enumerate}
\end{lem}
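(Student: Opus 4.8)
The plan is to establish the isomorphisms (i)\,$\cong$\,(ii) and (ii)\,$\cong$\,(iii) separately, the first by a direct inspection of $\ker\psi_s^M$ and the second by recognizing $\psi_s^M$ as the map obtained from a free resolution of $R[s^{-1}]/R$ by applying ${-}\ot_R M$. For (i)\,$\cong$\,(ii): the map $\psi_s^M$ is visibly $R$\+linear, so $\ker\psi_s^M$ is an $R$\+submodule of $\bigoplus_{n\ge0}M$, and a sequence $(x_0,x_1,x_2,\dots)$ lies in it exactly when $x_n=sx_{n-1}$ for all $n\ge1$, i.e.\ when $x_n=s^nx_0$ for every~$n$. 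Since an element of $\bigoplus_{n\ge0}M$ is by definition an eventually vanishing sequence, this holds precisely when $s^nx_0=0$ for $n$ large, that is, when $x_0\in\Gamma_s(M)$. Hence the $R$\+linear projection $(x_n)_{n\ge0}\mapsto x_0$ restricts to an isomorphism $\ker\psi_s^M\rarrow\Gamma_s(M)$, with inverse $x\mapsto(s^nx)_{n\ge0}$; everything here is functorial in~$M$, so this isomorphism is natural.

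For (ii)\,$\cong$\,(iii) I would reuse the telescope free resolution $0\to\bigoplus_{n\ge0}Rf_n\to\bigoplus_{n\ge0}Re_n\to R[s^{-1}]\to0$ from the proof of Lemma~\ref{ext-r-s-minus-1-computed}, in which $e_n\mapsto s^{-n}$ and $f_n\mapsto e_n-se_{n+1}$. Because the image of $R=Re_0$ in $R[s^{-1}]$ is the submodule spanned by $s^0=1$, passing to the quotient by $e_0$ yields a surjection $\bigoplus_{n\ge1}Re_n\twoheadrightarrow R[s^{-1}]/R$ whose kernel is generated by the images $-se_1$ of $f_0$ and $e_n-se_{n+1}$ ($n\ge1$) of the remaining $f_n$. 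This is the one place the hypothesis enters: because $s$ is not a zero-divisor, these generators are $R$\+linearly independent --- a finite relation among them forces, on comparing coefficients of the $e_m$, a recursion $r_{n+1}=sr_n$, whence all coefficients vanish by descending induction from the top nonzero index --- so one obtains a free resolution $0\to\bigoplus_{n\ge0}R\bar f_n\to\bigoplus_{n\ge1}Re_n\to R[s^{-1}]/R\to0$. Applying ${-}\ot_R M$ and reading off the differential on generators, the coefficient of $e_m$ in the image of $(x_n)_{n\ge0}$ is precisely $x_m-sx_{m-1}$; that is, this differential \emph{is} $\psi_s^M$, so $\Tor_1^R(R[s^{-1}]/R,M)=\ker\psi_s^M$, which by the previous paragraph is $\Gamma_s(M)$.

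As an alternative route to (iii), one may instead tensor the short exact sequence $0\to R\to R[s^{-1}]\to R[s^{-1}]/R\to0$ with $M$ --- injectivity of $R\to R[s^{-1}]$ being exactly the non-zero-divisor hypothesis --- and use flatness of the localization $R[s^{-1}]$ (so $\Tor_1^R(R[s^{-1}],M)=0$) to get $0\to\Tor_1^R(R[s^{-1}]/R,M)\to M\to R[s^{-1}]\ot_R M$; the last arrow is the localization map, whose kernel is the set of elements of $M$ annihilated by some power of $s$, namely $\Gamma_s(M)$. Naturality is automatic from functoriality of the $\Tor$ long exact sequence and of localization. I do not anticipate any serious obstacle: apart from the single use of the non-zero-divisor assumption, the argument is formal, the only care needed being the index-bookkeeping that matches the tensored differential with $\psi_s^M$.
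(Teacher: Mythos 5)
Your proposal is correct and follows essentially the same route as the paper: the same direct inspection of $\ker\psi_s^M$ for (i)\,$\simeq$\,(ii), and the same identification of the telescope complex $\psi_s^R$ as a projective resolution of $R[s^{-1}]/R$ for (ii)\,$\simeq$\,(iii) (your verification that the induced map on generators is injective when $s$ is a non-zero-divisor just spells out what the paper asserts with a reference to Lemma~\ref{ext-r-s-minus-1-computed}). Your alternative argument via the flat resolution $R\rarrow R[s^{-1}]$ is precisely the paper's proof of (i)\,$\simeq$\,(iii).
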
 

\begin{proof}
 (i)${}\simeq{}$(ii): The equations $y_n=0$, $n\ge1$ mean that
$x_n=sx_{n-1}$ for $n\ge1$, that is $x_n=s^nx_0$.
 The submodule formed by all the elements $x_0\in M$ for which
the sequence $s^nx_0\in M$, \ $n\ge1$ is eventually vanishing
coincides with $\Gamma_s(M)\subset M$.

 (i)${}\simeq{}$(iii): When $s$ is a nonzero-divisor in $R$,
the $R$\+module $R[s^{-1}]/R$ has a two-term flat left resolution
$R\rarrow R[s^{-1}]$.
 Hence the $R$\+module $\Tor^R_1(R[s^{-1}]/R,\.M)$ is computed as
the kernel of the morphism $(R\to R[s^{-1}])\ot_R M =
(M\to R[s^{-1}]\ot_RM)$, which obviously coincides with~$\Gamma_s(M)$.

 (ii)${}\simeq{}$(iii): When $s$ is a nonzero-divisor, the two-term
complex
\begin{equation} \label{s-telescope-morphism}
 \psi_s^R\:\bigoplus\nolimits_{n\ge0}R\lrarrow
 \bigoplus\nolimits_{n\ge1}R
\end{equation}
is a projective left resolution of the $R$\+module $R[s^{-1}]/R$
(cf.\ the proof of Lemma~\ref{ext-r-s-minus-1-computed}).
 Therefore, the $R$\+module $\Tor^R_1(R[s^{-1}]/R,\.M)$ is computed as
the kernel of the morphism $\psi_s^R\ot_R M=\psi_s^M$.
\end{proof}

 For any commutative ring $R$ and element $s\in R$, we denote by
$R\modl_{s\ctra}\subset R\modl$ the full subcategory of
$s$\+contramodule $R$\+modules.
 According to Theorem~\ref{ext-0-1-orthogonal}(a), the category
$R\modl_{s\ctra}$ is abelian and the embedding functor
$R\modl_{s\ctra}\allowbreak\rarrow R\modl$ is exact.

\begin{thm} \label{delta-s-theorem}
 For any $R$\+module $C$, the following $R$\+modules are naturally
isomorphic to each other:
\begin{enumerate}
\renewcommand{\theenumi}{\roman{enumi}}
\item the cokernel of the $R$\+module morphism
\begin{equation} \label{phi-s-morphism}
 \phi^s_C\:\prod\nolimits_{n\ge1}C\lrarrow\prod\nolimits_{n\ge0}C
\end{equation}
taking a sequence $c_1$, $c_2$, $c_3$,~\dots~$\in C$, \ $c_0=0$ to
the sequence
$$
 (b_0,b_1,b_2,\dots)=\phi^s_C(c_1,c_2,c_3,\dots),
 \quad b_n=c_n-sc_{n+1}, \quad n\ge0;
$$
\item the cokernel of the endomorphism of the $R$\+module
$C[[z]]$ of formal power series in one variable~$z$ with
coefficients in~$C$
$$
 (z-s)\:C[[z]]\lrarrow C[[z]]
$$
which is the difference of the endomorphism of multiplication
by~$z$ (coming from the formal power series structure) and
the endomorphism~$s$ (induced by the endomorphism~$s$ on
the coefficient module~$C$);
\item assuming that $s$ is not a zero-divisor in $R$,
the $R$\+module $\Ext^1_R(R[s^{-1}]/R,\.C)$.
\end{enumerate}
 Denote the $R$\+module produced by either of the constructions
(i)\+-(iii) by\/~$\Delta_s(C)$.
 Then the functor\/ $\Delta_s\:R\modl\rarrow R\modl_{s\ctra}$ is
left adjoint to the fully faithful embedding functor
$R\modl_{s\ctra}\rarrow R\modl$.
\end{thm}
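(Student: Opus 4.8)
The plan is to first establish the three isomorphisms, taking presentation (ii) as the working model, and then deduce the adjunction. For (i)\,$\simeq$\,(ii): identify $C[[z]]$ with $\prod_{n\ge 0}C$ coefficientwise, and the source $\prod_{n\ge 1}C$ of $\phi^s_C$ with $\prod_{n\ge 0}C$ by shifting the index down by one. Since multiplication by $z$ sends $(c_0,c_1,c_2,\dots)$ to $(0,c_0,c_1,\dots)$, the operator $z-s$ sends it to $(-sc_0,\ c_0-sc_1,\ c_1-sc_2,\dots)$, which is exactly $\phi^s_C$ after the reindexing; hence the cokernels in (i) and (ii) coincide, naturally in $C$. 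For (i)\,$\simeq$\,(iii): when $s$ is a nonzero-divisor, the two-term complex $\psi_s^R\colon\bigoplus_{n\ge 0}R\to\bigoplus_{n\ge 1}R$ of~\eqref{s-telescope-morphism} is a projective resolution of $R[s^{-1}]/R$, as established in the proof of Lemma~\ref{gamma-s-lemma}. Applying $\Hom_R({-},C)$ and unwinding the formula for $\psi_s^R$ gives $\Hom_R(\psi_s^R,C)=\phi^s_C$, so $\Ext^1_R(R[s^{-1}]/R,C)=\coker\phi^s_C=\Delta_s(C)$, again naturally.

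Next I would check that $\Delta_s(C)$ really lands in $R\modl_{s\ctra}$. The group $C[[z]]$ carries a $z$-power infinite summation operation (Example~\ref{s-power-summation-examples}(1)). The endomorphism $z-s$ commutes with multiplication by $z$ (since $z$ commutes with the coefficientwise operator $s$), hence preserves that operation by Theorem~\ref{s-power-contra-equivalence}(b); therefore the cokernel $\Delta_s(C)$ inherits a $z$-power infinite summation operation (Example~\ref{s-power-summation-examples}(2)). But on $\Delta_s(C)$ the operators $z$ and $s$ act identically (as $z-s$ kills the quotient), so this is in fact an $s$-power infinite summation operation, and therefore $\Delta_s(C)$ is an $s$-contramodule by Theorem~\ref{s-power-contra-equivalence}(c).

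Now the adjunction. Let the unit $\eta_C\colon C\to\Delta_s(C)$ send $c$ to the class of the constant power series $c=cz^0$; it is $R$-linear and natural in $C$. Given an $s$-contramodule $P$ and an $R$-linear map $g\colon C\to P$, define $\tilde g\colon\Delta_s(C)\to P$ by $[\sum_n c_nz^n]\mapsto\sum_n s^ng(c_n)$, where the right-hand side uses the $s$-power infinite summation operation on $P$. This is well defined: if $\sum_n c_nz^n=(z-s)\sum_n d_nz^n$, so that $c_0=-sd_0$ and $c_m=d_{m-1}-sd_m$ for $m\ge 1$, then $\sum_n s^ng(c_n)=-sg(d_0)+\sum_{m\ge 1}s^m\bigl(g(d_{m-1})-sg(d_m)\bigr)=0$ by a telescoping identity made rigorous through the contraassociativity axiom, exactly as in the proofs of Lemma~\ref{infsummation-nodivisible} and Theorem~\ref{s-power-contra-equivalence}. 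One checks that $\tilde g$ is $R$-linear (using that the $R$-action on $P$ commutes with its $s$-power summation, by Theorem~\ref{s-power-contra-equivalence}(b)) and that $\tilde g\circ\eta_C=g$.

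For uniqueness, suppose $h\colon\Delta_s(C)\to P$ is $R$-linear with $h\circ\eta_C=g$. In $\Delta_s(C)$ one has $[\sum_n c_nz^n]=\sum_n s^n\eta_C(c_n)$, the right-hand side being the $s$-power sum in $\Delta_s(C)$: indeed $b_n:=[\sum_{i\ge 0}c_{n+i}z^i]$ solves $b_n-sb_{n+1}=\eta_C(c_n)$ (using that $z$ acts as $s$ on $\Delta_s(C)$), so by the equation-solving description of the summation operation (Lemma~\ref{ext-r-s-minus-1-computed}(b)) that sum equals $b_0$. Since $h$ is $R$-linear it commutes with $s$, hence preserves $s$-power sums by Theorem~\ref{s-power-contra-equivalence}(b), whence $h([\sum_n c_nz^n])=\sum_n s^nh(\eta_C(c_n))=\sum_n s^ng(c_n)=\tilde g([\sum_n c_nz^n])$. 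Thus $g\mapsto\tilde g$ is a two-sided inverse of precomposition with $\eta_C$, and the resulting bijection $\Hom_R(\Delta_s(C),P)\cong\Hom_R(C,P)$ is natural in $C$ and $P$; this is the asserted adjunction, with unit $\eta$. I expect the only genuine computations to be the two telescoping identities above, both instances of manipulations already carried out in the paper, so the main point is careful bookkeeping with the contraassociativity axiom rather than any real obstacle; alternatively, since the $s$-contramodule condition depends only on the operator $s$ (Lemma~\ref{ext-r-s-minus-1-computed}), one could reduce to the case $R=\Z[s]$, where $s$ is automatically a nonzero-divisor, and use the $\Ext^1$-description (iii) throughout.
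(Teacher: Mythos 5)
Your proposal is correct. The treatment of the isomorphisms (i)$\simeq$(ii)$\simeq$(iii) and the existence half of the adjunction (the formula $\tilde g([\sum_n c_nz^n])=\sum_n s^ng(c_n)$ and the telescoping well-definedness check) coincide with the paper's argument, but you diverge in two substeps. First, to see that $\Delta_s(C)$ lands in $R\modl_{s\ctra}$, the paper uses right exactness of $\Delta_s$, reduces to $R=\Z[s]$ and to free modules, and computes $\Delta_s(A[s])\simeq A[[s]]$; you instead observe that $z-s$ commutes with $z$ on $C[[z]]$, hence preserves the $z$\+power infinite summation operation, so the cokernel inherits one in which $z$ and $s$ act identically -- a shorter, resolution-free argument entirely inside the summation-operation formalism (the paper's route has the side benefit of identifying the free $s$\+contramodules as $A[[s]]$, which it reuses later). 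Second, for uniqueness the paper shows that $h\delta_{s,C}=0$ forces $h=0$ because the elements $d_n=h'((b_{n+i})_i)$ span an $s$\+divisible submodule of $D$; you instead prove that every class $[\sum_n c_nz^n]$ equals the $s$\+power sum $\sum_n s^n\eta_C(c_n)$ in $\Delta_s(C)$ and that any $R$\+linear map between $s$\+contramodules preserves such sums, so $h$ is determined by $h\circ\eta_C$. Both versions are complete and rest on the same Section~3 machinery (Theorem~\ref{s-power-contra-equivalence} and the contraassociativity manipulations), so this is a matter of packaging rather than substance; your uniqueness argument also makes explicit the useful fact that $\Delta_s(C)$ is generated, under infinite summation, by the image of the adjunction unit.
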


\begin{proof}
(i)${}\simeq{}$(ii): The obvious isomorphisms $\prod_{n\ge1}C
\simeq C[[z]]\simeq\prod_{n\ge0}C$ identify the morphism~$\phi^s_C$
with the morphism~$z-s$.

(i)${}\simeq{}$(iii): When $s$ is a nonzero-divisor,
the $R$\+module $\Ext_R^1(R[s^{-1}]/R,\.C)$ can be computed
as the cokernel of the morphism $\Hom_R(\psi_s^R,C)$
(see~\eqref{s-telescope-morphism}), which is readily identified
with the morphism~$\phi^s_C$.

 Before proving that the two functors are adjoint
(cf.~\cite[the proof of Proposition~2.1]{Pmgm}), we have to check
that $\Delta_s$ takes values in $R\modl_{s\ctra}$, i.~e.,
the $R$\+module $\Delta_s(C)$ is an $s$\+contramodule for every
$R$\+module~$C$.
 This claim only depends on the action of~$s$ and not on the rest
of the $R$\+module structures on our modules, so we can assume
that $R=\Z[s]$.
 Furthermore, the functor $\Delta_s$ is right exact by construction
(as the cokernel of a morphism is a right exact functor and
the functor of infinite product of abelian groups is exact).
 So it can be computed using an initial fragment of a free
$\Z[s]$\+module resolution of the module~$C$.

 In other words, the functor $\Delta_s$ preserves cokernels.
 Let us present the $\Z[s]$\+module $C$ as the cokernel of
a morphism of free $\Z[s]$\+modules $u\:A'[s]\rarrow A''[s]$, where
$A'$ and $A''$ are some (free) abelian groups.
 Then $\Delta_s(C)$ is the cokernel of the morphism
$\Delta_s(u)\:\Delta_s(A'[s])\rarrow\Delta_s(A''[s])$.
 Since the class of $s$\+contramodules is closed under the cokernels,
it suffices to check that the $\Z[s]$\+module $\Delta_s(A[s])$
is an $s$\+contramodule for any abelian group~$A$.

 This is accomplished by an explicit computation; and
the construction~(ii) seems to be the most convenient one.
 We leave it to the reader to compute that the quotient module
$A[s][[z]]/(z-s)$ is naturally isomorphic to the $\Z[s]$\+module
of formal power series $A[[s]]$; so $\Delta_s(A[s])=A[[s]]$.
 That is clearly an $s$\+contramodule.

 Now let $D$ be an $s$\+contramodule $R$\+module and $C$ an arbitrary
$R$\+module.
 By the definition, the adjunction morphism $\delta_{s,C}\:C\rarrow
\Delta_s(C)$ is induced by the embedding of $C$ into $\prod_{n\ge0}C$
as the $(n=0)$-indexed factor of the product.
 To any $R$\+module morphism $g\:\Delta_s(C)\rarrow D$ one assigns
the composition $g\delta_{s,C}\:C\rarrow\Delta_s(C)\rarrow D$.
 We have to check that this construction defines an isomorphism
of the Hom modules $\Hom_R(\Delta_s(C),D)\simeq\Hom_R(C,D)$.

 Let $f\:C\rarrow D$ be an $R$\+module morphism; we need to show
that it factorizes through the morphism~$\delta_{s,C}$.
 Define a morphism $g'\:\prod_{n\ge0}C\rarrow D$ by
the rule
$$
 g'(b_0,b_1,b_2,\dotsc)=\sum\nolimits_{n=0}^\infty s^nf(b_n).
$$
 Then for any sequence $(c_n\in C)_{n\ge1}$, \ $c_0=0$ we have
\begin{multline*}
 g'(\phi^s_C((c_n)_{n=1}^\infty)) = g'((c_n-sc_{n+1})_{n=0}^\infty)=
 \sum\nolimits_{n=0}^\infty s^n(f(c_n-sc_{n+1})) \\
 =\sum\nolimits_{n=0}^\infty s^nf(c_n)-
 \sum\nolimits_{n=0}^\infty s^n(sf(c_{n+1}))= f(c_0)=0,
\end{multline*}
so the morphism~$g'$ factorizes through the cokernel of~$\phi_C^s$
and induces a morphism $g\:\Delta_s(C)\rarrow D$.
 Using the contraunitality axiom for infinite summation operations,
one can easily check that $g\delta_{s,C}=f$.

 Finally, let $h\:\Delta_s(C)\rarrow D$ be an $R$\+module morphism
for which $h\delta_{s,C}=0$.
 Denote the composition of~$h$ with the projection
$\prod_{n=0}^\infty C\rarrow\Delta_s(C)$ by
$h'\:\prod_{n=0}^\infty C\rarrow D$.
 Given any sequence $(b_n\in C)_{n=0}^\infty$, set
$$
 d_n=h'((b_{n+i})_{i=0}^\infty)=h'(b_n,b_{n+1},b_{n+2},\dotsc)\in D.
$$
 Then we have
\begin{setlength}{\multlinegap}{0pt}
\begin{multline*}
 d_n-sd_{n+1} =
 h'(b_n-sb_{n+1},\>b_{n+1}-sb_{n+2},\>b_{n+2}-sb_{n+3},\>\dotsc) \\ 
 =h'(-sb_{n+1},\>b_{n+1}-sb_{n+2},\>b_{n+2}-sb_{n+3},\>\dotsc)
 =h'(\phi_C^s(b_{n+1},b_{n+2},b_{n+3},\dots))=0
\end{multline*}
for every $n\ge0$, since $h'(b,0,0,\dots)=0$ for any $b\in C$
by assumption.
 Since $D$ has no nonzero $s$\+divisible submodules, we conclude
that $d_n=0$ for all $n\ge0$, and in particular $d_0=0$.
 Thus $h'((b_n)_{n=0}^\infty)=0$ for any sequence
$(b_n\in C)_{n=0}^\infty$ and $h=0$.
\end{setlength}
\end{proof}

\begin{rem} \label{gamma-delta-zero-divisor-remark}
 The two-term complex~\eqref{s-telescope-morphism} plays a central
role in the constructions of
Lemma~\ref{gamma-s-lemma} and Theorem~\ref{delta-s-theorem}.
 Let us denote it by $T^\bu(R;s)$ and place in the cohomological
degrees~$0$ and~$1$, so that $T^0(R;s)=\bigoplus_{n=0}^\infty R$ and
$T^1(R;s)=\bigoplus_{n=1}^\infty R$.
 The complex $T^\bu(R;s)[1]$ is quasi-isomorphic to the two-term
complex $R\rarrow R[s^{-1}]$ (where the term $R$ sits in
the cohomological degree~$-1$ and the term $R[s^{-1}]$ sits in
the cohomological degree~$0$).
 The quasi-isomorphism is provided by the map taking an eventually
vanishing sequence $x_0$, $x_1$, $x_2$,~\dots~$\in R$ to the element
$x_0\in R$ and an eventually vanishing sequence $y_1$, $y_2$,
$y_3$,~\dots~$\in R$ to the element
$-\sum_{n=1}^\infty y_n/s^n\in R[s^{-1}]$.
 Hence the assumption that $s$ is not a zero-divisor in $R$ can be
removed from the constructions of Lemma~\ref{gamma-s-lemma}(iii)
and Theorem~\ref{delta-s-theorem}(iii) by saying that
$\Gamma_s(M)=H^{-1}((R\to R[s^{-1}])\ot_RM)$ and $\Delta_s(C)=
\Hom_{\D^\b(R\modl)}((R\to R[s^{-1}]),\.C[1])$ for any commutative
ring $R$, an element $s\in R$, and $R$\+modules $M$ and~$C$.
\end{rem}

\begin{rem}
 The following observations, suggested to the author by the anonymous
referee, shed some additional light on
Lemma~\ref{gamma-s-lemma} and Theorem~\ref{delta-s-theorem}.
 Pick a derived category symbol $\star=\b$, $+$, $-$, or~$\varnothing$,
and consider the related (bounded or unbounded) derived category of
$R$\+modules $\D^\star(R\modl)$.
 The restriction-of-scalars functor $\D^\star(R[s^{-1}]\modl)\rarrow
\D^\star(R\modl)$ is a fully faithful embedding whose essential image
is the full subcategory in $\D^\star(R\modl)$ formed by all
the complexes in whose cohomology modules $s$~acts by automorphism.
 The functor $\D^\star(R[s^{-1}]\modl)\rarrow\D^\star(R\modl)$ has
adjoints on both sides, the left adjoint being the extension of
scalars $M^\bu\longmapsto R[s^{-1}]\ot_RM^\bu$ and the right adjoint
being the coextension of scalars $C^\bu\longmapsto
\boR\Hom_R(R[s^{-1}],C^\bu)$.
 The kernel of this extension-of-scalars functor is the full subcategory
$\D^\star_{s\tors}(R\modl)\subset\D^\star(R\modl)$ of complexes of
$R$\+modules with $s$\+torsion cohomology modules, while the kernel
of the coextension of scalars is the full subcategory
$\D^\star_{s\ctra}(R\modl)\subset\D^\star(R\modl)$ of complexes of
$R$\+modules with $s$\+contramodule cohomology modules
(cf.\ the proof of Lemma~\ref{tor-ext-torsion-contra-lem}).

 The embedding functor $\D^\star_{s\tors}(R\modl)\rarrow\D^\star(R\modl)$
has a right adjoint, which can be computed as the functor
$(R\to R[s^{-1}])[-1]\ot_R{-}$, while the embedding functor
$\D^\star_{s\ctra}(R\modl)\rarrow\D^\star(R\modl)$ has a left adjoint,
which is computed as the functor $\Hom_R(T^\bu(R;s),{-})$
(see~\cite[Section~3]{Pmgm} or~\cite[Section~4]{PMat}).
 The functor $\Gamma_s$ is the composition $R\modl\rarrow
\D^\star(R\modl)\rarrow\D^\star_{s\tors}(R\modl)\rarrow R\modl_{s\tors}$
of the functor $(R\to R[s^{-1}])[-1]\ot_R\nobreak{-}$ with the embedding
$R\modl\rarrow\D^\star(R\modl)$ and the degree-zero cohomology functor
$\D^\star_{s\tors}(R\modl)\rarrow R\modl_{s\tors}$, while the functor
$\Delta_s$ is the similar composition $R\modl\rarrow\D^\star(R\modl)
\rarrow\D^\star_{s\ctra}(R\modl)\rarrow R\modl_{s\ctra}$ of the functor
$\Hom_R(T^\bu(R;s),{-})$ with the degree-zero cohomology functor
$\D^\star_{s\ctra}(R\modl)\rarrow R\modl_{s\ctra}$.

 It follows that both the triangulated categories
$\D^\star_{s\tors}(R\modl)$ and $\D^\star_{s\ctra}(R\modl)$ are equivalent
to the quotient category $\D^\star(R\modl)/\D^\star(R[s^{-1}]\modl)$.
 The mutually inverse equivalences between $\D^\star_{s\tors}(R\modl)$
and $\D^\star_{s\ctra}(R\modl)$ are given by the restrictions of
the functors $(R\to R[s^{-1}])[-1]\ot_R{-}$ and
$\Hom_R(T^\bu(R;s),{-})$.
 The resulting two t\+structures on the triangulated category
$\D^\star_{s\tors}(R\modl)\simeq\D^\star_{s\ctra}(R\modl)$ are connected
by tilting with respect to torsion pairs (in the sense of
an appropriate generalization of~\cite[Section~1.2]{HRS}).
 The torsion pair in the abelian category $R\modl_{s\tors}$ consists
of the classes of $s$\+divisible $s$\+torsion $R$\+modules and
$s$\+reduced $s$\+torsion $R$\+modules.
 The torsion pair in the abelian category $R\modl_{s\ctra}$ consists
of the classes of $s$\+special $s$\+contramodule $R$\+modules and
$s$\+torsion-free $s$\+contramodule
$R$\+modules~\cite[Section~5]{PMat}.
\end{rem}

 Given a commutative ring $R$, an element $t\in R$, and an $R$\+module
$M$, we denote by ${}_tM\subset M$ the submodule of all elements
annihilated by~$t$ in~$M$.

 For any element $s$ in a commutative ring $R$, one can assign to
every $R$\+module $C$ two projective systems of $R$\+modules.
 One of them is formed by the $R$\+modules $C/s^nC$ and the natural
surjective morphisms between them.
 The other one consists of the $R$\+modules ${}_{s^n}C$ and
the multiplication maps $s\:{}_{s^{n+1}}C\rarrow{}_{s^n}C$.

 Let us denote by $\Lambda_s$ the $s$\+completion functor
$C\longmapsto\Lambda_s(C)=\varprojlim_{n\ge1}C/s^nC$.
 The following lemma provides a comparison between the functors
$\Lambda_s$ and~$\Delta_s$.

\begin{lem} \label{delta-s-lambda-s-short-sequence}
 Let $R$ be a commutative ring and $s\in R$ be an element.
 Then for any $R$\+module $C$ there is a natural short exact sequence
of $R$\+modules
$$
 0\lrarrow\varprojlim\nolimits^1_{n\ge1}\.{}_{s^n}C\lrarrow\Delta_s(C)
 \lrarrow\Lambda_s(C)\lrarrow 0.
$$
\end{lem}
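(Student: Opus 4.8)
The plan is to produce the natural map $\Delta_s(C)\rarrow\Lambda_s(C)$ by hand from the explicit description of $\Delta_s$ in Theorem~\ref{delta-s-theorem}(i), check that it is surjective, and then identify its kernel with $\varprojlim^1$ of the tower $({}_{s^n}C)$ --- the identification coming exactly from the freedom of choice of ``antiderivatives''. So I would write $\Delta_s(C)=\coker(\phi^s_C)$ with $\phi^s_C\:\prod_{n\ge1}C\rarrow\prod_{n\ge0}C$, \ $(c_n)_{n\ge1}\mapsto(b_n=c_n-sc_{n+1})_{n\ge0}$, \ $c_0=0$, and define $P\:\prod_{n\ge0}C\rarrow\varprojlim_{m\ge1}C/s^mC$ by $(b_n)_{n\ge0}\mapsto\bigl(\sum_{i=0}^{m-1}s^ib_i\bmod s^mC\bigr)_{m\ge1}$ (the partial sums are compatible under the projections $C/s^{m+1}C\rarrow C/s^mC$ since $s^mb_m\in s^mC$). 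A one-line computation gives $\sum_{i=0}^{m-1}s^i(c_i-sc_{i+1})=c_0-s^mc_m=-s^mc_m$, so $P\circ\phi^s_C=0$ and $P$ descends to $\pi\:\Delta_s(C)\rarrow\Lambda_s(C)$; equivalently, since $\Lambda_s(C)$ is an $s$\+contramodule by Lemma~\ref{I-completion-s-contra}, the map $\pi$ is the image of the completion map $\lambda_{s,C}$ under the adjunction isomorphism $\Hom_R(\Delta_s(C),\Lambda_s(C))\simeq\Hom_R(C,\Lambda_s(C))$ of Theorem~\ref{delta-s-theorem}. Surjectivity of $\pi$ is immediate: lift a compatible family $(\bar c_m)_{m\ge1}$ to elements $c_m\in C$ with $c_{m+1}-c_m\in s^mC$, put $c_0=0$ and $c_{m+1}-c_m=s^mb_m$, and note $\sum_{i=0}^{m-1}s^ib_i=c_m$.

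The heart of the argument is the kernel. We have $\ker\pi=Z/\im(\phi^s_C)$, where $Z\subset\prod_{n\ge0}C$ consists of all $(b_n)$ with $\sum_{i=0}^{m-1}s^ib_i\in s^mC$ for every $m\ge1$. For $(b_n)\in Z$ choose $c_m\in C$ with $\sum_{i=0}^{m-1}s^ib_i=s^mc_m$ (and $c_0=0$); subtracting the relations for consecutive $m$ shows $t_m:=c_m+b_m-sc_{m+1}\in{}_{s^m}C$. Changing the $c_m$ by elements $u_m\in{}_{s^m}C$ (necessarily with $u_0=0$) changes $(t_m)_m$ by $(u_m-su_{m+1})_m$, an element of the image of the endomorphism of $\prod_{m\ge1}{}_{s^m}C$ whose cokernel is $\varprojlim^1_{m\ge1}{}_{s^m}C$ for the tower with transition maps $s\:{}_{s^{m+1}}C\rarrow{}_{s^m}C$. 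Hence $(b_n)\mapsto[(t_m)_m]$ is a well-defined homomorphism $\Theta\:Z\rarrow\varprojlim^1_{m\ge1}{}_{s^m}C$. It is surjective: for given $(t_m)_m$, the sequence $b_0=0$, $b_m=t_m$ lies in $Z$ because $s^mt_m=0$, and the choice $c_m\equiv0$ returns $(t_m)_m$. And $\ker\Theta=\im(\phi^s_C)$: if $b_n=c_n-sc_{n+1}$, then taking the antiderivatives $-c_m$ makes all $t_m$ vanish; conversely, if $[(t_m)_m]=0$ then adjusting the $c_m$ makes $t_m=0$ for all $m$, i.e.\ $b_m=sc_{m+1}-c_m$, which displays $(b_n)$ in $\im(\phi^s_C)$. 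Thus $\Theta$ induces an isomorphism $\ker\pi=Z/\im(\phi^s_C)\overset{\sim}{\rarrow}\varprojlim^1_{m\ge1}{}_{s^m}C$, and assembling this with the surjectivity of $\pi$ gives the asserted short exact sequence; naturality in $C$ is visible because $\pi$ and $\Theta$ do not depend on the auxiliary choices.

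I expect the only genuinely delicate point to be the equality $\ker\Theta=\im(\phi^s_C)$ (as opposed to a mere inclusion): it is here that the submodules ${}_{s^n}C$ must be accounted for, precisely as the indeterminacy of the antiderivatives. As a sanity check, the same sequence can be obtained conceptually: since $R[s^{-1}]/R=\varinjlim_n(R/s^nR,\,{\cdot}\,s)$, the Milnor $\varprojlim^1$\+sequence for $\Ext^1_R$ of a countable direct limit yields $0\rarrow\varprojlim^1_n\Hom_R(R/s^nR,C)\rarrow\Ext^1_R(R[s^{-1}]/R,C)\rarrow\varprojlim_n\Ext^1_R(R/s^nR,C)\rarrow0$, and for $s$ a nonzero\+divisor this becomes the assertion after the identifications $\Hom_R(R/s^nR,C)={}_{s^n}C$, \ $\Ext^1_R(R/s^nR,C)=C/s^nC$ (with the expected transition maps) and $\Ext^1_R(R[s^{-1}]/R,C)=\Delta_s(C)$ of Theorem~\ref{delta-s-theorem}(iii); for general $s$ one falls back on the explicit argument above, or replaces $R\rarrow R[s^{-1}]$ by the telescope $T^\bu(R;s)$ of Remark~\ref{gamma-delta-zero-divisor-remark}.
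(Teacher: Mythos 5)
Your proof is correct, but your main argument takes a genuinely different route from the paper's. The paper filters the telescope complex $T^\bu(R;s)$ by its finite subtelescopes $T_n^\bu(R;s)$, which are homotopy equivalent to $(R\overset{s^n}\to R)$; since the tower of complexes $\Hom_R(T_n^\bu(R;s),C)$ has termwise surjective transition maps and $\varprojlim_n\Hom_R(T_n^\bu(R;s),C)=\Hom_R(T^\bu(R;s),C)$, the $\varprojlim$\+-$\varprojlim^1$ ``universal coefficient'' sequence immediately gives the claim after the identifications $H_1=\nobreak{}_{s^n}C$, \ $H_0=C/s^nC$, \ $H_0(\Hom_R(T^\bu(R;s),C))=\Delta_s(C)$ --- this is essentially the ``sanity check'' you sketch in your last paragraph (your Milnor-sequence version via $R[s^{-1}]/R=\varinjlim_n R/s^nR$ is the nonzero-divisor case of the same argument, and your telescope variant removes that hypothesis). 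Your primary argument instead works entirely by hand with the cokernel presentation of Theorem~\ref{delta-s-theorem}(i): the partial-sum map $P$, its surjectivity, and the identification of the kernel via the ``antiderivatives'' $c_m$ with $s^mc_m=\sum_{i<m}s^ib_i$, whose indeterminacy ${}_{s^m}C$ is precisely what produces $\varprojlim^1$ as the cokernel of $(u_m)\mapsto(u_m-su_{m+1})$ on $\prod_{m\ge1}{}_{s^m}C$. I checked the delicate points (the $m{=}1$ relation forcing $b_0=sc_1$, well-definedness and surjectivity of $\Theta$, and both inclusions in $\ker\Theta=\im\phi^s_C$, including the $n{=}0$ component after reindexing $d_m=-c_m$), and they all go through. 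What each approach buys: yours is elementary and self-contained, makes visible exactly where the $\varprojlim^1$ term comes from, and needs no homotopy-theoretic bookkeeping; the paper's is shorter given the machinery already in place and generalizes verbatim to finitely many generators, which is how Lemma~\ref{delta-I-lambda-I} is proved.
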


\begin{proof}
 Denote by $T_n^\bu(R;s)$, \,$n\ge1$ the subcomplex
\begin{equation} \label{s-telescope-subcomplex}
 \bigoplus\nolimits_{i=0}^{n-1} R\lrarrow
 \bigoplus\nolimits_{i=1}^n R
\end{equation}
of the complex~\eqref{s-telescope-morphism}.
 As in Remark~\ref{gamma-delta-zero-divisor-remark}, 
\,$T_n^\bu(R;s)$ is viewed as a complex concentrated in
the cohomological degrees~$0$ and~$1$.
 The complex $T_n^\bu(R;s)$ is homotopy equivalent to the two-term
complex of $R$\+modules $R\overset{s^n}\rarrow R$.
 The complexes $\Hom_R(T_n^\bu(R;s),C)$ form a projective system
with termwise surjective morphisms of complexes, hence there is
a short exact sequence of homology modules
\begin{setlength}{\multlinegap}{0pt}
\begin{multline*}
 0\lrarrow \varprojlim\nolimits_n^1H_1(\Hom_R(T_n^\bu(R;s),C)) \\
 \lrarrow H_0(\varprojlim\nolimits_n\Hom_R(T_n^\bu(R;s),C))\lrarrow
 \varprojlim\nolimits_n H_0(\Hom_R(T_n^\bu(R;s),C))\lrarrow0.
\end{multline*}
 Furthermore, we have
$$
 \varprojlim\nolimits_{n\ge1}\Hom_R(T_n^\bu(R;s),C)=
  \Hom_R(T^\bu(R;s),C).
$$
 It remains to recall that $H_0(\Hom_R(T^\bu(R;s),C))=\Delta_s(C)$
by Theorem~\ref{delta-s-theorem}(i), while
$H_1(\Hom_R(T^\bu_n(R;s),C))=H_1(\Hom_R((R\overset{s^n}\to R),\.C))
=\.{}_{s^n}C$ and $H_0(\Hom_R(T^\bu_n(R;s),\allowbreak C))=
H_0(\Hom_R((R\overset{s^n}\to R),\.C))=C/s^nC$.
 (See the proof of Lemma~\ref{delta-I-lambda-I} below for a more
detailed discussion.)
\end{setlength}
\end{proof}

\begin{cor} \label{delta-s-to-lambda-s}
 Let $R$ be a commutative ring and $s\in R$ an element.  Then \par
\textup{(a)} for any $R$\+module $C$, there is a natural surjective
$R$\+module morphism $\Delta_s(C)\rarrow\Lambda_s(C)$; \par
\textup{(b)} for any $s$\+torsion-free $R$\+module $C$, the morphism
$\Delta_s(C)\rarrow\Lambda_s(C)$ is an isomorphism.
\end{cor}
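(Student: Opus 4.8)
The plan is to read both assertions straight off Lemma~\ref{delta-s-lambda-s-short-sequence}, which already provides, for every $R$\+module $C$, a natural short exact sequence
$$
 0\lrarrow\varprojlim\nolimits^1_{n\ge1}\.{}_{s^n}C\lrarrow\Delta_s(C)
 \lrarrow\Lambda_s(C)\lrarrow 0 .
$$
Part~(a) is then immediate: the natural surjective morphism $\Delta_s(C)\rarrow\Lambda_s(C)$ is precisely the right-hand arrow of this sequence, and surjectivity is part of the statement of the lemma.

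For part~(b) I would note that $s$\+torsion-freeness of $C$ means exactly that the multiplication operator $s\:C\rarrow C$ is injective, so that ${}_{s^n}C=0$ for all $n\ge1$. Hence the projective system $({}_{s^n}C)_{n\ge1}$ (with the maps $s\:{}_{s^{n+1}}C\rarrow{}_{s^n}C$) is the zero system, and both $\varprojlim_{n\ge1}{}_{s^n}C$ and $\varprojlim^1_{n\ge1}{}_{s^n}C$ vanish. Substituting this into the short exact sequence above shows that $\Delta_s(C)\rarrow\Lambda_s(C)$ is an isomorphism.

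There is essentially no obstacle here; the only thing being used beyond pure formalities is the trivial fact that the derived limit $\varprojlim^1$ of a system of zero modules is zero. If one wishes to bypass $\varprojlim^1$ altogether, an alternative route is to recompute $\Delta_s(C)$ directly from construction~(i) in Theorem~\ref{delta-s-theorem} when $C$ is $s$\+torsion-free: the morphism $\phi^s_C\:\prod_{n\ge1}C\rarrow\prod_{n\ge0}C$ is then injective (a short tail argument using injectivity of~$s$), and one identifies its cokernel with $\varprojlim_{n}C/s^nC$ by matching a sequence $(b_n)_{n\ge0}$ against the partial sums $b_0+sb_1+\dotsb+s^{n-1}b_{n-1}$, exactly in the spirit of the proof of Theorem~\ref{s-contraadjusted-complete}. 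Either way the corollary follows at once.
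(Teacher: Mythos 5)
Your proposal is correct and follows essentially the same route as the paper, which likewise reads both assertions off the short exact sequence of Lemma~\ref{delta-s-lambda-s-short-sequence}, part~(b) coming from the vanishing of ${}_{s^n}C$ (hence of the $\varprojlim^1$ term) for $s$\+torsion-free~$C$. The paper additionally records an alternative deduction via the adjunctions and Theorem~\ref{s-contraadjusted-complete}, but that is only a supplementary remark, not a different core argument.
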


\begin{proof}
 Both assertions follow immediately from
Lemma~\ref{delta-s-lambda-s-short-sequence}.
 Alternatively, one can deduce part~(a) from
Theorem~\ref{s-contraadjusted-complete}(a) and
part~(b) from Theorem~\ref{s-contraadjusted-complete}(b),
as we will now explain.

 Part~(a): by Theorem~\ref{s-separ-complete-contra}(a), the category of
$s$\+separated and $s$\+complete $R$\+modules $R\modl_{s\secmp}$
is contained in the category of $s$\+contramodule $R$\+modules
$R\modl_{s\ctra}$, which in turn is contained in the category $R\modl$.
 By Theorem~\ref{lambda-adjoint}, the functor $\Lambda_s$ is left
adjoint to the embedding functor $R\modl_{s\secmp}\rarrow R\modl$,
while by Theorem~\ref{delta-s-theorem}, the functor $\Delta_s$ is left
adjoint to the embedding functor $R\modl_{s\ctra}\rarrow R\modl$.
 It follows that there is a natural transformation $\Delta_s\rarrow
\Lambda_s$ forming a commutative triangle with the adjunction morphisms
$$
 C\lrarrow\Delta_s(C)\lrarrow\Lambda_s(C)
$$
for any $R$\+module~$C$. 
 Furthermore, we have $\Lambda_s(\Delta_s(C))=\Lambda_s(C)$.
 Since $\Delta_s(C)$ is an $s$\+contramodule, it follows from
Theorem~\ref{s-contraadjusted-complete}(a) that $\Delta_s(C)$
is $s$\+complete, hence the morphism
$\Delta_s(C)\rarrow\Lambda_s(C)$ is surjective.

 Part~(b): in view of the proof of part~(a), it suffices to check that
the $R$\+module $\Delta_s(C)$ is $s$\+torsion-free for every
$s$\+torsion-free $R$\+module~$C$.
 According to Remark~\ref{gamma-delta-zero-divisor-remark}, we have
$\Delta_s(B)=\Hom_{\D^\b(R\modl)}((R\to R[s^{-1}]),\.B[1])$ for any
$R$\+module~$B$.
 Hence from the short exact sequence $0\rarrow C\rarrow C\rarrow
C/sC\rarrow0$ we obtain a long exact sequence
\begin{multline*}
 \dotsb\lrarrow\Hom_{\D^\b(R\modl)}((R\to R[s^{-1}]),\.C/sC) \\ \lrarrow
 \Delta_s(C)\overset s\lrarrow\Delta_s(C)\lrarrow\Delta_s(C/sC)
 \lrarrow0.
\end{multline*}
 It remains to notice that $\Hom_{\D^\b(R\modl)}((R\to R[s^{-1}]),\.D)=
\Hom_R(R[s^{-1}]/R,D)$ for any $R$\+module $D$, and the right-hand side
vanishes when $sD=0$.
\end{proof}

\begin{rem} \label{lambda-s-torsion-free}
 One can also show that the natural morphism $\Delta_s(C)\rarrow
\Lambda_s(C)$ is an isomorphism whenever the $R$\+module
$\Lambda_s(C)$ is $s$\+torsion-free.
 Indeed, both $\Delta_s(C)$ and $\Lambda_s(C)$ are $s$\+contramodules,
so the kernel $K$ of the morphism in question is an $s$\+contramodule,
too.
 Furthermore, the morphism $\Delta_s(C)/s\Delta_s(C)\rarrow
\Lambda_s(C)/s\Lambda_s(C)$ is always an isomorphism, because for
any $R$\+module $M$ with $sM=0$ one has
\begin{alignat*}{3}
 &\Hom_R(\Delta_s(C)/s\Delta_s(C),\.M) &&=
 \Hom_R(\Delta_s(C),M) &&=\Hom_R(C,M) \\
 &\Hom_R(\Lambda_s(C)/s\Lambda_s(C),\.M) &&=
 \Hom_R(\Lambda_s(C),M) &&=\Hom_R(C,M).
\end{alignat*}
 Assuming that $\Lambda_s(C)$ is $s$\+torsion-free, from the exact
sequence $0\rarrow K\rarrow\Delta_s(C)\rarrow\Lambda_s(C)\rarrow0$
we get an exact sequence $0\rarrow K/sK\rarrow\Delta_s(C)/s\Delta_s(C)
\allowbreak\rarrow\Lambda_s(C)/s\Lambda_s(C)\rarrow0$.
 Hence $K=sK$ and it follows that $K=0$.
 (See Lemma~\ref{tor-lambda-nakayama} below for a generalization.)

 More generally, an $R$\+module $C$ is said to have \emph{bounded
$s$\+torsion} if there exists $m\ge1$ such that $s^nc=0$ implies
$s^mc=0$ for all $n\ge1$ and $c\in C$.
 One can observe that $\varprojlim_{n\ge1}^1\.{}_{s^n}C=0$ whenever
the $s$\+torsion in $C$ is bounded.
 By Lemma~\ref{delta-s-lambda-s-short-sequence}, the natural morphism
$\Delta_s(C)\rarrow\Lambda_s(C)$ is an isomorphism in this case.
 Hence it follows that the morphism $\Delta_s(C)\rarrow\Lambda_s(C)$
is an isomorphism whenever the $R$\+module $\Delta_s(C)$ has
bounded $s$\+torsion.

 Conversely, assume that the $R$\+module $\Lambda_s(C)$ has bounded
$s$\+torsion.
 Then, for every $n\ge1$, from the exact sequence $0\rarrow K
\rarrow\Delta_s(C)\rarrow\Lambda_s(C)\rarrow0$ we get an exact
sequence ${}_{s^n}\Lambda_s(C)\rarrow K/s^nK\rarrow\Delta_s(C)/s^n
\Delta_s(C)\rarrow\Lambda_s(C)/s^n\Lambda_s(C)\rarrow0$,
where ${}_tM$ denotes the submodule annihilated by an element $t\in R$
in an $R$\+module~$M$.
 For any $n\ge k$, we have a morphism of such exact sequences
with the maps $M/s^nM\rarrow M/s^kM$ being the natural surjections
and the map ${}_{s^n}\Lambda_s(C)\rarrow{}_{s^k}\Lambda_s(C)$ being
the multiplication with $s^{n-k}$.
 Since the map $\Delta_s(C)/s^n\Delta_s(C)\allowbreak\rarrow
\Lambda_s(C)/s^n\Lambda_s(C)$ is an isomorphism, the map
${}_{s^n}\Lambda_s(C)\rarrow K/s^nK$ is surjective.
 So is the map $K/s^nK\rarrow K/sK$.
 Now if the $s$\+torsion in $\Lambda_s(C)$ is bounded, then the map
${}_{s^n}\Lambda_s(C)\rarrow{}_s\.\Lambda_s(C)$ vanishes for $n$ large
enough, and it follows from the commutativity of the diagram that
$K/sK=0$.
 Hence $K=0$.
\end{rem}

 Now we deduce a corollary providing a ``non-na\"\i ve''
version of the contraadjustedness criterion from
Theorem~\ref{s-contraadjusted-complete}.

\begin{cor} \label{s-contraadjusted-criterion}
 Let $R$ be a commutative ring, $s\in R$ an element, and $C$
an $R$\+module.  Then \par
\textup{(a)} an $R$\+module $C$ has no $s$\+divisible submodules if
and only if the adjunction morphism $C\rarrow\Delta_s(C)$ is injective;
\par
\textup{(b)} an $R$\+module $C$ is $s$\+contraadjusted if and only if
the adjunction morphism $C\rarrow\Delta_s(C)$ is surjective.
\end{cor}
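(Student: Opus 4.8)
The plan is to work throughout with the explicit model of $\Delta_s(C)$ from Theorem~\ref{delta-s-theorem}(i): writing $\theta^s_C\:\prod_{n\ge0}C\rarrow\prod_{n\ge0}C$ for the map $(b_n)_{n\ge0}\mapsto(b_n-sb_{n+1})_{n\ge0}$ whose kernel and cokernel compute $\Hom_R(R[s^{-1}],C)$ and $\Ext^1_R(R[s^{-1}],C)$ (Lemma~\ref{ext-r-s-minus-1-computed} and its proof), the morphism $\phi^s_C$ defining $\Delta_s(C)$ is exactly the restriction of $\theta^s_C$ to the subgroup $\{(b_n):b_0=0\}$, and the unit $\delta_{s,C}$ sends $c$ to the class of $(c,0,0,\dots)$. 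First I would record the one-line identity $(b_n-sb_{n+1})_{n\ge0}=(b_0,0,0,\dots)+\phi^s_C(b_1,b_2,b_3,\dots)$, which shows that $\im\theta^s_C$ is the sum of $\im\phi^s_C$ with the embedded copy $C\oplus0\oplus0\oplus\cdots$. Passing to cokernels, this yields a natural exact sequence $C\overset{\delta_{s,C}}\rarrow\Delta_s(C)\rarrow\Ext^1_R(R[s^{-1}],C)\rarrow0$.

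Part~(b) is then immediate: $\delta_{s,C}$ is surjective if and only if $\Ext^1_R(R[s^{-1}],C)=0$, i.e.\ if and only if $C$ is $s$\+contraadjusted. (If one prefers to stay closer to the ground, surjectivity of $\delta_{s,C}$ says that for each sequence $(b_n)_{n\ge0}$ one can find $c$ and $(c_n)_{n\ge1}$ with $b_0-c=-sc_1$ and $c_n-sc_{n+1}=b_n$ for $n\ge1$; the first equation just pins down $c$, so the content is the solvability of the system~\eqref{a-b-main-equation-system}, which by Lemma~\ref{ext-r-s-minus-1-computed}(a) amounts to $C$ being $s$\+contraadjusted.)

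For part~(a) I would compute $\ker\delta_{s,C}$ by hand. An element $c$ lies in it precisely when $(c,0,0,\dots)\in\im\phi^s_C$, i.e.\ when there is a sequence $c_1,c_2,c_3,\dots$ with $c=-sc_1$ and $c_n=sc_{n+1}$ for all $n\ge1$; prepending $c_0:=-c$ makes this a sequence $(c_n)_{n\ge0}$ with $c_n=sc_{n+1}$ for all $n\ge0$, i.e.\ (by the proof of Lemma~\ref{ext-r-s-minus-1-computed}) an element of $\Hom_R(R[s^{-1}],C)=\ker\theta^s_C$, with $c$ equal to minus its value at $1\in R\subset R[s^{-1}]$. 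Hence $\ker\delta_{s,C}$ is the image of the evaluation map $\Hom_R(R[s^{-1}],C)\rarrow C$. Exactly as in the computation of the maximal divisible subgroup recalled in the Introduction, this image is the maximal $s$\+divisible submodule of $C$: it is $s$\+divisible, being a sum of homomorphic images of the $s$\+divisible module $R[s^{-1}]$, and any $s$\+divisible submodule lies inside it, since an element $d$ of such a submodule equals $\psi(1)$ for a suitable homomorphism $\psi\:R[s^{-1}]\rarrow C$. Therefore $\delta_{s,C}$ is injective if and only if $C$ has no nonzero $s$\+divisible submodule.

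I do not expect a genuine obstacle. The one place to be slightly careful is in part~(a): one must see that $\ker\delta_{s,C}=0$ matches the vanishing of $\Hom_R(R[s^{-1}],C)$ itself, and not merely of its image in $C$ under evaluation at $1$. The maximal-$s$\+divisible-submodule description settles this; equivalently, one can note that a shift $(c_{N+i})_{i\ge0}$ of any nonzero sequence with $c_n=sc_{n+1}$ still satisfies the same relations and has nonzero leading term, so $\Hom_R(R[s^{-1}],C)\ne0$ already forces $\ker\delta_{s,C}\ne0$. Alternatively, both parts can be read off at once from the four-term exact sequence $\Hom_R(R[s^{-1}],C)\rarrow C\rarrow\Delta_s(C)\rarrow\Ext^1_R(R[s^{-1}],C)\rarrow0$ obtained by applying $\boR\Hom_R(-,C)$ to $R\rarrow R[s^{-1}]\rarrow R[s^{-1}]/R$, using the identification of $\Delta_s$ in Remark~\ref{gamma-delta-zero-divisor-remark}.
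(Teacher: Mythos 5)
Your proof is correct, and it takes a more hands-on route than the paper. The paper first assumes $s$~is a nonzero-divisor, identifies $\Delta_s(C)$ with $\Ext^1_R(R[s^{-1}]/R,\.C)$ via Theorem~\ref{delta-s-theorem}(iii), and reads off both assertions from the long exact sequence of $\Ext$ applied to $0\rarrow R\rarrow R[s^{-1}]\rarrow R[s^{-1}]/R\rarrow0$; the general case is then handled either by replacing $R$ with $\Z[s]$ (the properties depend only on the action of~$s$) or by rerunning the argument in $\D^\b(R\modl)$ with the cone $(R\to R[s^{-1}])$ in place of the quotient module. You instead stay with the telescope presentation of Theorem~\ref{delta-s-theorem}(i) and verify at the chain level that $\im\theta^s_C=\im\phi^s_C+(C\oplus0\oplus\dotsb)$, which produces the same four-term exact sequence $\Hom_R(R[s^{-1}],C)\rarrow C\rarrow\Delta_s(C)\rarrow\Ext^1_R(R[s^{-1}],C)\rarrow0$ directly, uniformly in~$s$, with no case distinction, no change of rings, and no derived categories. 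This buys self-containedness and also forces you to address explicitly the one point the paper leaves implicit in part~(a): that vanishing of the image of the evaluation map $\Hom_R(R[s^{-1}],C)\rarrow C$ is equivalent to vanishing of $\Hom_R(R[s^{-1}],C)$ itself, which you settle correctly by identifying that image with the maximal $s$\+divisible submodule (or by the shift argument). One small caveat: your closing alternative, applying $\boR\Hom_R({-},C)$ to ``$R\rarrow R[s^{-1}]\rarrow R[s^{-1}]/R$'', is literally a short exact sequence only when $s$~is a nonzero-divisor; in general one must use the distinguished triangle with the two-term complex $(R\to R[s^{-1}])$ as in Remark~\ref{gamma-delta-zero-divisor-remark}, which is exactly the paper's second argument. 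This does not affect your main proof.
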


\begin{proof}
 First let us consider the case when $s$ is not a zero-divisor in~$R$.
 Then from the short exact sequence
$$
 0\lrarrow R\lrarrow R[s^{-1}]\lrarrow R[s^{-1}]/R\lrarrow0
$$
we obtain the long exact sequence of $\Ext$ groups
\begin{multline*}
 0\lrarrow\Hom_R(R[s^{-1}]/R,C)\lrarrow\Hom_R(R[s^{-1}],C) \\*
 \lrarrow C\lrarrow\Ext^1(R[s^{-1}]/R,C)\lrarrow
 \Ext^1_R(R[s^{-1}],C)\lrarrow0
\end{multline*}
 Recalling that $\Ext^1_R(R[s^{-1}]/R,C)=\Delta_s(C)$ by
Theorem~\ref{delta-s-theorem}(iii), we obtain both
the assertions (a) and~(b).
 In the general case, it suffices to notice that the properties
in question do not depend on the $R$\+module structure on $C$,
but only on the action of the element~$s$; so a ring $R$ can be
replaced with the polynomial ring~$\Z[s]$.

 Alternatively, the argument in the general case can be similar,
except that one has to use the cone in lieu of the cokernel.
 Denote by $(R\to R[s^{-1}])\in\D^\b(R\modl)$ the derived category
object represented by the two-term complex $R\rarrow R[s^{-1}]$, which
is quasi-isomorphic to the complex~\eqref{s-telescope-morphism}.
 Placing the complex $R\rarrow R[s^{-1}]$ in the cohomological
degrees~$-1$ and~$0$, one has a distinguished triangle
$$
 R\lrarrow R[s^{-1}]\lrarrow (R\to R[s^{-1}])\lrarrow R[1]
$$
in $\D^\b(R\modl)$.
 Hence the related long exact sequence of triangulated $\Hom$
\begin{multline*}
 \dotsb\lrarrow\Hom_{\D^\b(R\modl)}(R[s^{-1}],C)\lrarrow
 \Hom_{\D^\b(R\modl)}(R,C) \\ \lrarrow
 \Hom_{\D^\b(R\modl)}((R\to R[s^{-1}]),\>C[1]) \\ \lrarrow
 \Hom_{\D^\b(R\modl)}(R[s^{-1}],C[1])\lrarrow\Hom_{\D^\b(R\modl)}(R,C[1])
 \lrarrow\dotsb,
\end{multline*}
with $\Hom_{\D^\b(R\modl)}(R[s^{-1}],C)=\Hom_R(R[s^{-1}],C)$, \
$\Hom_{\D^\b(R\modl)}(R,C)=\Hom_R(R,\allowbreak C)=C$, \
$\Hom_{\D^\b(R\modl)}(R[s^{-1}],C[1])=\Ext^1_R(R[s^{-1}],C)$, and
$\Hom_{\D^\b(R\modl)}(R,C[1])\allowbreak=\Ext^1_R(R,C)=0$.
 Finally, notice the isomorphism
$$
 \Hom_{\D^\b(R\modl)}((R\to R[s^{-1}]),\>C[1]) = \Delta_s(C)
$$
from Remark~\ref{gamma-delta-zero-divisor-remark}.
 Now exactness of the long sequence implies both~(a) and~(b).
\end{proof}

\begin{rem} \label{bounded-torsion-remark}
 Combining Corollaries~\ref{delta-s-to-lambda-s}
and~\ref{s-contraadjusted-criterion}(b), one can obtain a new
proof of Theorem~\ref{s-contraadjusted-complete}.
 Similarly, Theorem~\ref{s-separ-complete-contra}(b) follows
from Corollary~\ref{delta-s-to-lambda-s}.

 More generally, the morphism $\Delta_s(C)\rarrow\Lambda_s(C)$ is
an isomorphism when the $s$\+torsion in $C$ is bounded (see
Remark~\ref{lambda-s-torsion-free}).
 This allows to weaken the ``$s$\+torsion-free'' assumption to
``bounded $s$\+torsion'' in the assertions of
Theorems~\ref{s-contraadjusted-complete}(b)
and~\ref{s-separ-complete-contra}(b).
 Similarly, it follows from
Corollary~\ref{s-contraadjusted-criterion}(a) that any $R$\+module
with bounded $s$\+torsion and without $s$\+divisible submodules is
$s$\+separated (cf.\ Remark~\ref{reduced-separated-remark}).
\end{rem}

\Section{$I$-Torsion Modules and the Functor $\Gamma_I$, \\*
$I$-Contramodules and the Functor $\Delta_I$}
\label{functor-delta-I-second-secn}

 Let $I$ be an ideal in a commutative ring~$R$.
 An $R$\+module $M$ is said to be \emph{$I$\+torsion} if it is
$s$\+torsion for every $s\in I$.
 Clearly, it suffices to check the latter condition for some set
of generators of the ideal~$I$: if $I$ is generated by some
elements~$s_j$ and $M$ is $s_j$\+torsion for every~$j$, then $M$
is $I$\+torsion.
 The dual-analogous contramodule version of this observation is
Theorem~\ref{ideal-contramodule-thm} (to be proved again below
in this section).

 Let $R\modl_{I\tors}\subset R\modl$ denote the full subcategory
of $I$\+torsion $R$\+modules.
 Denote by $\Gamma_I(M)$ the maximal $I$\+torsion submodule of
an $R$\+module~$M$.
 Then the functor $\Gamma_I$ is right adjoint to the fully faithful
embedding $R\modl_{I\tors}\rarrow R\modl$.

 As the category $R\modl_{I\tors}$ is abelian and its embedding functor
$R\modl_{I\tors}\rarrow R\modl$ is exact, the functor $\Gamma_I$ (viewed
either as a functor $R\modl\rarrow R\modl_{I\tors}$ or as a functor
$R\modl\rarrow R\modl$) is left exact.
 It also preserves infinite direct sums; and the full subcategory
$R\modl_{I\tors}\subset R\modl$ is closed under subobjects, quotients,
extensions, and infinite direct sums in $R\modl$.

 We recall the notation $T^\bu(R;s)$ for
the complex~\eqref{s-telescope-morphism}
(see Remark~\ref{gamma-delta-zero-divisor-remark}).
 Furthermore, we set
$$
 T^\bu(R;s_1,\dotsc,s_m) = T^\bu(R;s_1)\ot_R\dotsb\ot_R T^\bu(R;s_m).
$$
 Hence $T^\bu(R;s_1,\dotsc,s_m)$ is a complex of countably-generated
free $R$\+modules concentrated in the cohomological
degrees~$0$,~\dots,~$m$.
 Since the complex $T^\bu(R;s_j)$ is quasi-isomorphic to the two-term
complex $R\rarrow R[s_j^{-1}]$, the complex $T^\bu(R;s_1,\dotsc,s_m)$
is quasi-isomorphic to the complex
\begin{equation} \label{augmented-cech}
 \check C\sptilde(R;s_1,\dotsc,s_m)=(R\rarrow R[s_1^{-1}])
 \ot_R\ot\dotsb\ot_R(R\rarrow R[s_m^{-1}]),
\end{equation}
which looks explicitly as
$$
 R\lrarrow\bigoplus\nolimits_{j=1}^m R[s_j^{-1}]\lrarrow
 \bigoplus\nolimits_{j'<j''} R[s_{j'}^{-1},s_{j''}^{-1}]
 \lrarrow\dotsb\lrarrow R[s_1^{-1},\dotsc,s_m^{-1}].
$$
 For the reasons discussed below in Remark~\ref{cech-remark}, we call
the complex $\check C\sptilde(R;s_1,\dotsc,s_m)$ the \emph{augmented
\v Cech complex} of the ring $R$ with the elements $s_1$,~\dots,~$s_m$.
 This is a complex of flat (in fact, even very flat,
see~\cite{Pcosh,ST} for the definition) $R$\+modules concentrated
in the cohomological degrees~$0$,~\dots,~$m$.

\begin{lem}  \label{gamma-I-lemma}
 For any $R$\+module $M$, the following $R$\+modules are naturally
isomorphic to each other:
\begin{enumerate}
\renewcommand{\theenumi}{\roman{enumi}}
\item the maximal $I$\+torsion submodule\/ $\Gamma_I(M)$;
\item the maximal $s_1$-, $s_2$-,~\dots, and $s_m$\+torsion
submodule\/ $\Gamma_{s_m}\dotsm\Gamma_{s_2}\Gamma_{s_1}(M)$;
\item the cohomology module $H^0(T^\bu(R;s_1,\dotsc,s_m)\ot_R M)$.
\end{enumerate}
\end{lem}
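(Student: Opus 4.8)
The plan is to prove the chain of natural isomorphisms (i)$\,\simeq\,$(ii)$\,\simeq\,$(iii), treating (i)$\simeq$(ii) as a formal fact about maximal torsion submodules and obtaining (i)$\simeq$(iii) by computing the degree-zero cohomology of the augmented \v Cech complex tensored with~$M$.

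First I would check (i)$\simeq$(ii). The submodule $\Gamma_{s_m}\dotsm\Gamma_{s_2}\Gamma_{s_1}(M)\subset M$ is $s_j$\+torsion for every~$j$: the application of $\Gamma_{s_j}$ produces an $s_j$\+torsion module, and the subsequent functors $\Gamma_{s_{j+1}},\dotsc,\Gamma_{s_m}$ only pass to submodules, which remain $s_j$\+torsion. Conversely, any submodule $N\subset M$ that is $s_j$\+torsion for all~$j$ satisfies $N\subset\Gamma_{s_1}(M)$, hence $N\subset\Gamma_{s_2}\Gamma_{s_1}(M)$, and so on, so that $N\subset\Gamma_{s_m}\dotsm\Gamma_{s_1}(M)$. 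Therefore $\Gamma_{s_m}\dotsm\Gamma_{s_1}(M)$ is the largest submodule of $M$ that is $s_j$\+torsion for all~$j$, which — since $I$ is generated by $s_1,\dotsc,s_m$ and an $R$\+module is $I$\+torsion if and only if it is $s_j$\+torsion for every~$j$ — is precisely the maximal $I$\+torsion submodule $\Gamma_I(M)$. All of these identifications take place inside $M$ and are visibly natural in~$M$.

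For (i)$\simeq$(iii), I would use that $T^\bu(R;s_1,\dotsc,s_m)$ is quasi-isomorphic to the augmented \v Cech complex $\check C\sptilde(R;s_1,\dotsc,s_m)$ via a morphism of bounded complexes of flat $R$\+modules. The cone of this morphism is a bounded acyclic complex of flat $R$\+modules, and therefore it remains acyclic after applying ${-}\ot_R M$; hence $H^0(T^\bu(R;s_1,\dotsc,s_m)\ot_R M)\simeq H^0(\check C\sptilde(R;s_1,\dotsc,s_m)\ot_R M)$, naturally in~$M$. Now $\check C\sptilde(R;s_1,\dotsc,s_m)\ot_R M$ begins with the map $M\rarrow\bigoplus_{j=1}^m\bigl(R[s_j^{-1}]\ot_R M\bigr)$, so its degree-zero cohomology is $\bigcap_{j=1}^m\ker\bigl(M\rarrow R[s_j^{-1}]\ot_R M\bigr)=\bigcap_{j=1}^m\Gamma_{s_j}(M)$, which (again by the reduction to generators) is exactly $\Gamma_I(M)$. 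The case $m=1$ is precisely Lemma~\ref{gamma-s-lemma}: the two-term complex $T^\bu(R;s)\ot_R M$ has differential $\psi_s^M$, with kernel $\Gamma_s(M)$.

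A more hands-on alternative for (iii), relying only on Lemma~\ref{gamma-s-lemma} and induction on~$m$, writes $T^\bu(R;s_1,\dotsc,s_m)\ot_R M$ as the total complex of the double complex $T^\bu(R;s_1)\ot_R P^\bu$, where $P^\bu=T^\bu(R;s_2,\dotsc,s_m)\ot_R M$. Since $T^\bu(R;s_1)$ is concentrated in degrees $0$ and~$1$, the degree-zero cohomology of the total complex is the intersection, inside $\bigoplus_{n\ge0}P^0$, of $\ker\psi_{s_1}^{P^0}=\Gamma_{s_1}(P^0)$ with $\bigoplus_{n\ge0}H^0(P^\bu)$; as $H^0(P^\bu)$ is an $R$\+submodule of $P^0$, this intersection is $\Gamma_{s_1}\bigl(H^0(P^\bu)\bigr)$, which by the induction hypothesis together with (i)$\simeq$(ii) equals $\Gamma_I(M)$. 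The only step that demands any care is the invariance of the quasi-isomorphism under ${-}\ot_R M$ (or, in the inductive variant, the bookkeeping that identifies the intersection of the two submodules of $\bigoplus_{n\ge0}P^0$ with $\Gamma_{s_1}(H^0(P^\bu))$); everything else is routine.
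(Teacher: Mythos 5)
Your proposal is correct and follows essentially the same route as the paper: the identification (i)${}={}$(ii) is the same formal observation about iterated maximal torsion submodules, and your main argument for (i)${}\simeq{}$(iii) via the augmented \v Cech complex (with the flat-cone justification that the quasi-isomorphism survives ${-}\ot_RM$, a point the paper leaves implicit) is exactly the paper's first comparison, while your ``hands-on'' inductive alternative is just an explicit unwinding of the paper's (ii)${}\simeq{}$(iii) argument based on $H^0(T^\bu\ot_RK^\bu)\simeq H^0(T^\bu\ot_RH^0(K^\bu))$ and Lemma~\ref{gamma-s-lemma}. No gaps.
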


\begin{proof}
 (i) and~(ii) are clearly the same submodule in~$M$.

 (i)${}\simeq{}$(iii): The complex $T^\bu(R;s_1,\dotsc,s_m)\ot_RM$
is quasi-isomorphic to the complex
$\check C\sptilde(R;s_1,\dotsc,s_m)\ot_R M$, which starts as
$M\rarrow \bigoplus_{j=1}^mM[s_j^{-1}]$.
 The kernel of the latter morphism coincides with $\Gamma_I(M)$.

 (ii)${}\simeq{}$(iii): For any complex of $R$\+modules $K^\bu$
concentrated in the cohomological degrees~$\ge0$ and any
complex of flat $R$\+modules $T^\bu$ concentrated in
the cohomological degrees~$\ge0$, one has
$$
 H^0(T^\bu\ot_R K^\bu)\simeq H^0(T^\bu\ot_R H^0(K^\bu)).
$$
 By Lemma~\ref{gamma-s-lemma}\,(i)$\simeq$(ii), we have
$H^0(T^\bu(R;s_j)\ot_RM)\simeq \Gamma_{s_j}(M)$.
 Hence
\begin{multline*}
 H^0\bigl(T^\bu(R;s_1,s_2)\ot_RM\bigr) \simeq
 H^0\bigl(T^\bu(R;s_2)\ot_R\bigl(T^\bu(R;s_1)\ot_RM\bigr)\bigr) \\
 \simeq H^0\bigl(T^\bu(R;s_2)\ot_R
 H^0\bigl(T^\bu(R;s_1)\ot_R M\bigr)\bigr) \\ \simeq
 H^0\bigl(T^\bu(R;s_2)\ot_R \Gamma_{s_1}(M)\bigr)\simeq
 \Gamma_{s_2}\Gamma_{s_1}(M),
\end{multline*}
etc.
 The argument finishes by induction on~$j$.
\end{proof}

 Let $I\subset R$ be the ideal generated by a finite
set of elements $s_1$,~\dots,~$s_m$.
 As our aim is to prove Theorem~\ref{ideal-contramodule-thm}
rather than just use it, let us introduce the temporary notation
$R\modl_{[s_1,\dotsc,s_m]\ctra}\subset R\modl$ for the full
subcategory of all $R$\+modules that are $s_j$\+contramodules
for every $1\le j\le m$.
 (When our proof is finished, we will switch to the permanent notation
$R\modl_{[s_1,\dotsc,s_m]\ctra}=R\modl_{I\ctra}$.)

 According to Theorem~\ref{ext-0-1-orthogonal}(a), the category
$R\modl_{[s_1,\dotsc,s_m]\ctra}$ is abelian and its embedding functor
$R\modl_{[s_1,\dotsc,s_m]\ctra}\rarrow R\modl$ is exact.
 The full subcategory $R\modl_{[s_1,\dotsc,s_m]\ctra}$ is also closed
under infinite products in $R\modl$.

\begin{thm} \label{delta-s1-sm-theorem}
 For any $R$\+module $C$, the following $R$\+modules are
naturally isomorphic to each other:
\begin{enumerate}
\renewcommand{\theenumi}{\roman{enumi}}
\item the module\/ $\Delta_{s_m}\dotsm\Delta_{s_2}\Delta_{s_1}(C)$;
\item the quotient module
$$
 C[[z_1,\dotsc,z_m]]\Big/
 \sum\nolimits_{j=1}^m(z_j-s_j)C[[z_1,\dotsc,z_m]]
$$
of the module of formal power series $C[[z_1,\dotsc,z_m]]$ in~$n$
variables $z_1$,~\dots, $z_m$ with coefficients in $C$ by the sum of
the images of the operators
$$
 z_1-s_1,\, \ z_2-s_2, \ \dotsc, \ z_m-s_m\:
 C[[z_1,\dotsc,z_m]]\rarrow C[[z_1,\dotsc,z_m]];
$$
\item the homology module $H_0(\Hom_R(T^\bu(R;s_1,\dotsc,s_m),C))$.
\end{enumerate}
 Denote the $R$\+module produced by either of
the constructions~(i)\+-(iii) by\/~$\Delta_{s_1,\dotsc,s_m}(C)$.
 Then the functor\/ $\Delta_{s_1,\dotsc,s_m}\:R\modl\rarrow
R\modl_{[s_1,\dotsc,s_m]\ctra}$ is left adjoint to the fully faithful
embedding functor $R\modl_{[s_1,\dotsc,s_m]\ctra}\rarrow R\modl$.
\end{thm}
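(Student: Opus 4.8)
The plan is to bootstrap everything from the one-variable Theorem~\ref{delta-s-theorem} by induction on~$m$, so that $\Delta_{s_1,\dotsc,s_m}$ becomes identified with the iterated composite $\Delta_{s_m}\dotsm\Delta_{s_1}$ of part~(i). First I would prove the isomorphism (i)${}\simeq{}$(ii). Applying Theorem~\ref{delta-s-theorem}(ii) and using that the formal-power-series functor $N\longmapsto N[[z]]=\prod_{n\ge0}N$ is exact --- so that for a submodule $N'\subset N$ one has $N'[[z]]\subset N[[z]]$ with quotient $(N/N')[[z]]$, and (the index set being finite) $M[[z_j]]$ for $M=C[[z_1,\dotsc,z_{j-1}]]\big/\sum_{i<j}(z_i-s_i)C[[z_1,\dotsc,z_{j-1}]]$ becomes $C[[z_1,\dotsc,z_j]]$ modulo $\sum_{i<j}(z_i-s_i)C[[z_1,\dotsc,z_j]]$, with $s_j$ acting on coefficients --- one obtains by induction on $j$
\[
 \Delta_{s_j}\dotsm\Delta_{s_1}(C)=C[[z_1,\dotsc,z_j]]\Big/\sum\nolimits_{i=1}^j(z_i-s_i)\.C[[z_1,\dotsc,z_j]].
\]
The case $j=m$ is (ii), and the base case $j=1$ is Theorem~\ref{delta-s-theorem}(ii) itself.

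Next, (i)${}\simeq{}$(iii). Writing $T^\bu(R;s_1,\dotsc,s_m)=T^\bu(R;s_1,\dotsc,s_{m-1})\ot_RT^\bu(R;s_m)$ and using the symmetry of $\ot_R$ together with the chain-level tensor--Hom adjunction, one gets
\[
 \Hom_R(T^\bu(R;s_1,\dotsc,s_m),C)\simeq\Hom_R\bigl(T^\bu(R;s_m),\.\Hom_R(T^\bu(R;s_1,\dotsc,s_{m-1}),C)\bigr).
\]
The inner complex $Q^\bu=\Hom_R(T^\bu(R;s_1,\dotsc,s_{m-1}),C)$ is concentrated in homological degrees $0,\dotsc,m-1$, so it suffices to establish the statement \emph{dual} to the key observation in the proof of Lemma~\ref{gamma-I-lemma}: for any complex $Q^\bu$ concentrated in non-negative homological degrees, $H_0(\Hom_R(T^\bu(R;s),Q^\bu))\simeq\Delta_s(H_0(Q^\bu))$. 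This is a short direct computation: since $T^\bu(R;s)$ has only the two free terms $\bigoplus_{n\ge0}R$ and $\bigoplus_{n\ge1}R$, the complex $\Hom_R(T^\bu(R;s),Q^\bu)$ again lives in non-negative homological degrees, so its $H_0$ is the cokernel of the differential from homological degree~$1$; unwinding this, $H_0$ is $\prod_{n\ge0}Q_0$ modulo the subgroup generated by the termwise image of $d\:Q_1\rarrow Q_0$ and by $\im(\phi^s_{Q_0})$, and dividing out first by the product of boundaries $\prod_{n\ge0}\im(d)$ turns this into $\coker(\phi^s_{H_0(Q^\bu)})=\Delta_s(H_0(Q^\bu))$. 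Combined with the induction hypothesis $H_0(Q^\bu)=\Delta_{s_{m-1}}\dotsm\Delta_{s_1}(C)$ this gives (iii)${}\simeq{}$(i), the base case $m=1$ being the identity $H_0(\Hom_R(T^\bu(R;s_1),C))=\Delta_{s_1}(C)$ of Theorem~\ref{delta-s-theorem}(i).

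It remains to prove the adjunction. Put $\Delta=\Delta_{s_m}\dotsm\Delta_{s_1}$. First, $\Delta$ takes values in $R\modl_{[s_1,\dotsc,s_m]\ctra}$: each $\Delta_{s_j}(M)=\coker(\phi^{s_j}_M)$ is an $s_j$\+contramodule by Theorem~\ref{delta-s-theorem}, and $\Delta_{s_j}$ also sends $s_i$\+contramodules to $s_i$\+contramodules for $i\ne j$, because $\phi^{s_j}_M$ is a morphism between the infinite products $\prod_{n\ge1}M$ and $\prod_{n\ge0}M$, which are $s_i$\+contramodules whenever $M$ is, and the cokernel of a morphism of $s_i$\+contramodules is an $s_i$\+contramodule --- both closure properties by Theorem~\ref{ext-0-1-orthogonal}(a). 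Induction on~$j$ then shows that $\Delta_{s_j}\dotsm\Delta_{s_1}(C)$ is an $s_i$\+contramodule for all $i\le j$, so $\Delta(C)\in R\modl_{[s_1,\dotsc,s_m]\ctra}$. Finally, given any $R$\+module $C$ and any $D\in R\modl_{[s_1,\dotsc,s_m]\ctra}$, the module $D$ is in particular an $s_j$\+contramodule for each~$j$, so $m$ successive applications of the one-variable adjunction of Theorem~\ref{delta-s-theorem} --- to the $R$\+modules $\Delta_{s_{m-1}}\dotsm\Delta_{s_1}(C)$, then $\Delta_{s_{m-2}}\dotsm\Delta_{s_1}(C)$, and so on down to $C$ --- produce a chain of natural isomorphisms
\[
 \Hom_R(\Delta(C),D)\simeq\Hom_R(\Delta_{s_{m-1}}\dotsm\Delta_{s_1}(C),D)\simeq\dotsb\simeq\Hom_R(C,D),
\]
which is the asserted adjunction; its unit is the composite $C\rarrow\Delta_{s_1}(C)\rarrow\Delta_{s_2}\Delta_{s_1}(C)\rarrow\dotsb\rarrow\Delta(C)$ of the one-variable adjunction units.

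I expect the only real work to be in the step (i)${}\simeq{}$(iii): keeping the degree conventions for the iterated $\Hom$\+complexes straight and carrying out the computation dual to Lemma~\ref{gamma-I-lemma} carefully (one could alternatively pass to the derived category and use that $T^\bu(R;s_1,\dotsc,s_m)$ is a bounded complex of projectives quasi-isomorphic to the augmented \v Cech complex, but the bare-hands argument avoids unbounded complexes). Everything else --- the power-series bookkeeping for (i)${}\simeq{}$(ii), the closure argument placing $\Delta(C)$ in $R\modl_{[s_1,\dotsc,s_m]\ctra}$, and the iterated adjunction --- is formal once Theorem~\ref{delta-s-theorem} and the closure properties of Theorem~\ref{ext-0-1-orthogonal}(a) are available.
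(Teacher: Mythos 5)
Your proposal is correct and follows essentially the same route as the paper: (i)${}\simeq{}$(ii) via exactness of the power-series functor, (i)${}\simeq{}$(iii) by peeling off one factor of the tensor product $T^\bu(R;s_1,\dotsc,s_m)$ at a time and using that $H_0$ of the $\Hom$ complex only depends on $H_0$ of the (non-negatively graded) argument complex, and the adjunction via the key observation that each $\Delta_{s_j}$ preserves $s_i$\+contramodules (closure under products and cokernels) followed by iterating the one-variable adjunction of Theorem~\ref{delta-s-theorem}. The only cosmetic differences are that you verify the needed commutation $H_0(\Hom_R(T^\bu(R;s),Q^\bu))\simeq\Delta_s(H_0(Q^\bu))$ by a direct two-term computation (the paper states the general fact for a complex of projectives concentrated in cohomological degrees~$\ge0$ without proof) and that you peel off the outermost factor $T^\bu(R;s_m)$ rather than the innermost one, which changes nothing.
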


\begin{proof}
 (i)${}\simeq{}$(ii): The natural isomorphism is easily constructed
using the observation that the functor assigning to an abelian
group $A$ the group $A[[z]]$ is exact and, in particular,
preserves cokernels.

 (i)${}\simeq{}$(iii): For any complex of $R$\+modules $K_\bu$
concentrated in the homological degrees~$\ge0$ and any complex
of projective $R$\+modules $T^\bu$ concentrated in
the cohomological degrees~$\ge0$, one has
$$
 H_0(\Hom_R(T^\bu,K_\bu))\simeq H_0(\Hom_R(T^\bu,H_0(K_\bu))).
$$
 Hence
\begin{multline*}
 H_0\bigl(\Hom_R\bigl(T^\bu(R;s_1,s_2),C\bigr)\bigr)\simeq
 H_0\bigl(\Hom_R\bigl(T^\bu(R;s_2),\Hom_R\bigl(T^\bu(R;s_1),C
 \bigr)\bigr)\bigr) \\ \simeq
 H_0\bigl(\Hom_R\bigl(T^\bu(R;s_2),
 H_0\bigl(\Hom_R\bigl(T^\bu(R;s_1),C\bigr)\bigr)\bigr)\bigr) \\
 \simeq H_0\bigl(\Hom_R\bigl(T^\bu(R,s_2),\Delta_{s_1}(C)\bigr)\bigr)
 \simeq \Delta_{s_2}\Delta_{s_1}(C),
\end{multline*}
etc.

 It remains to show that the functor $\Delta_{s_1,\dotsc,s_m}$ is left
adjoint to the embedding functor $R\modl_{[s_1,\dotsc,s_m]\ctra}\rarrow
R\modl$ (cf.~\cite[proof of Proposition~2.1]{Pmgm}).
 The key observation is that, for any two elements $s$ and $t\in R$,
the functor $\Delta_t$ takes $s$\+contramodules to
$s$\+contramodules.
 Indeed, the class of $s$\+contramodules is closed under
the infinite products and cokernels in $R\modl$, hence
the cokernel of the morphism~\eqref{phi-s-morphism}
is an $s$\+contramodule whenever the $R$\+module $C$~is.
 It follows that, for any $R$\+module $C$, the $R$\+module
$\Delta_{s_m}\dotsm\Delta_{s_2}\Delta_{s_1}(C)$ is
an $s_j$\+contramodule for every $1\le j\le m$.

 Now for an arbitrary $R$\+module $C$ and an $R$\+module $D$ from
the subcategory $R\modl_{[s_1,\dotsc,s_m]\ctra}\subset R\modl$
one has
\begin{multline*}
 \Hom_R(\Delta_m\dotsm\Delta_2\Delta_1(C),D)\simeq
 \Hom_R(\Delta_{m-1}\dotsm\Delta_2\Delta_1(C),D) \\ \simeq
 \dotsb\simeq\Hom_R(\Delta_2\Delta_1(C),D)\simeq
 \Hom_R(\Delta_1(C),D)\simeq\Hom_R(C,D)
\end{multline*}
due to the adjointness properties of the functors~$\Delta_{s_j}$.
\end{proof}

 For any ideal $I\subset R$, one denotes by $\sqrt I \subset R$
the radical of the ideal $I$, i.~e., the ideal formed by all
the elements $s\in R$ for which there exists $n\ge1$ such that
$s^n\in I$.
 The following result is due to Porta, Shaul, and
Yekutieli~\cite[Theorem~6.1]{PSY}.

\begin{thm}  \label{s-t-homotopy-equivalent}
 Let $s_1$,~\dots, $s_m$ and $t_1$,~\dots, $t_k$ be two finite sets
of elements in a commutative ring~$R$.
 Denote by $I=(s_1,\dotsc,s_m)$ and $J=(t_1,\dotsc,t_k)\subset R$
the ideals generated by the first and the second set, respectively.
 Suppose that $\sqrt I = \sqrt J$ in~$R$.
 Then the two complexes of $R$\+modules $T^\bu(R;s_1,\dotsc,s_m)$ and
$T^\bu(R;t_1,\dotsc,t_k)$ are homotopy equivalent.
\end{thm}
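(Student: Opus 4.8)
The plan is to deduce the homotopy equivalence from an isomorphism in the derived category, reducing that isomorphism in turn to the case of adjoining or deleting one ``redundant'' generator. First I would pass to $\D^\b(R\modl)$: each telescope complex $T^\bu(R;s_1,\dotsc,s_m)$ is a bounded complex of free, hence projective, $R$\+modules, and the homotopy category of bounded complexes of projective $R$\+modules embeds fully faithfully into $\D^\b(R\modl)$, so any two such complexes that become isomorphic as objects of $\D^\b(R\modl)$ are already homotopy equivalent. Thus it suffices to identify $T^\bu(R;s_1,\dotsc,s_m)$ with $T^\bu(R;t_1,\dotsc,t_k)$ in $\D^\b(R\modl)$. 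By the discussion preceding the theorem, $T^\bu(R;s_1,\dotsc,s_m)$ is quasi-isomorphic to the augmented \v Cech complex $\check C\sptilde(R;s_1,\dotsc,s_m)$, and likewise for the~$t_l$; moreover $\check C\sptilde(R;u_1,\dotsc,u_p)$ is, up to a natural isomorphism of complexes, independent of the ordering of the elements~$u_i$. So the task reduces to identifying $\check C\sptilde(R;s_1,\dotsc,s_m)$ with $\check C\sptilde(R;t_1,\dotsc,t_k)$ in $\D^\b(R\modl)$.

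The key lemma is: if $t\in\sqrt{(s_1,\dotsc,s_m)}$, then the ``forget the generator~$t$'' morphism $\check C\sptilde(R;s_1,\dotsc,s_m,t)\rarrow\check C\sptilde(R;s_1,\dotsc,s_m)$ is a quasi-isomorphism. To prove it, write $\check C\sptilde(R;t)=(R\rarrow R[t^{-1}])$ with $R[t^{-1}]$ placed in cohomological degree~$1$, and use the degreewise split short exact sequence of complexes $0\rarrow R[t^{-1}][-1]\rarrow\check C\sptilde(R;t)\rarrow R\rarrow0$. Tensoring over~$R$ with the complex of flat $R$\+modules $\check C\sptilde(R;s_1,\dotsc,s_m)$, and using $\check C\sptilde(R;s_1,\dotsc,s_m,t)=\check C\sptilde(R;s_1,\dotsc,s_m)\ot_R\check C\sptilde(R;t)$, produces a short exact sequence of complexes with middle term $\check C\sptilde(R;s_1,\dotsc,s_m,t)$, right term $\check C\sptilde(R;s_1,\dotsc,s_m)$, and left term $\check C\sptilde(R[t^{-1}];s_1,\dotsc,s_m)[-1]$ (formation of $\check C\sptilde$ commutes with localization). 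Since $t^N\in(s_1,\dotsc,s_m)R$ for some $N\ge1$ while $t^N$ is invertible in $R[t^{-1}]$, the elements $s_1,\dotsc,s_m$ generate the unit ideal of $R[t^{-1}]$, so $\check C\sptilde(R[t^{-1}];s_1,\dotsc,s_m)$ is acyclic; the long exact cohomology sequence then shows the right-hand morphism is a quasi-isomorphism.

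Granting the lemma, the theorem follows by chaining. Because $\sqrt I=\sqrt J$, each $t_l$ lies in $\sqrt I\subseteq\sqrt{(s_1,\dotsc,s_m,t_1,\dotsc,t_{l-1})}$, so adjoining the~$t_l$ one at a time and applying the lemma yields a quasi-isomorphism $\check C\sptilde(R;s_1,\dotsc,s_m)\simeq\check C\sptilde(R;s_1,\dotsc,s_m,t_1,\dotsc,t_k)$. Symmetrically, at every stage of deleting the~$s_j$ the tuple still contains $t_1,\dotsc,t_k$, hence its radical contains $\sqrt J$, which contains the element being deleted; so we obtain a quasi-isomorphism $\check C\sptilde(R;s_1,\dotsc,s_m,t_1,\dotsc,t_k)\simeq\check C\sptilde(R;t_1,\dotsc,t_k)$. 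Composing identifies the two \v Cech complexes in $\D^\b(R\modl)$, hence, through their quasi-isomorphisms with the telescope complexes, identifies $T^\bu(R;s_1,\dotsc,s_m)$ with $T^\bu(R;t_1,\dotsc,t_k)$ there; the first step then upgrades this to a homotopy equivalence.

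The one genuinely nontrivial ingredient — and the main obstacle — is the acyclicity of the augmented \v Cech complex $\check C\sptilde(S;u_1,\dotsc,u_p)$ of a commutative ring~$S$ with respect to elements $u_1,\dotsc,u_p$ generating the unit ideal. Vanishing of $H^0$ is immediate, since $H^0=\Gamma_{(u_1,\dotsc,u_p)}(S)$ and the torsion submodule for the unit ideal is zero. Vanishing in higher degrees is the classical fact that this complex is contractible when the $u_i$ generate the unit ideal; it can be established directly by manipulating a partition of unity $1=\sum_i a_iu_i$, or deduced from the statement that $\check C\sptilde(S;u_1,\dotsc,u_p)\ot_S{-}$ computes the derived $(u_1,\dotsc,u_p)$\+torsion (local cohomology) functor, which vanishes on every module when the ideal is all of~$S$. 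Apart from this, one only needs to keep careful track of the cohomological shifts and signs in the short exact sequences of complexes, which is routine.
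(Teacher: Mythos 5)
Your proposal is correct, but it takes a genuinely different route from the paper's proof. The paper (following Porta--Shaul--Yekutieli) first reduces to the finitely generated subring $R'\subset R$ containing the data witnessing $\sqrt I=\sqrt J$, hence to a Noetherian ring, and then invokes the nontrivial fact that every finite sequence in a Noetherian ring is weakly proregular, so that both telescope complexes compute $\boR\Gamma_I(R)$ and can be linked through $T^\bu(K^\bu;\dotsc)\llarrow\Gamma_I(K^\bu)$ for an injective resolution $K^\bu$ of~$R$; the last step, that quasi-isomorphic bounded complexes of free modules are homotopy equivalent, is the same in both arguments. You instead stay over the arbitrary ring $R$ and build the roof $\check C\sptilde(R;s_1,\dotsc,s_m)\llarrow\check C\sptilde(R;s_1,\dotsc,s_m,t_1,\dotsc,t_k)\lrarrow\check C\sptilde(R;t_1,\dotsc,t_k)$ by adjoining or deleting one redundant generator at a time, the one-step lemma coming from the stupid-filtration short exact sequence together with acyclicity of the augmented \v Cech complex of a unit ideal. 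This is in effect an algebraic rendering of the roof of quasi-isomorphisms appearing in Remark~\ref{cech-remark}, but it avoids both the Noetherian reduction and quasi-coherent sheaf cohomology, so it is more elementary and self-contained; what the paper's route buys is that it piggybacks on a known theorem (weak proregularity) instead of proving anything about \v Cech complexes directly.

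One correction to how you justify the key acyclicity input. The augmented \v Cech complex of a unit ideal is acyclic but \emph{not} contractible in general: for $S=\Z$, \ $u_1=2$, $u_2=3$, the complex $\Z\rarrow\Z[1/2]\oplus\Z[1/3]\rarrow\Z[1/6]$ is acyclic, yet the kernel $\Z$ of the second arrow cannot be a direct summand of the middle term because $\Hom_\Z(\Z[1/2],\Z)=0=\Hom_\Z(\Z[1/3],\Z)$. Correspondingly, the naive partition-of-unity homotopy does not work verbatim in the algebraic setting (multiplying a cochain that lives in a localization inverting $u_i$ by $a_i$ does not land in the required smaller localization), and the appeal to ``$\check C\sptilde(S;u_1,\dotsc,u_p)\ot_S{-}$ computes derived torsion'' is circular here, since in general that identification is exactly what weak proregularity provides---the hypothesis your proof is designed to avoid. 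The clean fix: for every prime $\p\subset S$ some $u_i$ is invertible in $S_\p$, so the tensor factor $S_\p\rarrow S_\p[u_i^{-1}]$ has an isomorphism for its differential and is contractible, hence $\check C\sptilde(S;u_1,\dotsc,u_p)_\p$ is contractible; since localization is exact and a module vanishes when all its localizations at primes vanish, the complex is acyclic. Your main argument only uses acyclicity, so with this substitution the proof is complete.
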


\begin{proof}[Sketch of proof\/~\cite{PSY}]
 According to Lemma~\ref{gamma-I-lemma}\,(i)$\simeq$(iii), for any
$R$\+module $M$ we have
$$
 H^0(T^\bu(R;s_1,\dotsc,s_m)\ot_RM)\simeq \Gamma_I(M)
 \simeq H^0(T^\bu(R;t_1,\dotsc,t_k)\ot_RM).
$$
 Therefore, the cohomology of the complexes
$T^\bu(R;s_1,\dotsc,s_m)\ot_RM$ and $T^\bu(R;t_1,\dotsc,\allowbreak
t_k)\ot_RM$ form cohomological $\delta$\+functors of the argument
$M\in R\modl$ taking values in $R\modl$.
 A sequence of elements $s_1$,~\dots, $s_m$ in a commutative ring $R$
is called \emph{weakly proregular}, if the cohomology
$H^*(T^\bu(R;s_1,\dotsc,s_m)\ot_RM)$ computes the right derived functor
$\boR\Gamma_I^*(M)$ of the functor $\Gamma_I\:R\modl\rarrow R\modl$.
 In other words, it means that one should have
$$
 H^q(T^\bu(R;s_1,\dotsc,s_m)\ot_RK)=0 \quad\text{for all $q>0$}
$$
when $K$ is an injective $R$\+module.
 (After the theorem will have been proved, it will follow that
the weak proregularity is a property of the ideal $I$ and
even $\sqrt{I}$ rather than of the generating sequence of elements.)

 The fact that the two ideals $\sqrt{I}$ and $\sqrt{J}$ coincide in
$R$ can be expressed as a finite system of equations on a finite
set of elements involved (including the elements $s_j$, $t_i$
and the elements used as the coefficients of the expressions of
powers of~$s_j$ as linear combinations of~$t_i$ and vice versa).
 Denote by $R'\subset R$ the subring generated by this finite set
of elements over~$\Z$.
 Then we have
\begin{align*}
 T^\bu(R;s_1,\dotsc,s_m)&=R\ot_{R'}T^\bu(R';s_1,\dotsc,s_m), \\
 T^\bu(R;t_1,\dotsc,t_k)&=R\ot_{R'}T^\bu(R';t_1,\dotsc,t_k).
\end{align*}
 Hence it suffices to prove that the complexes of $R'$\+modules
$T^\bu(R';s_1,\dotsc,s_m)$ and $T^\bu(R';t_1,\dotsc,t_k)$ are
homotopy equivalent.
 This reduces the assertion to be proved to the case of
a Noetherian ring~$R$.

 According to~\cite[Theorem~4.34]{PSY}, any finite sequence of
elements in a Noetherian ring is weakly proregular.
 Hence both complexes $T^\bu(R;s_1,\dotsc,s_m)$ and
$T^\bu(R;t_1,\dotsc,t_k)$ compute the derived functor
$\boR^*\Gamma_I(R)$ for the $R$\+module~$R$.
 Now let $K^\bu$ be an injective $R$\+module resolution of
the module~$R$; then we have a chain of quasi-isomorphisms
\begin{alignat*}{2}
 T^\bu(R;s_1,\dotsc,s_m)&\lrarrow T^\bu(K^\bu;s_1,\dotsc,s_m)
 &&\llarrow \Gamma_I(K^\bu) \\ 
 T^\bu(R;t_1,\dotsc,t_k)&\lrarrow T^\bu(K^\bu;t_1,\dotsc,t_k)
 &&\llarrow \Gamma_I(K^\bu)
\end{alignat*}
connecting $T^\bu(R;s_1,\dotsc,s_m)$ with $T^\bu(R;t_1,\dotsc,t_k)$
(where the bicomplexes are presumed to have been replaced with
their total complexes).
 Finally, two finite complexes of free $R$\+modules are homotopy
equivalent whenever they are quasi-isomorphic.
\end{proof}

\begin{proof}[Second proof of Theorem~\ref{ideal-contramodule-thm}]
 Clearly, it suffices to consider the case of a finitely generated
ideal.
 Let $R$ be a commutative ring, and let $s_1$,~\dots, $s_m$ and
$t_1$,~\dots, $t_m$ be two sequences of elements in $R$ generating
ideals $I$ and $J$ such that $\sqrt I=\sqrt J$.
 Then, by Theorem~\ref{delta-s1-sm-theorem}, the full subcategories
$R\modl_{[s_1,\dotsc,s_m]\ctra}$ and $R\modl_{[t_1,\dotsc,t_k]\ctra}\subset
R\modl$ are the essential images of their reflector functors
$\Delta_{s_1,\dotsc,s_m}$ and $\Delta_{t_1,\dotsc,t_k}$.
 Furthermore, for any $R$\+module $C$, the $R$\+modules
$\Delta_{s_1,\dotsc,s_m}(C)$ and $\Delta_{t_1,\dotsc,t_k}(C)$
can be computed as the homology modules
$$
 H_0(\Hom_R(T^\bu(R;s_1,\dotsc,s_m),\.C))
 \quad\text{and}\quad
 H_0(\Hom_R(T^\bu(R;t_1,\dotsc,t_k),\.C)).
$$
 Finally, according to Theorem~\ref{s-t-homotopy-equivalent},
the two complexes of $R$\+modules $T^\bu(R;s_1,\dotsc,s_m)$
and $T^\bu(R;s_1,\dotsc,s_m)$ are homotopy equivalent.
 Thus the functors $\Delta_{s_1,\dotsc,s_m}$ and
$\Delta_{t_1,\dotsc,t_k}$ are isomorphic, and it follows that
the two subcategories $R\modl_{[s_1,\dotsc,s_m]\ctra}$ and
$R\modl_{[t_1,\dotsc,t_k]\ctra}$ in $R\modl$ coincide.
\end{proof}

 Now we can set $R\modl_{I\ctra}=R\modl_{[s_1,\dotsc,s_m]\ctra}$
and $\Delta_I=\Delta_{s_1,\dotsc,s_m}$.

\begin{rem} \label{cech-remark}
 One can avoid the reduction to Noetherian rings in the proof
of Theorem~\ref{s-t-homotopy-equivalent} using the following
geometric argument instead.
 Let $X=\Spec R$ denote the affine scheme with the ring of
functions~$R$.
 Then the category of quasi-coherent sheaves on $X$ is equivalent
to the category of $R$\+modules; let us denote by $\cM$
the quasi-coherent sheaf corresponding to the $R$\+module $M$.
 For any element $s\in R$, we have the related principal affine
open subscheme $U_s\subset X$.
 Given a sequence $s_1$,~\dots, $s_m\in R$ generating an ideal
$I\subset R$, consider the open subscheme $U=\bigcup_{j=1}^m U_{s_j}
\subset X$.
 Then the open subscheme $U\subset X$ depends only on the radical
$\sqrt{I}\subset R$ of the ideal $I$ and not on the generating
sequence $s_1$,~\dots, $s_m$ itself.

 The complex of $R$\+modules $T^\bu(R;s_1,\dotsc,s_m)$ is
quasi-isomorphic to the complex $\check C\sptilde(R;s_1,\dotsc,s_m)$
\eqref{augmented-cech}.
 Let $\check C(R;s_1,\dotsc,s_m)$ denote the nonaugmented version
of the same complex
$$
 \bigoplus\nolimits_{j=1}^m R[s_j^{-1}]\lrarrow
 \bigoplus\nolimits_{j'<j'} R[s_{j'}^{-1},s_{j''}^{-1}]
 \lrarrow\dotsb\lrarrow R[s_1^{-1},\dotsc,s_m^{-1}].
$$
 Then $\check C(R;s_1,\dotsc,s_m)\ot_RM$ is the \v Cech complex 
computing the quasi-coherent sheaf cohomology $H^*(U,\cM|_U)$
in terms of the affine covering $U=\bigcup_{j=1}^mU_{s_j}$, while
the complex $\check C\sptilde(R;s_1,\dotsc,s_m)\ot_RM$ computes
the cohomology of the cone of the restriction morphism between
(the complexes representing) $M=H^*(X,\cM)$ and $H^*(U,\cM|_U)$.
 It follows that the cohomology of both the complexes
$\check C(R;s_1,\dotsc,s_m)\ot_RM$ and
$\check C\sptilde(R;s_1,\dotsc,s_m)\ot_RM$ depend only on the open
subscheme $U\subset X$ and not on the generating sequence of
the ideal.
 Thus, given our two sequences $s_1$,~\dots, $s_m$ and
$t_1$,~\dots, $t_k$, the complexes $T^\bu(R;s_1,\dotsc,s_m)$
and $T^\bu(R;t_1,\dotsc,t_k)$ are connected by a chain
of quasi-isomorphisms
$$
 T^\bu(R;s_1,\dotsc,s_m)\llarrow T^\bu(R;s_1,\dotsc,s_m,t_1,\dotsc,t_k)
 \lrarrow T^\bu(R;t_1,\dotsc,t_k).
$$

 Weak proregularity of ideals in Noetherian rings is explained,
in the same geometric terms, by the facts that injective
quasi-coherent sheaves on Noetherian schemes are flasque and
their restrictions to open subschemes remain
injective~\cite[\S\,II.7]{Hart}.
 Hence, denoting by $\cK^\bu$ the complex of quasi-coherent
sheaves on $X$ corresponding to an injective $R$\+module resolution
$K^\bu$ of a given module $M$, one can compute $H^*(U,\cM|_U)$ as
the cohomology of the complex of sections $H^*(\cK^\bu(U))$.
 Furthermore, the morphism of complexes $\cK^\bu(X)\rarrow\cK^\bu(U)$
is sujective, so one can replace the cone with the kernel, which
leads one to the complex $\Gamma_I(K^\bu)$ \cite[Section~1]{Pmgm}.

 This discussion is meant to suggest that, generally speaking,
it is not a good idea to use the underived global sections of
the restrictions of injective quasi-coherent sheaves to open
subschemes in lieu of the cohomology of quasi-coherent sheaves on
such open subschemes.
 Therefore, outside of the weakly proregular case, it is the derived
functor $\boR\Gamma_I$ that appears to be ``naive'', and
the cohomology of the complex $T^\bu(R;s_1,\dotsc,s_m)\ot_RM$
or $\check C\sptilde(R;s_1,\dotsc,s_m)\ot_RM$ is its
``well-behaved replacement''.
 Similarly, the homology of the complex
$\Hom_R(T^\bu(R;s_1,\dotsc,s_m),C)$ is preferable to the derived
functor $\boL\Delta_I$ outside of the weakly proregular
case~\cite[Section~3]{Pmgm}, while in the weakly proregular case
they coincide~\cite[Lemma~2.7]{Pmgm}.
\end{rem}

 Recall the notation $T_n^\bu(R;s)\subset T^\bu(R;s)$, \,$n\ge1$,
from the proof of Lemma~\ref{delta-s-lambda-s-short-sequence}.
 The complex $T_n^\bu(R;s)$ is quasi-isomorphic to the two-term
complex $R\overset{s^n}\rarrow R$.
 Set~\cite[Section~5]{PSY}
$$
 T_n^\bu(R;s_1,\dotsc,s_m)=T_n^\bu(R;s_1)\ot_R\dotsb\ot_R T^\bu(R;s_m).
$$
 Then $T_n^\bu(R;s_1,\dotsc,s_m)$ is a subcomplex in
$T^\bu(R;s_1,\dotsc,s_m)$ and one has
$$
 T^\bu(R;s_1,\dotsc,s_m) = \varinjlim\nolimits_{n\ge1}
 T^\bu_n(R;s_1,\dotsc,s_m).
$$

\begin{lem} \label{delta-I-lambda-I}
 Let $R$ be a commutative ring and $I\subset R$ be an ideal generated
by a sequence of elements $s_1$,~\dots, $s_m\in R$.
 Then for any $R$\+module $C$ there is a natural short exact
sequence of $R$\+modules
$$
 0\lrarrow\varprojlim\nolimits_{n\ge1}^1
 H_1(\Hom_R(T_n^\bu(R;s_1,\dotsc,s_m),\.C))\lrarrow\Delta_I(C)
 \lrarrow \Lambda_I(C)\lrarrow0.
$$
\end{lem}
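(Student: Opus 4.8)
The plan is to run the argument from the proof of Lemma~\ref{delta-s-lambda-s-short-sequence} with the $m$\+variable telescope complexes in place of the one-variable ones. Write $T_n^\bu=T_n^\bu(R;s_1,\dotsc,s_m)$ and $T^\bu=T^\bu(R;s_1,\dotsc,s_m)$. We know that $T_n^\bu$ is a subcomplex of $T_{n+1}^\bu$, that $T^\bu=\varinjlim_{n\ge1}T_n^\bu$, and that in each cohomological degree $T_n^i$ sits inside the (countably generated free) module $T^i$ as a direct summand, so the inclusions $T_n^\bu\hookrightarrow T_{n+1}^\bu$ are termwise split monomorphisms. Applying $\Hom_R(-,C)$, which carries the colimit $\varinjlim_n T_n^\bu$ to the limit, we get that $\Hom_R(T^\bu,C)=\varprojlim_{n\ge1}\Hom_R(T_n^\bu,C)$ is a projective limit of a tower of complexes of abelian groups with termwise surjective transition morphisms. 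Hence, just as in the proof of Lemma~\ref{delta-s-lambda-s-short-sequence}, there is a short exact sequence of homology modules in homological degree~$0$,
\begin{multline*}
 0\lrarrow\varprojlim\nolimits^1_{n\ge1}H_1(\Hom_R(T_n^\bu,C))
 \lrarrow H_0\bigl(\varprojlim\nolimits_{n\ge1}\Hom_R(T_n^\bu,C)\bigr) \\
 \lrarrow\varprojlim\nolimits_{n\ge1}H_0(\Hom_R(T_n^\bu,C))\lrarrow0,
\end{multline*}
and it remains to identify the middle and the right-hand terms.

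The middle term is $H_0(\Hom_R(T^\bu,C))$, which equals $\Delta_{s_1,\dotsc,s_m}(C)=\Delta_I(C)$ by Theorem~\ref{delta-s1-sm-theorem}(iii). For the right-hand term I would compute $H_0(\Hom_R(T_n^\bu,C))$ by the iterated adjunction argument of the proof of Theorem~\ref{delta-s1-sm-theorem}(i)${}\simeq{}$(iii): writing $T_n^\bu=T_n^\bu(R;s_1)\ot_R\dotsb\ot_R T_n^\bu(R;s_m)$ and using tensor-hom adjunction together with the identity $H_0(\Hom_R(P^\bu,K_\bu))\simeq H_0(\Hom_R(P^\bu,H_0(K_\bu)))$ valid for a complex $P^\bu$ of projective $R$\+modules and a complex $K_\bu$ of $R$\+modules both concentrated in (co)homological degrees~$\ge0$, the computation reduces by induction on~$m$ to the one-variable case. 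At this point I would also record the one-variable statement promised in the proof of Lemma~\ref{delta-s-lambda-s-short-sequence}: since $T_n^\bu(R;s)$ is homotopy equivalent to $(R\overset{s^n}\rarrow R)$, one has $H_1(\Hom_R(T_n^\bu(R;s),C))\simeq{}_{s^n}C$ and $H_0(\Hom_R(T_n^\bu(R;s),C))\simeq C/s^nC$, and the inclusion $T_n^\bu(R;s)\hookrightarrow T_{n+1}^\bu(R;s)$ induces on $H_0$ the natural projection $C/s^{n+1}C\rarrow C/s^nC$. Feeding this into the induction gives $H_0(\Hom_R(T_n^\bu,C))\simeq C/(s_1^nC+\dotsb+s_m^nC)$, naturally in~$n$, with the obvious surjections as transition maps.

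Finally I would pass to the limit. As ideals of $R$ one has $I^{mn}\subseteq(s_1^n,\dotsc,s_m^n)\subseteq I^n$: the second inclusion because each $s_j^n\in I^n$, the first because in any product of $mn$ of the generators $s_1$,~\dots,~$s_m$ some $s_j$ occurs at least $n$ times. Therefore the descending filtrations $\bigl(s_1^nC+\dotsb+s_m^nC\bigr)_{n\ge1}$ and $(I^nC)_{n\ge1}$ of $C$ are mutually cofinal, so
$$
 \varprojlim\nolimits_{n\ge1}C/(s_1^nC+\dotsb+s_m^nC)\simeq
 \varprojlim\nolimits_{n\ge1}C/I^nC=\Lambda_I(C).
$$
Substituting the identifications of the middle and the right-hand terms into the short exact sequence above proves the lemma.

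The $\varprojlim^1$ formalism and the appeal to Theorem~\ref{delta-s1-sm-theorem} are routine. The two points I expect to require a little care — and where most of the bookkeeping lives — are making the computation of $H_0(\Hom_R(T_n^\bu,C))$ natural in~$n$, with the transition maps coming out as the expected projections (so that one is identifying \emph{towers}, not merely their individual terms), and the elementary cofinality estimate $I^{mn}\subseteq(s_1^n,\dotsc,s_m^n)\subseteq I^n$, which is what lets one trade the ``generator-power'' filtration for the $I$\+adic filtration before taking the projective limit. I do not anticipate any genuine obstacle.
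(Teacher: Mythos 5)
Your proposal is correct and follows essentially the same route as the paper: the $\varprojlim$--$\varprojlim^1$ exact sequence for the tower $\Hom_R(T_n^\bu(R;s_1,\dotsc,s_m),C)$ with termwise surjective transition maps, the identification $\varprojlim_n\Hom_R(T_n^\bu,C)=\Hom_R(T^\bu,C)$ giving $\Delta_I(C)$ in degree zero, and $H_0(\Hom_R(T_n^\bu,C))\simeq C/(s_1^n,\dotsc,s_m^n)C$ whose limit is $\Lambda_I(C)$. Your added details (split termwise inclusions, the iterated adjunction computation of $H_0$, and the explicit cofinality $I^{mn}\subseteq(s_1^n,\dotsc,s_m^n)\subseteq I^n$) merely spell out steps the paper leaves implicit.
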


\begin{proof}
 The complexes $\Hom_R(T_n^\bu(R;s_1,\dotsc,s_m),C)$ form a projective
system of complexes and termwise surjective morphisms between them
indexed by the integers $n\ge1$.
 Furthermore, we have $H_0(\Hom_R(T_n^\bu(R;s_1,\dotsc,s_m),C))\simeq
C/(s_1^n,\dotsc,s_m^n)C$, where $(s_1^n,\dotsc,s_m^n)$ denotes the ideal
in $R$ generated by the elements $s_1^n$,~\dots, $s_m^n\in R$.
 Hence
$$
 \varprojlim\nolimits_n H_0(\Hom_R(T_n^\bu(R;s_1,\dotsc,s_m),\.C))
 \simeq \Lambda_I(C).
$$
 On the other hand, we have
$$
 \varprojlim\nolimits_n\Hom_R(T_n^\bu(R;s_1,\dotsc,s_m),C)=
 \Hom_R(T^\bu(R;s_1,\dotsc,s_m),C),
$$
so
$$
 H_0\bigl(\varprojlim_n\nolimits
 \Hom_R(T_n^\bu(R;s_1,\dotsc,s_m),\.C)\bigr)
 \simeq \Delta_I(C). 
$$
 Finally, since the projective limit functor $\varprojlim_{n\ge1}$ has
cohomological dimension~$1$, for any projective system of complexes
of $R$\+modules and termwise surjective morphisms between them
$K_1^\bu\larrow K_2^\bu\larrow K_3^\bu\larrow\dotsb$,
the hypercohomology spectral sequence reduces to
``universal coefficient'' short exact sequences
$$
 0\lrarrow\varprojlim\nolimits_{n\ge1}^1 H^{q-1}(K_n^\bu)\lrarrow
 H^q\bigl(\varprojlim\nolimits_{n\ge1} K_n^\bu\bigr)\lrarrow
 \varprojlim\nolimits_{n\ge1} H^q(K_n^\bu)\lrarrow0,
$$
where $\varprojlim^1_{n\ge1}$ denotes the (first) derived functor of
projective limit.
\end{proof}

\begin{rem} \label{contraadj-for-ideal-gens-adjunction-surjective}
 As a stronger version of
Theorem~\ref{contraadjusted-for-ideal-generators-complete}, one can
prove that the adjunction morphism $C\rarrow\Delta_I(C)$ is surjective
whenever the $R$\+module $C$ is $s_j$\+contraadjusted for every
$j=1$,~\dots,~$m$.
 Indeed, by Corollary~\ref{s-contraadjusted-criterion}(b)
the morphism $C\rarrow\Delta_{s_1}(C)$ is surjective.
 Since the class of $s$\+contraadjusted $R$\+modules is closed under
quotients for any $s\in R$, it follows that the $s_1$\+contramodule
$R$\+module $\Delta_{s_1}(C)$ is $s_j$\+contraadjusted for every
$j=2$,~\dots,~$m$.
 Applying Corollary~\ref{s-contraadjusted-criterion}(b) again,
we see that the morphism $\Delta_{s_1}(C)\rarrow\Delta_{s_2}
\Delta_{s_1}(C)$ is surjective, etc.
 Hence the morphism $C\rarrow\Delta_{s_m}\dotsm\Delta_{s_1}(C)$ is
surjective.
 (Cf.~\cite[Section~C.2]{Pcosh}.)
\end{rem}

\begin{rem}
 Given an arbitrary (not necessarily finitely generated) ideal $I$ in
a commutative ring $R$, one denotes by $R\modl_{I\ctra}$
the full subcategory in $R\modl$ consisting of all the $R$\+modules
that are $s$\+contramodules for every $s\in I$.
 In this generality, one can show, using category-theoretic techniques,
that the embedding functor $R\modl_{I\ctra}\rarrow R\modl$ has
a left adjoint functor $\Delta_I\:R\modl\rarrow R\modl_{I\ctra}$
\cite[Examples~4.1\,(2\+-3)]{PR}.
\end{rem}

\Section{Covers, Envelopes, and Cotorsion Theories}
\label{covers-envelopes-secn}

 This section consists almost entirely of the definitions.
 Its aim is to supply preliminary material for the remaining part of
the paper, including the more general theoretical discussion
in Sections~\ref{relatively-cotorsion-secn}\+-\ref{flat-cotorsion-secn}
and, most importantly, the examples considered
in Sections~\ref{abelian-groups-secn}\+-\ref{krull-dim-1-secn}.
 All the proofs in this section are omitted and replaced with
references to the book~\cite{Xu} and
the papers~\cite{Sal,ET,BBE,Ba,Sto,PR}.

 Let $R$ be an associative ring.
 Given two full subcategories $\F$ and $\C\subset R\modl$, one denotes
by $\F^\perp$ and ${}^\perp\C$ the full subcategories
\begin{align*}
 \F^\perp &= \{\,C\in R\modl\mid
 \Ext^1_R(F,C)=0 \text{ for all $F\in\F$}\,\}, \\
 {}^\perp\C &= \{\,F\in R\modl\mid
 \Ext^1_R(F,C)=0 \text{ for all $C\in\C$}\,\}.
\end{align*}
 A \emph{cotorsion theory} (or \emph{cotorsion pair}) in $R\modl$
is a pair of full subcategories $\F$, $\C\subset R\modl$ such that
$\F^\perp=\C$ and ${}^\perp\C=\F$.
 Given a class of modules $\sS\subset R\modl$, one construct
a cotorsion theory by setting $\C=\sS^\perp$ and $\F={}^\perp\C$.
 The cotorsion theory $(\F,\C)=({}^\perp(\sS^\perp),\,\sS^\perp)$
is said to be \emph{generated} by~$\sS$.

\begin{ex} \label{flat-cotorsion-example}
 Let $R\modl_\fl\subset R\modl$ denote the class of all flat left
$R$\+modules.
 A left $R$\+module $C$ is said to be \emph{cotorsion} if it belongs
to $R\modl_\fl^\perp$, i.~e., one has $\Ext^1_R(F,C)=0$ for every
flat left $R$\+module~$F$.
 The class of all cotorsion left $R$\+modules is denoted by
$R\modl_\cot\subset R\modl$.
 The pair of full subcategories $(R\modl_\fl,\>R\modl_\cot)$
is called the \emph{flat cotorsion pair/theory} in the category
of left $R$\+modules.
 This is the classical example of a cotorsion theory.

 Notice that one actually needs to prove that $(R\modl_\fl,
\>R\modl_\cot)$ is a cotorsion theory, i.~e., that any left
$R$\+module belonging to ${}^\perp R\modl_\cot$ is flat.
 This is the result of~\cite[Lemma~3.4.1]{Xu}, which can be also
obtained from Theorems~\ref{eklof-trlifaj}\+-%
\ref{flat-cotorsion-theory-complete} below.
\end{ex}

 One says that a cotorsion theory $(\F,\C)$ has \emph{enough
projectives} if every $R$\+module $M$ can be included in a short
exact sequence
\begin{equation} \label{special-precover}
 0\lrarrow C'\lrarrow F\lrarrow M\lrarrow0, \qquad
 F\in\F,\,\ C'\in\C,
\end{equation}
and that $(\F,\C)$ has \emph{enough injectives} if every
for every $R$\+module there exists a short exact sequence
\begin{equation} \label{special-preenvelope}
 0\lrarrow M\lrarrow C\lrarrow F'\lrarrow 0, \qquad
 F'\in\F,\,\ C\in\C.
\end{equation}
 Examples of exact
sequences~(\ref{special-precover}\+-\ref{special-preenvelope})
in the flat cotorsion theory will be provided in
Sections~\ref{abelian-groups-secn} and~\ref{krull-dim-1-secn}.

\begin{rem}
 For any cotorsion theory $(\F,\C)$ in $R\modl$, the full subcategories
$\F$ and $\C\subset R\modl$ are closed under extensions in $R\modl$.
 Therefore, they inherit the exact category structures from
the abelian category $R\modl$~\cite{Bueh}.
 The intersection $\F\cap\C$ coincides with the class of injective
objects in the exact category $\F$ and with the class of projective
objects in the exact category~$\C$.

 If an $R$\+module $M$ in a short exact
sequence~\eqref{special-precover} belongs to $\C$,
then the $R$\+module $F$, being an extension of $M\in\C$ and
$C'\in\C$, belongs to $\F\cap\C$.
 So, when a cotorsion theory $(\F,\C)$ has enough projectives,
the exact category $\C$ has enough projective objects.
 Building up short exact sequences~\eqref{special-precover},
one can then construct a projective resolution of a given
object $M$ in the exact category~$\C$.

 Similarly, if an $R$\+module $M$ in a short exact
sequence~\eqref{special-preenvelope} belongs to $\F$, then
the $R$\+module $C$ belongs to $\F\cap\C$.
 So if a cotorsion theory $(\F,\C)$ has enough injectives then
the exact category $\F$ has enough injective objects.
 Building up short exact sequences~\eqref{special-preenvelope},
one can construct an injective resolution of a given object
$M\in\F$.

 In particularly, Theorem~\ref{flat-cotorsion-theory-complete} below
implies that there are enough injective objects in the exact
category of flat left $R$\+modules and enough projective objects in
the exact category of cotorsion left $R$\+modules.
 Similarly, it will follow from
Corollary~\ref{very-flat-cotorsion-theory} that the exact category of
contraadjusted modules over a commutative ring has enough projective
objects.
 These are important observations for the theory of contraherent
cosheaves~\cite[Section~4.4]{Pcosh}.
\end{rem}

 The following lemma is due to Salce~\cite{Sal}
(see also~\cite[Lemma~5.20]{GT}, \cite[second paragraph of
the proof of Theorem~10]{ET}, or~\cite[Lemma~1.1.3]{Pcosh}).

\begin{lem}  \label{salce-lemma}
 A cotorsion theory in $R\modl$ has enough projectives if and only if
it has enough injectives. \qed
\end{lem}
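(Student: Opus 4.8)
The plan is to run the classical pullback/pushout argument of Salce, using only that the ambient category $R\modl$ has both enough projectives and enough injectives, together with the fact recorded in the Remark just above that both $\F$ and $\C$ are closed under extensions in $R\modl$. First I would note two trivialities: every injective left $R$\+module $I$ lies in $\C$, because $\Ext^1_R(F,I)=0$ for all $F$ forces $I\in\F^\perp=\C$; and every projective left $R$\+module $P$ lies in $\F$, because $\Ext^1_R(P,C)=0$ for all $C$ forces $P\in{}^\perp\C=\F$.

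Suppose $(\F,\C)$ has enough projectives, and let $M$ be any $R$\+module. Choose an embedding into an injective module, giving a short exact sequence $0\rarrow M\rarrow I\rarrow I/M\rarrow0$ with $I\in\C$. Applying the enough-projectives hypothesis to $I/M$ produces $0\rarrow C'\rarrow F\rarrow I/M\rarrow0$ with $F\in\F$ and $C'\in\C$. Now I would form the pullback $D$ of the two maps $F\rarrow I/M\larrow I$. Of the two short exact sequences attached to $D$, one reads $0\rarrow C'\rarrow D\rarrow I\rarrow0$, so $D\in\C$ by closedness of $\C$ under extensions; the other reads $0\rarrow M\rarrow D\rarrow F\rarrow0$, which is precisely a special preenvelope~\eqref{special-preenvelope} of $M$. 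Hence $(\F,\C)$ has enough injectives.

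Conversely, suppose $(\F,\C)$ has enough injectives, and let $M$ be any $R$\+module. Choose a surjection from a projective (e.g.\ free) module, giving $0\rarrow N\rarrow P\rarrow M\rarrow0$ with $P\in\F$. Applying the enough-injectives hypothesis to $N$ yields $0\rarrow N\rarrow C\rarrow F'\rarrow0$ with $C\in\C$ and $F'\in\F$. I would then form the pushout $D$ of $P\larrow N\rarrow C$. Of its two short exact sequences, one reads $0\rarrow P\rarrow D\rarrow F'\rarrow0$, so $D\in\F$ by closedness of $\F$ under extensions; the other reads $0\rarrow C\rarrow D\rarrow M\rarrow0$, which is a special precover~\eqref{special-precover} of $M$. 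Hence $(\F,\C)$ has enough projectives.

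The argument is entirely formal, so I do not expect any genuine obstacle; the only points that need care are the bookkeeping of which of the two exact sequences a given pullback or pushout square produces (and the directions of the maps), and the citation of the closure of $\F$ and $\C$ under extensions, which was already established in the Remark preceding the lemma.
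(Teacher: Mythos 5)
Your argument is correct: both directions, the pullback and pushout exact sequences, and the use of closure of $\F$ and $\C$ under extensions all check out. The paper itself omits the proof and merely cites Salce (and \cite[Lemma~5.20]{GT}, \cite{ET}, \cite[Lemma~1.1.3]{Pcosh}), and what you have written is exactly the classical Salce pullback/pushout argument those references contain, so there is nothing to add.
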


 A cotorsion theory having enough projectives/injectives is called
\emph{complete}.

 Given a class of modules $\sS\subset R\modl$, one says that
an $R$\+module $F$ is a \emph{transfinitely iterated extension}
(in the sense of the inductive limit) of modules from $\sS$ if
there exists an ordinal~$\alpha$ and an increasing chain of
submodules $F_i\subset F$ indexed by the ordinals $i\le\alpha$
such that $F_0=0$, \ $F_\alpha=F$, \ $F_i=\bigcup_{j<i}F_j$
for all limit ordinals $i\le\alpha$, and $F_{i+1}/F_i\in\sS$
for all $i<\alpha$.
 The class of all modules representable as transfinitely iterated
extensions of modules from $\sS$ is denoted by $\filt(\sS)$.

 The next result is known as the Eklof lemma~\cite[Lemma~1]{ET}.

\begin{lem} \label{eklof-lemma}
 For any class of modules\/ $\sS\subset R\modl$, one has\/
$\sS^\perp=\filt(\sS)^\perp$. \qed
\end{lem}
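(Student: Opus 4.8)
The plan is to prove the two inclusions $\filt(\sS)^\perp\subseteq\sS^\perp$ and $\sS^\perp\subseteq\filt(\sS)^\perp$ separately. The first is immediate: any $S\in\sS$ lies in $\filt(\sS)$, as witnessed by the two-term filtration $0=F_0\subset F_1=S$ (the remaining conditions in the definition of $\filt$ being vacuous or trivial in this case), so $\sS\subseteq\filt(\sS)$, and passing to right $\Ext^1$\+orthogonal complements reverses inclusions.

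For the nontrivial inclusion, fix a module $C\in\sS^\perp$ and a module $F\in\filt(\sS)$, together with a filtration $(F_i)_{i\le\alpha}$ of $F$ by submodules as in the definition (so $F_0=0$, $F_\alpha=F$, $F_i=\bigcup_{j<i}F_j$ for limit ordinals $i$, and $F_{i+1}/F_i\in\sS$ for all $i<\alpha$). I want to show $\Ext^1_R(F,C)=0$, that is, that every short exact sequence $0\rarrow C\rarrow E\rarrow F\rarrow0$ splits; write $\pi\:E\rarrow F$ for the surjection. The essential point is that one should \emph{not} try to prove ``$\Ext^1_R(F_i,C)=0$'' by transfinite induction on~$i$, since $\Ext^1_R(-,C)$ does not turn the directed colimit $F_i=\varinjlim_{j<i}F_j$ at a limit ordinal into a limit, so the limit step would fail. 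Instead, I would construct directly, by transfinite recursion on $i\le\alpha$, a \emph{coherent} family of $R$\+module homomorphisms $s_i\:F_i\rarrow E$ with $\pi\circ s_i$ equal to the inclusion $F_i\hookrightarrow F$ and $s_j|_{F_i}=s_i$ whenever $i\le j$. Then $s=s_\alpha\:F\rarrow E$ is a section of~$\pi$, so the sequence splits and $\Ext^1_R(F,C)=0$; as $F\in\filt(\sS)$ is arbitrary, this yields $C\in\filt(\sS)^\perp$.

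The recursion is organized as follows. Put $s_0=0$. At a limit ordinal~$i$, the homomorphisms $s_j$ for $j<i$ are pairwise compatible by construction, so they glue to a homomorphism $s_i\:F_i=\bigcup_{j<i}F_j\rarrow E$ with the required properties, and there is nothing to check. The only substantive step is the successor step $i=j+1$, where I must extend the already-built $s_j\:F_j\rarrow E$ to a homomorphism $s_{j+1}\:F_{j+1}\rarrow E$ lifting the inclusion $F_{j+1}\hookrightarrow F$. Setting $E_{j+1}=\pi^{-1}(F_{j+1})$, the sequence $0\rarrow C\rarrow E_{j+1}\rarrow F_{j+1}\rarrow0$ is exact and $s_j$ is a section of it over the submodule $F_j\subseteq F_{j+1}$; passing to the quotient by $s_j(F_j)$ produces a short exact sequence $0\rarrow C\rarrow E_{j+1}/s_j(F_j)\rarrow F_{j+1}/F_j\rarrow0$. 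Here $F_{j+1}/F_j\in\sS$ and $C\in\sS^\perp$, so $\Ext^1_R(F_{j+1}/F_j,C)=0$ and this sequence splits; a short diagram chase (equivalently, a pushout argument) converts the resulting splitting of $F_{j+1}/F_j$ back into the desired extension $s_{j+1}$ of~$s_j$. This is the one and only place where the hypothesis $C\in\sS^\perp$ enters.

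The main obstacle is conceptual rather than computational: one must recognize that the transfinite recursion has to be run on the level of partial sections — so that limit stages are handled automatically by gluing a compatible family — rather than on the level of $\Ext$ groups, where the limit step has no reason to hold. Once this is set up correctly, the successor step reduces to the vanishing of a single $\Ext^1$ group plus a routine diagram chase, and the limit step is free.
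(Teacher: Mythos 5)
Your proof is correct. The paper itself omits the argument, stating the lemma with a reference to Eklof--Trlifaj's ``How to make Ext vanish'' [ET, Lemma~1], and your proof is essentially the standard one from that source: run the transfinite recursion on compatible partial sections $s_i\:F_i\rarrow E$ (so limit stages are free), and at the successor step use $\Ext^1_R(F_{i+1}/F_i,C)=0$ to split the induced sequence $0\rarrow C\rarrow \pi^{-1}(F_{i+1})/s_i(F_i)\rarrow F_{i+1}/F_i\rarrow 0$; the retraction so obtained kills $s_i(F_i)$, so the resulting section of $\pi$ over $F_{i+1}$ restricts to $s_i$, exactly as you indicate.
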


 The following important existence theorem is due to
Eklof and Trlifaj~\cite[Theorem~10]{ET}.

\begin{thm} \label{eklof-trlifaj}
\textup{(a)} Any cotorsion theory\/ $(\F,\C)$ generated by a set
(rather than a proper class) of modules\/ $\sS\subset R\modl$
is complete. \par
\textup{(b)} If the $R$\+module $R$ belongs to $\sS$, then
the class\/ $\F$ consists precisely of all the direct summands of
the modules belonging to $\filt(\sS)$. \qed
\end{thm}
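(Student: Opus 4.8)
The plan is to establish part~(a) by the transfinite construction of Eklof and Trlifaj (a ``small object argument''), and then to derive part~(b) from it together with the Eklof lemma~\ref{eklof-lemma} and a pushout argument of Salce. I would first record the routine closure properties used throughout: the class $\F={}^\perp\C$ is closed under extensions, arbitrary direct sums, and direct summands (immediate from the long exact sequence and the additivity of $\Ext^1_R({-},C)$ in the first argument), while the class $\filt(\sS)$ is closed under extensions, contains every direct sum of members of $\sS$, and satisfies $\filt(\filt(\sS))=\filt(\sS)$ (all by concatenating transfinite filtrations). Since Lemma~\ref{eklof-lemma} gives $\filt(\sS)^\perp=\sS^\perp=\C$, it follows that $\filt(\sS)\subseteq{}^\perp(\sS^\perp)=\F$.

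For part~(a), by Salce's lemma~\ref{salce-lemma} it suffices to produce, for every $R$\+module $N$, a short exact sequence $0\rarrow N\rarrow C\rarrow F'\rarrow0$ with $C\in\C$ and $F'\in\F$. This is the point where the hypothesis that $\sS$ is a \emph{set} is essential: I would fix an infinite cardinal $\kappa\ge|R|$ such that every $S\in\sS$ is generated by at most $\kappa$ elements, so that each such $S$ admits a free resolution by free $R$\+modules of rank $\le\kappa$; then fix a regular cardinal $\lambda>\kappa$. Now build a continuous chain $(N_i)_{i\le\lambda}$ with $N_0=N$, with $N_i=\bigcup_{j<i}N_j$ at limit ordinals, and with $N_{i+1}$ the ``universal extension'' of $N_i$ at a successor step: indexing over all pairs $(S,\phi)$ with $S\in\sS$ and $\phi\in\Ext^1_R(S,N_i)$, let $0\rarrow N_i\rarrow N_{i+1}\rarrow\bigoplus_{(S,\phi)}S\rarrow0$ be the extension whose class, under the identification $\Ext^1_R\bigl(\bigoplus_{(S,\phi)}S,\,N_i\bigr)=\prod_{(S,\phi)}\Ext^1_R(S,N_i)$, is the tuple $(\phi)_{(S,\phi)}$. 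Pulling this class back along the inclusion $S\hookrightarrow\bigoplus S'$ of the summand indexed by $(S,\phi)$ returns $\phi$; since pulled-back classes lie in the image of the connecting map $\Hom_R\bigl(S,\bigoplus S'\bigr)\rarrow\Ext^1_R(S,N_i)$, this forces $\phi$ to map to $0$ in $\Ext^1_R(S,N_{i+1})$. Hence every class in every $\Ext^1_R(S,N_i)$ is killed one step later. Setting $C=N_\lambda$, the quotient $C/N$ is a transfinitely iterated extension of the modules $\bigoplus_{(S,\phi)}S\in\filt(\sS)$, so $C/N\in\filt(\sS)\subseteq\F$; it remains only to check that $C\in\C$.

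The main obstacle is exactly this last step: $\Ext^1_R(S,N_\lambda)=0$ for all $S\in\sS$, which is not automatic since $\Ext$ does not commute with direct limits in general. This is where the regularity of $\lambda$ and the bound $\kappa$ are used. Computing $\Ext^1_R(S,-)$ from the chosen free resolution of $S$ by modules of rank $\le\kappa$ presents it as the homology, at the middle term, of a three-term complex $N^\kappa\rarrow N^\kappa\rarrow N^\kappa$ depending functorially on $N$, and the functor $N\mapsto N^\kappa$ commutes with the colimit $\varinjlim_{i<\lambda}N_i=\bigcup_{i<\lambda}N_i$ because any $\kappa$\+indexed family of elements of that union already lies in a single term $N_i$ (here $\kappa<\lambda$ and $\lambda$ is regular). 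As filtered colimits are exact, $\Ext^1_R(S,N_\lambda)=\varinjlim_{i<\lambda}\Ext^1_R(S,N_i)$; but every element of $\Ext^1_R(S,N_i)$ already maps to $0$ in $\Ext^1_R(S,N_{i+1})$ with $i+1<\lambda$, so this colimit is zero. Thus $C=N_\lambda\in\sS^\perp=\C$, which completes part~(a).

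For part~(b), assume $R\in\sS$. The inclusion ``$\supseteq$'' is immediate, since $\filt(\sS)\subseteq\F$ and $\F$ is closed under direct summands. For ``$\subseteq$'', let $F\in\F$, choose a surjection $R^{(X)}\twoheadrightarrow F$ with kernel $K$, and apply the construction of part~(a) to $N=K$ to obtain $0\rarrow K\rarrow C'\rarrow F'\rarrow0$ with $C'\in\C$ and $F'\in\filt(\sS)$. Form the pushout $G$ of the maps $K\rarrow R^{(X)}$ and $K\rarrow C'$: this yields a short exact sequence $0\rarrow C'\rarrow G\rarrow F\rarrow0$ of the form~\eqref{special-precover}, together with a short exact sequence $0\rarrow R^{(X)}\rarrow G\rarrow F'\rarrow0$. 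Since $R\in\sS$ forces $R^{(X)}\in\filt(\sS)$, the latter sequence shows $G\in\filt(\sS)$; and since $F\in\F={}^\perp\C$ and $C'\in\C$, the former sequence splits, exhibiting $F$ as a direct summand of $G\in\filt(\sS)$. (The pushout here is just Salce's lemma; note it also produces the sequence~\eqref{special-precover} with $F\in\filt(\sS)$.)
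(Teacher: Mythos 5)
The paper gives no proof of this theorem at all---it is imported from Eklof and Trlifaj~\cite{ET} (Theorem~10)---and your argument is a correct reconstruction of that standard proof: the transfinite chain of universal extensions, with the set hypothesis entering exactly through the cardinal bound~$\kappa$ and the regular $\lambda>\kappa$ that makes $\Ext^1_R(S,{-})$ commute with the union of the chain, followed by the Eklof lemma to see $\filt(\sS)\subset\F$ and Salce's pushout trick for part~(b). Nothing further is needed.
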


 A class of modules $\F\subset R\modl$ is called \emph{deconstructible}
if there exists a set of modules $\sS\subset R\modl$ such that
$\F=\filt(\sS)$.
 The following corollary of Theorem~\ref{eklof-trlifaj} is due to
Enochs~\cite[Proposition~2]{BBE}.

\begin{thm} \label{flat-cotorsion-theory-complete}
 For any associative ring $R$, the class of all flat $R$\+modules
is deconstructible.
 Therefore, the flat cotorsion theory in $R\modl$ is complete. \qed
\end{thm}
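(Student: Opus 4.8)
The plan is to produce a single set $\sS\subset R\modl$ of flat modules with the property that every flat module is a transfinitely iterated extension of modules from $\sS$, and then invoke Theorem \ref{eklof-trlifaj} together with Lemma \ref{eklof-lemma}. The natural candidate is to let $\kappa$ be an infinite cardinal larger than $|R|$ and to take $\sS$ to be a set of representatives of all flat $R$\+modules of cardinality $\le\kappa$ (there is only a set of isomorphism classes of such modules, so $\sS$ really is a set). Since $R$ itself is flat of cardinality $\le\kappa$, the module $R$ belongs to $\sS$. The entire content of the proof is then the claim that every flat $R$\+module $F$ lies in $\filt(\sS)$; once this is known, Theorem \ref{eklof-trlifaj}(a) gives completeness of the cotorsion theory generated by $\sS$, while Lemma \ref{eklof-lemma} shows $\sS^\perp=\filt(\sS)^\perp$, so the cotorsion theory generated by $\sS$ has left-hand class ${}^\perp(\sS^\perp)={}^\perp(\filt(\sS)^\perp)$. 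Combined with the description in Theorem \ref{eklof-trlifaj}(b), the left-hand class consists of direct summands of modules in $\filt(\sS)$; since $\filt(\sS)$ already contains all flat modules and (by Example \ref{flat-cotorsion-example}) is contained in the flat modules, and flatness is closed under direct summands, the left-hand class is exactly $R\modl_\fl$. This simultaneously proves deconstructibility and the assertion, deferred in Example \ref{flat-cotorsion-example}, that $(R\modl_\fl,R\modl_\cot)$ is genuinely a cotorsion theory.

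The key step — building the filtration — is carried out by a transfinite Hill-type / Kaplansky-style argument. Given a flat module $F$, one well-orders a generating set and constructs an increasing continuous chain $0=F_0\subset F_1\subset\dotsb\subset F_\alpha=F$ of \emph{pure} submodules of $F$ such that each successive quotient $F_{i+1}/F_i$ has cardinality $\le\kappa$. The crucial input is the classical ``purity / Hill lemma'' observation: in a flat module $F$, every subset of size $\le\kappa$ is contained in a pure submodule of size $\le\kappa$; this uses $\kappa>|R|$ in an essential way, since one closes a given subset under witnesses for the solvability of the finite systems of linear equations expressing purity, and each closure step only adds $\le\kappa$ many new elements, so after $\le\kappa$ steps one stabilizes. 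A pure submodule of a flat module is flat, and a quotient of a flat module by a pure flat submodule is again flat. Hence each $F_i$ is flat, each $F_{i+1}/F_i$ is flat of cardinality $\le\kappa$, so $F_{i+1}/F_i\in\sS$, and the chain exhibits $F$ as an object of $\filt(\sS)$. The continuity condition at limit ordinals is automatic from the construction, and exhausting the chosen generating set guarantees $F_\alpha=F$.

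I expect the main obstacle to be the purity bookkeeping in the Hill lemma step: one must be careful that the ``closure'' operation producing a pure submodule containing a given small subset is well-defined, increases cardinality by at most $\kappa$, and that the resulting submodule is actually pure (not merely ``approximately'' pure at each finite stage) — this is where the regularity of $\kappa$, or at least the cofinality argument bounding the number of closure steps by $\le\kappa$, must be used cleanly. A secondary, more routine point is verifying that a quotient of a flat module by a pure submodule that is itself flat remains flat; this follows from the long exact sequence for $\Tor$ applied to the pure-exact (hence $\Tor$\+acyclic after one step) sequence $0\to F_i\to F_{i+1}\to F_{i+1}/F_i\to0$, together with flatness of $F_{i+1}$ and $F_i$. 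All of these ingredients are standard, and the whole argument is essentially the one in \cite[Proposition~2]{BBE}; the theorem then reduces to citing Theorem \ref{eklof-trlifaj} as indicated above.
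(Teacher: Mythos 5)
Your proposal is correct and is essentially the proof the paper intends: the paper simply cites Enochs' result \cite[Proposition~2]{BBE} together with Theorem~\ref{eklof-trlifaj}, and your Kaplansky/Hill-style filtration by pure submodules of cardinality bounded by $\kappa\ge|R|+\aleph_0$ is exactly the argument behind that citation. The only cosmetic slip is attributing the inclusion $\filt(\sS)\subset R\modl_\fl$ to Example~\ref{flat-cotorsion-example}; it really follows from closure of flatness under extensions and direct limits of chains, but this does not affect the proof.
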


 According to Lemma~\ref{eklof-lemma}, in any cotorsion theory in
$R\modl$ the class $\F$ is closed under transfinitely iterated
extensions in the sense of the inductive limit; this includes finitely
iterated extensions and infinite direct sums.
 The class $\C$ is closed under extensions and the infinite products
(and, in fact, even under transfinitely iterated extensions in
the sense of the projective limit~\cite[Proposition~18]{ET},
\cite[Lemma~4.5]{PR}, cf.\ Lemma~\ref{dual-eklof} below).
 Both the classes $\F$ and $\C$ are closed under direct summands.

\begin{lem} \label{hereditary-cotorsion}
 Let\/ $(\F,\C)$ be a cotorsion theory in $R\modl$.
 Then the following four conditions are equivalent:
\begin{enumerate}
\renewcommand{\theenumi}{\roman{enumi}}
\item $\Ext^2_R(F,C)=0$ for all $F\in\F$ and $C\in\C$;
\item $\Ext^n_R(F,C)=0$ for all $F\in\F$, \,$C\in\C$, and $n\ge2$;
\item the class\/ $\F$ is closed under the kernels of surjective
morphisms;
\item the class\/ $\C$ is closed under the cokernels of injective
morphisms.
\end{enumerate}
\end{lem}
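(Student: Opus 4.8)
The plan is to prove the four-way equivalence via the implications $(ii)\Rightarrow(i)\Rightarrow(iii)\Rightarrow(i)$ and $(i)\Rightarrow(iv)\Rightarrow(i)$, together with $(i)\Rightarrow(ii)$, all by elementary dimension shifting in $R\modl$. The two facts I would record at the very beginning, since everything rests on them, are: every projective left $R$\+module $P$ lies in $\F$, because $\Ext^1_R(P,{-})=0$ forces $P\in{}^\perp\C=\F$; and dually every injective left $R$\+module $E$ lies in $\C$, since $\Ext^1_R({-},E)=0$ forces $E\in\F^\perp=\C$. The implication $(ii)\Rightarrow(i)$ is of course trivial.

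First I would treat $(i)\Rightarrow(iii)$ and $(i)\Rightarrow(iv)$, which are dual to each other. Given a short exact sequence $0\rarrow K\rarrow G\rarrow F\rarrow0$ with $G,F\in\F$ and an arbitrary $C\in\C$, the long exact sequence of $\Ext_R({-},C)$ contains the fragment $\Ext^1_R(G,C)\rarrow\Ext^1_R(K,C)\rarrow\Ext^2_R(F,C)$; the left term vanishes because $G\in\F$, and the right term vanishes by~(i), so $\Ext^1_R(K,C)=0$, i.e.\ $K\in\F$. The argument for $(i)\Rightarrow(iv)$ is the mirror image: for $0\rarrow C\rarrow D\rarrow L\rarrow0$ with $C,D\in\C$ and any $F\in\F$, the long exact sequence of $\Ext_R(F,{-})$ gives $\Ext^1_R(F,D)\rarrow\Ext^1_R(F,L)\rarrow\Ext^2_R(F,C)$ with both outer terms zero, so $L\in\C$.

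Next, for $(iii)\Rightarrow(i)$ and $(iv)\Rightarrow(i)$ I would use a single dimension shift. Fix $F\in\F$ and $C\in\C$, and choose $0\rarrow K\rarrow P\rarrow F\rarrow0$ with $P$ projective; then $P\in\F$ by the opening observation, so (iii) yields $K\in\F$, and projectivity of $P$ gives $\Ext^2_R(F,C)\cong\Ext^1_R(K,C)=0$. Dually, embedding $C$ into an injective module $E$ (which lies in $\C$) with cokernel $N$, condition (iv) gives $N\in\C$ and $\Ext^2_R(F,C)\cong\Ext^1_R(F,N)=0$. Finally $(i)\Rightarrow(ii)$ follows by iteration: having established $(i)\Leftrightarrow(iii)$, the class $\F$ is closed under every syzygy of a projective resolution of any of its members, so for $F\in\F$ the $n$\+th syzygy $K_{n-1}$ lies in $\F$ for all $n\ge1$, and repeated dimension shifting gives $\Ext^{n+1}_R(F,C)\cong\Ext^1_R(K_{n-1},C)=0$ for every $C\in\C$.

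I do not expect a real obstacle here, since $R\modl$ always has enough projectives and injectives and the whole argument is formal; the only point needing a moment's care is the very first one, namely that projectives belong to $\F$ and injectives to $\C$, without which the dimension\+shifting steps could not start.
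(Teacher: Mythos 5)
Your argument is correct and is exactly the standard dimension-shifting proof: the paper itself does not spell it out but simply cites \cite[Lemma~6.17]{Sto}, and your reasoning matches both that reference and the style of the paper's proof of Lemma~\ref{relatively-cotorsion-lemma} (induction/shift via a projective presentation). The one point you rightly flag -- that projectives lie in $\F$ and injectives in $\C$, so the shifts can start -- is handled correctly, so nothing is missing.
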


\begin{proof}
  See~\cite[Lemma~6.17]{Sto} (cf.\
Lemma~\ref{relatively-cotorsion-lemma} below).
\end{proof}

 A cotorsion theory in $R\modl$ is said to be \emph{hereditary} if
it satisfies the equivalent conditions of
Lemma~\ref{hereditary-cotorsion}.
 In particular, the flat cotorsion theory is hereditary, because
the condition~(iii) clearly holds.

\medskip

 Let $\F\subset R\modl$ be a class of $R$\+modules.
 An $R$\+module morphism $f\:F\rarrow M$ with $F\in\F$ is called
an \emph{$\F$\+precover} of an $R$\+module $M$ if any morphism
$f'\:F'\rarrow M$ with $F'\in\F$ factorizes through~$f$ (that is
for any such~$f'$ there exists a morphism $u\:F'\rarrow F$
such that $f'=fu$).
 A \emph{special\/ $\F$\+precover} of an $R$\+module $M$ is
a surjective morphism $f\:F\rarrow M$ with $F\in\F$ and
$\ker(f)\in\F^\perp$; in other words, it is a morphism that can be
included into a short exact sequence like~\eqref{special-precover}.

 An $\F$\+precover $f\:F\rarrow M$ is called an \emph{$\F$\+cover}
of the $R$\+module $M$ if for any endomorphism $u\:F\rarrow F$
the equation $fu=f$ implies that $u$~is an automorphism of~$F$,
i.~e., $u$~is invertible.
 Clearly, an $\F$\+cover of a given module $M$, if it exists, is
unique up to a (nonunique) isomorphism.

\begin{lem} \label{cover-wakamatsu}
\textup{(a)} Any special\/ $\F$\+precover is an\/ $\F$\+precover. \par
\textup{(b)} If the class\/ $\F$ is closed under extensions in
$R\modl$, then the kernel of any\/ $\F$\+cover belongs to\/~$\F^\perp$.
 In particular, any surjective\/ $\F$\+cover is special. \par
\textup{(c)} Assume that an $R$\+module $M$ admits an\/ $\F$\+cover.
 In this case, an\/ $\F$\+precover $f\:F\rarrow M$ is an\/ $\F$\+cover
if and only if the $R$\+module $F$ has no nonzero direct summands
contained in\/ $\ker f$.
\end{lem}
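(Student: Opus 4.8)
The plan is to prove the three parts in order, each reducing to a short diagram-chase with pullbacks, pushouts, or idempotents; the only genuinely subtle point is part~(b). For part~(a), let $f\:F\rarrow M$ be a special $\F$\+precover, fitting into a short exact sequence $0\rarrow C'\rarrow F\rarrow M\rarrow 0$ with $F\in\F$ and $C'\in\F^\perp$. Given any morphism $f'\:F'\rarrow M$ with $F'\in\F$, I would form the pullback $P=F\times_M F'$; since $f$ is surjective, so is the projection $P\rarrow F'$, and its kernel is $C'$, giving a short exact sequence $0\rarrow C'\rarrow P\rarrow F'\rarrow0$. As $F'\in\F$ and $C'\in\F^\perp$, this sequence splits, and composing a splitting $F'\rarrow P$ with the other projection $P\rarrow F$ yields the required $u\:F'\rarrow F$ with $fu=f'$.

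For part~(b), let $f\:F\rarrow M$ be an $\F$\+cover, put $K=\ker f$, and take an arbitrary extension $0\rarrow K\rarrow E\rarrow F'\rarrow0$ with $F'\in\F$; the goal is to split it. Form the pushout $G=F\sqcup_K E$ along the inclusion $K\rarrow F$. Then $F\rarrow G$ is a monomorphism with cokernel $F'$, so $G$ is an extension of $F'$ by $F$ and hence lies in $\F$, since $\F$ is closed under extensions. Because $K=\ker f$, the morphism $f\:F\rarrow M$ and the zero morphism $E\rarrow M$ agree on $K$, so they glue to a morphism $g\:G\rarrow M$ with $g|_F=f$ and $g|_E=0$. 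As $f$ is an $\F$\+precover, $g=fh$ for some $h\:G\rarrow F$; restricting to $F\subseteq G$ gives $f(h|_F)=f$, and the $\F$\+cover property forces $h|_F$ to be an automorphism of $F$. Then $v:=(h|_F)^{-1}h\:G\rarrow F$ satisfies $v|_F=\id_F$; moreover $fv=f(h|_F)^{-1}h=fh=g$, using that $f(h|_F)=f$ implies $f(h|_F)^{-1}=f$. Restricting $v$ to $E$ and using $fv|_E=g|_E=0$ shows $v(E)\subseteq K$; since $E$ and $F$ intersect in $K$ inside $G$ and $v|_F=\id_F$, the map $v|_E\:E\rarrow K$ is a retraction of $K\rarrow E$. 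Thus the chosen extension splits, so $\Ext^1_R(F',K)=0$ for all $F'\in\F$, i.e.\ $K\in\F^\perp$. The final clause is then immediate: a surjective $\F$\+cover is a surjection $F\rarrow M$ with $F\in\F$ and $\ker f\in\F^\perp$, hence a special $\F$\+precover.

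For part~(c), the ``only if'' direction is easy: if $F=F_1\oplus F_2$ with $0\ne F_2\subseteq\ker f$, then $f$ equals its composite with the idempotent $\pi\:F\rarrow F$ projecting onto $F_1$, so $f\pi=f$ forces $\pi$ to be an automorphism, contradicting $\ker\pi=F_2\ne0$. For ``if'', fix an $\F$\+cover $f_0\:F_0\rarrow M$ of $M$. Applying the $\F$\+precover properties of $f$ and $f_0$ to each other, I get morphisms $u\:F\rarrow F_0$ and $v\:F_0\rarrow F$ with $f_0u=f$ and $fv=f_0$; then $f_0(uv)=f_0$, so $a:=uv$ is an automorphism of $F_0$. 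The morphism $e:=va^{-1}u\:F\rarrow F$ is idempotent, since $e^2=va^{-1}(uv)a^{-1}u=va^{-1}u=e$, and $ue=(uv)a^{-1}u=u$, so $u$ vanishes on $(1-e)F$; hence $(1-e)F\subseteq\ker u\subseteq\ker f$. By hypothesis this direct summand of $F$ is zero, so $e=\id_F$, i.e.\ $va^{-1}u=\id_F$; combined with $uv=a$ being invertible, $u$ is both a split monomorphism and a split epimorphism, hence an isomorphism. Finally, $f=f_0u$ with $u$ an isomorphism and $f_0$ an $\F$\+cover implies $f$ is an $\F$\+cover: given $t\:F\rarrow F$ with $ft=f$, one has $f_0(utu^{-1})=f_0$, so $utu^{-1}$, and hence $t$, is invertible.

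The step I expect to be the main obstacle is part~(b): setting up the pushout square and its properties in the module category, and especially verifying that the retraction $v$, once restricted to $E$, actually lands inside $K=\ker f$---this is the one place where the full $\F$\+cover property (not merely the precover property) is used, via the automorphism $h|_F$ and the resulting identity $fv=g$.
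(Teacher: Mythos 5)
Your proposal is correct in all three parts: the pullback-splitting argument for (a), the pushout construction with the glued map $g$ and the normalized retraction $v=(h|_F)^{-1}h$ for Wakamatsu's lemma (b), and the idempotent $e=va^{-1}u$ obtained by comparing $f$ with a fixed cover $f_0$ for (c) all check out, including the subtle points you flag ($fv=g$ and $v(E)\subseteq\ker f$). The paper itself gives no argument, simply citing Xu's book for each part, and your proofs are exactly the standard arguments those references contain, so there is no substantive difference of approach to report.
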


\begin{proof}
 Part~(a) is~\cite[Proposition~2.1.3]{Xu}.
 Part~(b) is known as Wakamatsu's lemma; this
is~\cite[Lemma~2.1.1]{Xu}.
 Part~(c) is~\cite[Corollary~1.2.8]{Xu}.
\end{proof}

 Dually, let $\C\subset R\modl$ be another class of $R$\+modules.
 An $R$\+module morphism $g\:M\rarrow C$ with $C\in\C$ is called
a \emph{$\C$\+preenvelope} of $M$ if any morphism $g'\:M\rarrow C'$
with $C'\in\C$ factorizes through~$g$ (that is for any~$g'$ there
exists a morphism $u\:C\rarrow C'$ such that $g'=ug$).
 A \emph{special\/ $\C$\+preenvelope} of an $R$\+module $M$ is
an injective morphism $g\:M\rarrow C$ with $C\in\C$ and
$\coker(f)\in{}^\perp\C$; in other words, it is a morphism that
can be included in a short exact sequence
like~\eqref{special-preenvelope}.

 A $\C$\+preenvelope $g\:M\rarrow C$ is called a \emph{$\C$\+envelope}
of the $R$\+module $M$ if for any endomorphism $u\:C\rarrow C$
the equation $ug=g$ implies that $u$~is an automorphism of~$C$.
 A $\C$\+envelope of a given module $M$, if it exists, is unique up to
a (nonunique) isomorphism.

\begin{lem} \label{envelope-wakamatsu}
\textup{(a)} Any special\/ $\C$\+preenvelope is a\/ $\C$\+preenvelope.
\par
\textup{(b)} If the class\/ $\C$ is closed under extensions in
$R\modl$, then the cokernel of any\/ $\C$\+envelope belongs
to\/~${}^\perp\C$.
 In particular, any injective\/ $\C$\+envelope is special. \par
\textup{(c)} Assume that an $R$\+module $M$ admits a\/
$\C$\+envelope.
 In this case, a\/ $\C$\+preenvelope $g\:M\rarrow C$ is
a\/ $\C$\+envelope if and only if the $R$\+module $C$ has no
proper direct summands containing\/ $\im g$.
\end{lem}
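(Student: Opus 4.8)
The natural strategy is to establish all three parts by running the arguments dual to those behind Lemma~\ref{cover-wakamatsu}, systematically interchanging kernels with cokernels, submodules with quotient modules, pullbacks with pushouts, and epimorphisms with monomorphisms. Part~(a) is a direct computation: given a monomorphism $g\:M\rarrow C$ with $C\in\C$ and $\coker(g)\in{}^\perp\C$, applying $\Hom_R(-,C')$ to the short exact sequence $0\rarrow M\rarrow C\rarrow\coker(g)\rarrow0$ for an arbitrary $C'\in\C$ and using $\Ext^1_R(\coker(g),C')=0$ shows that $\Hom_R(C,C')\rarrow\Hom_R(M,C')$ is surjective; hence every $g'\:M\rarrow C'$ lifts through~$g$, which is the preenvelope property.

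For part~(c), one implication is immediate: if $g\:M\rarrow C$ is a $\C$-envelope and $C=C_1\oplus C_2$ with $\im(g)\subseteq C_1$, then the idempotent endomorphism $u$ of $C$ with image $C_1$ and kernel $C_2$ satisfies $ug=g$, hence is an automorphism, forcing $C_2=0$. For the converse, let $g\:M\rarrow C$ be a $\C$-preenvelope with the stated summand property and let $g_0\:M\rarrow C_0$ be a $\C$-envelope (which exists by hypothesis). Each of $g$, $g_0$ factors through the other, say $g=\alpha g_0$ and $g_0=\beta g$; from $(\beta\alpha)g_0=g_0$ and the envelope property, $\beta\alpha$ is an automorphism of $C_0$, so $\alpha$ is a split monomorphism and $\alpha(C_0)$ is a direct summand of $C$ containing $\im(g)=\alpha(\im(g_0))$. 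By hypothesis $\alpha(C_0)=C$, so $\alpha$ is an isomorphism, and $g$ is a $\C$-envelope because the envelope property is preserved under composition with isomorphisms.

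Part~(b) is the heart of the matter. I would first reduce to the case where $g$ is a monomorphism, by replacing $M$ with $M/\ker(g)$ and checking that the induced monomorphism $M/\ker(g)\rarrow C$ is again a $\C$-envelope with the same cokernel. Writing $L=\coker(g)=C/M$, fix $C'\in\C$ and an extension $0\rarrow C'\rarrow E\rarrow L\rarrow0$. Pulling it back along the projection $C\rarrow L$ produces a short exact sequence $0\rarrow C'\rarrow P\rarrow C\rarrow0$ with $P\in\C$ (this is exactly where closure of $\C$ under extensions enters). Restricted over the submodule $M\subseteq C$ this sequence splits, which supplies a section $\sigma\:M\rarrow P$ of $P\rarrow C$ over~$M$; since $P\in\C$ and $g$ is a preenvelope, $\sigma$ factors as $rg$ for some $r\:C\rarrow P$, and then the composite of $r$ with $P\rarrow C$ is an endomorphism of $C$ restricting to the identity on $\im(g)$, hence an automorphism, so $0\rarrow C'\rarrow P\rarrow C\rarrow0$ splits. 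The long exact sequence of $\Ext_R(-,C')$ attached to $0\rarrow M\rarrow C\rarrow L\rarrow0$ now shows that the class of $E$ lies in the image of the connecting homomorphism $\Hom_R(M,C')\rarrow\Ext^1_R(L,C')$, so $E$ may be taken to be the pushout of $0\rarrow M\rarrow C\rarrow L\rarrow0$ along some $\phi\:M\rarrow C'$. Using once more that $g$ is a preenvelope, $\phi$ extends to $v\:C\rarrow C'$, and the assignment $c\mapsto[(c,-v(c))]$ kills $M$ and therefore descends to a splitting $L\rarrow E$ of the pushout sequence. Thus $\Ext^1_R(L,C')=0$, i.e.\ $\coker(g)\in{}^\perp\C$; the ``in particular'' clause follows at once, since an injective $\C$-envelope $g$ then sits in a short exact sequence $0\rarrow M\rarrow C\rarrow\coker(g)\rarrow0$ of the form~\eqref{special-preenvelope}.

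The main obstacle is the two-step structure of part~(b): the splitting of the pulled-back sequence $0\rarrow C'\rarrow P\rarrow C\rarrow0$ genuinely uses the \emph{envelope} hypothesis rather than just a preenvelope, yet this by itself does not split the original extension over~$L$; one must still run the connecting-homomorphism argument and invoke the preenvelope property a second time in order to descend from a splitting over~$C$ to a splitting over~$L$. Everything else — parts~(a) and~(c), and both reductions used in~(b) — is formal diagram chasing.
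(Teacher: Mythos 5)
Your proof is correct, and since the paper establishes this lemma only by citing Xu's book (Proposition~2.1.4, Lemma~2.1.2, and Corollary~1.2.3 there), your arguments are precisely the standard ones being delegated to that reference: part~(b) is the usual dual Wakamatsu argument (pull back along $C\rarrow\coker(g)$, split the pulled-back sequence using the envelope property, then use the preenvelope property a second time, via the surjectivity of $\Hom_R(C,C')\rarrow\Hom_R(M,C')$ and the connecting map, to descend the splitting to the original extension), while (a) and~(c) are the routine verifications. So you have taken essentially the same approach, just written out in full rather than cited.
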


\begin{proof}
 Part~(a) is~\cite[Proposition~2.1.4]{Xu}.
 Part~(b) is Wakamatsu's lemma~\cite[Lemma~2.1.2]{Xu}.
 Part~(c) is~\cite[Corollary~1.2.3]{Xu}.
\end{proof}

\begin{lem} \label{direct-sum-cover-envelope}
\textup{(a)} Let\/ $\F\subset R\modl$ be a full subcategory closed
under finite direct sums.
 Then the direct sum of any two\/ $\F$\+precovers is
an\/ $\F$\+precover, the direct sum of any two special\/
$\F$\+precovers is a special\/ $\F$\+precover, and the direct sum of
any two\/ $\F$\+covers is an\/ $\F$\+cover. \par
\textup{(b)} Let\/ $\C\subset R\modl$ be a full subcategory closed
under finite direct sums.
 Then the direct sum of any two\/ $\C$\+preenvelopes is
a\/ $\C$\+preenvelope, the direct sum of any two special\/
$\C$\+preenvelopes is a special\/ $\C$\+preenvelope, and
the direct sum of any two\/ $\C$\+envelopes is a\/ $\C$\+envelope.
\end{lem}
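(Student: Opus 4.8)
The plan is to deduce all six assertions directly from the universal properties of the biproduct together with the additivity of $\Hom$ and $\Ext^1$ in each variable; since parts~(a) and~(b) are formally dual (precovers, special precovers and covers passing to preenvelopes, special preenvelopes and envelopes under the interchange of coordinate projections with coproduct injections, of ${}^\perp(-)$ with $(-)^\perp$, and of matrices with their transposes), it suffices to treat~(a) carefully and then read off~(b). Throughout, let $f_i\:F_i\to M_i$ be two $\F$\+precovers and put $f=f_1\oplus f_2\:F_1\oplus F_2\to M_1\oplus M_2$; here $F_1\oplus F_2\in\F$ by hypothesis, and $\ker f=\ker f_1\oplus\ker f_2$.

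First I would verify the precover statement: given $f'\:F'\to M_1\oplus M_2$ with $F'\in\F$, compose with the two coordinate projections to obtain $\pi_if'\:F'\to M_i$, factor each as $\pi_if'=f_iu_i$ with $u_i\:F'\to F_i$ using that $f_i$ is an $\F$\+precover, and observe that $u=(u_1,u_2)\:F'\to F_1\oplus F_2$ then satisfies $fu=f'$. For the special precover statement, if $f_i$ sits in a short exact sequence $0\to C_i'\to F_i\to M_i\to0$ with $C_i'\in\F^\perp$, then the direct sum of the two sequences is short exact, so $f$ is surjective with kernel $C_1'\oplus C_2'$, and $C_1'\oplus C_2'\in\F^\perp$ because $\Ext^1_R(F,C_1'\oplus C_2')\cong\Ext^1_R(F,C_1')\oplus\Ext^1_R(F,C_2')=0$ for all $F\in\F$. (The same additivity of $\Ext^1$ shows that ${}^\perp\C$ is closed under finite direct sums, which handles the special preenvelope case in~(b).)

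The cover statement carries the only real content, though it is still routine. Since $f$ is already known to be an $\F$\+precover, it remains to check that every endomorphism $u$ of $F_1\oplus F_2$ with $fu=f$ is an automorphism. Writing $u$ as a $2\times2$ block matrix $(u_{ij})$ with $u_{ij}\:F_j\to F_i$, the equation $fu=f$ translates into $f_1u_{11}=f_1$, $f_2u_{22}=f_2$, $f_1u_{12}=0$, $f_2u_{21}=0$; hence $u_{11}$ and $u_{22}$ are automorphisms, $f_1$ and $f_2$ being $\F$\+covers. Left multiplication by the (invertible, block-unitriangular) endomorphism $\begin{pmatrix}1&0\\ -u_{21}u_{11}^{-1}&1\end{pmatrix}$ of $F_1\oplus F_2$ reduces $u$ to block-upper-triangular form $\begin{pmatrix}u_{11}&u_{12}\\ 0&w\end{pmatrix}$ with $w=u_{22}-u_{21}u_{11}^{-1}u_{12}$, and from $f_2u_{21}=0$ one gets $f_2w=f_2u_{22}=f_2$, so $w$ too is an automorphism of $F_2$. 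A block-triangular matrix with invertible diagonal blocks is invertible, hence so is $u$. In part~(b) the same computation is run with transposed matrices: right multiplication by a block-unitriangular automorphism of $C_1\oplus C_2$ reduces an endomorphism $v$ with $vg=g$ to block-lower-triangular form with invertible diagonal blocks. The only obstacle, and a mild one, is keeping the ``sides'' of these block-matrix manipulations and their duals straight; all the rest is an unwinding of the definitions of (special) precover/cover and of the biproduct.
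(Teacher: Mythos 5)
Your proposal is correct and follows essentially the same route as the paper: the precover and special precover assertions by direct unwinding and additivity of $\Ext^1$, and the cover assertion by Gaussian elimination of the block matrix of an endomorphism $u$ with $fu=f$, where the cover property of $f_1$, $f_2$ makes the diagonal blocks (including the Schur complement $w$, since $f_2u_{21}=0$ gives $f_2w=f_2$) invertible. Your write-up merely makes explicit the triangular factorization and the dual computation that the paper summarizes with a reference to Xu's book.
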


\begin{proof}
 Part~(a): the assertions concerning precovers and special precovers
are straightforward.
 Concerning the covers, let $M_1$ and $M_2$ be two $R$\+modules and
$f_i\:F_i\rarrow M_i$ be their $\F$\+covers.
 We have to show that $f_1\oplus f_2\: F_1\oplus F_2\allowbreak\rarrow
M_1\oplus M_2$ is an $\F$\+cover.
 This is obvious when $\Hom_R(F_1,F_2)=0$ or $\Hom_R(F_2,F_1)=0$,
because endomorphisms of $F_1\oplus F_2$ are then represented by
triangular matrices, and a triangular matrix with invertible diagonal
entries is invertible.

 In the general case, one observes that any $2\times 2$ matrix with
an invertible diagonal entry can be naturally decomposed into
the product of an upper triangular and a lower triangular matrices
(by a kind of Gaussian elemination).
 The original matrix is invertible whenever the diagonal entries of
its two triangular factors are.
 In the situation at hand, the latter is guaranteed by the condition
that the two original morphisms~$f_i$ are
covers~\cite[Remark~1.4.2]{Xu}.
 The proof of part~(b) is similar.
\end{proof}

\begin{thm} \label{enochs-covers-envelopes}
\textup{(a)} Let\/ $\F\subset R\modl$ be a full subcategory closed
under filtered inductive limits.
 Suppose that an $R$\+module $M$ admits an\/ $\F$\+precover.
 Then $M$ also admits an\/ $\F$\+cover. \par
\textup{(b)} Let\/ $\F\subset R\modl$ be a full subcategory closed
under extensions and filtered inductive limits.
 Set\/ $\C=\F^\perp$.
 Suppose that an $R$\+module $M$ admits a special\/
$\C$\+preenvelope with the cokernel belonging to\/~$\F$.
 Then $M$ also admits a\/ $\C$\+envelope.
\end{thm}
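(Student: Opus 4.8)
The plan is to recognize part~(a) as the classical theorem of Enochs on passing from precovers to covers, and to obtain part~(b) by a parallel (though not formally dual) argument applied to the short exact sequence~\eqref{special-preenvelope}; both arguments are available in the literature, so the sketch below only indicates the structure, and the actual write-up would simply cite~\cite{Xu}.

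For part~(a), I would start from the given $\F$\+precover $\phi\:F\rarrow M$. If $\phi$ is not already an $\F$\+cover, the aim is to cut $F$ down to its essential part. Any $\F$\+cover of $M$, should one exist, factors through $\phi$ and has the automorphism property, so it must be isomorphic to a direct summand $F'$ of $F$ for which $\phi|_{F'}$ is still an $\F$\+precover and $F'$ has no nonzero summand inside $\ker\phi$ (cf.\ Lemma~\ref{cover-wakamatsu}(c)). Thus it suffices to produce a direct summand $F'\subseteq F$ with $\phi|_{F'}$ an $\F$\+precover that is minimal among such summands. The naive move --- take a descending chain of such summands and intersect --- fails, since $\F$ is not closed under submodules and an intersection of a chain of direct summands need not be a direct summand. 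The key device is to replace intersections by filtered colimits: one runs a transfinite descending chain of ``sub-precovers'', computing the limit steps as filtered inductive limits, which remain in $\F$ precisely by the hypothesis on $\F$; a cardinal bound depending only on $|R|$ and $|M|$ then forces the chain to stabilize. The stable term admits no proper sub-precover, and one checks that this makes it an $\F$\+cover.

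For part~(b), I would start from the special $\C$\+preenvelope $0\rarrow M\overset g\rarrow C\overset p\rarrow F'\rarrow0$ with $F'\in\F$ and $C\in\C=\F^\perp$. A $\C$\+envelope of $M$, if it exists, is a direct summand $C_0$ of $C$ containing $\im g$; and whenever $C=C_0\oplus C_1$ with $\im g\subseteq C_0$, the module $C_1$ is a direct summand of $F'$, hence lies in $\F$ (a class of the form ${}^\perp\C$ is closed under direct summands). Such decompositions of $C$ correspond bijectively to decompositions $F'=F_0'\oplus F_1'$ along which the extension~\eqref{special-preenvelope} splits. So the task is to find a minimal such $C_0$, equivalently a maximal split-off summand $C_1$. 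I would run an increasing transfinite chain of split-off summands, pass to unions (again filtered inductive limits, which stay in $\F$) at limit stages, use the closure of $\F$ under extensions to keep the complementary quotients in $\F$ through successor stages, and invoke a cardinality bound (now involving also $F'$) to force termination. That the resulting $M\rarrow C_0$ is a $\C$\+envelope then follows from Lemma~\ref{envelope-wakamatsu}(b),(c).

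I expect the main obstacle, in both parts, to be the transfinite and cardinality bookkeeping rather than any isolated clever step. Because $R\modl$ is not self-dual, part~(b) cannot be read off formally from part~(a), and in neither part can one simply apply Zorn's lemma to descending chains of direct summands. The way around this is available exactly because $\F$ is assumed closed under filtered inductive limits (and, in~(b), under extensions): the minimization is carried along a well-ordered chain whose limit steps are computed as colimits inside $\F$, and whose length is bounded by an explicit cardinal. This is Enochs' argument, and in keeping with the conventions of this section I would simply refer to~\cite[Chapter~2]{Xu} for the details.
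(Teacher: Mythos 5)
The paper itself gives no argument for this theorem: it simply attributes both parts to Enochs and cites \cite[Theorems~2.2.6 and~2.2.8]{Xu} (alternatively \cite{Ba}, with generalizations in \cite{PR}), which is exactly what your proposal ultimately does, so you are following essentially the same route. Your interpolated sketch of Enochs' transfinite argument is loose in places (for instance, the ``descending chain of sub-precovers'' whose limit steps are filtered inductive limits is really a directed system of precovers with non-injective connecting maps, not a chain of direct summands of the original module $F$, and in~(b) the class $\F$ need not be of the form ${}^\perp\C$), but since the operative content is the citation, this matches the paper's proof.
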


\begin{proof} 
 These two results are due to Enochs.
 Part~(a) is~\cite[Theorem~2.2.8]{Xu} or~\cite[Theorem~1.2]{Ba}.
 Part~(b) is easily obtained from~\cite[Theorem~2.2.6]{Xu}.
 For generalizations, see~\cite[Theorem~2.7 or Corollary~4.17]{PR}
in the case of part~(a), and~\cite[Corollary~4.18 or Remark~4.19]{PR}
in the case of part~(b).
\end{proof}

 The following theorem is due to El~Bashir~\cite[Theorem~3.2]{Ba}
(for a generalization, see~\cite[Theorem~2.5 and
Proposition~2.6]{PR}).

\begin{thm} \label{el-bashir}
 Let\/ $\F\subset R\modl$ be a full subcategory closed under direct
sums and filtered inductive limits.
 Suppose that there is a set (not a proper class) of modules\/
$\sS\subset\F$ such that every module from\/ $\F$ is a filtered
inductive limit of modules from\/~$\sS$.
 Then every $R$\+module has an\/ $\F$\+cover. \qed
\end{thm}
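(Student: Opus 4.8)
The plan is to reduce the assertion to the construction of\/ $\F$\+precovers and then invoke Theorem~\ref{enochs-covers-envelopes}(a): since\/ $\F$ is closed under filtered inductive limits, it suffices to prove that every $R$\+module admits an\/ $\F$\+precover, and the passage from precovers to covers will then be automatic. So the whole weight of the argument falls on showing that\/ $\F$ is a precovering class.

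Given an $R$\+module $M$, the first candidate I would write down is $F^{(0)}=\bigoplus_g S_g$, where $g$ runs over a set of representatives of all morphisms $S\rarrow M$ with $S\in\sS$ --- a set, because\/ $\sS$ is a set and each\/ $\Hom_R(S,M)$ is a set --- with $f^{(0)}\:F^{(0)}\rarrow M$ restricting to~$g$ on the summand~$S_g$; here $F^{(0)}\in\F$ because\/ $\F$ is closed under direct sums. This $f^{(0)}$ solves the precover problem relative to the set\/ $\sS$ (every $S\rarrow M$ with $S\in\sS$ factors through it), but I expect it to fail to be an\/ $\F$\+precover: writing a general $F'\in\F$ as a directed colimit $F'=\varinjlim_i S_i$ with $S_i\in\sS$, each composite $S_i\rarrow F'\rarrow M$ factors through $f^{(0)}$, but the resulting factorizations $S_i\rarrow F^{(0)}$ need not be compatible with the transition maps of the system, hence need not glue to a morphism $F'\rarrow F^{(0)}$ over~$M$.

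To repair this I would run a transfinite recursion that forces the factorizations to become coherent: build a chain $F^{(0)}\rarrow F^{(1)}\rarrow\dotsb\rarrow F^{(\alpha)}\rarrow\dotsb$ in\/ $\F$ with compatible morphisms $f^{(\alpha)}\:F^{(\alpha)}\rarrow M$, where at a successor step one enlarges $F^{(\alpha)}$ by a direct sum of copies of modules from\/ $\sS$ --- indexed by the ``discrepancies'' between competing factorizations --- so that any two factorizations of a fixed $S\rarrow M$ through $F^{(\alpha)}$ are identified after composing with $F^{(\alpha)}\rarrow F^{(\alpha+1)}$, and at limit steps one takes the directed colimit, which stays in\/ $\F$. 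The decisive ingredient is a cardinal-arithmetic estimate: using that\/ $\sS$ is a set and $R\modl$ is well-powered, together with a careful analysis of how morphisms out of\/ $\sS$\+modules interact with directed colimits, one chooses a regular cardinal~$\mu$ large enough that every obstruction to compatibility is already killed by some stage $<\mu$; then $f=f^{(\mu)}\:F=F^{(\mu)}\rarrow M$ should be an\/ $\F$\+precover, since for any $h\:F'\rarrow M$ with $F'=\varinjlim_i S_i\in\F$ the now-compatible maps $S_i\rarrow F$ assemble into $u\:F'\rarrow F$ with $fu=h$.

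I expect the last step --- proving that the recursion stabilizes and that the resulting map is a genuine\/ $\F$\+precover --- to be the main obstacle: one needs a \emph{single} object of\/ $\F$ that receives compatible maps from \emph{every} directed-colimit presentation of \emph{every} module of\/ $\F$, something that neither closure under direct sums nor closure under directed colimits supplies by itself, so both the ``closing-up'' construction and, above all, the choice of the cardinal~$\mu$ must be engineered with care. Once the\/ $\F$\+precover is available, Theorem~\ref{enochs-covers-envelopes}(a) turns it into an\/ $\F$\+cover and the proof is complete; the same strategy, recast in the language of locally presentable abelian categories, yields the generalization cited after the statement.
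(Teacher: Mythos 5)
The paper does not actually prove this theorem: it is quoted from El~Bashir's paper \cite{Ba} (with the generalization in \cite{PR}), so the only comparison available is with those proofs. Your overall plan --- reduce everything to the existence of $\F$-precovers and then invoke Theorem~\ref{enochs-covers-envelopes}(a) --- is the right architecture, and you correctly observe that the naive map $\bigoplus_{g\colon S\rarrow M,\,S\in\sS}S\rarrow M$ need not be an $\F$-precover because the summandwise factorizations attached to a filtered system need not be compatible with its transition maps. The proposed repair, however, cannot work as described. At a successor step your map $F^{(\alpha)}\rarrow F^{(\alpha+1)}$ is the inclusion of a direct summand, hence a split monomorphism; composing with a split monomorphism never identifies two distinct factorizations $u\ne v\colon S\rarrow F^{(\alpha)}$ of the same map to $M$ (compose with a retraction). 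Forcing identifications or extensions of factorizations would require coequalizers, cokernels, or pushouts, and $\F$ is only assumed closed under direct sums and filtered inductive limits, so such constructions either do nothing (as above) or leave $\F$. Moreover, coherence over a large directed system is not a pairwise matter: what is really needed is to extend a partial factorization $T\rarrow F$ along a transition map $T\rarrow T'$ compatibly with the maps to $M$, and adjoining $T'$ as a new summand does not achieve this, since a map $T'\rarrow F\oplus T'$ restricting to the given map on $T$ would again require solving the original extension problem into~$F$.

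Consequently the ``cardinal-arithmetic estimate'' you defer is not a bookkeeping step; it is the entire content of the theorem. What has to be proved is that for every $M$ there is a \emph{set} of morphisms $F_i\rarrow M$ with $F_i\in\F$ through which every morphism $L\rarrow M$ with $L\in\F$ factors (closure under direct sums then yields a precover, and Theorem~\ref{enochs-covers-envelopes}(a) yields a cover); neither closure hypothesis produces this by a small-object-argument-style enlargement, precisely because that technique needs pushouts, which are unavailable inside~$\F$. The known proofs use different tools: El~Bashir's argument rests on a set-theoretic analysis of directed colimits (for instance, rewriting an arbitrary directed colimit as a $\lambda$-directed colimit of colimits of subsystems of size less than~$\lambda$), and the generalization in \cite{PR} runs through the theory of accessible categories. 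So your proposal correctly locates the difficulty but does not overcome it; the successor step of the recursion is the concrete point of failure.
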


\begin{cor} \label{flat-covers-cotorsion-envelopes-cor}
\textup{(a)} For any associative ring $R$, any left $R$\+module has
a flat cover. \par
\textup{(b)} For any associative ring $R$, any left $R$\+module has
a cotorsion envelope.
\end{cor}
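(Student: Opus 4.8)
The plan is to deduce both assertions by combining the completeness of the flat cotorsion theory (Theorem~\ref{flat-cotorsion-theory-complete}) with the cover and envelope existence theorems of Enochs (Theorem~\ref{enochs-covers-envelopes}). Throughout, write $(\F,\C)=(R\modl_\fl,\>R\modl_\cot)$ for the flat cotorsion pair in $R\modl$ (Example~\ref{flat-cotorsion-example}). Beyond what is recalled in the excerpt, the only structural fact about flat modules I will need is that $\F$ is closed under filtered inductive limits in $R\modl$; this is standard and follows, for instance, from the observation that the functor $\Tor^R_1(N,{-})$ commutes with filtered colimits for every right $R$\+module $N$. Of course, $\F$ is also closed under extensions, as is the left-hand class of any cotorsion theory, and $\C=\F^\perp$ by the definition of the flat cotorsion theory.

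For part~(a), I would argue as follows. By Theorem~\ref{flat-cotorsion-theory-complete} the flat cotorsion theory is complete, hence has enough projectives: every $R$\+module $M$ fits into a short exact sequence~\eqref{special-precover} with $F\in\F$ and $C'\in\C=\F^\perp$, so that $F\rarrow M$ is a special $\F$\+precover. By Lemma~\ref{cover-wakamatsu}(a) this is in particular an $\F$\+precover. Since $\F$ is closed under filtered inductive limits, Theorem~\ref{enochs-covers-envelopes}(a) applies and shows that $M$ admits an $\F$\+cover, i.e., a flat cover. (Alternatively, one may appeal directly to Theorem~\ref{el-bashir}: by the Govorov--Lazard theorem every flat $R$\+module is a filtered inductive limit of finitely generated free $R$\+modules, so one takes $\sS$ to be the set of these and concludes at once that every $R$\+module has a flat cover.)

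For part~(b), I would use the ``injective'' half of completeness. By Theorem~\ref{flat-cotorsion-theory-complete} together with Lemma~\ref{salce-lemma}, the flat cotorsion theory also has enough injectives, so every $R$\+module $M$ fits into a short exact sequence~\eqref{special-preenvelope} with $C\in\C$ and $F'\in\F$; in other words, $M$ admits a special $\C$\+preenvelope $M\rarrow C$ whose cokernel $F'$ lies in $\F$. Now $\F$ is closed under extensions and under filtered inductive limits, and $\C=\F^\perp$, so all the hypotheses of Theorem~\ref{enochs-covers-envelopes}(b) are met; it yields a $\C$\+envelope of $M$, i.e., a cotorsion envelope.

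Since all the substantive work is carried out in the cited theorems of Enochs and El~Bashir, there is essentially no obstacle left in this corollary itself; the only point that requires (a small amount of) care is verifying that the class of flat $R$\+modules is closed under filtered inductive limits, which is precisely the hypothesis needed to invoke Theorem~\ref{enochs-covers-envelopes} in both parts.
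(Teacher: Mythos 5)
Your argument is correct and coincides with the paper's own proof: part~(b) is deduced exactly as in the paper from completeness of the flat cotorsion theory via Theorem~\ref{enochs-covers-envelopes}(b), and both of your routes to part~(a) — the special precover from Theorem~\ref{flat-cotorsion-theory-complete} combined with Lemma~\ref{cover-wakamatsu}(a) and Theorem~\ref{enochs-covers-envelopes}(a), and the direct appeal to Theorem~\ref{el-bashir} with flat modules as filtered inductive limits of finitely generated free modules — are precisely the two deductions given in the paper (which merely leads with the El~Bashir route and mentions the other as the converse direction).
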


\begin{proof}
 Since the class of all flat $R$\+modules $\F=R\modl_\fl$ is closed
under filtered inductive limits and consists precisely of
the filtered inductive limits of finitely generated free modules, 
Theorem~\ref{el-bashir} applies, proving part~(a).
 In view Lemma~\ref{cover-wakamatsu}(b) and Lemma~\ref{salce-lemma},
this provides another proof of completeness of the flat cotorsion
theory~\cite[Section~3]{BBE}.
 Conversely, one can deduce part~(a) from
Theorem~\ref{flat-cotorsion-theory-complete} using
Lemma~\ref{cover-wakamatsu}(a) and
Theorem~\ref{enochs-covers-envelopes}(a).
 Part~(b) of the corollary follows from completeness of the flat
cotorsion theory by means of Theorem~\ref{enochs-covers-envelopes}(b),
since the class of all flat left $R$\+modules is closed under
filtered inductive limits.
\end{proof}

 Now we return to our usual setting of a commutative ring~$R$.
 An $R$\+module $C$ is said to be \emph{contraadjusted} if it is
$s$\+contraadjusted for every $s\in R$.
 An $R$\+module $F$ is called \emph{very flat} if it is a direct
summand of a transfinitely iterated extension (in the sense of
the inductive limit) of $R$\+modules of the form $R[s^{-1}]$,
\ $s\in R$.
 We denote the class of all contraadjusted $R$\+modules by
$R\modl_\ctaa$ and the class of all very flat $R$\+modules
by $R\modl_\vfl\subset R\modl$.

\begin{cor} \label{very-flat-cotorsion-theory}
 For any commutative ring~$R$, the pair of full subcategories
$(R\modl_\vfl,\>R\modl_\ctaa)$ is a hereditary complete cotorsion
theory in $R\modl$.
\end{cor}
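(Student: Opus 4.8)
The plan is to present $(R\modl_\vfl,\>R\modl_\ctaa)$ as the cotorsion theory generated by an explicit \emph{set} of modules and then to invoke the Eklof--Trlifaj theorem. I would set $\sS=\{\,R[s^{-1}]\mid s\in R\,\}\subset R\modl$, which is a set and not a proper class. By the very definition of $s$\+contraadjustedness, an $R$\+module $C$ lies in $\sS^\perp$ if and only if $\Ext^1_R(R[s^{-1}],C)=0$ for every $s\in R$, i.e., if and only if $C$ is contraadjusted; hence $\sS^\perp=R\modl_\ctaa$. Therefore the cotorsion theory generated by $\sS$ is $({}^\perp(R\modl_\ctaa),\>R\modl_\ctaa)$, and by Theorem~\ref{eklof-trlifaj}(a) this cotorsion theory is complete.

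Next I would identify the left-hand class ${}^\perp(R\modl_\ctaa)$ with $R\modl_\vfl$. Since $R=R[1^{-1}]$, the free module $R$ belongs to $\sS$, so Theorem~\ref{eklof-trlifaj}(b) applies: the class ${}^\perp(R\modl_\ctaa)$ consists precisely of the direct summands of modules in $\filt(\sS)$, that is, of the direct summands of transfinitely iterated extensions (in the sense of the inductive limit) of modules of the form $R[s^{-1}]$. By definition this is exactly the class $R\modl_\vfl$. Thus $(R\modl_\vfl,\>R\modl_\ctaa)$ is a complete cotorsion theory.

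To establish hereditariness I would verify criterion~(i) of Lemma~\ref{hereditary-cotorsion}, i.e.\ that $\Ext^2_R(F,C)=0$ for all $F\in R\modl_\vfl$ and all $C\in R\modl_\ctaa$. As exhibited in the proof of Lemma~\ref{ext-r-s-minus-1-computed}, each $R[s^{-1}]$ admits a two-term free resolution $0\to\bigoplus_{n\ge0}R\to\bigoplus_{n\ge0}R\to R[s^{-1}]\to0$, so $\pd_R R[s^{-1}]\le1$ and hence $\Ext^2_R(R[s^{-1}],{-})=0$. I would then invoke the standard fact that the class of $R$\+modules of projective dimension $\le1$ is closed under transfinitely iterated extensions in the sense of the inductive limit and under direct summands; consequently every very flat $R$\+module has projective dimension $\le1$, so $\Ext^{\ge2}_R(F,{-})=0$ for every $F\in R\modl_\vfl$. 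By Lemma~\ref{hereditary-cotorsion} the cotorsion theory is then hereditary.

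The identification $\sS^\perp=R\modl_\ctaa$ and the bookkeeping with Theorem~\ref{eklof-trlifaj} are routine. The one point that requires a genuine (if standard) input is the last paragraph: transferring the bound $\pd\le1$ from the generators $R[s^{-1}]$ to an arbitrary very flat module, i.e.\ the closure of $\{\,M\mid\pd_R M\le1\,\}$ under transfinite extensions and direct summands. Equivalently, one could try to verify criterion~(iii) of Lemma~\ref{hereditary-cotorsion} directly, that $R\modl_\vfl$ is closed under kernels of surjective morphisms; but by the equivalence in that lemma this again rests on the same $\Ext^2$\+vanishing, so it is here that the (modest) main work of the argument lies.
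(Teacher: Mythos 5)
Your proposal is correct and is essentially the paper's own argument: the paper likewise obtains completeness (and the identification of the two classes) from Theorem~\ref{eklof-trlifaj} applied to the set $\sS=\{\,R[s^{-1}]\mid s\in R\,\}$, and disposes of hereditariness by noting that any cotorsion theory generated by modules of projective dimension~$\le1$ is hereditary. The only wrinkle is your last paragraph: instead of transferring $\pd\le1$ from the generators to all of $R\modl_\vfl$ (which requires the closure of the class of modules of projective dimension~$\le1$ under transfinitely iterated extensions and direct summands), you could verify condition~(iv) of Lemma~\ref{hereditary-cotorsion} directly from the generators alone --- for a short exact sequence $0\to C\to D\to E\to0$ with $C$, $D$ contraadjusted, the exact sequence $\Ext^1_R(R[s^{-1}],D)\to\Ext^1_R(R[s^{-1}],E)\to\Ext^2_R(R[s^{-1}],C)=0$ shows $E$ is contraadjusted --- so only the projective dimension of the modules $R[s^{-1}]$ is ever needed.
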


\begin{proof}
 This is the result of~\cite[Section~1.1]{Pcosh}.
 The assertion that $(R\modl_\vfl,\>R\modl_\ctaa)$ is a complete
cotorsion theory is provided by
Theorem~\ref{eklof-trlifaj} applied to the set of $R$\+modules
$\sS=\{\,R[s^{-1}]\mid s\in R\,\}$.
 Furthermore, any cotorsion theory generated by a class of
modules of projective dimension~$\le1$ is hereditary.
\end{proof}

 The pair of full subcategories $(R\modl_\vfl,\>R\modl_\ctaa)$ in
$R\modl$ is called the \emph{very flat cotorsion theory}.
 According to Corollary~\ref{very-flat-cotorsion-theory}, any
$R$\+module has a special very flat precover and a special
contraadjusted preenvelope.
 On the other hand, it is proved in the paper~\cite{ST} that for
any Noetherian commutative ring $R$ with infinite spectrum there
exist an $R$\+module having \emph{no} very flat cover and
an $R$\+module having \emph{no} contraadjusted envelope.

 For a Noetherian domain $R$ with finite spectrum, it is shown
in~\cite[Lemma~2.13]{ST} that the very flat cotorsion theory in
$R\modl$ coincides with the flat cotorsion theory, $R\modl_\vfl=
R\modl_\fl$ and $R\modl_\ctaa=R\modl_\cot$.
 In Section~\ref{krull-dim-1-secn} below, we will extend this
result to all Noetherian commutative rings with finite spectrum.
 On the other hand, for a von~Neumann regular commutative ring $R$,
every very flat $R$\+module is projective and every $R$\+module
is contraadjusted~\cite[Example~2.9]{ST}.

\Section{Contramodules and Relatively Cotorsion Modules}
\label{relatively-cotorsion-secn}

 According to Example~\ref{flat-cotorsion-example} and
Lemma~\ref{hereditary-cotorsion}, a left module $C$ over
an associative ring $A$ is said to be \emph{cotorsion} if for every
flat left $A$\+module $F$ one has $\Ext^1_A(F,C)=\nobreak0$,
or equivalently, for every flat left $A$\+module $F$ one has
$\Ext^q_A(F,C)=0$ for all $q\ge\nobreak1$.
 The class of cotorsion left $A$\+modules is closed under the cokernels
of injective morphisms, extensions, and infinite products 
(see Lemma~\ref{relatively-cotorsion-lemma} below for a discussion in
the somewhat greater generality of relatively cotorsion modules).

\begin{lem} \label{restrict-scalars-cotorsion}
 For any homomorphism of associative rings $A\rarrow A'$, any cotorsion
$A'$\+module is also a cotorsion $A$\+module (in the induced
$A$\+module structure).
\end{lem}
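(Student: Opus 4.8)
The plan is to exploit the standard fact that a flat module over $A$ can be reached from flat modules over $A'$ by base change, combined with a tensor-hom adjunction identity relating $\Ext$ over the two rings. Concretely, let $C$ be a cotorsion $A'$\+module and let $F$ be a flat left $A$\+module; I want $\Ext^1_A(F,C)=0$. The key input is that $A'\ot_A F$ is a flat left $A'$\+module (flatness is preserved by base change along $A\rarrow A'$), so by hypothesis $\Ext^q_{A'}(A'\ot_A F,\>C)=0$ for all $q\ge1$; moreover $\Ext^0$ does not vanish but is simply $\Hom_{A'}(A'\ot_A F,C)=\Hom_A(F,C)$ by adjunction.

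First I would set up the change-of-rings spectral sequence, or more simply the low-degree exact sequence, for $\boR\Hom_{A'}(A'\ot_A^{\boL}F,\>C)$. Since $F$ is flat over $A$, one has $A'\ot_A^{\boL}F = A'\ot_A F$ in the derived category of $A'$\+modules, and this object is quasi-isomorphic to the flat $A'$\+module $A'\ot_A F$ concentrated in degree zero. On the other hand, the derived adjunction
$$
 \boR\Hom_{A'}(A'\ot_A^{\boL}F,\>C) = \boR\Hom_A(F,\>C)
$$
(this is the derived form of the hom-tensor adjunction $\Hom_{A'}(A'\ot_A F,C)=\Hom_A(F,C)$, valid because $A'$ is a complex of $A$\+modules and the restriction of $C$ to $A$ is what appears on the right). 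Taking cohomology in degree~$1$ of both sides gives
$$
 \Ext^1_A(F,C) \;=\; \Ext^1_{A'}(A'\ot_A F,\>C),
$$
and the right-hand side vanishes because $A'\ot_A F$ is a flat $A'$\+module and $C$ is cotorsion over $A'$. Since $F$ was an arbitrary flat $A$\+module, $C$ is a cotorsion $A$\+module.

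The main thing to be careful about is the derived adjunction identity, i.e.\ that $\boR\Hom_{A'}(A'\ot_A^{\boL}F,C)$ really computes $\boR\Hom_A(F,C)$ with no correction terms; this is standard (it follows from the underived adjunction on projective resolutions, since a projective resolution of $F$ over $A$ maps to one of $A'\ot_A F$ over $A'$ after applying $A'\ot_A-$, and $F$ flat ensures no higher $\Tor$), but it is worth stating cleanly. An alternative, more elementary route that avoids derived categories entirely: take a short exact sequence $0\rarrow F'\rarrow P\rarrow F\rarrow 0$ with $P$ a free $A$\+module; then $F'$ is again flat (the class of flat modules is closed under kernels of surjections onto flat modules), and applying $A'\ot_A-$ keeps the sequence exact with flat $A'$\+modules. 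Comparing the two long exact $\Ext$ sequences — $\Ext^*_A(-,C)$ for the first sequence and $\Ext^*_{A'}(-,C)$ for its base change — via the adjunction isomorphisms in degree $0$, and using $\Ext^1_{A'}(A'\ot_A P,C)=0=\Ext^1_{A'}(A'\ot_A F',C)$ and $\Ext^1_A(P,C)$ surjects appropriately, yields $\Ext^1_A(F,C)=0$ by a short diagram chase. Either way, the argument is short; the only potential obstacle is getting the adjunction bookkeeping exactly right, and there is no real difficulty beyond that.
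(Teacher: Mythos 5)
Your proof is correct and follows essentially the same route as the paper: the paper's argument is precisely the natural isomorphism $\Ext^q_A(F,C)\simeq\Ext^q_{A'}(A'\ot_AF,\>C)$ for flat $A$\+modules $F$, together with the flatness of $A'\ot_AF$ over~$A'$. Your additional care in justifying the change-of-rings isomorphism (no higher $\Tor$ because $F$ is flat) and the alternative diagram-chase route are fine, but they amount to spelling out the same one-line argument.
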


\begin{proof}
 For any flat left $A$\+module $F$, there is a natural isomorphism
$\Ext^q_A(F,C)\simeq\Ext^q_{A'}(A'\ot_AF,\>C)$ for all $q\ge0$;
and the $A'$\+module $A'\ot_AF$ is flat.
\end{proof}

 Let $R$ be a commutative ring and $I\subset R$ be an ideal.
 An $R$\+module $C$ is said to be an \emph{$I$\+contramodule} (or
an \emph{$I$\+contramodule $R$\+module}) if $C$ is an $s$\+contramodule
for every $s\in I$ (cf.\ Theorem~\ref{ideal-contramodule-thm}).
 The full subcategory of $I$\+contramodule $R$\+modules is denoted
by $R\modl_{I\ctra}\subset R\modl$.

\begin{rem} \label{contramodules-modules-over-localizations}
 Any $I$\+contramodule $R$\+module is actually a module over
the localization $(1+I)^{-1}R$ of the ring $R$ with respect to
the multiplicative set $(1+I)=\{1+s\mid s\in I\}$.
 Indeed, for any $s\in I$, the inverse map to the action of $1-s$
in an $I$\+contramodule $R$\+module $C$ can be constructed as
$$
 (1-s)^{-1}(c)=\sum\nolimits_{n=0}^\infty s^n c.
$$
 In particular, if $\m$ is a maximal ideal in a commutative ring $R$,
then any $\m$\+contramodule $R$\+module is actually a module over
the local ring $R_\m=(R\setminus\m)^{-1}R$.
 When $I$ is an ideal in a Noetherian ring $R$, one can show
that the category of $I$\+contramodule $R$\+modules is isomorphic
to the category of $\Lambda_I(I)$\+contramodule
$\Lambda_I(R)$\+modules~\cite[Theorem~B.1.1]{Pweak}.
\end{rem}

 The aim of this section is to prove the following theorem
(cf.~\cite[Proposition~B.10.1]{Pweak}
and~\cite[Proposition~1.3.7(a)]{Pcosh}).

\begin{thm} \label{noetherian-maximal-ideal-cotorsion}
 Let $\m$ be a maximal ideal in a Noetherian ring~$R$.
 Then every\/ $\m$\+contramodule $R$\+module is a cotorsion $R$\+module.
\end{thm}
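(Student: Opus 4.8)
The plan is to realize $C$ as a quotient of a ``free contramodule'' $\Delta_\m(R^{(X)})$, to prove that such free contramodules are cotorsion, and then to finish by a dimension\+shifting argument using that flat modules over a Noetherian local ring have finite projective dimension. First I would reduce to the local case: since $C$ is an $\m$\+contramodule, it is a module over the localization $R_\m$ (Remark~\ref{contramodules-modules-over-localizations}), and by Lemma~\ref{restrict-scalars-cotorsion} it suffices to prove that $C$ is a cotorsion $R_\m$\+module. So we may assume that $R$ is Noetherian local with maximal ideal $\m=(s_1,\dotsc,s_m)$; then $d:=\dim R<\infty$ (Krull's height theorem), and by a well-known theorem (Jensen, Gruson--Raynaud) every flat $R$\+module has projective dimension at most~$d$. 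Recall that $R\modl_{\m\ctra}=R\modl_{[s_1,\dotsc,s_m]\ctra}\subset R\modl$ is an abelian subcategory closed under kernels, cokernels, extensions, and infinite products in $R\modl$ (Theorems~\ref{ext-0-1-orthogonal}(a) and~\ref{ideal-contramodule-thm}), and that $\Delta_\m=\Delta_{s_1,\dotsc,s_m}$ is left adjoint to the fully faithful embedding $R\modl_{\m\ctra}\rarrow R\modl$ (Theorem~\ref{delta-s1-sm-theorem}); being a left adjoint, $\Delta_\m$ is right exact, and since the embedding is fully faithful the adjunction morphism $C\rarrow\Delta_\m(C)$ is an isomorphism for every $\m$\+contramodule~$C$. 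Choosing a surjection $R^{(X_0)}\rarrow C$ and applying $\Delta_\m$, one gets a surjection $P_0:=\Delta_\m(R^{(X_0)})\rarrow C$ whose kernel $Z_0$ is again an $\m$\+contramodule; iterating, one builds a resolution $\dotsb\rarrow P_1\rarrow P_0\rarrow C\rarrow0$, exact in $R\modl$, with each $P_i=\Delta_\m(R^{(X_i)})$.

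The heart of the matter is to prove that every free contramodule $P=\Delta_\m(R^{(X)})$ is a cotorsion $R$\+module. By Lemma~\ref{delta-I-lambda-I} there is a short exact sequence
$$
 0\rarrow\varprojlim\nolimits^1_{n\ge1}H_1(\Hom_R(T_n^\bu(R;s_1,\dotsc,s_m),R^{(X)}))\rarrow\Delta_\m(R^{(X)})\rarrow\Lambda_\m(R^{(X)})\rarrow0 .
$$
Since $R^{(X)}$ is free and $T_n^\bu(R;s_1,\dotsc,s_m)$ is a bounded complex of finitely generated free $R$\+modules, one has $H_1(\Hom_R(T_n^\bu(R;s_1,\dotsc,s_m),R^{(X)}))\simeq H_1(\Hom_R(T_n^\bu(R;s_1,\dotsc,s_m),R))\ot_RR^{(X)}$, and the projective system $\{H_1(\Hom_R(T_n^\bu(R;s_1,\dotsc,s_m),R))\}_{n\ge1}$ is pro-zero, because every finite sequence of elements of a Noetherian ring is weakly proregular (the same input \cite[Theorem~4.34]{PSY} used in the proof of Theorem~\ref{s-t-homotopy-equivalent}). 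Hence the tensored projective system is pro-zero too, its $\varprojlim^1$ vanishes, and $P\simeq\Lambda_\m(R^{(X)})=\varprojlim_{n\ge1}(R/\m^n)^{(X)}$. Now $(R/\m^n)^{(X)}$ is a module over the Artinian ring $R/\m^n$, over which flat modules are projective, so every $R/\m^n$\+module (in particular $(R/\m^n)^{(X)}$) is cotorsion, hence a cotorsion $R$\+module by Lemma~\ref{restrict-scalars-cotorsion}; moreover the kernels of the transition maps $(R/\m^{n+1})^{(X)}\rarrow(R/\m^n)^{(X)}$ are modules over the residue field $R/\m$, hence cotorsion as well. Therefore $\varprojlim_{n\ge1}(R/\m^n)^{(X)}$ is a transfinitely iterated extension, in the sense of the projective limit, of cotorsion $R$\+modules, and so it is cotorsion (\cite[Proposition~18]{ET}; cf.\ the discussion after Lemma~\ref{flat-cotorsion-theory-complete}).

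To conclude: the flat cotorsion theory is hereditary (Lemma~\ref{hereditary-cotorsion}; the class of flat $R$\+modules is closed under kernels of surjective morphisms), so $\Ext^q_R(F,P_i)=0$ for every flat $R$\+module $F$ and every $q\ge1$. Writing $Z_{-1}=C$ and $Z_i=\ker(P_i\rarrow P_{i-1})$ for $i\ge0$, the short exact sequences $0\rarrow Z_i\rarrow P_i\rarrow Z_{i-1}\rarrow0$ yield isomorphisms $\Ext^q_R(F,Z_{i-1})\simeq\Ext^{q+1}_R(F,Z_i)$ for all $q\ge1$ and $i\ge0$; iterating, $\Ext^1_R(F,C)\simeq\Ext^{d+2}_R(F,Z_d)=0$, since $\pd_R F\le d<d+2$. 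Thus $\Ext^1_R(F,C)=0$ for every flat $R$\+module $F$, i.e.\ $C$ is a cotorsion $R$\+module.

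The main obstacle is the second step: identifying the free contramodule $\Delta_\m(R^{(X)})$ with the honest $\m$\+adic completion $\Lambda_\m(R^{(X)})$ of a free module (which rests on the vanishing of the $\varprojlim^1$ term, i.e.\ on weak proregularity of a generating sequence of $\m$) and then recognizing that completion as a projective\+limit extension of cotorsion modules. The reduction to the local case and the concluding dimension count are routine once that step is available.
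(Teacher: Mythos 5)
Your proof is correct, but it takes a genuinely different route from the paper's. The paper obtains the statement as a special case of the relative result of Theorem~\ref{I-contra-relatively-cotorsion}: for \emph{any} finitely generated ideal $I$ in \emph{any} commutative ring, every $I$\+contramodule is cotorsion relative to $R/I$; for $I=\m$ maximal, $R/\m$ is a field, so every flat module satisfies the relative condition and the absolute statement drops out. The engine there is the same exact sequence of Lemma~\ref{delta-I-lambda-I} that you use, but applied directly to an arbitrary contramodule $C$ (using $\Delta_\m(C)=C$): both outer terms are shown to be relatively cotorsion because they are a projective limit and a $\varprojlim^1$ of $I$\+adically separated and complete modules (Corollary~\ref{separ-complete-relative-cotorsion} and Lemma~\ref{derived-limit-cotorsion}, resting on the idempotent-lifting Lemma~\ref{idempotent-lifting-argument} and the dual Eklof Lemma~\ref{dual-eklof}); no vanishing of the $\varprojlim^1$ term, no resolution by free contramodules, and no projective dimension bound for flat modules are needed. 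You instead kill the $\varprojlim^1$ term for free modules via the pro-zero (Koszul) formulation of weak proregularity from~\cite{PSY} (note the paper defines weak proregularity in the equivalent derived-torsion form, so the pro-zero property is an extra quoted fact; you could equally well have cited Corollary~\ref{flat-delta-lambda-noetherian-cor}(a), whose Artin--Rees proof is independent of the present theorem, to get $\Delta_\m(R^{(X)})\simeq\Lambda_\m(R^{(X)})$), then prove that $\varprojlim_n(R/\m^n)^{(X)}$ is cotorsion by essentially the same dual-Eklof closure the paper uses, and finally reach an arbitrary contramodule by resolving it by free contramodules and dimension shifting --- which is what forces the localization step and the Raynaud--Gruson/Jensen bound $\pd_R F\le\dim R$ for flat modules. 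What your route buys is a short, transparent proof of exactly this statement; what it costs is two heavyweight external inputs (a Remark in Section~\ref{krull-dim-1-secn} records that the paper deliberately avoids~\cite{RG}) and the loss of the relative version for general finitely generated ideals, which the paper needs later, e.g.\ in the proof of Theorem~\ref{projective-contramodules-charact}. (Your indexing of the syzygies $Z_i$ has a harmless slip: $Z_0$ should be $\ker(P_0\to C)$, with $0\to Z_i\to P_i\to Z_{i-1}\to0$ as you then use.)
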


 More generally, let $A\to A'$ be a homomorphism of associative rings.
 Let us call a left $A$\+module $C$ \emph{cotorsion relative to $A'$}
(or \emph{$A\.|A'$\+cotorsion}) if $\Ext^1_A(F,C)=0$ for every
flat $A$\+module $F$ such that the left $A'$\+module $A'\ot_A F$
is projective.

\begin{lem} \label{relatively-cotorsion-lemma}
\textup{(a)} For any $A\.|A'$\+cotorsion left $A$\+module $C$ and any
flat left $A$\+module $F$ such that the left $A'$\+module $A'\ot_AF$
is projective one has\/ $\Ext_A^q(F,C)=0$ for all\/ $q\ge1$. \par
\textup{(b)} The class of all $A\.|A'$\+cotorsion left $A$\+modules is
closed under the cokernels of injective morphisms, extensions, and
infinite products in $A\modl$.
\end{lem}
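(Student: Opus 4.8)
The plan is to prove part~(a) by a dimension-shifting argument, and then to derive part~(b) from part~(a) together with the standard formal properties of $\Ext^1$\+orthogonality classes.

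First I would write $\F\subset A\modl$ for the class of all flat left $A$\+modules $F$ such that the left $A'$\+module $A'\ot_A F$ is projective, so that by definition the $A\.|A'$\+cotorsion left $A$\+modules are precisely the objects of $\F^\perp$ (with respect to $\Ext^1$). The key preliminary step for part~(a) is that $\F$ is closed under first syzygies: given $F\in\F$ and a short exact sequence $0\rarrow G\rarrow P\rarrow F\rarrow0$ with $P$ a free left $A$\+module, the module $G$ is flat (a syzygy of a flat module is flat), and since $\Tor_1^A(A',F)=0$, applying $A'\ot_A{-}$ to this sequence produces a short exact sequence $0\rarrow A'\ot_A G\rarrow A'\ot_A P\rarrow A'\ot_A F\rarrow0$ of left $A'$\+modules. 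Here $A'\ot_A P$ is a free $A'$\+module and $A'\ot_A F$ is projective, so the sequence splits, whence $A'\ot_A G$ is a direct summand of a free $A'$\+module and therefore projective; thus $G\in\F$. Then part~(a) follows by induction on~$q$: the case $q=1$ holds by the definition of the class of $A\.|A'$\+cotorsion modules, and for $q\ge2$, applying $\Hom_A({-},C)$ to $0\rarrow G\rarrow P\rarrow F\rarrow0$ and using that $\Ext^i_A(P,C)=0$ for all $i\ge1$ (as $P$ is projective) gives $\Ext^q_A(F,C)\simeq\Ext^{q-1}_A(G,C)$, which vanishes by the induction hypothesis applied to the module $G\in\F$.

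For part~(b), closedness under infinite products is immediate from the natural isomorphism $\Ext^1_A(F,\prod_\alpha C_\alpha)\simeq\prod_\alpha\Ext^1_A(F,C_\alpha)$, valid for every $F\in\F$. Given a short exact sequence $0\rarrow C'\rarrow C\rarrow C''\rarrow0$ in $A\modl$ with both $C'$ and $C''$ being $A\.|A'$\+cotorsion, the fragment $\Ext^1_A(F,C')\rarrow\Ext^1_A(F,C)\rarrow\Ext^1_A(F,C'')$ of the long exact sequence has vanishing outer terms for every $F\in\F$, so $C$ is $A\.|A'$\+cotorsion; this settles extensions. Finally, for closedness under cokernels of injective morphisms, I would take a short exact sequence as above in which $C'$ and $C$ are $A\.|A'$\+cotorsion, and consider the fragment $\Ext^1_A(F,C)\rarrow\Ext^1_A(F,C'')\rarrow\Ext^2_A(F,C')$ for $F\in\F$: its first term vanishes since $C$ is $A\.|A'$\+cotorsion, and its third term vanishes by part~(a) applied to~$C'$, so $C''\in\F^\perp$.

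The one step that takes any thought is the closure of $\F$ under first syzygies in part~(a), and within it specifically the passage of projectivity from $A'\ot_A F$ to $A'\ot_A G$ --- which is exactly where the flatness of $F$ enters, guaranteeing that $A'\ot_A{-}$ preserves the exactness of $0\rarrow G\rarrow P\rarrow F\rarrow0$. Everything else is the routine machinery of long exact sequences and the automatic closure properties of orthogonality classes. The whole argument runs parallel to the proof of Lemma~\ref{hereditary-cotorsion} and of~\cite[Lemma~6.17]{Sto}, with the class $\F$ here playing the role played there by the class of all flat modules.
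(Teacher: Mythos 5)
Your proposal is correct and follows essentially the same route as the paper: part~(a) by induction on~$q$ using a syzygy sequence $0\rarrow G\rarrow P\rarrow F\rarrow0$, with flatness of $F$ ensuring that $A'\ot_A{-}$ keeps the sequence exact and splitness transferring projectivity to $A'\ot_A G$, and part~(b) for cokernels of injective morphisms via the fragment $\Ext^1_A(F,D)\rarrow\Ext^1_A(F,E)\rarrow\Ext^2_A(F,C)$ together with part~(a). The only difference is that you spell out the (routine) closure under extensions and products, which the paper leaves to the reader.
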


\begin{proof}
 Part~(a): arguing by induction in~$q\ge1$, choose a surjective
homomorphism $P\rarrow F$ onto the $A$\+module $F$ from
a projective left $A$\+module~$P$.
 Denoting the kernel of this $A$\+module morphism by $G$, we have
a short exact sequence of flat left $A$\+modules $0\rarrow G\rarrow P
\rarrow F\rarrow 0$ and a short exact sequence of left $A'$\+modules
$0\rarrow A'\ot_A G\rarrow A'\ot_A P\rarrow A'\ot_A F\rarrow0$.
 Since the $A'$\+modules $A'\ot_AP$ and $A'\ot_AF$ are projective,
the $A'$\+module $A'\ot_AG$ is projective, too.
 On the other hand, we have $\Ext_A^q(F,C)\simeq\Ext_A^{q-1}(G,C)$
for $q\ge2$, and now the right-hand side of this isomorphism
vanishes by the induction assumption.

 Part~(b): we will only check closedness under the cokernels of
injective morphisms.
 Let $0\rarrow C\rarrow D\rarrow E\rarrow0$ be a short exact
sequence of left $A$\+modules; suppose that the modules $C$ and $D$
are $A\.|A'$\+cotorsion.
 Let $F$ be a flat left $A$\+module such that $A'\ot_A F$ is
a projective left $A'$\+module.
 Applying part~(a) for $q=2$, we conclude from the exact sequence
$\Ext_A^1(F,D)\rarrow\Ext_A^1(F,E)\rarrow\Ext_A^2(F,C)$ that
$\Ext^1_A(F,E)=0$.
 So $E$ is an $A\.|A'$\+cotorsion left $A$\+module, too.
\end{proof}

 We will obtain Theorem~\ref{noetherian-maximal-ideal-cotorsion} as
a particular case of the following result, which extends the assertion
of~\cite[Proposition~B.10.1]{Pweak} to non-Noetherian rings.

\begin{thm} \label{I-contra-relatively-cotorsion}
 Let $R$ be a commutative ring and $I\subset R$ a finitely generated
ideal.
 Then every $I$\+contramodule $R$\+module is $R\.|(R/I)$\+cotorsion
(i.~e., cotorsion relative to the quotient ring $R/I$).
\end{thm}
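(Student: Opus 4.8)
The plan is to reduce, via Lemma~\ref{relatively-cotorsion-lemma}(a), to proving that $\Ext^1_R(F,C)=0$ whenever $F$ is a flat $R$\+module with $F/IF$ projective over $R/I$ and $C$ is an $I$\+contramodule; fix once and for all a finite generating set $I=(s_1,\dotsc,s_m)$. The central — and, I expect, hardest — step is a \emph{relative projectivity lemma}: for every $n\ge1$ the $R/I^n$\+module $F/I^nF$ is projective. Granting this, the change-of-rings isomorphism $\boR\Hom_R(F,M)\simeq\boR\Hom_{R/I^n}(F\ot^\boL_R R/I^n,\>M)=\boR\Hom_{R/I^n}(F/I^nF,\>M)$ — valid because $F$ is flat, so $F\ot^\boL_R R/I^n=F/I^nF$ — at once gives
\[
 \Ext^q_R(F,M)=0 \qquad\text{for all $q\ge1$ and every $R/I^n$\+module $M$, $n\ge1$,}
\]
and this vanishing drives the rest of the argument.

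For the relative projectivity lemma I would proceed as follows. By hypothesis $F/IF$ is a direct summand of a free $R/I$\+module $(R/I)^{(X)}=(R/I^n)^{(X)}\big/(I/I^n)(R/I^n)^{(X)}$. The kernel of the ring surjection $\operatorname{End}_{R/I^n}\bigl((R/I^n)^{(X)}\bigr)\rarrow\operatorname{End}_{R/I}\bigl((R/I)^{(X)}\bigr)$ consists of the column-finite matrices with entries in the nilpotent ideal $I/I^n\subset R/I^n$, hence is a nilpotent two-sided ideal (a product of $n$ such matrices has all entries in $(I/I^n)^n=0$); so the idempotent cutting out $F/IF$ lifts to one cutting out a projective $R/I^n$\+module $\widetilde P$ with $\widetilde P\big/(I/I^n)\widetilde P\simeq F/IF$. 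Now $F/I^nF$ and $\widetilde P$ are both flat over $R/I^n$ with isomorphic reductions modulo the nilpotent ideal $I/I^n$; lifting this isomorphism to $\widetilde P\rarrow F/I^nF$ (using projectivity of $\widetilde P$) and applying Nakayama's lemma for $I/I^n$ first to the cokernel and then to the kernel (the latter using flatness of $F/I^nF$) shows $\widetilde P\simeq F/I^nF$. This is where the hypotheses on $F$, and the finite generation of $I$, are really used.

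Next I extract the consequence of $C$ being an $I$\+contramodule. Since $C$ is an $s$\+contramodule for every $s\in I$ and $\pd_R R[s^{-1}]\le1$, one has $\boR\Hom_R(R[s^{-1}],C)=0$ in $\D(R\modl)$. Every term of the nonaugmented \v Cech complex $\check C(R;s_1,\dotsc,s_m)$ is a finite direct sum of localizations $R[(s_{j_1}\dotsm s_{j_k})^{-1}]$ with $s_{j_1}\dotsm s_{j_k}\in I$, so $\boR\Hom_R(\check C(R;s_1,\dotsc,s_m),C)=0$. Combining this with the distinguished triangle $\check C(R;s_1,\dotsc,s_m)\rarrow\check C\sptilde(R;s_1,\dotsc,s_m)\rarrow R\rarrow\check C(R;s_1,\dotsc,s_m)[1]$ (with $R$ in cohomological degree~$0$), the quasi-isomorphism $T^\bu(R;s_1,\dotsc,s_m)\simeq\check C\sptilde(R;s_1,\dotsc,s_m)$, and the fact that $\boR\Hom_R(T^\bu(R;s_1,\dotsc,s_m),C)=\Hom_R(T^\bu(R;s_1,\dotsc,s_m),C)$ ($T^\bu$ being a bounded complex of free modules), I conclude that $\Hom_R(T^\bu(R;s_1,\dotsc,s_m),C)$ is quasi-isomorphic to $C$ placed in homological degree~$0$; in particular $H_q\bigl(\Hom_R(T^\bu(R;s_1,\dotsc,s_m),C)\bigr)=0$ for $q\ge1$.

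Finally I combine these ingredients via $\varprojlim^1$. Putting $W_n=H_1\bigl(\Hom_R(T^\bu_n(R;s_1,\dotsc,s_m),C)\bigr)$, the Milnor exact sequence for $\Hom_R(T^\bu(R;s_1,\dotsc,s_m),C)=\varprojlim_{n\ge1}\Hom_R(T^\bu_n(R;s_1,\dotsc,s_m),C)$ (the transition maps being termwise surjective) in homological degree~$0$ is precisely Lemma~\ref{delta-I-lambda-I}: a short exact sequence $0\rarrow V\rarrow C\rarrow\Lambda_I(C)\rarrow0$ with $V=\varprojlim^1_nW_n$ (here $\varprojlim_nH_0\bigl(\Hom_R(T^\bu_n,C)\bigr)=\varprojlim_nC/(s_1^n,\dotsc,s_m^n)C=\Lambda_I(C)$, the filtrations being cofinal); in degree~$1$ it gives, using $H_1\bigl(\Hom_R(T^\bu,C)\bigr)=0$, that $\varprojlim_nW_n=0$. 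Now each $W_n$, being a first Koszul homology module of $C$ on $s_1^n,\dotsc,s_m^n$, is annihilated by $(s_1^n,\dotsc,s_m^n)\supseteq I^{m(n-1)+1}$, hence is an $R/I^N$\+module, so $\Ext^q_R(F,\prod_nW_n)=\prod_n\Ext^q_R(F,W_n)=0$ for $q\ge1$ by the displayed vanishing; feeding the exact sequence $0\rarrow\prod_nW_n\rarrow\prod_nW_n\rarrow V\rarrow0$ (first map $\id$ minus the shift, exact since $\varprojlim_nW_n=0$ and $\varprojlim^1_nW_n=V$) into the long exact sequence of $\Ext_R(F,-)$ then gives $\Ext^1_R(F,V)=0$. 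Likewise $\Lambda_I(C)=\varprojlim_nC/I^nC$ has surjective transitions whose kernels $I^nC/I^{n+1}C$ are $R/I$\+modules, so the displayed vanishing makes the tower $\{\Hom_R(F,C/I^nC)\}_n$ have surjective transitions, and $\Ext^1_R(F,\Lambda_I(C))=\varprojlim^1_n\Hom_R(F,C/I^nC)=0$ (using $\Ext^1_R(F,\prod_nC/I^nC)=0$ to identify $\Ext^1_R(F,\Lambda_I(C))$ with this $\varprojlim^1$). The long exact sequence of $\Ext_R(F,-)$ applied to $0\rarrow V\rarrow C\rarrow\Lambda_I(C)\rarrow0$ now yields $\Ext^1_R(F,C)=0$. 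Apart from the relative projectivity lemma, the delicate point will be controlling the $\varprojlim^1$\+term $V$, but it is tamed because the Koszul homology modules $W_n$ are annihilated by powers of~$I$.
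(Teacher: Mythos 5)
Your proposal is correct, and its skeleton coincides with the paper's: both rest on the nilpotent idempotent-lifting lemma (Lemma~\ref{idempotent-lifting-argument}, which is exactly your ``relative projectivity lemma'' with the same proof), on the exact sequence of Lemma~\ref{delta-I-lambda-I}, and on the observation that the homology of $\Hom_R(T_n^\bu(R;s_1,\dotsc,s_m),C)$ is annihilated by $(s_1^n,\dotsc,s_m^n)$. Where you diverge is in the middle machinery. The paper packages the consequences of idempotent lifting as Corollary~\ref{separ-complete-relative-cotorsion} (every $I$\+adically separated and complete module is $R\.|(R/I)$\+cotorsion), proved via the dual Eklof Lemma~\ref{dual-eklof}, and then invokes Lemma~\ref{derived-limit-cotorsion} to conclude that both $\varprojlim$ and $\varprojlim^1$ of \emph{any} tower of separated and complete modules are relatively cotorsion; this yields the stronger statement that $\Delta_I(C)$ is $R\.|(R/I)$\+cotorsion for every $R$\+module $C$, the contramodule hypothesis entering only through $\Delta_I(C)=C$. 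You instead extract the blanket vanishing $\Ext^q_R(F,M)=0$ for $q\ge1$ and $M$ annihilated by a power of $I$, and kill the two outer terms of the sequence by hand, using $\id-\mathit{shift}$ sequences, the commutation of $\Ext$ with products, and $\varprojlim^1$\+vanishing for towers with surjective maps (your treatment of $\Lambda_I(C)$ is in substance the dual Eklof lemma reproved homologically). The price is that your handling of the $\varprojlim^1$\+term requires $\varprojlim_n W_n=0$, which you obtain from the additional \v Cech-complex computation $\boR\Hom_R(T^\bu(R;s_1,\dotsc,s_m),C)\simeq C$ for an $I$\+contramodule $C$\,---\,an ingredient the paper's argument does not need; the gain is that you bypass the dual Eklof lemma and the auxiliary results on separated-complete modules, staying within elementary $\Ext$/product/$\varprojlim^1$ manipulations. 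One small correction: finite generation of $I$ is not what makes the relative projectivity lemma work (nilpotency of $I/I^n$ requires no finiteness assumption); it is needed for the telescope complexes and Lemma~\ref{delta-I-lambda-I}.
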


 The proof of Theorem~\ref{I-contra-relatively-cotorsion} occupies
the rest of this section.
 We start with the following lemma, which can be found
in~\cite[Lemma~B.10.2]{Pweak}, but the argument is standard and goes
back, at least, to the famous~\cite[Theorem~P]{Bas}.

\begin{lem} \label{idempotent-lifting-argument}
 Let $A$ be an associative ring and $J\subset A$ an ideal such that
$J^n=0$ for a certain $n\ge1$.
 Suppose that $P$ is a flat left $A$\+module such that $P/JP$ is
a projective left $A/J$\+module.
 Then the $A$\+module $P$ is projective.

 Similarly, if the $P$ is a flat $A$\+module and $P/JP$ is a free
$A/J$\+module, then $P$ is a free $A$\+module.
\end{lem}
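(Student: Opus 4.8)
The plan is to handle the projective assertion by realizing $P$ as a direct summand of a free $A$\+module obtained by lifting a splitting that exists modulo~$J$. First I would choose a free left $A$\+module $L$ with a surjection $\pi\:L\rarrow P$ and put $N=\ker(\pi)$. Since $P$ is flat we have $\Tor_1^A(A/J,P)=0$, so tensoring the short exact sequence $0\rarrow N\rarrow L\rarrow P\rarrow 0$ with $A/J$ yields an exact sequence $0\rarrow N/JN\rarrow L/JL\rarrow P/JP\rarrow 0$ of $A/J$\+modules. Because $P/JP$ is projective over $A/J$, this sequence splits; writing $\bar e\in\operatorname{End}_{A/J}(L/JL)$ for the idempotent attached to the splitting, the reduction of~$\pi$ restricts to an isomorphism $\bar e(L/JL)\rarrow P/JP$ and $\ker(\bar e)=N/JN$.

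The next step is to lift $\bar e$ to an idempotent of $E=\operatorname{End}_A(L)$. Let $I\subset E$ consist of all $\phi$ with $\phi(L)\subseteq JL$. Using $A$\+linearity of endomorphisms one checks that $I$ is a two-sided ideal, that the natural ring homomorphism $E\rarrow\operatorname{End}_{A/J}(L/JL)$ is surjective with kernel~$I$, and that $I^n=0$ (as $\phi_1\dotsm\phi_n(L)\subseteq J^nL=0$). Lifting idempotents modulo the nilpotent ideal~$I$ --- the classical argument going back to~\cite[Theorem~P]{Bas} --- produces an idempotent $e\in E$ reducing to~$\bar e$, so that $L=eL\oplus(1-e)L$ exhibits $eL$ as a direct summand of the free module~$L$. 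Using $e(JL)=J(eL)=eL\cap JL$ one identifies $eL/J(eL)$ with $\bar e(L/JL)$.

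It then remains to see that $eL\simeq P$. The composite $g\:eL\hookrightarrow L\overset{\pi}{\rarrow}P$ reduces modulo~$J$ to the isomorphism $\bar e(L/JL)\rarrow P/JP$, hence $g(eL)+JP=P$, and iterating this (using $J^n=0$) gives that $g$ is surjective. Applying $A/J\ot_A{-}$ to $0\rarrow\ker(g)\rarrow eL\rarrow P\rarrow 0$ and using flatness of~$P$ once more, the surjectivity together with the fact that $eL/J(eL)\rarrow P/JP$ is an isomorphism force $\ker(g)=J\ker(g)=J^n\ker(g)=0$; thus $g$ is an isomorphism and $P$ is projective. For the second assertion no idempotents are needed: lifting a basis of the free $A/J$\+module $P/JP$ to elements of~$P$ defines a homomorphism $L\rarrow P$ from a free $A$\+module which is an isomorphism modulo~$J$, and the same flatness-plus-nilpotence argument shows it is an isomorphism.

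The step I expect to require the most care is the bookkeeping around a possibly infinite-rank~$L$: one must verify that $I$ is a nilpotent \emph{two-sided} ideal of $\operatorname{End}_A(L)$ with $\operatorname{End}_A(L)/I\simeq\operatorname{End}_{A/J}(L/JL)$ (column-finiteness of the matrices involved over $A$ versus over $A/J$) and that the reduction of $eL$ modulo~$J$ is correctly computed as $\bar e(L/JL)$. Once these identifications are in place, the remaining Nakayama-type arguments are routine.
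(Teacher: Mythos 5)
Your proof is correct and follows essentially the same route as the paper's: lift the idempotent modulo the nilpotent kernel of $\operatorname{End}_A(L)\rarrow\operatorname{End}_{A/J}(L/JL)$, then prove surjectivity of the comparison map by a Nakayama-type iteration using $J^n=0$ and injectivity by tensoring the resulting short exact sequence with $A/J$ and using flatness of~$P$. The only (cosmetic) difference is that you obtain the map $eL\rarrow P$ as a restriction of the chosen surjection $L\rarrow P$, whereas the paper lifts the isomorphism $fG/J(fG)\simeq P/JP$ through $P\rarrow P/JP$ using projectivity of~$fG$.
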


\begin{proof}
  Let $G$ be a free $A$\+module such that $P/JP$ is a direct summand
of $G/JG$.
 Then there exists an idempotent endomorphism~$e$ of the $A/J$\+module
$G/JG$ such that the $A/J$\+module $P/JP$ is isomorphic to $e(G/JG)$.
 The functor $A/J\ot_A{-}$ taking $G$ to $G/JG$ provides an associative
ring homomorphism $\pi\:\Hom_A(G,G)\rarrow\Hom_{A/J}(G/JG,\.G/JG)$.
 Since the $A$\+module $G$ is projective, the homomorphism~$\pi$ is
surjective.
 Its kernel $I=\ker \pi=\Hom_A(G,JG)\subset\Hom_A(G,G)$ satisfies
$I^n=0$; so one can lift idempotents modulo~$I$.
 Let $f\in\Hom_A(G,G)$ be an idempotent endomorphism for which
$\pi(f)=e$.
 The projective $A$\+module $fG$ is endowed with a natural homomorphism
onto the $A/J$\+module $fG/J(fG)\simeq e(G/JG)\simeq P/JP$, which
can lifted to an $A$\+module morphism $l\:fG\rarrow P$.

 We claim that $l$~is an isomorphism.
 Indeed, let $K$ and $L$ denote its kernel and cokernel, respectively.
 Then $L/JL = 0$, since the morphism $fG\rarrow P/JP$ was surjective.
 Given that $J^n=0$, it follows that $L=0$.
 Now we have a short exact sequence of $A$\+modules $0\rarrow K
\rarrow fG\rarrow P\rarrow 0$.
 The $A$\+module $P$ being flat by assumption, the sequence remains
exact after taking the tensor product with $A/J$ over $A$; so the short
sequence $0\rarrow K/JK\rarrow fG/J(fG)\rarrow P/JP\rarrow0$ is
also exact.
 We have shown that $K/JK=0$ and it follows that $K=0$.

 To prove the second assertion, it suffices to say that when $e=1$ one
can choose $f=1$.
\end{proof}

 The following lemma is a particular case of~\cite[Lemma~B.10.3]{Pweak}.

\begin{lem} \label{dual-eklof}
 Let $A$ be an associative ring, $F$ a left $A$\+module, and
$C_1\larrow C_2\larrow C_3\larrow\dotsb$ a projective system of left
$A$\+modules.
 Assume that the induced maps\/ $\Hom_A(F,C_{n+1})\rarrow\Hom_A(F,C_n)$
are surjective for all\/ $n\ge1$ and\/ $\Ext^1_A(F,C_n)=0$ for all\/
$n\ge1$.
 Then\/ $\Ext^1_A(F,\>\varprojlim_{n\ge1} C_n)=0$.
\end{lem}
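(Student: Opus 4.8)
The plan is to trade the vanishing of $\Ext^1_A(F,\varprojlim_{n\ge1}C_n)$ for a $\varprojlim\nolimits^1$ computation over the system of $\Hom$ groups, where the surjectivity hypothesis will do all the work. First I would fix a short exact sequence $0\rarrow G\rarrow P\rarrow F\rarrow0$ of left $A$\+modules with $P$ projective (for instance, free). Applying $\Hom_A(-,C_n)$ and using the hypothesis $\Ext^1_A(F,C_n)=0$, one obtains for each $n\ge1$ a short exact sequence
$$
 0\rarrow\Hom_A(F,C_n)\rarrow\Hom_A(P,C_n)\rarrow\Hom_A(G,C_n)\rarrow0,
$$
and these assemble, via the transition maps induced by those of the system $(C_n)$, into a short exact sequence of projective systems of abelian groups indexed by $n\ge1$.

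Next I would pass to the projective limit. Since the functor $\varprojlim_{n\ge1}$ on $\Z_{\ge1}$\+indexed projective systems has cohomological dimension~$1$, the short exact sequence of systems yields a six-term exact sequence whose connecting homomorphism takes values in $\varprojlim\nolimits^1_{n\ge1}\Hom_A(F,C_n)$. By assumption the transition maps $\Hom_A(F,C_{n+1})\rarrow\Hom_A(F,C_n)$ are surjective, so this projective system satisfies the Mittag--Leffler condition and $\varprojlim\nolimits^1_{n\ge1}\Hom_A(F,C_n)=0$. Consequently the induced map $\varprojlim_{n\ge1}\Hom_A(P,C_n)\rarrow\varprojlim_{n\ge1}\Hom_A(G,C_n)$ is surjective.

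Finally, since the functor $\Hom_A(M,-)$ commutes with projective limits for any module $M$, this surjection is precisely the map $\Hom_A(P,\varprojlim_{n\ge1}C_n)\rarrow\Hom_A(G,\varprojlim_{n\ge1}C_n)$. Comparing with the long exact sequence of $\Ext^*_A(-,\varprojlim_{n\ge1}C_n)$ attached to $0\rarrow G\rarrow P\rarrow F\rarrow0$, and using that $P$ is projective so that $\Ext^1_A(P,-)=0$, surjectivity of this map is equivalent to the vanishing $\Ext^1_A(F,\varprojlim_{n\ge1}C_n)=0$, which is what we want. The only step that is not pure diagram chasing is the vanishing of $\varprojlim\nolimits^1$; that is exactly where the hypothesis on the transition maps enters, and it is the crux of the argument — everything else uses only left exactness of $\Hom$ and the fact that it turns limits into limits.
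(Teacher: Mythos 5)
Your argument is correct, but it follows a genuinely different route from the paper's. The paper works directly with extensions: given a short exact sequence $0\rarrow\varprojlim_{n\ge1}C_n\rarrow M\rarrow F\rarrow0$, it pushes forward along the projections $\varprojlim_m C_m\rarrow C_n$ to get a projective system of extensions of $F$ by the $C_n$, each of which splits because $\Ext^1_A(F,C_n)=0$; the surjectivity of $\Hom_A(F,C_{n+1})\rarrow\Hom_A(F,C_n)$ is then used to correct the splittings inductively so that they become compatible, and passing to the limit splits the original sequence. You instead pick a projective presentation $0\rarrow G\rarrow P\rarrow F\rarrow0$, convert the hypotheses into a termwise short exact sequence of $\Z_{\ge1}$\+indexed projective systems of $\Hom$ groups, and invoke the $\varprojlim$\+-$\varprojlim^1$ six-term sequence together with the Mittag--Leffler vanishing of $\varprojlim^1$ for a system with surjective transition maps; the identification $\varprojlim_n\Hom_A(-,C_n)=\Hom_A(-,\varprojlim_n C_n)$ and the long exact sequence for $\Ext^*_A(-,\varprojlim_n C_n)$ then finish the job. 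Both proofs are complete. Your version is shorter and uses machinery the paper itself employs elsewhere (the derived projective limit sequences in the proofs of Lemmas~\ref{delta-s-lambda-s-short-sequence} and~\ref{delta-I-lambda-I}), but it is tied to sequential (countable) limits where Mittag--Leffler applies; the paper's splitting-correction argument avoids derived limits entirely and is exactly the ``dual Eklof'' pattern that generalizes to transfinitely iterated extensions in the sense of the projective limit, as in~\cite[Proposition~18]{ET} and~\cite[Lemma~4.5]{PR}.
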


\begin{proof}
 This is almost the dual version of (the ordinal~$\omega$ particular
case of) the Eklof lemma (Lemma~\ref{eklof-lemma} above;
cf.~\cite[Proposition~18]{ET} and~\cite[Lemma~4.5]{PR}).
  Suppose we are given a short exact sequence of left $A$\+modules
$$
 0\lrarrow\varprojlim\nolimits_{n\ge1} C_n\overset{i}\lrarrow
 M\overset{p}\lrarrow F\lrarrow0.
$$
 The push-forwards with respect to the projection morphisms
$\varprojlim_{m\ge1} C_m\rarrow C_n$ provide a projective system of
short exact sequences
$$
 0\lrarrow C_n\overset{i_n}\lrarrow M_n\overset{p_n}\lrarrow F\lrarrow0.
$$
 By assumption, all the short exact sequences in this projective system
split.
 We only have to show that one can choose the splittings $t_n\:M_n
\rarrow C_n$, \ $t_ni_n=\id_{C_n}$ in a compatible way, as then
composing with the morphisms $M\rarrow M_n$ and passing to
the projective limit will provide the desired splitting
$M\rarrow\varprojlim_{n\ge1} C_n$.

 Given a splitting $t_n\:M_n\rarrow C_n$ and a splitting
$t'_{n+1}:M_{n+1}\rarrow C_{n+1}$, the difference between
the two compositions $M_{n+1}\rarrow M_n\rarrow C_n$ and
$M_{n+1}\rarrow C_{n+1}\rarrow C_n$ is a morphism $M_{n+1}\rarrow C_n$
that vanishes in the composition with $i_{n+1}\:C_{n+1}\rarrow M_{n+1}$,
and therefore factorizes through $p_{n+1}\:M_{n+1}\rarrow F$.
 We have obtained a morphism $f\:F\rarrow C_n$.
 By assumption, it can be lifted to a morphism $g\:F\rarrow C_{n+1}$.
 Adding the composition $M_{n+1}\rarrow F\rarrow C_{n+1}$ to
the splitting $t'_{n+1}$, that is replacing $t'_{n+1}$ by
$t_{n+1}=t'_{n+1}+gp_{n+1}$, provides a splitting $t_{n+1}$
compatible with~$t_n$.
 Now one can proceed by induction in~$n$.
\end{proof}

\begin{cor} \label{separ-complete-relative-cotorsion}
 Let $I$ be a finitely generated ideal in a commutative ring~$R$.
 Then every $I$\+adically separated and complete $R$\+module is
$R\.|(R/I)$\+cotorsion.
\end{cor}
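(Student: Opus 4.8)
The plan is to write an $I$\+adically separated and complete $R$\+module $C$ as the projective limit of its discrete quotients $C_n=C/I^nC$ and to invoke Lemma~\ref{dual-eklof}. Since $C$ is $I$\+adically separated and complete, the natural map $C\rarrow\varprojlim_{n\ge1}C/I^nC$ is an isomorphism, so it suffices to show that $\Ext^1_R(F,\varprojlim_n C_n)=0$ for every flat $R$\+module $F$ such that $(R/I)\ot_R F$ is a projective $R/I$\+module.

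First I would verify that $\Ext^1_R(F,C_n)=0$ for every $n\ge1$. The module $C_n=C/I^nC$ is a module over $R/I^n$. The base change $(R/I^n)\ot_R F$ is a flat $R/I^n$\+module whose reduction modulo the nilpotent ideal $I/I^n$ is $(R/I)\ot_R F$, which is projective over $R/I$; since $(I/I^n)^n=0$ in $R/I^n$, Lemma~\ref{idempotent-lifting-argument} shows that $(R/I^n)\ot_R F$ is a projective $R/I^n$\+module. Applying the change\+of\+rings isomorphism $\Ext^q_R(F,N)\simeq\Ext^q_{R/I^n}((R/I^n)\ot_R F,\,N)$, valid for flat $F$ (as recalled in the proof of Lemma~\ref{restrict-scalars-cotorsion}), with $N=C_n$, we obtain $\Ext^q_R(F,C_n)=0$ for all $q\ge1$; in particular, the second hypothesis of Lemma~\ref{dual-eklof} is satisfied.

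Next I would check that the transition maps $\Hom_R(F,C_{n+1})\rarrow\Hom_R(F,C_n)$ are surjective. Applying $\Hom_R(F,{-})$ to the short exact sequence $0\rarrow I^nC/I^{n+1}C\rarrow C/I^{n+1}C\rarrow C/I^nC\rarrow0$, the obstruction to surjectivity is a map into $\Ext^1_R(F,I^nC/I^{n+1}C)$. But $I^nC/I^{n+1}C$ is annihilated by $I$, hence is an $R/I$\+module, so the same change\+of\+rings isomorphism over $R/I$, together with the projectivity of $(R/I)\ot_R F$ over $R/I$, shows that this $\Ext^1$ vanishes. Hence the transition maps are surjective, and Lemma~\ref{dual-eklof} yields $\Ext^1_R(F,C)=\Ext^1_R(F,\varprojlim_n C_n)=0$, so $C$ is $R\.|(R/I)$\+cotorsion.

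Given Lemmas~\ref{idempotent-lifting-argument} and~\ref{dual-eklof}, the proof is largely bookkeeping, and the only point that deserves attention is keeping track of which truncation $R/I^n$ each module lives over, so that the change\+of\+rings isomorphism applies and the tensored\+up module is genuinely projective --- in particular the graded piece $I^nC/I^{n+1}C$, being killed by all of $I$, must be handled over $R/I$ and not merely over $R/I^n$. (The finite generation of $I$ plays no role in this argument itself, the nilpotence $(I/I^n)^n=0$ being available for an arbitrary ideal; it is relevant only to the surrounding reduction of the general $I$\+contramodule case to this one.)
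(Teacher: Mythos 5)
Your proof is correct and follows essentially the same route as the paper's: both deduce projectivity of $F/I^nF$ over $R/I^n$ from Lemma~\ref{idempotent-lifting-argument}, use the flat change-of-rings isomorphism to get $\Ext^1_R(F,C/I^nC)=0$, and conclude via Lemma~\ref{dual-eklof}. The only (immaterial) difference is in checking surjectivity of $\Hom_R(F,C_{n+1})\rarrow\Hom_R(F,C_n)$: you kill the obstruction in $\Ext^1_R(F,I^nC/I^{n+1}C)$ using that this graded piece is an $R/I$\+module, whereas the paper lifts a map $F/I^{n+1}F\rarrow C_n$ directly along the surjection $C_{n+1}\rarrow C_n$ using projectivity of $F/I^{n+1}F$ over $R/I^{n+1}$.
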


\begin{proof}
 Let $F$ be a flat $R$\+module such that the $R/I$\+module $F/IF$ is
projective.
 Applying Lemma~\ref{idempotent-lifting-argument} to the ring $A=R/I^n$
with the ideal $J=I/I^n$ and the $A$\+module $F/I^nF$, we conclude
that the $R/I^n$\+module $F/I^nF$ is projective.

 Let $C$ be an $I$\+adically complete and separated $R$\+module;
so $C=\varprojlim_{n\ge1}C/I^nC$.
 It suffices to check that the conditions of Lemma~\ref{dual-eklof}
are satisfied for the ring $A=R$, the module $F$, and the projective
system of modules $(C_n=C/I^nC)_{n=1}^\infty$.
 Since $F$ is a flat $R$\+module, we have
$$
 \Ext_R^1(F,C_n)=\Ext_{R/I^n}^1(R/I^n\ot_RF,\>C_n)=0.
$$
 Finally, any morphism $F\rarrow C_n$ factorizes through
the surjection $F\rarrow F/I^{n+1}F$, providing a morphism $F/I^{n+1}F
\rarrow C_n$, which can be lifted to a morphism
$F/I^{n+1}F\rarrow C_{n+1}$.
\end{proof}

\begin{lem} \label{derived-limit-cotorsion}
 Let $C_1\larrow C_2\larrow C_3\larrow\dotsb$ be a projective system
of $I$\+adically separated and complete $R$\+modules.
 Then the projective limit\/ $\varprojlim_{n\ge1} C_n$ and the derived
projective limit\/ $\varprojlim_{n\ge1}^1 C_n$ are
$R\.|(R/I)$\+cotorsion $R$\+modules.
\end{lem}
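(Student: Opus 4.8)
The plan is to reduce the whole statement to Corollary~\ref{separ-complete-relative-cotorsion} together with the closure properties of relatively cotorsion modules from Lemma~\ref{relatively-cotorsion-lemma}(b), applied to the Milnor exact sequence that computes $\varprojlim$ and $\varprojlim^1$ in terms of infinite products. By Corollary~\ref{separ-complete-relative-cotorsion} each $C_n$ is $R\.|(R/I)$\+cotorsion, hence so is the product $\prod_{n\ge1}C_n$ by the closedness of the class of $R\.|(R/I)$\+cotorsion modules under infinite products (Lemma~\ref{relatively-cotorsion-lemma}(b)).

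The first real point is that the $R$\+module $\varprojlim_{n\ge1}C_n$ is again $I$\+adically separated and complete, and therefore $R\.|(R/I)$\+cotorsion by Corollary~\ref{separ-complete-relative-cotorsion}. Separatedness is elementary: the projection $\varprojlim_m C_m\rarrow C_n$ carries $I^k\varprojlim_m C_m$ into $I^kC_n$, so every element of $\bigcap_k I^k\varprojlim_m C_m$ has all its components lying in $\bigcap_k I^kC_n=0$. Completeness is where the finite generation of $I$ is used: by Theorem~\ref{lambda-adjoint} the fully faithful embedding $R\modl_{I\secmp}\rarrow R\modl$ is right adjoint to the completion functor $\Lambda_I$; since $R\modl_{I\secmp}$ is reflective in the complete category $R\modl$ it has all small limits, and the embedding, being a right adjoint, preserves them. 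Hence the inverse limit of the diagram $(C_n)_{n\ge1}$ of objects of $R\modl_{I\secmp}$, computed in $R\modl$, again lies in $R\modl_{I\secmp}$, i.e.\ is $I$\+adically separated and complete. (Alternatively, one can invoke Theorem~\ref{contraadjusted-for-ideal-generators-complete} in place of this categorical argument.)

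Now consider the exact sequence
$$
 0\lrarrow\varprojlim\nolimits_{n\ge1}C_n\lrarrow\prod\nolimits_{n\ge1}C_n
 \lrarrow\prod\nolimits_{n\ge1}C_n\lrarrow
 \varprojlim\nolimits^1_{n\ge1}C_n\lrarrow0,
$$
in which the middle map $d$ sends a family $(x_n)_{n\ge1}$ to $(x_n-f_{n+1}(x_{n+1}))_{n\ge1}$, where $f_{n+1}\:C_{n+1}\rarrow C_n$ are the transition maps, so that $\ker d=\varprojlim_n C_n$ and $\coker d=\varprojlim^1_n C_n$. Putting $Z=\im(d)$, we obtain two short exact sequences
$$
 0\lrarrow\varprojlim\nolimits_n C_n\lrarrow\prod\nolimits_n C_n\lrarrow Z\lrarrow0,
 \qquad
 0\lrarrow Z\lrarrow\prod\nolimits_n C_n\lrarrow\varprojlim\nolimits^1_n C_n\lrarrow0.
$$
In the first sequence the two left terms are $R\.|(R/I)$\+cotorsion by the previous paragraph, so $Z$ is $R\.|(R/I)$\+cotorsion by closedness under cokernels of injective morphisms (Lemma~\ref{relatively-cotorsion-lemma}(b)); feeding this into the second sequence and applying the same closure property once more shows that $\varprojlim^1_n C_n$ is $R\.|(R/I)$\+cotorsion. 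Together with the assertion already obtained for $\varprojlim_n C_n$, this completes the argument.

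The only step that is not a formal manipulation is the completeness of $\varprojlim_{n\ge1}C_n$: the subcategory $R\modl_{I\secmp}$ is \emph{not} closed under inverse limits in $R\modl$ for a general ideal $I$, and it is precisely the hypothesis that $I$ is finitely generated (entering through Theorem~\ref{lambda-adjoint}, or equivalently through Theorem~\ref{contraadjusted-for-ideal-generators-complete}) that makes this work.
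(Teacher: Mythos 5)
Your argument is correct and is essentially the paper's own proof: both identify $\varprojlim_{n\ge1} C_n$ and $\varprojlim^1_{n\ge1} C_n$ as the kernel and cokernel of the shift\+difference endomorphism of $\prod_{n\ge1} C_n$, establish that the product and the projective limit are $R\.|(R/I)$\+cotorsion via Corollary~\ref{separ-complete-relative-cotorsion} (you get the product from closure under infinite products in Lemma~\ref{relatively-cotorsion-lemma}(b), the paper applies the corollary to it directly), and then run the two short exact sequences through $\im f$ using closedness under cokernels of injective morphisms. The only cosmetic deviation is your justification that $\varprojlim_{n\ge1} C_n$ is $I$\+adically complete: the paper observes it is an $I$\+adically separated $I$\+contramodule (kernels and products of $I$\+contramodules being $I$\+contramodules), which is precisely your alternative route through Theorem~\ref{contraadjusted-for-ideal-generators-complete}, while your reflective\+subcategory argument based on Theorem~\ref{lambda-adjoint} is an equally valid substitute relying on the same finite generation of~$I$.
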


\begin{proof}
 The $R$\+modules $\varprojlim_{n\ge1} C_n$ and
$\varprojlim_{n\ge1}^1 C_n$ are computed as, respectively, the kernel
and the cokernel of the morphism
$$
 f=\id-\mathit{shift}\:\prod\nolimits_{n=1}^\infty C_n
 \lrarrow\prod\nolimits_{n=1}^\infty C_n.
$$
 Since the class of $I$\+contramodule $R$\+modules is closed under
the kernels, cokernels, and infinite products in $R\modl$, both
$\varprojlim_n C_n$ and $\varprojlim_n^1 C_n$ are $I$\+contramodules.
 Furthermore, the $R$\+module $\varprojlim_n C_n$ is $I$\+adically
separated as a submodule of the $I$\+adically separated module
$\prod_n C_n$.
 By Corollary~\ref{separ-complete-relative-cotorsion}, we can conclude
that $\varprojlim_n C_n$ and $\prod_n C_n$ are
$R\.|(R/I)$\+cotorsion $R$\+modules.

 Now we have short exact sequences of $R$\+modules
\begin{gather*}
 0\lrarrow \varprojlim\nolimits_{n\ge1}C_n\lrarrow
 \prod\nolimits_{n=1}^\infty C_n\lrarrow \im f\lrarrow 0 \\
 0\lrarrow \im f\lrarrow\prod\nolimits_{n=1}^\infty C_n
 \lrarrow \varprojlim\nolimits_{n\ge1}^1 C_n\lrarrow0.
\end{gather*}
  According to Lemma~\ref{relatively-cotorsion-lemma}, the class of
$R\.|(R/I)$\+cotorsion $R$\+modules is closed under the cokernels
of injective morphisms.
 Thus $\im f$ and $\varprojlim_n^1 C_n$ are also
$R\.|(R/I)$\+cotorsion.
\end{proof}

\begin{proof}[Proof of Theorem~\ref{I-contra-relatively-cotorsion}]
 We will show that the $R$\+module $\Delta_I(C)$ is
$R\.|(R/I)$\+cotorsion for every $R$\+module $C$; since one has
$\Delta_I(C)=C$ for any $I$\+contramodule $R$\+module $C$,
this is sufficient.
 The argument is based on the exact
sequence from Lemma~\ref{delta-I-lambda-I}.
 The elements $s_1^n$,~\dots, $s_m^n\in R$ act in the complex
$T^\bu_n(R;s_1,\dotsc,s_m)$ by contractible endomorphisms, so
all the homology modules of the complex
$\Hom_R(T^\bu_n(R;s_1,\dotsc,s_m),C)$ are annihilated by
each of these elements.
 Hence $H_*(\Hom_R(T_n^\bu(R;s_1,\dotsc,s_m),C))$ are $I$\+adically
separated and complete $R$\+modules.
 By Lemma~\ref{derived-limit-cotorsion}, both the leftmost and
the rightmost terms of our short exact sequence are
$R\.|(R/I)$\+cotorsion $R$\+modules; and it follows that
the middle term is $R\.|(R/I)$\+cotorsion, too.
\end{proof}

\Section{Flat, Projective, and Free Contramodules}
\label{free-contramodules-secn}

 The aim of this section and the next one is to discuss Enochs'
classification of flat cotorsion modules over Noetherian
rings~\cite{En} and explain the connection with free contramodules
over Noetherian local rings, as stated in~\cite[Theorem~1.3.8]{Pcosh}.

 We start with the following application of the contramodule
Nakayama lemma.

\begin{lem} \label{tor-lambda-nakayama}
 Let $R$ be a commutative ring, $I\subset R$ a finitely generated
ideal, $C$ an $R$\+module, and $\Lambda_I(C)$ its $I$\+adic
completion.
 Assume that\/ $\Tor^R_1(R/I,\.\Lambda_I(C))=0$.
 Then the natural morphism $\Delta_I(C)\rarrow\Lambda_I(C)$ is
an isomorphism.
\end{lem}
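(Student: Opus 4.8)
The plan is to exploit the short exact sequence from Lemma~\ref{delta-I-lambda-I},
$$
 0\lrarrow\varprojlim\nolimits_{n\ge1}^1
 H_1(\Hom_R(T_n^\bu(R;s_1,\dotsc,s_m),\.C))\lrarrow\Delta_I(C)
 \lrarrow \Lambda_I(C)\lrarrow0,
$$
so it suffices to show that the derived projective limit
$L=\varprojlim^1_{n\ge1} H_1(\Hom_R(T_n^\bu(R;s_1,\dotsc,s_m),C))$
vanishes under the hypothesis $\Tor^R_1(R/I,\Lambda_I(C))=0$. By the
discussion preceding Lemma~\ref{delta-I-lambda-I}, each module
$H_1(\Hom_R(T_n^\bu(R;s_1,\dotsc,s_m),C))$ is annihilated by
$s_1^n,\dotsc,s_m^n$ (the complex $T_n^\bu$ carries contractible
actions of these elements), so it is $I$\+adically separated and
complete, and in particular an $I$\+contramodule. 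Hence $L$, being
a cokernel in the abelian category $R\modl_{I\ctra}$ (which is closed
under cokernels and infinite products in $R\modl$ by
Theorem~\ref{ext-0-1-orthogonal}(a)), is itself an $I$\+contramodule.
The key point will be to show $L=IL$; then the contramodule Nakayama
lemma in the $m$\+variable form (Remark~\ref{many-variables-power-summations},
extending Lemma~\ref{nakayama}) forces $L=0$, since for an
$I$\+contramodule $L$ the condition $L=IL$ means precisely
$L\subset s_1L+\dotsb+s_mL$.

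The hypothesis $\Tor^R_1(R/I,\Lambda_I(C))=0$ has to be brought to
bear on $L$, and this is where the main work lies. First I would
observe that the natural surjection $\Delta_I(C)\rarrow\Lambda_I(C)$
becomes an isomorphism modulo~$I$: for any $R/I$\+module $M$ one has
$\Hom_R(\Delta_I(C),M)=\Hom_R(C,M)=\Hom_R(\Lambda_I(C),M)$, the first
equality by adjunction (Theorem~\ref{delta-s1-sm-theorem}) and the
second because $\Hom_R(C,M)$ factors through $C/IC$ and
$\Lambda_I(C)\rarrow C/IC$ induces an iso on $\Hom$ into $M$; hence
$\Delta_I(C)/I\Delta_I(C)\simeq\Lambda_I(C)/I\Lambda_I(C)$. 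Tensoring
the short exact sequence $0\to L\to\Delta_I(C)\to\Lambda_I(C)\to0$
with $R/I$ then yields an exact sequence
$$
 \Tor^R_1(R/I,\Lambda_I(C))\lrarrow L/IL\lrarrow
 \Delta_I(C)/I\Delta_I(C)\lrarrow\Lambda_I(C)/I\Lambda_I(C)\lrarrow0,
$$
where the last map is the isomorphism just established and the first
term vanishes by assumption. It follows that $L/IL=0$, i.e. $L=IL$,
which is exactly the Nakayama input needed. I expect verifying the
iso $\Delta_I(C)/I\Delta_I(C)\simeq\Lambda_I(C)/I\Lambda_I(C)$ (or,
more precisely, that the connecting map in the $\Tor$ sequence is the
right one and that $L/IL$ really sits where claimed) to be the most
delicate bookkeeping step; the argument in
Remark~\ref{lambda-s-torsion-free} for the one-variable case
$\Delta_s/s\Delta_s\simeq\Lambda_s/s\Lambda_s$ is the template, and
the same reasoning applies with $I$ finitely generated in place
of~$(s)$.

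Once $L=0$ is in hand, the short exact sequence of
Lemma~\ref{delta-I-lambda-I} collapses to the asserted isomorphism
$\Delta_I(C)\rarrow\Lambda_I(C)$, completing the proof. An alternative
packaging, which I would mention as a remark if space permits, is to
note that $L=\varprojlim^1_n H_1(\Hom_R(T_n^\bu(R;s_1,\dotsc,s_m),C))$
together with $\varprojlim_n H_1(\dotsc)$ and the $\varprojlim^{0,1}$ of
the $H_0$\+terms fit into the hypercohomology spectral sequence for
$\varprojlim_n\Hom_R(T_n^\bu,C)=\Hom_R(T^\bu,C)$, so that the vanishing
of $\Tor^R_1(R/I,\Lambda_I(C))$ could in principle be traced through
that spectral sequence directly; but the two-step argument above
(reduce mod $I$, then apply contramodule Nakayama) is cleaner and
avoids any explicit manipulation of the $H_1$ terms.
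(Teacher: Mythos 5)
Your proposal is correct and follows essentially the same route as the paper's proof: identify the kernel of $\Delta_I(C)\rarrow\Lambda_I(C)$ as an $I$\+contramodule, observe that the map becomes an isomorphism modulo $I$ (the paper cites the proof of Theorem~\ref{lambda-adjoint} and Remark~\ref{lambda-s-torsion-free} for exactly this), feed the hypothesis $\Tor^R_1(R/I,\Lambda_I(C))=0$ into the $\Tor$ long exact sequence to get that the kernel equals $I$ times itself, and conclude by the many-variable contramodule Nakayama lemma. The only cosmetic difference is that you realize the kernel explicitly as the $\varprojlim^1$ term of Lemma~\ref{delta-I-lambda-I}, whereas the paper simply notes it is a contramodule as the kernel of a morphism between two contramodules.
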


\begin{proof}
 The morphism $f\:\Delta_I(C)\rarrow\Lambda_I(C)$ is surjective by
Lemma~\ref{delta-I-lambda-I} (the proof of
Corollary~\ref{delta-s-to-lambda-s}(a) is also applicable, and shows
additionally that $\Lambda_I(C)=\Lambda_I(\Delta_I(C))$).
 Its kernel $K=\ker f$ is an $I$\+contramodule, because both
$\Delta_I(C)$ and $\Lambda_I(C)$ are.
 Furthermore, the maps
$$
 C/I^nC\lrarrow\Delta_I(C)/I^n\Delta_I(C)\lrarrow
 \Lambda_I(C)/I^n\Lambda_I(C)
$$
are isomorphisms for any $R$\+module $C$ and any $n\ge1$
(see the proof of Theorem~\ref{lambda-adjoint} or the argument
in Remark~\ref{lambda-s-torsion-free}).
 Now from the short exact sequence $0\rarrow K\rarrow \Delta_I(C)
\rarrow\Lambda_I(C)\rarrow0$ we obtain the long exact sequence
\begin{multline*}
 \dotsb\lrarrow\Tor^R_1(R/I,\.\Lambda_I(C))\lrarrow
 K/IK \\ \lrarrow\Delta_I(C)/I\Delta_I(C)\lrarrow
 \Lambda_I(C)/I\Lambda_I(C)\lrarrow0,
\end{multline*}
and the vanishing of $\Tor^R_1(R/I,\.\Lambda_I(C))$ implies
the vanishing of $K/IK$.
 Finally, it remains to apply the Nakayama Lemma~\ref{nakayama},
extended to any finite number of variables as mentioned in
Remark~\ref{many-variables-power-summations}, in order to
conclude that $K=IK$ implies $K=0$ for an $I$\+contramodule
$R$\+module~$K$.
\end{proof}

 The following lemma is specific to Noetherian rings.

\begin{lem} \label{noetherian-separ-complete-flat}
 Let $R$ be a Noetherian commutative ring, $I\subset R$ an ideal,
and $C$ an $I$\+adically separated and complete $R$\+module.
 Assume that the $R/I^n$\+module $C/I^nC$ is flat for every $n\ge1$.
 Then the $R$\+module $C$ is flat.
\end{lem}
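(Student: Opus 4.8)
The plan is to deduce flatness of $C$ from the vanishing of $\Tor^R_1(N,C)$ for all finitely generated $R$-modules $N$; since $R$ is Noetherian, this vanishing suffices for $C$ to be flat (indeed $R/J$ is finitely generated for every ideal $J$). So I would fix a finitely generated $R$-module $N$ and choose a resolution $F_\bullet$ of $N$ by finitely generated free $R$-modules, so that $\Tor^R_i(N,C)=H_i(F_\bullet\ot_R C)$. The crucial structural observation is that every term of the complex $F_\bullet\ot_R C$ is a finite direct sum of copies of $C$, hence $I$-adically separated and complete; consequently $F_\bullet\ot_R C=\varprojlim_{n\ge1}(F_\bullet\ot_R C/I^nC)$ as a complex, with termwise surjective transition maps. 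Exactly as in the proof of Lemma~\ref{delta-I-lambda-I}, the cohomological dimension~$1$ of the functor $\varprojlim_{n\ge1}$ then yields, for every $i\ge0$, a short exact sequence
\begin{multline*}
 0\lrarrow\varprojlim\nolimits^1_{n\ge1}H_{i+1}(F_\bullet\ot_R C/I^nC)\lrarrow \\
 H_i(F_\bullet\ot_R C)\lrarrow\varprojlim\nolimits_{n\ge1}H_i(F_\bullet\ot_R C/I^nC)\lrarrow0.
\end{multline*}

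Next I would identify the terms $H_i(F_\bullet\ot_R C/I^nC)$. Writing $F_\bullet\ot_R C/I^nC=(F_\bullet\ot_R R/I^n)\ot_{R/I^n}C/I^nC$, where $F_\bullet\ot_R R/I^n$ is a complex of finitely generated free $R/I^n$-modules with homology $\Tor^R_*(N,R/I^n)$, and using that $C/I^nC$ is a flat $R/I^n$-module (so that $-\ot_{R/I^n}C/I^nC$ commutes with homology), one gets $H_i(F_\bullet\ot_R C/I^nC)\simeq\Tor^R_i(N,R/I^n)\ot_{R/I^n}C/I^nC$. The key claim is that for every $i\ge1$ the tower $\bigl(\Tor^R_i(N,R/I^n)\bigr)_{n\ge1}$ is \emph{pro-zero}: for each $m$, the transition map $\Tor^R_i(N,R/I^n)\rarrow\Tor^R_i(N,R/I^m)$ vanishes once $n$ is large. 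This is the step where the Noetherian hypothesis is essential. One represents $H_i(F_\bullet\ot_R R/I^n)$ as $d_i^{-1}(I^nF_{i-1})/(B_i+I^nF_i)$, where $B_i=Z_i\subseteq F_i$ are the boundaries and cycles (equal because $F_\bullet$ is a resolution); since $\im d_i=B_{i-1}$ one has $d_i^{-1}(I^nF_{i-1})=d_i^{-1}(I^nF_{i-1}\cap B_{i-1})$, and the Artin--Rees lemma applied to the inclusion $B_{i-1}\subseteq F_{i-1}$ gives $I^nF_{i-1}\cap B_{i-1}\subseteq I^{n-c}B_{i-1}$ for some $c$ and all $n\ge c$, whence $d_i^{-1}(I^nF_{i-1})\subseteq Z_i+I^{n-c}F_i=B_i+I^{n-c}F_i$; it follows that the transition maps on $H_i(F_\bullet\ot_R R/I^n)$ die as soon as $n$ exceeds $m+c$. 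Finally, tensoring a pro-zero tower of $R/I^n$-modules with the tower $(C/I^nC)_{n\ge1}$ again produces a pro-zero tower, so $\bigl(H_i(F_\bullet\ot_R C/I^nC)\bigr)_{n\ge1}$ is pro-zero for all $i\ge1$, and therefore both its $\varprojlim_{n\ge1}$ and its $\varprojlim^1_{n\ge1}$ vanish.

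Substituting these vanishings into the short exact sequence above, with $i=1$ (the two outer terms involve $H_2$ and $H_1$ of the complexes $F_\bullet\ot_R C/I^nC$, both now pro-zero), we conclude $H_1(F_\bullet\ot_R C)=0$; the same reasoning gives $\Tor^R_i(N,C)=0$ for all $i\ge1$ and all finitely generated $N$, so $C$ is flat. I expect the main obstacle to be the pro-zero statement for the Tor towers, that is, carrying out the Artin--Rees bookkeeping with cycles and boundaries uniformly in~$n$; the rest is the $\varprojlim$/$\varprojlim^1$ formalism already deployed in Lemma~\ref{delta-I-lambda-I}, together with the routine observation that tensoring preserves pro-zeroness.
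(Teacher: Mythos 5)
Your argument is correct, but it follows a genuinely different route than the paper. The paper never touches higher $\Tor$ or derived limits: it compares the two functors $M\longmapsto C\ot_RM$ and $M\longmapsto\Lambda_I(C\ot_RM)=\varprojlim_n\bigl(C/I^nC\ot_{R/I^n}M/I^nM\bigr)$ on finitely generated modules, proves the second one exact (Artin--Rees applied to $K\cap I^nL$, flatness of $C/I^nC$ over $R/I^n$, and exactness of $\varprojlim$ on towers with surjective transition maps), observes that both functors are right exact and agree on finitely generated free modules -- this is exactly where separatedness and completeness of $C$ enter -- and concludes they are isomorphic, so $C\ot_R{-}$ is exact. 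You instead compute $\Tor^R_i(N,C)$ directly from a finite free resolution, identify $F_\bullet\ot_RC$ with $\varprojlim_n(F_\bullet\ot_RC/I^nC)$ (same use of completeness), and combine the $\varprojlim$/$\varprojlim^1$ sequence of Lemma~\ref{delta-I-lambda-I} with the classical pro-zero property of the towers $\bigl(\Tor^R_i(N,R/I^n)\bigr)_n$ for $i\ge1$, which your Artin--Rees bookkeeping with cycles and boundaries establishes correctly; flatness of $C/I^nC$ over $R/I^n$ then transports pro-zeroness to $\bigl(\Tor^R_i(N,C/I^nC)\bigr)_n$ (your ``tensoring preserves pro-zeroness'' step is indeed routine once one notes the transition map factors through the map induced on the $\Tor$ factor, or argues with cycle representatives). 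So the same three ingredients -- completeness of $C$, flatness of $C/I^nC$, Artin--Rees -- are deployed differently: the paper's comparison-of-functors argument is more elementary and avoids $\varprojlim^1$ altogether, while yours plugs directly into the derived-limit formalism already set up in the paper and gives the slightly stronger output $\Tor^R_i(N,C)=0$ for all $i\ge1$ at once; your pro-zero lemma is also the standard engine behind flatness of $I$-adic completions of Noetherian rings, so it situates the statement in a familiar context. (One cosmetic point: $R/J$ is cyclic, hence finitely generated for any ring, so the flatness criterion via $\Tor_1(R/J,C)$ does not need Noetherianity -- that hypothesis is needed only for Artin--Rees and for choosing a resolution by finitely generated free modules.)
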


\begin{proof}
 It suffices to show that the functor $M\longmapsto C\ot_RM$ is
exact on the category of finitely generated $R$\+modules~$M$.
 We consider two functors on the category of finitely generated
$R$\+modules:
$$
 M\longmapsto C\ot_RM \quad\text{and}\quad
 M\longmapsto \Lambda_I(C\ot_RM)=
 \varprojlim\nolimits_{n\ge1}\bigl(C/I^nC\ot_{R/I^n}M/I^nM\bigr).
$$
 First let us check that the functor $M\longmapsto\Lambda_I(C\ot_RM)$
is exact.
 Indeed, let $0\rarrow K\rarrow L\rarrow M\rarrow0$ be a short
exact sequence of finitely generated $R$\+modules.
 Then there are short exact sequences
\begin{equation} \label{artin-rees-short-sequence}
 0\lrarrow K/(K\cap I^nL)\lrarrow L/I^nL\lrarrow M/I^nM\lrarrow0,
 \quad n\ge1.
\end{equation}
 According to the Artin--Rees lemma, there exists $m\ge0$ such
that $K\cap I^nL=I^{n-m}(K\cap I^mL)$ for all $n\ge m$.
 Consequently, one has
$$
 I^nK\.\subset\. K\cap I^nL\.\subset\. I^{n-m}K \quad
 \text{for all $n\ge m$},
$$
and therefore
$$
 \varprojlim_{n\ge1}\bigl( C\ot_R K/(K\cap I^nL)\bigr)\simeq
 \varprojlim_{n\ge1}\bigl(C\ot_R K/I^nK\bigr).
$$
 Furthermore, the sequence~\eqref{artin-rees-short-sequence} is
a short exact sequence of $R/I^n$\+modules, so flatness of
the $R/I^n$\+module $C/I^nC$ implies exactness of the sequence
\begin{equation} \label{artin-rees-tensor-c}
 0\lrarrow C\ot_R K/(K\cap I^nL)\lrarrow C\ot_R L/I^nL
 \lrarrow C\ot_R M/I^nM\lrarrow0.
\end{equation}
 The sequences~\eqref{artin-rees-tensor-c} form a projective
system of short exact sequences and termwise surjective morphisms
between them, so the passage to the projective limit over $n\ge1$
preserves exactness of~\eqref{artin-rees-tensor-c}.
 We have constructed the desired short exact sequence
$$
 0\lrarrow\varprojlim_{n\ge1}(C\ot_R K/I^nK)\lrarrow
 \varprojlim_{n\ge1}(C\ot_R L/I^nL)\lrarrow\varprojlim_{n\ge1}
 (C\ot_RM/I^nM)\lrarrow0.
$$

 Now we have a morphism of functors
\begin{equation} \label{tensor-to-lambda-tensor}
 C\ot_RM\lrarrow \Lambda_I(C\ot_RM)
\end{equation}
of the argument $M$ running over the abelian category of
finitely generated $R$\+modules.
 We claim that this is an isomorphism of functors.
 The functor in the left-hand side is right exact (preserves
cokernels), while the functor in the right-hand side is exact,
as we have just shown.
 When $M$ is a finitely generated \emph{free} $R$\+module,
the map~\eqref{tensor-to-lambda-tensor} is an isomorphism,
because the map $C\rarrow\Lambda_I(C)$ is.
 In the general case, one can present a finitely generated
$R$\+module $M$ as the cokernel of a morphism of finitely
generated free $R$\+modules $f\:G\rarrow F$, and then
$$
 C\ot_R\coker f = \coker (C\ot_R f) = \coker\Lambda_I(C\ot_Rf)
 =\Lambda_I(C\ot\coker f)
$$
because $C\ot_Rf=\Lambda_I(C\ot_Rf)$.

 Finally, since the two functors are isomorphic and the functor
$M\longmapsto\Lambda_I(C\ot_RM)$ is exact, the functor
$M\longmapsto C\ot_RM$ is exact, too.
\end{proof}

 As a corollary, we obtain the following
result~\cite[Lemma~B.9.2]{Pweak}, many versions and generalizations
of which are known by now.
 See~\cite[Proposition~C.5.4]{Pcosh} for the noncommutative
Noetherian case; or \cite[Corollary~D.1.7]{Pcosh}
and~\cite[Corollary~6.15]{PR} for far-reaching generalizations to
contramodules over topological rings.

\begin{cor} \label{flat-contra-noetherian-cor}
 Let $R$ be a Noetherian commutative ring, $I\subset R$ an ideal,
and $C$ an $I$\+contramodule $R$\+module.  Then \par
\textup{(a)} $C$ is a flat $R$\+module if and only if $C/I^nC$ is
a flat $R/I^n$\+module for every $n\ge1$; \par
\textup{(b)} whenever either of the two conditions in\/~\textup{(a)}
is satisfied, $C$ is $I$\+adically separated.
\end{cor}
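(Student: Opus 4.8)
The plan is to deduce the corollary from Lemma~\ref{noetherian-separ-complete-flat} by passing to the $I$\+adic completion and exploiting the fact that an $I$\+contramodule $R$\+module $C$ coincides with its reflection $\Delta_I(C)$ onto the category $R\modl_{I\ctra}$. Since $R$ is Noetherian, the ideal $I$ is automatically finitely generated, so all the results above that assume $I$ finitely generated — in particular Theorem~\ref{lambda-adjoint}, Lemma~\ref{delta-I-lambda-I}, and Lemma~\ref{tor-lambda-nakayama} — are available here.

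The ``only if'' direction of part~(a) is immediate: if $C$ is a flat $R$\+module, then $C/I^nC=R/I^n\ot_RC$ is a flat $R/I^n$\+module by base change. For the ``if'' direction, assume $C/I^nC$ is a flat $R/I^n$\+module for every $n\ge1$, and consider the completion $M=\Lambda_I(C)$. By Theorem~\ref{lambda-adjoint}, $M$ is $I$\+adically separated and complete; moreover, the identity $I^n\Lambda_I(C)=K_n$ established in its proof gives $M/I^nM=\Lambda_I(C)/I^n\Lambda_I(C)=C/I^nC$ for every~$n$, so each $M/I^nM$ is flat over $R/I^n$. Then Lemma~\ref{noetherian-separ-complete-flat} applies to $M$ and shows that $\Lambda_I(C)$ is a flat $R$\+module; in particular $\Tor^R_1(R/I,\.\Lambda_I(C))=0$. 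On the other hand, since $C$ is an $I$\+contramodule, the adjunction unit is an isomorphism $C\simeq\Delta_I(C)$, and Lemma~\ref{tor-lambda-nakayama} then shows that the natural morphism $\Delta_I(C)\rarrow\Lambda_I(C)$ is an isomorphism. Hence $C\simeq\Lambda_I(C)$ is flat, which proves~(a).

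Part~(b) comes out of the same computation: once~(a) is established, either of its two (now equivalent) conditions forces $C/I^nC$ to be flat over $R/I^n$ for all~$n$, so the argument above produces an isomorphism $C\simeq\Lambda_I(C)$; and $\Lambda_I(C)$ is $I$\+adically separated by Theorem~\ref{lambda-adjoint}, hence so is~$C$.

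The one step where care is genuinely required — and the part I would expect to be the main obstacle to write cleanly — is checking that the flatness hypothesis really transports from $C/I^nC$ to $\Lambda_I(C)/I^n\Lambda_I(C)$, i.e.\ that these quotients coincide and that $\Lambda_I(C)$ is indeed separated and complete; both facts rest on $I$ being finitely generated, which is precisely where the Noetherian assumption on $R$ is used. Everything else is formal manipulation with the two adjunctions furnished by $\Delta_I$ and $\Lambda_I$.
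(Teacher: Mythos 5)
Your proposal is correct and follows essentially the same route as the paper: both pass to $\Lambda_I(C)$, use the identity $\Lambda_I(C)/I^n\Lambda_I(C)=C/I^nC$ to invoke Lemma~\ref{noetherian-separ-complete-flat}, and then apply Lemma~\ref{tor-lambda-nakayama} together with $C=\Delta_I(C)$ to conclude that $C\rarrow\Lambda_I(C)$ is an isomorphism, giving flatness and $I$\+adic separatedness at once.
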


\begin{proof}
 Clearly, if $C$ is a flat $R$\+module then $C/I^nC$ is a flat
$R/I^n$\+module.
 Conversely, assume that $C/I^nC$ is a flat $R/I^n$\+module for
every~$n$.
 Then the $R$\+module $\Lambda_I(C)$ satisfies the assumptions
of Lemma~\ref{noetherian-separ-complete-flat}, since
$\Lambda_I(C)/I^n\Lambda_I(C)=C/I^nC$.
 Hence $\Lambda_I(C)$ is a flat $R$\+module.
 By Lemma~\ref{tor-lambda-nakayama}, it follows that $C=\Delta_I(C)
\rarrow\Lambda_I(C)$ is an isomorphism.
 Therefore, the $R$\+module $C$ is flat and $I$\+adically separated.
\end{proof}

\begin{cor} \label{flat-delta-lambda-noetherian-cor}
 Let $R$ be a Noetherian commutative ring, $I\subset R$ an ideal,
and $F$ a flat $R$\+module.  Then \par
\textup{(a)} the natural morphism $\Delta_I(F)\rarrow\Lambda_I(F)$
is an isomorphism; \par
\textup{(b)} the $R$\+module $\Delta_I(F)=\Lambda_I(F)$ is flat.
\end{cor}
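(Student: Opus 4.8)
The plan is to route both assertions through the classical $I$\+adic completion $\Lambda_I(F)$, using the structural results already proved for $I$\+contramodule modules over Noetherian rings.

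First I would record that, $I$ being an ideal in a Noetherian ring, it is finitely generated, so Theorem~\ref{lambda-adjoint} and (the explicit part of) its proof apply: one has $I^n\Lambda_I(F)=\ker(\Lambda_I(F)\rarrow F/I^nF)$, and hence a natural isomorphism $\Lambda_I(F)/I^n\Lambda_I(F)\simeq F/I^nF$ for every $n\ge1$. Since $F$ is a flat $R$\+module, $F/I^nF=R/I^n\ot_RF$ is a flat $R/I^n$\+module. Moreover $\Lambda_I(F)$ is an $I$\+contramodule by Lemma~\ref{I-completion-s-contra}. Thus $\Lambda_I(F)$ satisfies the hypotheses of Corollary~\ref{flat-contra-noetherian-cor}(a) --- or, equivalently, being an $I$\+adically separated and complete $R$\+module with flat reductions modulo $I^n$, it satisfies the hypotheses of Lemma~\ref{noetherian-separ-complete-flat} --- and we conclude that $\Lambda_I(F)$ is a flat $R$\+module.

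In particular $\Tor_1^R(R/I,\.\Lambda_I(F))=0$, so Lemma~\ref{tor-lambda-nakayama} applies directly to $C=F$ and shows that the natural morphism $\Delta_I(F)\rarrow\Lambda_I(F)$ is an isomorphism; this is part~(a). Part~(b) is then immediate, since $\Delta_I(F)\simeq\Lambda_I(F)$ and the latter module was just shown to be flat.

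This is essentially an assembly of already-available pieces, so there is no genuine obstacle; the one point that wants a little care is the identification $\Lambda_I(F)/I^n\Lambda_I(F)\simeq F/I^nF$, which rests on the finite generation of $I$ via Theorem~\ref{lambda-adjoint} (without it $\Lambda_I(F)$ need not even be $I$\+adically complete, and the whole scheme collapses). I would also double-check that flatness of $F$ enters only through flatness of the reductions $F/I^nF$ and through the vanishing of $\Tor_1^R(R/I,\Lambda_I(F))$ obtained afterward from the flatness of $\Lambda_I(F)$, so that the deduction of part~(a) from Lemma~\ref{tor-lambda-nakayama} is not circular.
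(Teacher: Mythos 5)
Your proof is correct and is essentially the paper's own argument: the paper simply cites Corollary~\ref{flat-contra-noetherian-cor} applied to $C=\Delta_I(F)$ (using $\Delta_I(F)/I^n\Delta_I(F)=F/I^nF$), and unfolding that citation gives exactly your steps --- flatness of $\Lambda_I(F)$ via Lemma~\ref{noetherian-separ-complete-flat} and then Lemma~\ref{tor-lambda-nakayama} to identify $\Delta_I(F)$ with $\Lambda_I(F)$. Your remarks about finite generation of $I$ and non-circularity are accurate but amount to the same bookkeeping already done in the proofs of Theorem~\ref{lambda-adjoint} and Corollary~\ref{flat-contra-noetherian-cor}.
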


\begin{proof}
 Since $\Delta_I(F)/I^n\Delta_I(F)=F/I^nF$ is a flat $R/I^n$\+module
for every $n\ge1$, the assertions follow from
Corollary~\ref{flat-contra-noetherian-cor} applied to
the $R$\+module $C=\Delta_I(F)$.
\end{proof}

 A generalization of the assertion of 
Corollary~\ref{flat-delta-lambda-noetherian-cor}(a) to the case of
a commutative ring with a weakly proregular finitely generated ideal
can be found in~\cite[Lemma~2.5]{Pmgm}.
 A partial generalization of the assertion~(b) is briefly
discussed in~\cite[Remark~5.6]{Pmgm}.

\medskip

 Let $R$ be a commutative ring and $I\subset R$ an ideal.
 For any $R$\+module $M$ and a set $X$, we denote by $M^{(X)}$
the direct sum of $X$ copies of $M$ and by $M^X$ the direct
product of $X$ copies of~$M$.
 The $R$\+module $\Delta_I(R^{(X)})$ is called the \emph{free
$I$\+contramodule $R$\+module} generated by~$X$, because for
any $I$\+contramodule $R$\+module $C$ one has
$$
 \Hom_R(\Delta_I(R^{(X)}),C)\simeq\Hom_R(R^{(X)},C)\simeq C^X.
$$
 Free $I$\+contramodule $R$\+modules are projective objects in
the abelian category $R\modl_{I\ctra}$, and every $I$\+contramodule
$R$\+module $C$ is a quotient object of some free $I$\+contramodule
$R$\+module (e.~g., one can take $X=C$).
 It follows that an $I$\+contramodule $R$\+module is a projective
object in $R\modl_{I\ctra}$ if and only if it is a direct summand
of some free $I$\+contramodule $R$\+module.

 The following result can be found in~\cite[Corollary~B.8.2]{Pweak}
(see also~\cite[Theorem~3.4]{Yek0} and~\cite[Corollary~1.8]{PSY2}).

\begin{thm} \label{projective-contramodules-charact}
 Let $R$ be a Noetherian commutative ring, $I\subset R$ an ideal,
and $C$ an $I$\+contramodule $R$\+module.  Then the following
conditions are equivalent:
\begin{enumerate}
\renewcommand{\theenumi}{\roman{enumi}}
\item $C$ is a projective $I$\+contramodule $R$\+module;
\item $C/I^nC$ is a projective $R/I^n$\+module for every $n\ge1$;
\item $C/I^nC$ is a flat $R/I^n$\+module for every $n\ge1$ and
$C/IC$ is a projective $R/I$\+module;
\item $C$ is a flat $R$\+module and $C/IC$ is a projective
$R/I$\+module.
\end{enumerate}
 Furthermore, any projective $I$\+contramodule $R$\+module is
$I$\+adically separated.
\end{thm}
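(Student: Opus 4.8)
The plan is to prove the cycle of implications (i)$\,\Rightarrow\,$(iv)$\,\Rightarrow\,$(iii)$\,\Rightarrow\,$(ii)$\,\Rightarrow\,$(i), the last one being where the real work lies, and to read off the $I$\+adic separatedness along the way. First I would treat (i)$\,\Rightarrow\,$(iv). A projective object of $R\modl_{I\ctra}$ is a direct summand of a free $I$\+contramodule $\Delta_I(R^{(X)})$; by Corollary~\ref{flat-delta-lambda-noetherian-cor} the latter equals $\Lambda_I(R^{(X)})$ and is a flat $R$\+module, so $C$ is flat. Moreover $\Delta_I(R^{(X)})/I\Delta_I(R^{(X)})=(R/I)^{(X)}$ (the quotient modulo~$I$ only sees $M/IM$; see the proof of Theorem~\ref{lambda-adjoint}), and $C/IC$ is a direct summand of $(R/I)^{(X)}$, hence projective over $R/I$. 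In particular $C$, being a flat $I$\+contramodule over a Noetherian ring, is $I$\+adically separated by Corollary~\ref{flat-contra-noetherian-cor}(b) --- this is the ``furthermore''. The implication (iv)$\,\Rightarrow\,$(iii) is immediate: flatness of $C$ over $R$ gives flatness of $C/I^nC=R/I^n\ot_RC$ over $R/I^n$, and the condition on $C/IC$ is carried over verbatim.

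Next, (iii)$\,\Rightarrow\,$(ii) is an application of the idempotent-lifting Lemma~\ref{idempotent-lifting-argument}. I would fix $n\ge1$ and put $A=R/I^n$, $J=I/I^n$, so that $J^n=0$. The $A$\+module $P=C/I^nC$ is flat by hypothesis, and $P/JP=C/IC$ is projective over $A/J=R/I$, again by hypothesis; Lemma~\ref{idempotent-lifting-argument} then gives that $C/I^nC$ is projective over $R/I^n$.

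The main step is (ii)$\,\Rightarrow\,$(i) together with separatedness. From (ii) every $C/I^nC$ is flat over $R/I^n$, so Corollary~\ref{flat-contra-noetherian-cor} shows that $C$ is a flat $R$\+module and is $I$\+adically separated; being an $I$\+contramodule, $C\cong\Delta_I(C)\rarrow\Lambda_I(C)$ is then an isomorphism, so $C$ is $I$\+adically complete as well, i.e.\ $C=\varprojlim_n C/I^nC$. Now choose a free $I$\+contramodule $F=\Delta_I(R^{(X)})$ together with a surjection $g\:F\rarrow C$ in $R\modl_{I\ctra}$ (for instance $X=C$), and set $L=\ker g$. Since $C$ is flat, $\Tor^R_1(R/I^n,C)=0$, so reducing $0\rarrow L\rarrow F\rarrow C\rarrow0$ modulo $I^n$ stays exact, giving $0\rarrow L/I^nL\rarrow F/I^nF\rarrow C/I^nC\rarrow0$; here $F/I^nF=(R/I^n)^{(X)}$. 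As $R$ is Noetherian, $I$ is finitely generated, so $F=\Lambda_I(R^{(X)})$ is $I$\+adically separated and complete by Theorem~\ref{lambda-adjoint}; hence $L\subset F$ is $I$\+adically separated, and taking $\varprojlim_n$ of the displayed short exact sequences identifies $\varprojlim_n L/I^nL$ with $\ker(F\rarrow C)=L$, so $L$ is $I$\+adically complete too. By Corollary~\ref{separ-complete-relative-cotorsion}, $L$ is $R\.|(R/I)$\+cotorsion. On the other hand $C$ is a flat $R$\+module with $R/I\ot_RC=C/IC$ projective over $R/I$ (the $n=1$ case of (ii)), so by the very definition of relative cotorsion $\Ext^1_R(C,L)=0$. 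Therefore $0\rarrow L\rarrow F\rarrow C\rarrow0$ splits, exhibiting $C$ as a direct summand of the free $I$\+contramodule $F$, so $C$ is a projective object of $R\modl_{I\ctra}$.

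I expect the delicate point to be the verification that the kernel $L$ is $I$\+adically \emph{complete} (not merely separated), since that is exactly what makes Corollary~\ref{separ-complete-relative-cotorsion} applicable; this is where the flatness of $C$ --- hence the reducibility of the defining short exact sequence modulo every~$I^n$ --- is essential. An alternative to invoking the relative-cotorsion machinery would be to construct compatible splittings of the surjections $F/I^nF\rarrow C/I^nC$ directly and pass to the limit, using that $\Hom_{R/I^{n+1}}(C/I^{n+1}C,\.L/I^{n+1}L)\rarrow\Hom_{R/I^n}(C/I^nC,\.L/I^nL)$ is surjective (projectivity of $C/I^{n+1}C$ together with surjectivity of $L/I^{n+1}L\rarrow L/I^nL$); this is essentially the $\omega$\+indexed case of Lemma~\ref{dual-eklof}.
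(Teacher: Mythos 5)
Your proposal is correct, and its overall architecture (the cycle of implications, the idempotent-lifting Lemma~\ref{idempotent-lifting-argument} for (iii)$\Rightarrow$(ii), Corollary~\ref{flat-contra-noetherian-cor} for flatness and separatedness, and splitting a free presentation by a relative-cotorsion $\Ext^1$-vanishing) is the same as the paper's. The one genuine difference is in the key implication: the paper proves (iv)$\Rightarrow$(i) by citing Theorem~\ref{I-contra-relatively-cotorsion} in full strength --- \emph{every} $I$\+contramodule is $R\.|(R/I)$\+cotorsion, so $\Ext^1_R(C,D)=0$ for an arbitrary $I$\+contramodule kernel $D$ --- whereas you only invoke the much easier Corollary~\ref{separ-complete-relative-cotorsion}, because you check by hand that the specific kernel $L$ of a free presentation $0\rarrow L\rarrow\Delta_I(R^{(X)})\rarrow C\rarrow0$ is $I$\+adically separated and complete: flatness of $C$ kills $\Tor_1^R(R/I^n,C)$, so the sequence reduces exactly modulo every $I^n$, and passing to the limit against the separated complete modules $F$ and $C$ identifies $L$ with its own completion. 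This buys a more self-contained argument that bypasses the derived-limit machinery (Lemmas~\ref{delta-I-lambda-I} and~\ref{derived-limit-cotorsion}) behind Theorem~\ref{I-contra-relatively-cotorsion}, at the price of an extra page of diagram-chasing that the general theorem renders unnecessary; in effect you inline exactly the special case of that theorem needed here, exploiting that flatness of $C$ makes the kernel itself separated and complete rather than merely a (derived) limit of such modules. The paper also records a second, idempotent-lifting proof of (iv)$\Rightarrow$(i) (lifting an idempotent through the quotients $G/I^nG$ and applying the contramodule Nakayama Lemma~\ref{nakayama} twice), which neither of your arguments uses; and your closing alternative --- building compatible splittings of $F/I^nF\rarrow C/I^nC$ and passing to the limit via Lemma~\ref{dual-eklof} --- is precisely the mechanism hidden inside Corollary~\ref{separ-complete-relative-cotorsion}, so it is a sound fallback rather than a new idea.
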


\begin{proof}
 The implications (i) $\Longrightarrow$ (ii) $\Longrightarrow$ (iii)
$\Longleftarrow$ (iv) are obvious, while (iii) $\Longrightarrow$ (ii)
is provided by Lemma~\ref{idempotent-lifting-argument} and
(iii) $\Longrightarrow$ (iv) by
Corollary~\ref{flat-contra-noetherian-cor}(a).
 Any one of the conditions~(iii)\+-(iv) implies that $C$ is
$I$\+adically separated by
Corollary~\ref{flat-contra-noetherian-cor}(b).

 It remains to prove (iv) $\Longrightarrow$ (i).
 Assuming~(iv), by Theorem~\ref{I-contra-relatively-cotorsion}
we have $\Ext^1_R(C,D)=0$ for any $I$\+contramodule $R$\+module~$D$.
 Hence any short exact sequence of $I$\+contramodule $R$\+modules
$0\rarrow D\rarrow E\rarrow C$ splits.

 Alternatively, one can use an idempotent-lifting argument similar
to the proof of Lemma~\ref{idempotent-lifting-argument} in order
to show that (iv) implies~(i).
 Let $G=R^{(X)}$ be a free $R$\+module such that $C/IC$ is a direct
summand of $G/IG$.
 Then there is an idempotent endomorphism $e\:G/IG\rarrow G/IG$
such that the $R$\+module $C/IC$ is isomorphic to $e(G/IG)$.
 Proceeding by induction in~$n$, one lifts the idempotent element
$e\in\Hom_R(G/IG,\.G/IG)$ to a compatible sequence of idempotent
elements $e_n\in\Hom_R(G/I^nG,\.G/I^nG)$.
 Passing to the projective limit, we obtain an idempotent
endomorphism $f\:\Lambda_I(G)\rarrow\Lambda_I(G)$.
 By Corollary~\ref{flat-delta-lambda-noetherian-cor}(a),
$\,\Lambda_I(G)=\Delta_I(G)$ is a free $I$\+contramodule $R$\+module.
 Set $F=f\Lambda_I(G)$; then $F$ is a projective $I$\+contramodule
$R$\+module.
 Therefore, the isomorphism $F/IF\simeq e(G/IG)\simeq C/IC$ can be
lifted to an $R$\+module homomorphism $l\:F\rarrow C$.

 Let $K$ and $L$ denote the kernel and cokernel of~$l$.
 Then $L/IL=\coker(F/IF\to C/IC)=0$, so by Lemma~\ref{nakayama}
with Remark~\ref{many-variables-power-summations}, we have $L=0$
and the morphism~$l$ is surjective.
 The $R$\+module $C$ is flat by assumption, so from the exact
sequence $0\rarrow K\rarrow F\rarrow C\rarrow0$ we obtain
the exact sequence $0\rarrow K/IK\rarrow F/IF\rarrow C/IC\rarrow0$.
 Hence $K/IK=0$.
 Applying the Nakayama Lemma~\ref{nakayama} again, we conclude
that $K=0$.
\end{proof}

 Free $I$\+contramodule $R$\+modules can be characterized in
a way similar to the above characterization of the projective ones.

\begin{thm} \label{free-contramodules-charact}
 Let $R$ be a Noetherian commutative ring, $I\subset R$ an ideal,
and $C$ an $I$\+contramodule $R$\+module.  Then the following
conditions are equivalent:
\begin{enumerate}
\renewcommand{\theenumi}{\roman{enumi}}
\item $C$ is a free $I$\+contramodule $R$\+module;
\item $C/I^nC$ is a free $R/I^n$\+module for every $n\ge1$;
\item $C/I^nC$ is a flat $R/I^n$\+module for every $n\ge1$ and
$C/IC$ is a free $R/I$\+module;
\item $C$ is a flat $R$\+module and $C/IC$ is a free
$R/I$\+module.
\end{enumerate}
\end{thm}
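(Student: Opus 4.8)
The plan is to prove the cycle of implications (i)~$\Longrightarrow$~(ii)~$\Longrightarrow$~(iii)~$\Longrightarrow$~(iv)~$\Longrightarrow$~(i), following the proof of Theorem~\ref{projective-contramodules-charact} with ``projective'' everywhere replaced by ``free''. The first two implications are immediate: if $C=\Delta_I(R^{(X)})$ for a set $X$, then using the identification $\Delta_I(M)/I^n\Delta_I(M)=M/I^nM$ (recorded in the proof of Theorem~\ref{lambda-adjoint}; see also Remark~\ref{lambda-s-torsion-free}) one gets $C/I^nC\simeq R^{(X)}/I^nR^{(X)}=(R/I^n)^{(X)}$, a free $R/I^n$\+module, which gives~(ii); and a free $R/I^n$\+module is flat while a free $R/I$\+module stays free, which gives~(iii). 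For (iii)~$\Longrightarrow$~(iv) I would simply invoke Corollary~\ref{flat-contra-noetherian-cor}(a): an $I$\+contramodule $R$\+module $C$ over a Noetherian ring $R$ with $C/I^nC$ flat over $R/I^n$ for all $n\ge1$ is a flat $R$\+module, and the freeness of $C/IC$ is already part of~(iii).

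The substantive step is (iv)~$\Longrightarrow$~(i). Since $C/IC$ is a free $R/I$\+module by hypothesis, fix a set $X$ and an isomorphism of $R/I$\+modules $(R/I)^{(X)}\rarrow C/IC$; let $\bar c_x\in C/IC$ be the images of the standard basis vectors, and choose arbitrary lifts $c_x\in C$. Put $F=\Delta_I(R^{(X)})$, the free $I$\+contramodule $R$\+module on~$X$; by its universal property $\Hom_R(F,C)\simeq\Hom_R(R^{(X)},C)\simeq C^X$, so the family $(c_x)_{x\in X}$ determines a morphism $l\:F\rarrow C$. Since $F/IF=R^{(X)}/IR^{(X)}=(R/I)^{(X)}$, the reduction of~$l$ modulo~$I$ carries the $x$\+th generator to~$\bar c_x$, and is therefore precisely the chosen isomorphism $(R/I)^{(X)}\rarrow C/IC$. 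Now set $K=\ker l$ and $L=\coker l$; both are $I$\+contramodules, as $R\modl_{I\ctra}$ is closed under kernels and cokernels in $R\modl$ by Theorem~\ref{ext-0-1-orthogonal}(a). Applying $R/I\ot_R{-}$ to $F\rarrow C\rarrow L\rarrow0$ gives $L/IL=\coker(F/IF\rarrow C/IC)=0$, so the contramodule Nakayama lemma (Lemma~\ref{nakayama}, extended to a finite set of generators of $I$ as in Remark~\ref{many-variables-power-summations}) yields $L=0$ and $l$ is surjective. Since $C$ is flat by hypothesis, $\Tor^R_1(R/I,C)=0$, so applying $R/I\ot_R{-}$ to $0\rarrow K\rarrow F\rarrow C\rarrow0$ produces an exact sequence $0\rarrow K/IK\rarrow F/IF\rarrow C/IC\rarrow0$; the last map being an isomorphism, $K/IK=0$, and Nakayama once more gives $K=0$. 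Hence $l$ is an isomorphism and $C\simeq\Delta_I(R^{(X)})$ is free.

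I expect the argument to be essentially routine in the light of Theorem~\ref{projective-contramodules-charact}; the one point I would be careful about is the construction of~$l$ and the check that its reduction modulo~$I$ is the prescribed isomorphism, which hinges on the universal property $\Hom_R(\Delta_I(R^{(X)}),C)\simeq C^X$ (valid for any $I$\+contramodule target~$C$) together with the identity $\Delta_I(M)/I\Delta_I(M)=M/IM$. After that, everything reduces to two applications of the contramodule Nakayama lemma and the use of flatness of~$C$ to make reduction modulo~$I$ exact on the sequence $0\rarrow K\rarrow F\rarrow C\rarrow0$; this is the same pattern as in the proof of Theorem~\ref{projective-contramodules-charact}, specialized to the case where the relevant idempotent is the identity. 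One could alternatively run the step-by-step idempotent\+lifting argument of that proof with the idempotent taken to be the identity, in place of appealing to the universal property, but the version above seems shorter.
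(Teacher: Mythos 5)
Your proposal is correct and follows essentially the same route as the paper: the paper handles (iii)$\Rightarrow$(iv) by Corollary~\ref{flat-contra-noetherian-cor}(a) and disposes of (iv)$\Rightarrow$(i) by rerunning the second proof of Theorem~\ref{projective-contramodules-charact} with $e=1$, $f=1$, which is exactly the two-fold Nakayama argument (surjectivity of $l$ from $L/IL=0$, then $K/IK=0$ using flatness of $C$) that you carry out. Your only deviations are cosmetic: you close the equivalence by a cycle instead of the paper's separate use of Lemma~\ref{idempotent-lifting-argument} for (iii)$\Rightarrow$(ii), and you build the lift $l\:\Delta_I(R^{(X)})\rarrow C$ from the universal property $\Hom_R(\Delta_I(R^{(X)}),C)\simeq C^X$ rather than from projectivity of the free contramodule, which is the same mechanism.
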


\begin{proof}
 The implications (i) $\Longrightarrow$ (ii) $\Longrightarrow$ (iii)
$\Longleftarrow$ (iv) are obvious, while (iii) $\Longrightarrow$ (ii)
is provided by Lemma~\ref{idempotent-lifting-argument} (the second
assertion) and (iii) $\Longrightarrow$ (iv) by
Corollary~\ref{flat-contra-noetherian-cor}(a).
 Finally, the second (longer) proof of
Theorem~\ref{projective-contramodules-charact}\,%
(iv)$\,\Longrightarrow\,$(i) above is also a proof of
(iv) $\Longrightarrow$ (i) in the present theorem
(take $e_n=1$ and $f=1$ when $e=1$).
\end{proof}

 A version of the next result can be found
in~\cite[Corollary~4.5]{Yek0}; for a generalization,
see~\cite[Lemma~1.3.2]{Pweak}.

\begin{cor} \label{max-ideal-projective-contra-free}
 Let\/ $\m$ be a maximal ideal in a Noetherian commutative ring~$R$.
 Then the classes of projective\/ $\m$\+contramodule $R$\+modules
and free\/ $\m$\+contramodule $R$\+modules coincide.
\end{cor}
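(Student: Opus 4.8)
The plan is to obtain the corollary as an immediate consequence of the two characterization results already proved: Theorem~\ref{projective-contramodules-charact} for projective $\m$\+contramodule $R$\+modules and Theorem~\ref{free-contramodules-charact} for free $\m$\+contramodule $R$\+modules. The decisive point is that $\m$ being a \emph{maximal} ideal makes $R/\m$ a field, and over a field every module is free; in particular, for $R/\m$\+modules the notions of ``projective'' and ``free'' coincide. So the substantive work is already packed into those two theorems, and what remains is only to compare their statements.

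Concretely, first I would note that every free $\m$\+contramodule $R$\+module is by definition a projective object of $R\modl_{\m\ctra}$ (being its own free $\m$\+contramodule presentation), so only the reverse inclusion needs an argument. For that, let $C$ be a projective $\m$\+contramodule $R$\+module. By Theorem~\ref{projective-contramodules-charact}, condition~(iv), this is equivalent to $C$ being a flat $R$\+module with $C/\m C$ a projective $R/\m$\+module. Since $R/\m$ is a field, $C/\m C$ is then a free $R/\m$\+module, so $C$ satisfies condition~(iv) of Theorem~\ref{free-contramodules-charact}; hence $C$ is a free $\m$\+contramodule $R$\+module, as desired.

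I do not expect any genuine obstacle here: the only hypotheses actually needed are that $R$ is Noetherian (so that Theorems~\ref{projective-contramodules-charact} and~\ref{free-contramodules-charact} apply) and that $\m$ is maximal (so that $R/\m$ is a field). If one preferred a more self-contained route, one could instead repeat the idempotent-lifting argument from the second proof of Theorem~\ref{projective-contramodules-charact}\,(iv)$\Rightarrow$(i), taking the starting idempotent of $G/\m G$ to be the identity because a free module over the field $R/\m$ has no relevant proper direct summands; but passing through the two characterization theorems is clearly the shortest path and is the one I would present.
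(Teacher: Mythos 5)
Your argument is correct and is exactly the paper's proof: the paper simply says to compare condition~(iv) of Theorem~\ref{projective-contramodules-charact} with condition~(iv) of Theorem~\ref{free-contramodules-charact}, the point being, as you say, that over the field $R/\m$ projective modules are free. Your write-up just spells out this comparison in slightly more detail.
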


\begin{proof}
 Compare Theorem~\ref{projective-contramodules-charact}(iv) and
Theorem~\ref{free-contramodules-charact}(iv).
\end{proof}

 Let $R$ be a commutative ring and $I\subset R$ be a finitely
generated ideal such that the free $I$\+contramodule $R$\+modules
are $I$\+adically separated: e.~g., $R$ is Noetherian
(Corollary~\ref{flat-delta-lambda-noetherian-cor}(a) or
the last assertion of Theorem~\ref{projective-contramodules-charact}),
or $I$ is weakly proregular (\cite[Lemma~2.5]{Pmgm}).
 Then the free $I$\+contramodule $R$\+module generated by a set $X$
can be computed as
$$
 \Delta_I(R^{(X)})=\Lambda_I(R^{(X)})=\varprojlim\nolimits_{n\ge1}
 (R/I^n)^{(X)}=\Lambda_I(R)[[X]],
$$
where $\Lambda_I(R)[[X]]$ denotes the $R$\+module of all families
of elements $u_x\in\Lambda_I(R)$, \ $x\in X$ converging to~$0$ in
the $I$\+adic (\,$=$~projective limit) topology of $\Lambda_I(R)$.
 In other words, the $R$\+module $\Lambda_I(R)[[X]]$ consists of
all the maps $X\rarrow\Lambda_I(R)$, \ $x\longmapsto u_x$ such that
for every $n\ge1$ the set of all $x\in X$ for which
$$
 u_x\notin I^n\Lambda_I(R)=\ker(\Lambda_I(R)\to R/I^n)
$$
is finite (see the proof of Theorem~\ref{lambda-adjoint}; cf.\
Example~\ref{s-power-summation-examples}\,(1)).

\Section{Free Contramodules and Flat Cotorsion Modules}
\label{flat-cotorsion-secn}

 Throughout this section, $R$ is a Noetherian commutative ring.
 The following lemma is a dual version of
Corollary~\ref{flat-delta-lambda-noetherian-cor}(b).

\begin{lem} \label{injective-gamma-noetherian-lem}
 Let $I$ be an ideal in $R$ and $K$ an injective $R$\+module.
 Then the maximal $I$\+torsion submodule\/ $\Gamma_I(K)$ of $K$
is also an injective $R$\+module.
\end{lem}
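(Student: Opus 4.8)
The plan is to verify Baer's criterion for the $R$\+module $M=\Gamma_I(K)$: one must show that every homomorphism from an ideal $\mathfrak a\subset R$ into $M$ extends to a homomorphism $R\rarrow M$. This is to be the exact dual of the argument in Lemma~\ref{noetherian-separ-complete-flat}, with the $I$\+adic completion $\varprojlim_n C/I^nC$ replaced by the union $\bigcup_n\Hom_R(R/I^n,K)$ of the $I^n$\+torsion submodules, flatness replaced by injectivity, and the Artin--Rees lemma used in the same bookkeeping role.

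First I would record the two structural properties of $M=\Gamma_I(K)$ that play the part of ``$C$ is $I$\+adically separated and complete'' and ``$C/I^nC$ is a flat $R/I^n$\+module''. Set $M_n=\Hom_R(R/I^n,K)$, so that $M_n\subset M_{n+1}$ and $M=\bigcup_{n\ge1}M_n$, the latter equality holding because $M$ is $I$\+torsion by construction. Moreover each $M_n$ is an \emph{injective $R/I^n$\+module}: for any $R/I^n$\+module $N$ the restriction-of-scalars adjunction gives $\Hom_{R/I^n}(N,M_n)=\Hom_{R/I^n}(N,\Hom_R(R/I^n,K))\simeq\Hom_R(N,K)$, and $\Hom_R({-},K)$ is exact since $K$ is injective over $R$, so $\Hom_{R/I^n}({-},M_n)$ is exact on $R/I^n\modl$.

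Now, given an ideal $\mathfrak a\subset R$ and an $R$\+module morphism $\phi\:\mathfrak a\rarrow M$, I would use that $R$ is Noetherian to conclude that $\mathfrak a$ is finitely generated, hence $\phi(\mathfrak a)$ is a finitely generated submodule of $M=\bigcup_n M_n$, and therefore $\phi(\mathfrak a)\subset M_{n_0}$ for some $n_0\ge1$; in particular $\phi(I^{n_0}\mathfrak a)\subset I^{n_0}M_{n_0}=0$. By the Artin--Rees lemma there is a constant $c\ge0$ with $\mathfrak a\cap I^n\subset I^{n-c}\mathfrak a$ for all $n\ge c$; putting $N=n_0+c$ gives $\mathfrak a\cap I^N\subset I^{n_0}\mathfrak a\subset\ker\phi$. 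Consequently $\phi$ factors as the composite of the surjection $\mathfrak a\rarrow\mathfrak a/(\mathfrak a\cap I^N)\simeq(\mathfrak a+I^N)/I^N\subset R/I^N$ with a homomorphism $\overline\phi\:(\mathfrak a+I^N)/I^N\rarrow M_{n_0}\subset M_N$ of $R/I^N$\+modules. Since $M_N$ is an injective $R/I^N$\+module, $\overline\phi$ extends to $\overline\psi\:R/I^N\rarrow M_N$, and the composite $R\rarrow R/I^N\overset{\overline\psi}\rarrow M_N\hookrightarrow M$ is the desired extension of $\phi$. By Baer's criterion, $M$ is injective.

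The step requiring the most care is precisely the Artin--Rees bookkeeping: one must pass from the naive $n_0$ (for which only $\phi(I^{n_0}\mathfrak a)=0$ is guaranteed) to an $N$ for which $\phi$ annihilates all of $\mathfrak a\cap I^N$, so that the factorization through an honest submodule of $R/I^N$ becomes available; the rest is the formal adjunction computation and a routine application of Baer's criterion. I would also note an alternative, structure-theoretic proof: over a Noetherian ring $K$ is a direct sum of indecomposable injectives $E(R/\p)$, the functor $\Gamma_I$ commutes with arbitrary direct sums, and using that $\Ann(e)$ is $\p$\+primary for every nonzero $e\in E(R/\p)$ one checks that $\Gamma_I(E(R/\p))=E(R/\p)$ when $I\subset\p$ and $\Gamma_I(E(R/\p))=0$ otherwise; hence $\Gamma_I(K)$ is again a direct sum of injective $R$\+modules, and such a direct sum is injective because $R$ is Noetherian.
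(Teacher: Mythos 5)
Your proof is correct and follows essentially the same route as the paper's: in both arguments the heart is the Artin--Rees lemma, used to show that a map from a finitely generated source into $\Gamma_I(K)$ factors through a quotient annihilated by a power of $I$, after which injectivity finishes the extension. The only cosmetic differences are that you invoke Baer's criterion with ideals $\mathfrak{a}\subset R$ where the paper tests against submodules of arbitrary finitely generated modules, and that you extend into the layer $\Hom_R(R/I^N,K)$, injective over $R/I^N$ by adjunction, where the paper extends into $K$ itself and then observes that the image automatically lands in $\Gamma_I(K)$.
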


\begin{proof}
 It suffices to check that for any finitely generated $R$\+module $M$
and a submodule $N\subset M$ any $R$\+module morphism $g\:N\rarrow
\Gamma_I(K)$ can be extended to a morphism $f\:M\rarrow\Gamma_I(K)$.
 Since the $R$\+module $N$ is finitely generated, there exists
$n\ge1$ such that $g$~annihilates $I^nN$.
 By the Artin--Rees lemma, there exists $m\ge1$ such that
$N\cap I^mM\subset I^nN$.
 Then we have $N/(N\cap I^mM)\subset M/I^mM$, and the morphism~$g$
factorizes through the surjection $N\rarrow N/(N\cap I^mM)$,
providing an $R$\+module morphism $g'\:N/(N\cap I^mM)\rarrow 
\Gamma_I(K)\subset K$.
 Since $K$ is an injective $R$\+module, the morphism~$g'$ can be
extended to an $R$\+module morphism $f'\:M/I^mM\rarrow K$.
 Obviously, the image of~$f'$ is contained in
$\Gamma_I(K)\subset K$.
\end{proof}

 It follows from Lemma~\ref{injective-gamma-noetherian-lem} that
every injective object of the category $R\modl_{I\tors}$ is at the same
time an injective $R$\+module (i.~e., an injective object in
$R\modl$).

\medskip

 For any $R$\+module $M$, we denote by $E_R(M)\supset M$ an injective
envelope of the $R$\+module~$M$ (cf.\ the definition of
a $\C$\+envelope for a class of objects $\C\subset R\modl$
in Section~\ref{covers-envelopes-secn}).
 It follows from Lemma~\ref{injective-gamma-noetherian-lem} that
whenever $M$ is an $I$\+torsion $R$\+module for some ideal
$I\subset R$, the $R$\+module $E_R(M)$ is also $I$\+torsion.

 On the other hand, suppose that $M$ is an $s$\+torsion-free for
each element~$s$ from a certain multiplicative subset $S\subset R$.
 Then one has $E_R(S^{-1}M)=E_R(M)$, because $M\subset S^{-1}M$ and any
nonzero submodule of $S^{-1}M$ has a nonzero intersection with~$M$.
 For any $R$\+module $N$, any endomorphism of the $R$\+module
$E_R(N)$ restricting to an automorphism of $N$ is an automorphism
of~$E_R(N)$.
 Applying this observation to $N=S^{-1}M$, we conclude that
the elements of $S$ act by automorphisms of $E_R(M)$, that is
$E_R(M)$ is an $(S^{-1}R)$\+module.

 Furthermore, every injective $(S^{-1}R)$\+module is an injective
$R$\+module, since $S^{-1}R$ is a flat $R$\+module.
 It follows that
$$
 E_R(M)=E_R(S^{-1}M)=E_{S^{-1}R}(S^{-1}M).
$$

 In particular, let $\p\subset R$ be a prime ideal.
 Denote by
$$
 k_R(\p)=((R/\p)\setminus0)^{-1}(R/\p)=
 \bigl((R\setminus\p)^{-1}R\bigr)\big/\bigl((R\setminus\p)^{-1}\p\bigr)
$$
the residue field of~$\p$.
 Then the injective $R$\+module $E_R(R/\p)=E_R(k_R(\p))$ is
$\p$\+torsion.
 It is also a module over the local ring $R_\p=(R\setminus\p)^{-1}R$,
and in fact, an injective $R_\p$\+module isomorphic to
$E_{R_\p}(k_R(\p))$.
 In particular, if $R$ is an integral domain, then $E_R(R)=k_R(0)$
is the field of fractions of~$R$.

\begin{lem} \label{hom-indecomp-injectives-vanishing}
 Let $\p$ and $\q\subset R$ be two prime ideals.
 Then
$$
 \Hom_R(E_R(R/\p),\.E_R(R/\q))=0
 \qquad\text{if\/ \,$\p\not\subset\q$}.
$$
\end{lem}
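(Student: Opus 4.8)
The plan is to play off how a single ring element acts on the two modules. Since $\p\not\subset\q$, I would fix an element $s\in\p$ with $s\notin\q$; the claim will then follow from the observation that $s$ acts locally nilpotently on $E_R(R/\p)$ but invertibly on $E_R(R/\q)$, so no nonzero $R$\+module map can intertwine these actions.

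First I would record the relevant property of the source. The module $R/\p$ is $\p$\+torsion, so by Lemma~\ref{injective-gamma-noetherian-lem} (together with the remark immediately following it, where the Noetherianness of $R$ is used) its injective envelope $E_R(R/\p)$ is again $\p$\+torsion. In particular, every element $x\in E_R(R/\p)$ is annihilated by some power $s^n$, with $n=n(x)\ge1$ depending on~$x$.

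Next I would record the relevant property of the target. Since $\q$ is prime, $R/\q$ is $t$\+torsion-free for every $t\in R\setminus\q$, so by the discussion preceding the statement of the lemma the multiplicative set $R\setminus\q$ acts on $E_R(R/\q)$ by automorphisms, making $E_R(R/\q)$ a module over $R_\q=(R\setminus\q)^{-1}R$. In particular, since $s\notin\q$, multiplication by~$s$, and hence by every power~$s^n$, is a bijective endomorphism of $E_R(R/\q)$.

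Finally, given any $R$\+module morphism $f\:E_R(R/\p)\rarrow E_R(R/\q)$ and any $x\in E_R(R/\p)$, I would choose $n$ with $s^nx=0$ and compute $s^nf(x)=f(s^nx)=0$; since $s^n$ acts invertibly on $E_R(R/\q)$ this forces $f(x)=0$, and as $x$ was arbitrary, $f=0$. I do not expect a genuine obstacle here: both structural inputs are already established in the lines immediately above the statement, so the argument reduces to this short deduction. The only point requiring a little care is the use of $R$ Noetherian (through Lemma~\ref{injective-gamma-noetherian-lem}) to know that $E_R(R/\p)$ is $\p$\+torsion; a self-contained alternative, if desired, is to observe that $\Gamma_\p(E_R(R/\p))$ is an injective submodule of $E_R(R/\p)$ containing the essential submodule $R/\p$, hence coincides with $E_R(R/\p)$.
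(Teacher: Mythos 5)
Your proposal is correct and is essentially the paper's own argument: pick $s\in\p\setminus\q$, note that $E_R(R/\p)$ is $s$\+torsion (being $\p$\+torsion, by the discussion after Lemma~\ref{injective-gamma-noetherian-lem}) while $s$~acts invertibly on the $R_\q$\+module $E_R(R/\q)$, so any morphism kills every element. The extra details you supply (and the self-contained alternative via $\Gamma_\p(E_R(R/\p))$ being injective and containing the essential submodule $R/\p$) are exactly the facts the paper establishes in the paragraphs preceding the lemma.
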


\begin{proof}
 Let $s\in\p$ be an element not belonging to~$\q$.
 Then the $R$\+module $E_R(R/\p)$ is $s$\+torsion, while
the action of~$s$ in $E_R(R/\q)$ is invertible, so there are
no nonzero $s$\+torsion elements in $E_R(R/\q)$.
\end{proof}

 The next result is sometimes called the \emph{Matlis duality}
\cite[Corollary~4.3]{Mat0} (not to be confused with
the covariant Matlis category equivalence of~\cite[Section~3]{Mat},
\cite[Section~VIII.2]{FS}).
 We will only need the simplest finite-length version.

\begin{lem} \label{contravariant-matlis-duality}
 Let\/ $\m\subset R$ be a maximal ideal.
 Then the functor
$$
 M\longmapsto\Hom_R(M,E_R(R/\m))
$$
is an involutive auto-anti-equivalence of the category of finitely
generated\/ $\m$\+torsion $R$\+modules.
 In other words, the $R$\+module\/ $\Hom_R(M,E_R(R/\m))$ is a finitely
generated\/ $\m$\+torsion $R$\+module for every finitely
generated\/ $\m$\+torsion $R$\+module~$M$, and the natural morphism
$$
 M\lrarrow\Hom_R(\Hom_R(M,E_R(R/\m)),E_R(R/\m))
$$
is an isomorphism.
\end{lem}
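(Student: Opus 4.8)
The plan is to identify the category of finitely generated $\m$\+torsion $R$\+modules with the category of finite-length $R$\+modules whose composition factors are all isomorphic to $k=R/\m$, and then to argue by induction on the length, starting from the one-dimensional case. First I would observe that a finitely generated $\m$\+torsion $R$\+module $M$ is annihilated by $\m^n$ for some $n\ge1$ (take the maximum of the powers of $\m$ killing a finite generating set), hence is a finitely generated module over $R/\m^n$; since $R$ is Noetherian and $\m$ is maximal, $R/\m^n$ is an Artinian local ring with residue field $k$, so $M$ has finite length. Conversely, any finite-length $\m$\+torsion module is finitely generated. Thus it suffices to prove all the assertions for finite-length modules, by induction on the length.

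Next, write $E=E_R(R/\m)$ and recall that $E$ is an injective $R$\+module which is an essential extension of its simple submodule~$k$. The base case is the computation $\Hom_R(k,E)\simeq k$: indeed, $\Hom_R(k,E)$ is the socle $\{e\in E\mid\m e=0\}$ of~$E$, which contains~$k$; if it were strictly larger it would contain a simple submodule different from~$k$, whose intersection with~$k$ would be zero, contradicting the essentiality of $k\subset E$. The biduality morphism $\eta_k\:k\rarrow\Hom_R(\Hom_R(k,E),E)$ sends $x$ to the evaluation map $\phi\mapsto\phi(x)$; evaluating on the inclusion $k\hookrightarrow E$ shows $\eta_k\ne0$, and since source and target are both one-dimensional over~$k$, $\eta_k$ is an isomorphism.

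For the inductive step, let $M$ be a finite-length $\m$\+torsion module of length $\ell\ge2$. Choose a simple submodule $k\hookrightarrow M$ (a nonzero finite-length module has nonzero socle), giving a short exact sequence $0\rarrow k\rarrow M\rarrow M/k\rarrow0$ with $M/k$ of length $\ell-1$. Since $E$ is injective, applying $\Hom_R({-},E)$ yields the short exact sequence $0\rarrow\Hom_R(M/k,E)\rarrow\Hom_R(M,E)\rarrow\Hom_R(k,E)\rarrow0$; by the inductive hypothesis the two outer terms are finite-length $\m$\+torsion modules, and $\Hom_R(M,E)$ is annihilated by $\m^n$ along with~$M$, so it is again a finite-length $\m$\+torsion module, of length~$\ell$. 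Hence $\Hom_R({-},E)$ sends our category into itself. Applying $\Hom_R({-},E)$ a second time (again using injectivity of~$E$ for exactness) and invoking the naturality of~$\eta$, I obtain a commutative diagram of short exact sequences relating $0\rarrow k\rarrow M\rarrow M/k\rarrow0$ to its double dual, in which the biduality maps $\eta_k$ and $\eta_{M/k}$ are isomorphisms; the five lemma then gives that $\eta_M$ is an isomorphism. This shows that $M\longmapsto\Hom_R(M,E_R(R/\m))$ is a contravariant additive endofunctor of the category of finitely generated $\m$\+torsion $R$\+modules that squares to a functor naturally isomorphic to the identity, i.e., an involutive auto-anti-equivalence.

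The homological bookkeeping here is routine: two successive applications of $\Hom_R({-},E)$ preserve exactness because $E$ is injective, the length count is additive on short exact sequences, and the last step is a five-lemma diagram chase. The only place where genuine content enters is the base case — the identification $\Hom_R(R/\m,E_R(R/\m))\simeq R/\m$ and the non-degeneracy of the resulting evaluation pairing — so that is the step I would be most careful to get right; everything else is a formal consequence of it together with the injectivity of~$E$ and the Noetherian hypothesis (used only to ensure that finitely generated $\m$\+torsion modules have finite length).
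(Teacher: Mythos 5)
Your proposal is correct and follows essentially the same route as the paper: the base case $\Hom_R(R/\m,E)\simeq R/\m$ via essentiality of the injective envelope, the observation that finitely generated $\m$\+torsion modules are exactly the finitely iterated extensions of copies of $R/\m$ (your induction on length is the same device), exactness of $\Hom_R({-},E)$, and a five-lemma argument propagating the biduality isomorphism along short exact sequences. No gaps; the only difference is cosmetic phrasing of the induction.
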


\begin{proof}
 Set $E=E_R(R/\m)$.
 The quotient module $R/\m=k_R(\m)$ is a field, and the $R$\+module
$\Hom_R(R/\m,E)$ is a vector space over this field.
 This vector space is one-dimensional, since, by the definition
of an injective envelope, $R/\m$ is an $R$\+submodule in $E$ and
any nonzero $R$\+submodule in $E$ has a nonzero intersection
with this particular submodule.
 Hence $\Hom_R(\Hom_R(R/\m,E),E)$ is a one-dimensional $R/\m$\+vector
space, too.
 Since there exists an injective morphism $R/\m\rarrow E$, the natural
map $R/\m\rarrow\Hom_R(\Hom_R(R/\m,E),E)$ is injective, and
therefore an isomorphism.

 Now an $R$\+module is finitely generated $\m$\+torsion if and only if
it is a finitely iterated extension of copies of the $R$\+module~$R/\m$.
 The functor $\Hom_R({-},E)$ is exact, so it takes extensions to
extensions.
 This proves that the functor $\Hom_R({-},E)$ takes finitely generated
$\m$\+torsion $R$\+modules to finitely generated $\m$\+torsion
$R$\+modules.
 Finally, if $0\rarrow L\rarrow M\rarrow N\rarrow 0$ is a short
exact sequence of $R$\+modules such that $L\rarrow\Hom_R(\Hom_R(L,E),E)$
and $N\rarrow\Hom_R(\Hom_R(N,E),E)$ are isomorphisms, then
$M\rarrow\Hom_R(\Hom_R(M,E),E)$ is an isomorphism, too.
\end{proof}

 The following lemma is a dual version of
Corollary~\ref{max-ideal-projective-contra-free}.

\begin{lem} \label{max-ideal-injective-torsion-modules}
 Let\/ $\m\subset R$ be a maximal ideal.
 Then the injective objects of the category of\/ $\m$\+torsion
$R$\+modules are precisely the direct sums of copies of
the $R$\+module $E_R(R/\m)$.
\end{lem}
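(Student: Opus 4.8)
The plan is to first reduce the statement to a claim about injective $R$\+modules. By the remark following Lemma~\ref{injective-gamma-noetherian-lem}, every injective object of $R\modl_{\m\tors}$ is an injective $R$\+module; and conversely, since the embedding $R\modl_{\m\tors}\rarrow R\modl$ is exact and fully faithful (so that kernels, hence monomorphisms, agree in the two categories), every $\m$\+torsion $R$\+module that is injective in $R\modl$ is also an injective object of $R\modl_{\m\tors}$. Hence it suffices to show that an $R$\+module is $\m$\+torsion and injective in $R\modl$ if and only if it is isomorphic to $E_R(R/\m)^{(X)}$ for some set~$X$.

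One implication rests on the classical theorem of Matlis~\cite{Mat0} that over a Noetherian ring an arbitrary direct sum of injective modules is injective. Since $E_R(R/\m)$ is $\m$\+torsion (as recalled before Lemma~\ref{hom-indecomp-injectives-vanishing}), and the class of $\m$\+torsion $R$\+modules is obviously closed under direct sums, it follows that $E_R(R/\m)^{(X)}$ is an $\m$\+torsion injective $R$\+module for every set~$X$, and therefore an injective object of $R\modl_{\m\tors}$.

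For the converse, let $K$ be a nonzero $\m$\+torsion injective $R$\+module, and put $S=\Hom_R(R/\m,K)=\{x\in K\mid\m x=0\}$. This is a vector space over the field $k=R/\m$, so a choice of $k$\+basis gives an isomorphism $(R/\m)^{(X)}\simeq S$ together with an embedding $S\hookrightarrow K$. Two essentiality facts finish the argument. First, $S$ is an essential $R$\+submodule of $K$: given $0\ne x\in K$, there is a least integer $n\ge1$ with $\m^nx=0$ (because $K$ is $\m$\+torsion), and then $\m^{n-1}x$ is a nonzero submodule of $Rx$ annihilated by~$\m$, so $0\ne\m^{n-1}x\subset Rx\cap S$; as every nonzero submodule of $K$ contains a nonzero cyclic submodule, this is exactly essentiality. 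Second, $(R/\m)^{(X)}$ is an essential submodule of $E_R(R/\m)^{(X)}$, by the standard fact that a direct sum of essential extensions is essential (reduce to finite direct sums and induct). Now $E_R(R/\m)^{(X)}$ is injective (by the first implication) and an essential extension of $(R/\m)^{(X)}$, so it is an injective envelope of $(R/\m)^{(X)}$; likewise $K$, being injective and an essential extension of $S\simeq(R/\m)^{(X)}$, is an injective envelope of $(R/\m)^{(X)}$. By the uniqueness of injective envelopes, $K\simeq E_R(R/\m)^{(X)}$.

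The only nonroutine input is Matlis' theorem on direct sums of injectives over a Noetherian ring; the remainder is an elementary essential-submodule argument and poses no real obstacle. Alternatively, one could use the full Matlis decomposition $K\simeq\bigoplus_\alpha E_R(R/\p_\alpha)$ of an injective module over a Noetherian ring, and then argue, exactly as in the proof of Lemma~\ref{hom-indecomp-injectives-vanishing}, that an element $s\in\m\setminus\p_\alpha$ would act invertibly on the $\m$\+torsion module $E_R(R/\p_\alpha)$, forcing $\p_\alpha=\m$; this gives the containment $K\simeq E_R(R/\m)^{(X)}$ directly, the reverse containment being as above.
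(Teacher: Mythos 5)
Your proof is correct and follows essentially the same route as the paper: both directions rest on the closure of injective modules under direct sums over a Noetherian ring and on the fact that the socle ${}_\m K$ meets every nonzero submodule of an $\m$-torsion module. The paper packages the converse slightly differently---extending the isomorphism of socles to a morphism $K\rarrow E_R(R/\m)^{(X)}$ and killing its kernel and cokernel via the observation that ${}_\m M=0$ implies $M=0$---whereas you phrase it through essentiality and uniqueness of injective envelopes, but the mathematical content is the same.
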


\begin{proof}
 Direct sums of copies of $E=E_R(R/\m)$ are injective $R$\+modules,
because the class of injective left modules over a left
Noetherian ring is closed under infinite direct sums.
 Conversely, for any $\m$\+torsion $R$\+module $M$ denote by ${}_\m M$
the submodule of elements annihilated by~$\m$ in~$M$.
 The argument is based on the observation that ${}_\m M=0$ implies
$M=0$ (cf.~\cite[Lemma~2.1(a)]{Prev}).
 Let $K$ be an injective $\m$\+torsion $R$\+module.
 Choose a basis indexed by a set $X$ in the $R/\m$\+vector
space~${}_\m K$, consider the direct sum $E^{(X)}$ of $X$ copies
of $E$, and extend the isomorphism ${}_\m K\simeq {}_\m E^{(X)}$ to
an $R$\+module morphism $f\:K\rarrow E^{(X)}$.
 Then $\ker f=0$, since $K$ is $\m$\+torsion and $\ker f\subset K$
does not intersect ${}_\m K$.
 Since $K$ is injective, it follows that it is a direct summand
in $E^{(X)}$; hence $_\m\coker f=0$ and $\coker f=0$.
\end{proof}

 The following classification theorem is due to Matlis~\cite{Mat0}.

\begin{thm} \label{injectives-classification}
 An $R$\+module $K$ is injective if and only if it is isomorphic to
an $R$\+module of the form
\begin{equation} \label{injective-modules}
 \bigoplus\nolimits_{\p\in\Spec R} E_R(R/\p)^{(X_\p)},
\end{equation}
where\/ $\p\longmapsto X_\p$ is a correspondence assigning some set
$X_\p$ to every prime ideal\/ $\p\subset R$.
 The cardinality of every set $X_\p$ is uniquely determined by
the $R$\+module~$K$. \qed
\end{thm}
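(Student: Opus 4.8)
The plan is to follow the classical approach of Matlis~\cite{Mat0}. The ``if'' part is immediate: each $E_R(R/\p)$ is an injective $R$\+module by construction, and since $R$ is Noetherian an arbitrary direct sum of injective $R$\+modules is injective (the fact already used in the proof of Lemma~\ref{max-ideal-injective-torsion-modules}); hence any module of the form~\eqref{injective-modules} is injective.

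For the ``only if'' part, I would first recall two elementary facts. Every nonzero $R$\+module $M$ has an \emph{associated prime}: the family $\{\Ann(x)\mid 0\ne x\in M\}$ has a maximal element by Noetherianness, and any such maximal $\p$ is prime (if $ab\in\p=\Ann(x)$ with $b\notin\p$, then $\Ann(bx)\supseteq\p$, so $\Ann(bx)=\p$ and $a\in\p$), which yields an embedding $R/\p\rarrow M$. Also, $E_R(R/\p)$ is indecomposable for every prime $\p$: the domain $R/\p$ is a uniform $R$\+module, hence so is its injective envelope, and a uniform injective module is indecomposable; moreover, for $\p\ne\q$ one has $E_R(R/\p)\not\simeq E_R(R/\q)$, since one of $\p\not\subseteq\q$, $\q\not\subseteq\p$ holds and then Lemma~\ref{hom-indecomp-injectives-vanishing} makes the relevant $\Hom$ group vanish. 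Given these, let $K$ be injective and choose, by Zorn's lemma, a maximal family $(E_i)_{i\in X}$ of submodules of $K$, each isomorphic to $E_R(R/\p_i)$ for some prime $\p_i$, such that the sum $\bigoplus_{i\in X}E_i\subseteq K$ is direct. As $R$ is Noetherian, $\bigoplus_i E_i$ is injective, hence a direct summand $K=\bigl(\bigoplus_i E_i\bigr)\oplus K'$. If $K'\ne0$, take an associated prime $\p$ of $K'$ and the resulting embedding $R/\p\rarrow K'$; since $K'$ is injective it extends to $g\:E_R(R/\p)\rarrow K'$ restricting to this embedding on $R/\p$, so $\ker g\cap R/\p=0$ and hence $\ker g=0$ by essentiality of $R/\p$ in $E_R(R/\p)$, contradicting maximality (the image of $g$ could be adjoined to the family, staying in the complementary summand $K'$). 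Thus $K'=0$ and $K\simeq\bigoplus_{i}E_R(R/\p_i)\simeq\bigoplus_{\p\in\Spec R}E_R(R/\p)^{(X_\p)}$, where $X_\p=\{i\in X\mid\p_i=\p\}$. (Running the same argument with $K$ replaced by an indecomposable injective module shows, incidentally, that every indecomposable injective $R$\+module is $E_R(R/\p)$ for a unique prime~$\p$.)

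For the uniqueness of the cardinalities I would recover $|X_\p|$ from $K$ by localizing at $\p$ and then passing to the $\p$\+socle. Fix a prime $\p$. Localization commutes with direct sums, and $E_R(R/\q)_\p=0$ whenever $\q\not\subseteq\p$: choosing $s\in\q\setminus\p$, the module $E_R(R/\q)$ is $s$\+torsion while $s$ is invertible in $R_\p$. Therefore $K_\p\simeq\bigoplus_{\q\subseteq\p}E_R(R/\q)^{(X_\q)}$. Now apply $\Hom_{R_\p}(k_R(\p),-)$, which commutes with the direct sum since $k_R(\p)$ is a finitely generated $R_\p$\+module. For $\q=\p$, recalling $E_R(R/\p)=E_{R_\p}(k_R(\p))$, the module $\Hom_{R_\p}(k_R(\p),E_R(R/\p))\simeq k_R(\p)$ is one\+dimensional by the argument in the proof of Lemma~\ref{contravariant-matlis-duality}. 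For $\q\subsetneq\p$, pick $s\in\p\setminus\q$; then $s$ acts invertibly on $E_R(R/\q)$, so no nonzero element is annihilated by $\p R_\p$ (which contains the image of $s$), and $\Hom_{R_\p}(k_R(\p),E_R(R/\q))=0$. Hence $\Hom_{R_\p}(k_R(\p),K_\p)\simeq k_R(\p)^{(X_\p)}$, and $|X_\p|=\dim_{k_R(\p)}\Hom_{R_\p}(k_R(\p),K_\p)$ depends only on~$K$.

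I expect the uniqueness step to be the part requiring most care: one must juggle the localizations and the socle computations for the three cases $\q\not\subseteq\p$, $\q\subsetneq\p$, and $\q=\p$, and be sure that localization and $\Hom$ out of $k_R(\p)$ interact as claimed with the possibly infinite direct sum. Everything else rests on standard essentiality properties of injective envelopes and on the Bass--Papp theorem that a direct sum of injective modules over a Noetherian ring is injective, both used freely above.
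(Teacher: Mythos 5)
Your proof is correct: the ``if'' direction by the Noetherian direct-sum-of-injectives fact, the decomposition of an injective $K$ via associated primes, essentiality, and a Zorn maximality argument, and the recovery of the multiplicities $|X_\p|$ from $\dim_{k_R(\p)}\Hom_{R_\p}(k_R(\p),K_\p)$ are all sound (the only step left tacit, that $E_R(R/\q)$ is already an $R_\p$\+module for $\q\subseteq\p$ so localization at~$\p$ does not change it, is standard and is justified by the paper's discussion of $E_R(R/\q)$ as an $R_\q$\+module). The paper itself gives no argument for this theorem, citing Matlis instead, and what you have written is exactly the classical Matlis--Papp proof being referred to, so there is no divergence to report.
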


 Recall that we can use the notation $\Lambda_\p(R_\p)=
\varprojlim_{n\ge1}R_\p/\p^nR_\p$ for the completion of
the local ring $R_\p=(R\setminus\p)^{-1}R$ of a prime ideal
$\p\in\Spec R$.
 For a maximal ideal $\m\subset R$, we may write simply
$\Lambda_\m(R)=\Lambda_\m(R_\m)$.

 The following classical result can be viewed as a simple version of
the covariant Matlis category equivalence.
 At the same time, it is one of the simplest manifestations of
the underived co-contra correpondence
phenomenon~\cite[Sections~1.2 and~3.6]{Prev},
\cite[Proposition~1.5.1]{Pmgm}.

\begin{thm} \label{noetherian-maximal-ideal-covariant-duality}
 Let\/ $\m\subset R$ be a maximal ideal.  Then the pair of functors
$$
 M\longmapsto\Hom_R(E_R(R/\m),M) \quad\text{and}\quad
 C\longmapsto E_R(R/\m)\ot_R C
$$
provides an equivalence between the additive category of injective\/
$\m$\+torsion $R$\+mod\-ules $M$ and the additive category of
projective\/ $\m$\+contramodule $R$\+modules~$C$.
\end{thm}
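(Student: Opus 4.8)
The plan is to realize the equivalence as a restriction of the tensor--hom adjunction $E\ot_R{-}\dashv\Hom_R(E,{-})$, where I write $E=E_R(R/\m)$ and $\Lambda=\Lambda_\m(R)$, by evaluating both functors on the explicit generators of the two categories. By Lemma~\ref{max-ideal-injective-torsion-modules}, every injective $\m$\+torsion $R$\+module has the form $E^{(X)}$; by Corollary~\ref{max-ideal-projective-contra-free} together with the computation of free contramodules at the end of Section~\ref{free-contramodules-secn}, every projective $\m$\+contramodule $R$\+module has the form $\Delta_\m(R^{(X)})=\Lambda[[X]]$. (Using Remark~\ref{contramodules-modules-over-localizations} one may first replace $R$ by its completion $\Lambda$ and assume $R$ is complete Noetherian local, so that $\Lambda=R$; this is convenient but not essential, since the classes of $\m$\+torsion and $\m$\+contramodule modules, the module $E$, and the two functors are unchanged under passing from $R$ to $\Lambda$.)

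The first step is two computations. As $E$ is $\m$\+torsion, $E=\varinjlim_n{}_{\m^n}E$ with ${}_{\m^n}E=\Hom_R(R/\m^n,E)$; by Lemma~\ref{contravariant-matlis-duality} this is a finitely generated $\m$\+torsion $R$\+module (it is $E_{R/\m^n}(k_R(\m))$, the Matlis module of the Artinian ring $R/\m^n$), and the double\+duality isomorphism of the same lemma gives $\Hom_R({}_{\m^n}E,E)=\Hom_R(\Hom_R(R/\m^n,E),E)\cong R/\m^n$ by multiplication. Passing to the limit over $n$, $\Hom_R(E,E)\cong\varprojlim_n R/\m^n=\Lambda$, so $E$ is a faithful $\Lambda$\+module with $\Ann_\Lambda({}_{\m^n}E)=\m^n\Lambda$. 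Since $R$ is Noetherian, ${}_{\m^n}E$ is finitely presented, hence $\Hom_R({}_{\m^n}E,{-})$ preserves direct sums; therefore
$$
 \Hom_R(E,E^{(X)})=\varprojlim\nolimits_n\Hom_R({}_{\m^n}E,E)^{(X)}
 =\varprojlim\nolimits_n(R/\m^n)^{(X)}=\Lambda[[X]].
$$
Dually, for any $R$\+module $C$ one has ${}_{\m^n}E\ot_R C={}_{\m^n}E\ot_{R/\m^n}C/\m^nC$, and $\Delta_\m(R^{(X)})/\m^n\Delta_\m(R^{(X)})=(R/\m^n)^{(X)}$, so
$$
 E\ot_R\Lambda[[X]]=\varinjlim\nolimits_n\bigl({}_{\m^n}E\ot_{R/\m^n}(R/\m^n)^{(X)}\bigr)
 =\varinjlim\nolimits_n({}_{\m^n}E)^{(X)}=E^{(X)}.
$$
In particular the two functors interchange $\Lambda[[X]]$ and $E^{(X)}$, and by Lemma~\ref{tensor-hom-torsion-contra-duality-lem}(b), Lemma~\ref{max-ideal-injective-torsion-modules}, and Corollary~\ref{max-ideal-projective-contra-free} each sends the one category into the other.

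It remains to check that the adjunction unit $\eta_C\colon C\to\Hom_R(E,E\ot_R C)$ and counit $\varepsilon_N\colon E\ot_R\Hom_R(E,N)\to N$ become isomorphisms on the respective subcategories. Essential surjectivity of $E\ot_R{-}$ is the identity $E^{(X)}\cong E\ot_R\Lambda[[X]]$ just obtained, so by the triangle identities it is enough to see that $\eta_C$ is an isomorphism for $C=\Lambda[[X]]$ (the statement for $\varepsilon$ then follows formally, as $E\ot_R{-}$ is then an equivalence with $\Hom_R(E,{-})$ as right adjoint, hence a quasi-inverse). Under the identifications above, $\eta_{\Lambda[[X]]}$ sends a family $(u_x)_{x\in X}\in\Lambda[[X]]$ to the map $E\to E^{(X)}$, $e\mapsto(u_xe)_x$, where $u_x\in\Lambda=\Hom_R(E,E)$ acts on $E$; injectivity of $\eta_{\Lambda[[X]]}$ is the faithfulness of $E$, i.e.\ $\bigcap_n\m^n\Lambda=0$. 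For surjectivity, given $\psi\in\Hom_R(E,E^{(X)})$, compose with the coordinate projections to get $u_x\in\Hom_R(E,E)=\Lambda$ with $\psi(e)=(u_xe)_x$; since the restriction of $\psi$ to the finitely generated submodule ${}_{\m^n}E$ has image in the direct sum $({}_{\m^n}E)^{(X)}$, only finitely many of the endomorphisms $u_x$ are nonzero on ${}_{\m^n}E$, i.e.\ $u_x\in\Ann_\Lambda({}_{\m^n}E)=\m^n\Lambda$ for all but finitely many $x$; hence $(u_x)_x\in\Lambda[[X]]$ and $\eta_{\Lambda[[X]]}((u_x)_x)=\psi$.

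I expect the genuine work to be in this last paragraph: one must keep careful track of how $\Hom_R(E,{-})$ and $E\ot_R{-}$ interact with the colimit $E=\varinjlim{}_{\m^n}E$ and the limit $\Lambda[[X]]=\varprojlim_n(R/\m^n)^{(X)}$, and — most delicately — verify that the abstract isomorphisms $\Lambda[[X]]\cong\Hom_R(E,E^{(X)})\cong\Hom_R(E,E\ot_R\Lambda[[X]])$ produced above really coincide with the adjunction unit. All the ring\+theoretic ingredients (Matlis duality, finite presentation over the Noetherian ring, and the structure of free $\m$\+contramodules) are already available from the preceding sections.
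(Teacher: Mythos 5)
Your proposal is correct and takes essentially the same route as the paper's proof: identify the objects of the two categories via Lemma~\ref{max-ideal-injective-torsion-modules} and Corollary~\ref{max-ideal-projective-contra-free}, and compute both functors on $E^{(X)}$ and $\Lambda_\m(R)[[X]]$ using the exhaustion $E=\varinjlim_n{}_{\m^n}E$ together with the finite-length Matlis duality of Lemma~\ref{contravariant-matlis-duality}. Your closing paragraph, checking explicitly that the adjunction unit on $\Lambda_\m(R)[[X]]$ is an isomorphism (and deducing the counit formally), supplies a verification that the paper leaves implicit, and it is carried out correctly.
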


\begin{proof}
 As in the previous proof, we set $E=E_R(R/\m)$.
 Obviously, $M\longmapsto\Hom_R(E,M)$ and $C\longmapsto E\ot_RC$
form a pair of adjoint functors from the category of $R$\+modules
to itself.
 According to Lemma~\ref{max-ideal-injective-torsion-modules},
injective $\m$\+torsion $R$\+modules are precisely the $R$\+modules
of the form $E^{(X)}$, where $X$ is a set.
 According to Corollary~\ref{max-ideal-projective-contra-free},
projectve $\m$\+contramodule $R$\+modules are precisely
the $R$\+modules $\Lambda_\m(R)[[X]]=\Lambda_\m(R^{(X)})$,
where $X$ is again an arbitrary set.

 For every integer $n\ge1$, denote by ${}_nE\subset E$
the submodule of all elements annihilated by $\m^n$ in~$E$.
 Then ${}_nE\simeq\Hom_R(R/\m^n,E)$ is a finitely generated
$\m$\+torsion $R$\+module by Lemma~\ref{contravariant-matlis-duality},
and $E=\varinjlim_{n\ge1}{}_nE$.
 Now we have
\begin{multline*}
 E\ot_R\Lambda_\m(R)[[X]] =
 \varinjlim\nolimits_{n\ge1} \.{}_nE\ot_R\Lambda_\m(R)[[X]] \\
 = \varinjlim\nolimits_{n\ge1} \.{}_nE\ot_{R/\m^n}
 \bigl(\Lambda_\m(R)[[X]]\big/\m^n\Lambda_\m(R)[[X]]\bigr) \\
 = \varinjlim\nolimits_{n\ge1} \.{}_nE\ot_{R/\m^n} (R/\m^n)^{(X)} 
 = \varinjlim\nolimits_{n\ge1}\,({}_nE)^{(X)} = E^{(X)}.
\end{multline*}
 Conversely,
\begin{multline*}
 \Hom_R(E,E^{(X)})=
 \varprojlim\nolimits_{n\ge1}\Hom_{R/\m^n}({}_nE,\.{}_nE^{(X)})
 =\varprojlim\nolimits_{n\ge1}\Hom_{R/\m^n}({}_nE,\.{}_nE)^{(X)} \\
 =\varprojlim\nolimits_{n\ge1}\Hom_{R/\m^n}(R/\m^n,R/\m^n)^{(X)}
 =\varprojlim\nolimits_{n\ge1}\,(R/\m^n)^{(X)}=\Lambda_\m(R)[[X]],
\end{multline*}
since $\Hom_R({}_nE,\.{}_nE)=\Hom_R(R/\m^n,R/\m^n)$
by Lemma~\ref{contravariant-matlis-duality}.
\end{proof}

 The next lemma is a dual version of
Lemma~\ref{hom-indecomp-injectives-vanishing}.

\begin{lem} \label{hom-prime-ideals-contramodules-vanishing}
 Let\/ $\q\subset R$ be a prime ideal.
 Denote by $P\subset\Spec R$ the set of all prime ideals\/
$\p\subset R$ such that\/ $\p\not\supset\q$.
 Let $X_\p$, \ $\p\in P$ and $X_\q$ be some sets.
 Then
$$
 \Hom_R\bigl({\textstyle\prod_{\p\in P}\Lambda_\p(R_\p)[[X_\p]]},\>
 \Lambda_\q(R_\q)[[X_\q]]\bigr)=0.
$$
\end{lem}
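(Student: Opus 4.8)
The plan is to imitate the proof of Lemma~\ref{hom-indecomp-injectives-vanishing}, in dualized form: instead of exploiting that some $s\in\p\setminus\q$ acts invertibly on the target, I would exploit that a suitable finite collection of elements of $\q$ makes the \emph{source} ``divisible'' by those elements, and then invoke the several\+variable contramodule Nakayama lemma (Remark~\ref{many-variables-power-summations}) to conclude that every $R$\+module homomorphism into the target is zero.

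First I would choose generators $s_1,\dots,s_k$ of the ideal $\q$, which exist since $R$ is Noetherian. Because each $s_j$ lies in $\q$, a prime ideal $\p$ satisfies $\p\not\supset\q$ if and only if $s_j\notin\p$ for at least one~$j$; and for such a $\p$ the module $M_\p=\Lambda_\p(R_\p)[[X_\p]]$ is a module over the localization $R_\p$, so every $s_j$ with $s_j\notin\p$ acts on $M_\p$ by an invertible operator. Setting $N=\prod_{\p\in P}M_\p$, the key observation is then that $N=s_1N+\dots+s_kN=\q N$: given $(x_\p)_{\p\in P}\in N$, for each $\p$ pick $j(\p)$ with $s_{j(\p)}\notin\p$ and write $x_\p=s_{j(\p)}\bigl(s_{j(\p)}^{-1}x_\p\bigr)$ with $s_{j(\p)}^{-1}x_\p\in M_\p$; collecting, for each $j$, the coordinates with $j(\p)=j$ into one element of $N$ expresses $(x_\p)$ as an element of $s_1N+\dots+s_kN$. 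I expect this identity $N=\q N$ to be the only step requiring a genuine idea; everything before it is standard commutative algebra and everything after it is an application of results already established.

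Next, for any $R$\+module homomorphism $f\:N\rarrow C$ with $C=\Lambda_\q(R_\q)[[X_\q]]=\Lambda_{\q R_\q}\bigl(R_\q^{(X_\q)}\bigr)$, $R$\+linearity gives $f(N)=f(\q N)=\q f(N)$, so $D:=f(N)$ satisfies $D=s_1D+\dots+s_kD$. Now $C$ is an $s_j$\+contramodule for every~$j$: indeed $s_j\in\q$, so its image lies in $\q R_\q$, and Lemma~\ref{I-completion-s-contra} applied over the ring $R_\q$ with the ideal $\q R_\q$ shows that $\Lambda_{\q R_\q}(R_\q^{(X_\q)})$ is a contramodule with respect to that image; since the $s_j$\+contramodule property depends only on the underlying abelian group together with the endomorphism~$s_j$ (Lemma~\ref{ext-r-s-minus-1-computed}), $C$ is an $s_j$\+contramodule $R$\+module. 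The endomorphisms $s_1,\dots,s_k$ of $C$ pairwise commute, so Theorem~\ref{[st]-power-summation-equivalence}, in the several\+variable form of Remark~\ref{many-variables-power-summations}, endows $C$ with an $[s_1,\dots,s_k]$\+power infinite summation operation. Finally, the Nakayama lemma of Remark~\ref{many-variables-power-summations} --- a subgroup $D\subset C$ with $D\subset s_1D+\dots+s_kD$ vanishes --- yields $D=0$, that is $f=0$, which is exactly the assertion of the lemma.
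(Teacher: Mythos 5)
Your argument is correct, and it does prove the lemma (in fact, like the paper's proof, it works verbatim for arbitrary $R_\p$\+modules $M_\p$ in the source and an arbitrary $\q$\+contramodule in the target). The route is genuinely different from the paper's, though the ingredients overlap: the paper also picks generators $s_1,\dots,s_m$ of $\q$, but instead of proving $N=s_1N+\dots+s_mN$ for the whole product $N=\prod_{\p\in P}M_\p$, it writes $P=\bigcup_j P_j$ with $P_j=\{\p: s_j\notin\p\}$, observes that $\prod_{\p\in P}M_\p$ is a direct summand of $\bigoplus_{j}\prod_{\p\in P_j}M_\p$, and then kills each $\Hom_R\bigl(\prod_{\p\in P_j}M_\p,\>C\bigr)$ separately: $s_j$ acts invertibly on that factor, so the image of any morphism is an $s_j$\+divisible submodule of the $s_j$\+contramodule $C$, hence zero. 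That decomposition trick lets the paper get by with the one\+variable fact that $\Hom_R(R[s_j^{-1}],C)=0$, i.e.\ essentially Lemma~\ref{infsummation-nodivisible}, whereas your proof keeps the product intact, shows its image $D\subset C$ satisfies $D\subset s_1D+\dots+s_kD$, and invokes the several\+variable contramodule Nakayama lemma (Lemma~\ref{nakayama} with Remark~\ref{many-variables-power-summations}), together with the $[s_1,\dots,s_k]$\+power summation structure on $C$. So your argument buys a uniform treatment of all coordinates at once at the price of the heavier multi\+variable machinery (which the paper does have available), while the paper's reduction to single elements is more elementary; your identification of $\Lambda_\q(R_\q)[[X_\q]]$ with $\Lambda_{\q R_\q}(R_\q^{(X_\q)})$ and the verification via Lemma~\ref{I-completion-s-contra} and Remark~\ref{I-C-radical-ideal} that it is an $s_j$\+contramodule $R$\+module are also fine, just slightly more roundabout than citing the contramodule property directly as in Lemma~\ref{hom-maximal-ideals-contramodules}.
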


\begin{proof}
 More generally, let $M_\p$ be arbitrary $R_\p$\+modules and
$C_\q$ a $\q$\+contramodule $R$\+module.
 Then we claim that
$$
 \Hom_R\bigl({\textstyle\prod_{\p\in P}M_\p},\>C_\q\bigr)=0.
$$
 Indeed, let $s_1$,~\dots, $s_m$ be a finite set of generators of
the ideal~$\q$.
 For every $j=1$,~\dots, $m$, denote by $P_j\subset\Spec R$ the set of
all prime ideals $\p\subset R$ not containing~$s_j$.
 Then $P=\bigcup_{j=1}^mP_j$, hence $\prod_{\p\in P}M_\p$ is a direct
summand of $\bigoplus_{j=1}^m\prod_{\p\in P_j}M_\p$, and it suffices to
show that for every $1\le j\le m$ one has
$\Hom_R\bigl(\prod_{\p\in P_j}M_\p,\>C_q\bigr)=0$.
 Now, the action of~$s_j$ is invertible in $M_\p$ for every
$\p\in P_j$, hence also in $\prod_{\p\in P_j}M_\p$; while $C_\q$ is
an $s_j$\+contramodule, so it contains no $s_j$\+divisible submodules.
 (Cf.~\cite[Lemma~5.1.2(a)]{Pcosh}.)
\end{proof}

 The following version of
Lemma~\ref{hom-prime-ideals-contramodules-vanishing}
will be useful in the sequel.

\begin{lem} \label{hom-maximal-ideals-contramodules}
 Let\/ $I\subset R$ be an ideal.
 Denote by $P\subset\Spec R$ the set of all maximal ideals\/
$\m\subset R$ such that $I\not\subset\m$.
 Let\/ $C_\m$, \ $\m\in P$ be some\/ $\m$\+contramodule $R$\+modules
and $C_I$ an\/ $I$\+contramodule $R$\+module.
 Then
$$
 \Hom_R\bigl({\textstyle\prod_{\m\in P}C_\m},\>C_I\bigr)=0.
$$
\end{lem}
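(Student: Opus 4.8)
The plan is to mimic the proof of Lemma~\ref{hom-prime-ideals-contramodules-vanishing} almost verbatim, with the finitely generated prime ideal $\q$ there replaced by the ideal $I$, which is finitely generated since $R$ is Noetherian. First I would fix a finite set of generators $s_1,\dots,s_m\in R$ of $I$, and for each $j=1,\dots,m$ let $P_j\subset\Spec R$ be the set of all maximal ideals $\m\subset R$ with $s_j\notin\m$. Since a maximal ideal $\m$ contains $I$ precisely when it contains every $s_j$, one has $P=\bigcup_{j=1}^m P_j$.

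Next I would reduce to a single generator, exactly as in the proof of Lemma~\ref{hom-prime-ideals-contramodules-vanishing}. Choosing for every $\m\in P$ an index $j(\m)$ with $s_{j(\m)}\notin\m$ partitions $P$ into the disjoint subsets $Q_j=\{\m\in P\mid j(\m)=j\}$, each contained in~$P_j$; hence $\prod_{\m\in P}C_\m\cong\bigoplus_{j=1}^m\prod_{\m\in Q_j}C_\m$, and each $\prod_{\m\in Q_j}C_\m$ is a direct summand of $\prod_{\m\in P_j}C_\m$ via the evident projection and zero-extension maps. Therefore $\prod_{\m\in P}C_\m$ is a direct summand of $\bigoplus_{j=1}^m\prod_{\m\in P_j}C_\m$, and after applying $\Hom_R({-},C_I)$ it suffices to show $\Hom_R\bigl(\prod_{\m\in P_j}C_\m,\>C_I\bigr)=0$ for each fixed~$j$.

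For the remaining step I would fix~$j$ and argue as follows. By Remark~\ref{contramodules-modules-over-localizations}, each $\m$\+contramodule $R$\+module $C_\m$ with $\m\in P_j$ is a module over the local ring $R_\m$, and $s_j$ is invertible in $R_\m$ because $s_j\notin\m$; so $s_j$ acts by an automorphism on $C_\m$, hence also on $M=\prod_{\m\in P_j}C_\m$. On the other hand $C_I$ is an $s_j$\+contramodule (as $s_j\in I$), so it has no nonzero $s_j$\+divisible submodules. For any $R$\+module homomorphism $f\:M\rarrow C_I$ one has $s_j\.\im f=f(s_jM)=f(M)=\im f$, so $\im f$ is an $s_j$\+divisible submodule of $C_I$, whence $\im f=0$ and $f=0$.

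I do not expect any genuine obstacle: the statement is a routine variant of Lemma~\ref{hom-prime-ideals-contramodules-vanishing}. The only slightly fiddly point is the bookkeeping in the direct-summand reduction, and the only nontrivial input is Remark~\ref{contramodules-modules-over-localizations}, which is what lets me upgrade the $s_j$\+action on an $\m$\+contramodule to an automorphism.
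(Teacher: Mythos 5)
Your proposal is correct and follows essentially the same route as the paper, which simply repeats the argument of Lemma~\ref{hom-prime-ideals-contramodules-vanishing} with the generators $s_1,\dotsc,s_m$ of the finitely generated (Noetherian) ideal $I$, invoking Remark~\ref{contramodules-modules-over-localizations} to see that each $C_\m$ is an $R_\m$\+module on which $s_j\notin\m$ acts invertibly, while $C_I$ has no $s_j$\+divisible submodules. You have merely written out in full the direct-summand bookkeeping that the paper leaves implicit.
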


\begin{proof}
 Following the argument in the proof of 
Lemma~\ref{hom-prime-ideals-contramodules-vanishing},
we only have to check that $C_\m$ is a module over the local ring
$R_\m$ for every maximal ideal $\m\in P$.
 This is explained in
Remark~\ref{contramodules-modules-over-localizations}.
\end{proof}

 The next lemma is a classical result.

\begin{lem} \label{hom-cotorsion-flat}
 Let $N$ and $K$ be $R$\+modules.  Then\par
\textup{(a)} if $K$ is an injective $R$\+module, then\/
$\Hom_R(N,K)$ is a cotorsion $R$\+module; \par
\textup{(b)} if both $N$ and $K$ are injective $R$\+modules,
then\/ $\Hom_R(N,K)$ is a flat cotorsion $R$\+module.
\end{lem}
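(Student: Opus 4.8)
The plan is to deduce everything from the tensor--Hom adjunction together with the exactness of $\Hom_R({-},K)$ for an injective module $K$.

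For part~(a), let $F$ be a flat $R$\+module and choose a short exact sequence $0\rarrow G\rarrow P\rarrow F\rarrow0$ with $P$ a free $R$\+module. Since $\Ext^1_R(P,\Hom_R(N,K))=0$, the module $\Ext^1_R(F,\Hom_R(N,K))$ is the cokernel of the map $\Hom_R(P,\Hom_R(N,K))\rarrow\Hom_R(G,\Hom_R(N,K))$. Under the adjunction isomorphism $\Hom_R({-},\Hom_R(N,K))\simeq\Hom_R({-}\ot_RN,\>K)$ this map is identified with the map $\Hom_R(P\ot_RN,K)\rarrow\Hom_R(G\ot_RN,K)$ induced by $G\ot_RN\rarrow P\ot_RN$. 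Flatness of $F$ gives $\Tor^R_1(F,N)=0$, so the sequence $0\rarrow G\ot_RN\rarrow P\ot_RN\rarrow F\ot_RN\rarrow0$ is exact, and injectivity of $K$ then makes $\Hom_R(P\ot_RN,K)\rarrow\Hom_R(G\ot_RN,K)$ surjective. Hence the cokernel vanishes and $\Hom_R(N,K)$ is cotorsion. (Alternatively, one can argue in the derived category: since $K$ is injective and $F$ is flat, $\boR\Hom_R(F,\Hom_R(N,K))\simeq\boR\Hom_R(F\ot_R^{\boL}N,\>K)\simeq\Hom_R(F\ot_RN,K)$ is concentrated in degree~$0$, so $\Ext^q_R(F,\Hom_R(N,K))=0$ for all $q\ge1$.)

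For part~(b), the cotorsion property is already covered by~(a), so it remains to prove that $T:=\Hom_R(N,K)$ is flat when both $N$ and $K$ are injective. Here the key step is to produce, for every finitely generated $R$\+module $M$, a natural isomorphism
\[
 M\ot_RT\;\simeq\;\Hom_R(\Hom_R(M,N),\>K).
\]
The natural transformation is the obvious one sending $m\ot\phi$ to the functional $\psi\longmapsto\phi(\psi(m))$ on $\Hom_R(M,N)$; it is clearly an isomorphism when $M$ is free of finite rank. Both sides are right exact functors of $M$ on the category of finitely generated $R$\+modules: the left-hand side because ${-}\ot_RT$ is right exact, and the right-hand side because it is the composition of the two exact contravariant functors $\Hom_R({-},N)$ and $\Hom_R({-},K)$, both exact since $N$ and $K$ are injective. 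Since $R$ is Noetherian, every finitely generated $M$ admits a finite presentation $R^a\rarrow R^b\rarrow M\rarrow0$, and comparing the images of this presentation under the two functors, a diagram chase (the five lemma) upgrades the isomorphism on free modules to an isomorphism for all finitely generated~$M$.

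Granting this isomorphism, flatness of $T$ follows at once: for any finitely generated ideal $I\subset R$ (finitely presented, as $R$ is Noetherian) the exact functor $M\mapsto\Hom_R(\Hom_R(M,N),K)$ applied to $0\rarrow I\rarrow R\rarrow R/I\rarrow0$ shows that $0\rarrow I\ot_RT\rarrow T\rarrow(R/I)\ot_RT\rarrow0$ is exact, i.e.\ $\Tor^R_1(R/I,T)=0$; hence $T$ is flat, and by~(a) it is also cotorsion. The main point requiring care is the natural isomorphism in part~(b) --- checking naturality and the free case, and the bookkeeping needed to apply right exactness together with the five lemma (and this is where Noetherianity is used) --- while everything else reduces to standard adjunction and the long exact sequences of $\Ext$ and $\Tor$.
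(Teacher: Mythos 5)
Your proof is correct and takes essentially the same route as the paper: part~(a) is an unwinding (via a free presentation, or the derived-category identity) of the isomorphism $\Ext^q_R(F,\Hom_R(N,K))\simeq\Hom_R(\Tor_q^R(N,F),K)$ that the paper invokes directly, and part~(b) rests on the very same natural isomorphism $M\ot_R\Hom_R(N,K)\simeq\Hom_R(\Hom_R(M,N),K)$ for finitely generated $M$ together with exactness of the right-hand side in~$M$. You merely spell out the details the paper leaves implicit (the free case plus finite presentation for the isomorphism, and the $\Tor^R_1(R/I,{-})$ criterion for flatness), which is fine.
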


\begin{proof}
 Part~(a): for any left $R$\+module $F$, one has
$\Ext^q_R(F,\.\Hom_R(N,K))\simeq\Hom_R(\Tor^R_q(N,F),\.K)$,
and the right-hand side vanishes for $q>0$ when $F$ is flat.
 Part~(b): for any finitely generated $R$\+module $M$, one has
$M\ot_R\Hom_R(N,K)\simeq\Hom_R(\Hom_R(M,N),\.K)$, and the functor
in the right-hand side is exact.
\end{proof}

 Finally, we come to the following classification theorem due to
Enochs~\cite{En} (see also~\cite[Theorem~1.3.8]{Pcosh}).

\begin{thm} \label{flat-cotorsion-classification}
 An $R$\+module $C$ is flat and cotorsion if and only if it is
isomorphic to an $R$\+module of the form
\begin{equation} \label{flat-cotorsion-modules}
 \prod\nolimits_{\p\in\Spec R} \Lambda_\p(R_\p)[[X_\p]],
\end{equation}
where\/ $\p\longmapsto X_\p$ is a correspondence assigning some set
$X_\p$ to every prime ideal\/ $\p\subset R$.
 The cardinality of the set of generators $X_\p$ of a free\/
$(R_\p\p)$\+contramodule $R_\p$\+module $\Lambda_\p(R_\p)[[X_\p]]$
is uniquely determined by the $R$\+module~$C$.
\end{thm}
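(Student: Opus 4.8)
The plan is to prove the two implications separately: the ``if'' direction is a short verification, while the converse is essentially Enochs' theorem and carries the main weight. For the ``if'' direction, I would fix a prime ideal $\p\subset R$ and a set $X_\p$ and note that $\Lambda_\p(R_\p)[[X_\p]]$ is the free $\p R_\p$\+contramodule $R_\p$\+module on $X_\p$, hence a flat $R_\p$\+module (Theorem~\ref{free-contramodules-charact}, Corollary~\ref{flat-delta-lambda-noetherian-cor}(b)) and so flat over $R$; running the $\Hom$\+computation from the proof of Theorem~\ref{noetherian-maximal-ideal-covariant-duality} over the local ring $R_\p$ identifies it with $\Hom_R(E_R(R/\p),\,E_R(R/\p)^{(X_\p)})$ (using that $\Hom_{R_\p}$ and $\Hom_R$ agree on $R_\p$\+modules, and that $E_R(R/\p)^{(X_\p)}$ is again an injective $R$\+module since $R$ is Noetherian). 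By Lemma~\ref{hom-cotorsion-flat}(b) each such factor is flat and cotorsion; since a product of flat modules over a coherent (in particular, Noetherian) ring is flat and the class of cotorsion modules is closed under products, every module of the form~\eqref{flat-cotorsion-modules} is flat and cotorsion.

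For the converse, let $C$ be a flat cotorsion $R$\+module. \emph{The first step} is to realize $C$ as a direct summand of a $\Hom$\+module between two injective $R$\+modules. Since $C$ is flat, Baer's criterion together with $\Tor_1^R({-},C)=0$ shows that the character module $C^{+}=\Hom_\Z(C,\Q/\Z)$ is injective, and $R^{+}=\Hom_\Z(R,\Q/\Z)$ is injective as well; tensor--hom adjunction then gives $C^{++}\cong\Hom_R(C^{+},R^{+})$, which is flat and cotorsion by Lemma~\ref{hom-cotorsion-flat}(b). The canonical evaluation map $C\rarrow C^{++}$ is a pure monomorphism (a standard fact), so its cokernel $Q$ is flat, being a quotient of the flat module $C^{++}$ by a pure submodule; hence $\Ext^1_R(Q,C)=0$ because $C$ is cotorsion, and the short exact sequence $0\rarrow C\rarrow C^{++}\rarrow Q\rarrow0$ splits. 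Thus $C$ is a direct summand of $\Hom_R(J_1,J_2)$ with $J_1=C^{+}$ and $J_2=R^{+}$ injective.

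\emph{The second step} is to decompose $\Hom_R(J_1,J_2)$ into the form~\eqref{flat-cotorsion-modules} and to check that this form is inherited by direct summands. By the Matlis classification (Theorem~\ref{injectives-classification}), both $J_1$ and $J_2$ are direct sums of copies of the indecomposable injectives $E_R(R/\p)$. Pulling the direct sum inside $J_1$ out of $\Hom_R$ presents $\Hom_R(J_1,J_2)$ as a product over $\Spec R$ of modules $\Hom_R(E_R(R/\p),J_2)$, each of which is a $\p$\+contramodule $R_\p$\+module (by Lemma~\ref{tensor-hom-torsion-contra-duality-lem}(b) and Theorem~\ref{ideal-contramodule-thm}, since $\Gamma_\p(E_R(R/\p))=E_R(R/\p)$ makes $E_R(R/\p)$ an $s$\+torsion module for every generator $s$ of $\p$), while Lemma~\ref{hom-indecomp-injectives-vanishing} confines the relevant summands of $J_2$ to those $E_R(R/\q)$ with $\q\supseteq\p$. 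Reorganizing this bookkeeping over the poset $\Spec R$ --- localizing at primes so as to reduce to the complete local case, where Corollary~\ref{max-ideal-projective-contra-free} identifies the projective $\m$\+contramodules with the free ones and the structure becomes rigid --- one rewrites $\Hom_R(J_1,J_2)$ as $\prod_\p\Lambda_\p(R_\p)[[Y_\p]]$. Stability under summands is then forced by the vanishing Lemmas~\ref{hom-prime-ideals-contramodules-vanishing} and~\ref{hom-maximal-ideals-contramodules}, which make any idempotent endomorphism of such a product respect its factorization; applying this to $C$ completes the characterization.

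Finally, the cardinality $|X_\p|$ is recovered from $C$ by localizing at $\p$, completing $\p$\+adically, reducing modulo $\p R_\p$, and correcting for the contributions of primes strictly contained in $\p$ (equivalently, it is the $\kappa(\p)$\+dimension of the appropriate graded piece of the decomposition obtained in the second step), which also yields uniqueness. I expect \emph{the second step to be the main obstacle}: disentangling the contributions of comparable primes inside $\Hom_R(J_1,J_2)$ is the combinatorial heart of Enochs' theorem, and it is precisely here that the Noetherian hypothesis is used essentially --- through the classification of injectives, the Artin--Rees phenomenon behind $\Gamma_\p(E_R(R/\p))=E_R(R/\p)$, and the reduction to complete local rings. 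The ``if'' direction, the first step of the converse, and the uniqueness are comparatively routine once this machinery is assembled.
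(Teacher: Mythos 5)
Your proposal is correct and takes essentially the same route as the paper's (deliberately brief) proof, which defers the substance to Enochs~\cite{En}: the ``if'' direction via the identification $\Lambda_\p(R_\p)[[X_\p]]\simeq\Hom_R\bigl(E_R(R/\p),E_R(R/\p)^{(X_\p)}\bigr)$ and Lemma~\ref{hom-cotorsion-flat}(b), and the converse by exhibiting $C$ as a direct summand of some $\Hom_R(J_1,J_2)$ with $J_1$, $J_2$ injective (your double character module splitting is precisely the step the paper attributes to Enochs without detail), then decomposing this $\Hom$ using the Matlis classification, Lemmas~\ref{hom-indecomp-injectives-vanishing} and~\ref{hom-prime-ideals-contramodules-vanishing}, and Corollary~\ref{max-ideal-projective-contra-free}. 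One remark: the ``combinatorial heart'' you flag largely evaporates with the paper's own machinery, since each block $\Hom_R\bigl(E_R(R/\p)^{(X_\p)},J_2\bigr)$ is flat and cotorsion by Lemma~\ref{hom-cotorsion-flat}(b) (the direct sum being injective over the Noetherian ring~$R$) and is a $\p R_\p$-contramodule $R_\p$-module by Lemma~\ref{tensor-hom-torsion-contra-duality-lem}(b) together with Remark~\ref{contramodules-modules-over-localizations}, hence is free, i.e.\ of the form $\Lambda_\p(R_\p)[[Y_\p]]$, by Theorem~\ref{free-contramodules-charact} (flat plus free modulo the maximal ideal, the latter being automatic over the residue field), so that only the summand-stability and uniqueness bookkeeping remains, exactly as in the paper's sketch.
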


\begin{proof}[Brief sketch of proof]
 $R_\p\p$ is the maximal ideal of a local ring $R_\p$, so by
Theorem~\ref{noetherian-maximal-ideal-cotorsion} all
$R_\p\p$\+contramodule $R_\p$\+modules are cotorsion $R_\p$\+modules,
and by Lemma~\ref{restrict-scalars-cotorsion} they are also
cotorsion $R$\+modules.
 By Corollary~\ref{flat-delta-lambda-noetherian-cor}(b) or
Theorem~\ref{free-contramodules-charact}\,(i)$\Longleftrightarrow$(iv),
all free $(R_\p\p)$\+contramodule $R_\p$\+modules are flat
$R_\p$\+modules, hence also flat $R$\+modules (as $R_\p$ is
a flat $R$\+module).
 Alternatively, by (the proof of)
Theorem~\ref{noetherian-maximal-ideal-covariant-duality} one has
$$
 \Lambda_\p(R_\p)[[X]]\simeq
 \Hom_R\bigl(E_R(R/\p),\.E_R(R/\p)^{(X)}\bigr),
$$
and the right-hand side is a flat cotorsion $R$\+module by
Lemma~\ref{hom-cotorsion-flat}.

 Conversely, Enochs proves in~\cite{En} that every flat cotorsion
$R$\+module is a direct summand of an $R$\+module of
the form $\Hom_R(E,E')$, where $E$ and $E'$ are injective
$R$\+modules.
 He then proceeds to compute the $R$\+module $\Hom_R(E,E')$ using
the classification of injective
$R$\+modules~\eqref{injective-modules}, and shows that it has
the form~\eqref{flat-cotorsion-modules}.
 Finally, one can use, e.~g.,
Lemma~\ref{hom-prime-ideals-contramodules-vanishing} together with
Corollary~\ref{max-ideal-projective-contra-free} in order to
check that direct summands of $R$\+modules of
the form~\eqref{flat-cotorsion-modules} also have
the form~\eqref{flat-cotorsion-modules}, and that the factors in
the direct product~\eqref{flat-cotorsion-modules} can be recovered
from the product.
 (Cf.~\cite[Theorem~5.1.1]{Pcosh}.)
\end{proof}

\begin{rem}
 Matlis' classification of injective modules over Noetherian rings
was used by Hartshorne in his theory of injective quasi-coherent
sheaves over locally Noetherian schemes~\cite[\S\,II.7]{Hart}.
 Analogously, there is a theory of projective locally cotorsion
contraherent cosheaves over locally Noetherian
schemes~\cite[Section~5.1]{Pcosh} based on Enochs' classification
of flat cotorsion modules over Noetherian rings.
\end{rem}

\Section{Cotorsion and Contraadjusted Abelian Groups}
\label{abelian-groups-secn}

 We start with presenting several examples of flat covers and
cotorsion envelopes in the category $\Ab=\Z\modl$, before proceeding
to describe cotorsion abelian groups and discuss contraadjusted
abelian groups.
 First of all, we recall that an abelian group is flat if and
only if it is torsion-free.

\begin{ex} \label{cyclic-group-cover-example}
 Let $m\ge 2$ be a natural number.
 Then in the short exact sequence
$$
 0\lrarrow\bigoplus\nolimits_{p|m}\Z_p\overset m\lrarrow
 \bigoplus\nolimits_{p|m}\Z_p\lrarrow \Z/m\Z\lrarrow0
$$
the middle term is a flat $\Z$\+module, and the leftmost term
is a cotorsion $\Z$\+module.
 So it is a short exact sequence of the type~\eqref{special-precover}
for the group $\Z/m\Z$ (with respect to the flat cotorsion theory
in $\Z\modl$).
 Here the direct sums are taken over the finite set of all
the prime numbers $p$ dividing~$m$, and the leftmost
arrow acts by multiplication with~$m$ on the groups
of $p$\+adic integers~$\Z_p$.

 Indeed, the group $\bigoplus_{p|m}\Z_p$ is torsion-free, so it is
a flat $\Z$\+module.
 To show that the groups $\Z_p$ are cotorsion, one can apply
Theorem~\ref{noetherian-maximal-ideal-cotorsion}.
 Alternatively, it suffices to notice that $\Z_p=\Hom_\Z(\Q_p/\Z_p,\.
\Q_p/\Z_p)$, where the group $\Q_p/\Z_p$ is injective (because it
is divisible, see below), and use Lemma~\ref{hom-cotorsion-flat}(a).

 Moreover, the map $f\:\bigoplus_{p|m}\Z_p\to\Z/m\Z$ is a flat cover
of the cyclic group $\Z/m\Z$.
 Indeed, we have seen that $f$~is a special flat precover.
 By Lemma~\ref{cover-wakamatsu}(a), it follows that $f$~is a flat
precover.
 So it remains to check that any endomorphism $u\:\bigoplus_{p|m}
\Z_p\rarrow\bigoplus_{p|m}\Z_p$ for which $fu=f$ is an automorphism.

 One easily computes that $\Hom_\Z(\Z_p,\Z_q)=0$ for $p\ne q$
(e.~g., because $\Z_p$ is $q$\+divisible and there are no
$q$\+divisible subgroups in~$\Z_q$) and $\Hom_\Z(\Z_p,\Z_p)=\Z_p$
(e.~g., because $\Z_p=\Delta_p(\Z)=\Lambda_p(\Z)$, so
$\Hom_\Z(\Z_p,\Z_p)\rarrow\Hom_\Z(\Z,\Z_p)$ is an isomorphism).
 Thus
$\Hom_\Z({\textstyle\bigoplus_{p|m}\Z_p},\textstyle{\bigoplus_{p|m}
\Z_p})=\textstyle\bigoplus_{p|m}\Z_p$.
 Now, $fu=f$ for $u\in\bigoplus_{p|m}\Z_p$ means that $u-1$ is
divisible by~$m$ in $\bigoplus_{p|m}\Z_p$; hence $u\equiv 1\pmod p$
for every $p$ dividing~$m$ and $u$~is invertible in
$\bigoplus_{p|m}\Z_p$.
\end{ex}

\begin{ex} \label{q-p-mod-z-p-cover-example}
 Let $p$ be a prime number.
 Then in the short exact sequence
$$
 0\lrarrow \Z_p\lrarrow \Q_p\lrarrow \Q_p/\Z_p\lrarrow0
$$
the middle term $\Q_p$ is a flat $\Z$\+module, and the leftmost term
$\Z_p$ is a cotorsion $\Z$\+module (as we have seen in
Example~\ref{cyclic-group-cover-example}).
 So it is a short exact sequence of the type~\eqref{special-precover}
for the group $\Q_p/\Z_p=\Z[p^{-1}]/\Z$.

 Moreover, the map $f\:\Q_p\rarrow\Q_p/\Z_p$ is a flat cover of
the group~$\Q_p/\Z_p$.
 Indeed, by Corollary~\ref{flat-covers-cotorsion-envelopes-cor}(a),
a flat cover of $\Q_p/\Z_p$ exists in~$\Ab$.
 Applying Lemma~\ref{cover-wakamatsu}(a,c), we conclude that it
suffices to check that $\Q_p$ has no nonzero direct summands
contained in $\ker(f)$.
 However, such a direct summand would be also a direct summand
in $\ker(f)=\Z_p$, and $\Z_p$ has no nontrivial direct summands,
because there are nontrivial idempotents in the ring
$\Z_p=\Hom_\Z(\Z_p,\Z_p)$.
 It remains to point out that $\ker(f)$ itself is not a direct
summand in~$\Q_p$.
\end{ex}

\begin{ex}
 Consider the short exact sequence
$$
 0\lrarrow\prod\nolimits_p\Z_p\lrarrow\prod\nolimits
 ^{\textstyle\prime}_p\Q_p
 \lrarrow\bigoplus\nolimits_p\Q_p/\Z_p\lrarrow0,
$$
where the product in the leftmost term and the direct sum
in the rightmost term are taken over all the prime numbers~$p$.
 The middle term is the ``restricted product'' of the groups/fields
of $p$\+adic rationals $\Q_p$; by the definition, it consists of
all the collections $(a_p\in\Q_p)_p$ such that $a_p\in\Z_p$ for
all but a finite subset of the primes~$p$.
 Alternatively, the middle term can be defined as the tensor
product $\Q\ot_\Z\prod_p\Z_p$.
 This is what is called ``the ring of finite ad\`els'' in
the algebraic number theory.

 Clearly, the middle term $\prod^{\textstyle\prime}_p\Q_p$ is a flat
$\Z$\+module.
 The leftmost term $\prod_p\Z_p$ is a cotorsion $\Z$\+module,
since we already know that every factor $\Z_p$ is cotorsion and
the class of cotorsion modules is closed under infinite products.
 So our sequence is a short exact sequence of the
type~\eqref{special-precover} for the group $\bigoplus_p\Q_p/\Z_p
=\Q/\Z$ (with respect to the flat cotorsion theory in $\Z\modl$).

 Moreover, the map $f\:\prod^{\textstyle\prime}_p\Q_p\rarrow\Q/\Z$ is
a flat cover of the group~$\Q/\Z$.
 Indeed, let $u\:\prod^{\textstyle\prime}_p\Q_p\rarrow
\prod^{\textstyle\prime}_p\Q_p$ be a group homomorphism such
that $fu=f$.
 Then $f(u-\nobreak1)=\nobreak0$, hence the morphism $(u-1)\:
\prod^{\textstyle\prime}_p\Q_p\rarrow\prod^{\textstyle\prime}_p\Q_p$
factorizes through the embedding $\prod_p\Z_p\rarrow
\prod^{\textstyle\prime}_p\Q_p$.
 However, any group homomorphism from the $\Q$\+vector space
$\prod^{\textstyle\prime}_p\Q_p$ into the group $\prod_p\Z_p$ vanishes,
as there are no divisible subgroups in $\prod_p\Z_p$.
 We have shown that $u=1$.
\end{ex}

\begin{ex} \label{z-envelope-example}
 In the short exact sequence
$$
 0\lrarrow\Z\lrarrow\prod\nolimits_p\Z_p\lrarrow\prod\nolimits_p
 \Z_p\Big/\Z\lrarrow0
$$
the middle term $\prod_p\Z_p$ is a cotorsion $\Z$\+module, and
the rightmost term $\bigl(\prod_p\Z_p\bigr)\big/\Z$ is a flat
$\Z$\+module.
 In fact, the rightmost term is a $\Q$\+vector space isomorphic
to $\bigl(\prod^{\textstyle\prime}_p\Q_p\bigr)\big/\Q$.
 So this is a short exact sequence of
the type~\eqref{special-preenvelope} for the group~$\Z$.

 Moreover, the map $g\:\Z\rarrow\prod_p\Z_p$ is a cotorsion envelope
of the group~$\Z$.
 Indeed, we have seen that it is a special preenvelope, hence
(by Lemma~\ref{envelope-wakamatsu}(a)) a preenvelope.
 According to Lemma~\ref{hom-prime-ideals-contramodules-vanishing},
we have
$$
 \Hom_\Z\bigl({\textstyle\prod_p\Z_p},\>{\textstyle\prod_p\Z_p}\bigl)
 ={\textstyle\prod_p\Hom_\Z(\Z_p,\Z_p)=\prod_p\Z_p}=
 \Hom_\Z\bigl(\Z,\>{\textstyle\prod_p\Z_p}\bigl).
$$
 Thus the equation $ug=g$ for an endomorphism
$u\:\prod_p\Z_p\rarrow\prod_p\Z_p$ implies $u=1$.
\end{ex}

\begin{ex}
 All finite abelian groups are cotorsion, e.~g., by
Lemma~\ref{hom-cotorsion-flat}(a).
 Moreover, by Lemma~\ref{idempotent-lifting-argument} all flat
modules over an Artinian commutative ring $R$ are projective.
 Hence all $R$\+modules are cotorsion.
 Applying Lemma~\ref{restrict-scalars-cotorsion}, we conclude that
all the $\Z/m\Z$\+modules are cotorsion over~$\Z$, that is every
abelian group annihilated by some integer $m\ge2$ is cotorsion.
 Hence so are all the infinite products of such abelian groups.

 On the other hand, the groups $\bigoplus_{n=0}^\infty\Z_p$ and
$\bigoplus_{n=1}^\infty \Z/p^n\Z$ are \emph{not} cotorsion.
 We will see below in
Examples~\ref{contra-categorical-coproduct-examples}
what their cotorsion envelopes are.
\end{ex}

 An abelian group $B$ is said to be \emph{divisible} if it is
$s$\+divisible for every $0\ne s\in\Z$.
 Any abelian group $B$ has a unique maximal divisible subgroup
$B_\div$, which can be constructed as the sum of all divisible
subgroups in~$B$.

 An abelian group $B$ is said to be \emph{reduced} if it has
no nonzero divisible subgroups, i.~e., $B_\div=0$.
 For any abelian group $B$, the quotient group $B_\red=B/B_\div$
is reduced.
 It is the (unique) maximal reduced quotient group of~$B$.

 An abelian group is an injective object in $\Z\modl$ if and only
if it is divisible.
 Hence the natural short exact sequence
$$
 0\lrarrow B_\div\lrarrow B\lrarrow B_\red\lrarrow0
$$
is always (noncanonically) split.

\begin{lem} \label{hom-Q-reduced-groups}
 For any abelian group $B$, the subgroup $B_\div\subset B$ is
equal to the image of the map\/ $\Hom_\Z(\Q,B)\rarrow\Hom_\Z(\Z,B)
= B$.
 An abelian group $B$ is reduced if and only if\/ $\Hom_\Z(\Q,B)=0$.
\end{lem}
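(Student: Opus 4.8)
The plan is to prove the description of $B_\div$ first and then deduce the reducedness criterion as an immediate consequence. Write $\iota\:\Z\rarrow\Q$ for the embedding; under the identification $\Hom_\Z(\Z,B)=B$, \ $g\longmapsto g(1)$, the map in the statement sends a homomorphism $f\:\Q\rarrow B$ to $f(\iota(1))=f(1)$. First I would show that its image is contained in $B_\div$: for any $f\:\Q\rarrow B$ the image $f(\Q)\subset B$ is a quotient group of the divisible group $\Q$, hence is itself divisible, so $f(\Q)\subset B_\div$ and in particular $f(1)\in B_\div$.

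For the reverse inclusion $B_\div\subset\im(\Hom_\Z(\Q,B)\rarrow B)$, I would take $b\in B_\div$ and build an $f\:\Q\rarrow B$ with $f(1)=b$, exploiting that $B_\div$ is a divisible group (being a sum of divisible subgroups). Choose inductively $b_0=b$ and, for each $n\ge0$, an element $b_{n+1}\in B_\div$ with $(n+1)\.b_{n+1}=b_n$; then $n!\.b_n=b$ for all $n\ge0$. Since $\Q=\bigcup_{n\ge0}\frac{1}{n!}\Z$, I would set $f(k/n!)=k\.b_n$ and check that this is a well-defined homomorphism $\Q\rarrow B$: the value is independent of the chosen representative, because if $k/n!=k'/m!$ with $m\ge n$ then $k'=k\,(m!/n!)$ and $k'\.b_m=k\,(m!/n!)\.b_m=k\.b_n$ by repeated use of $(j+1)\.b_{j+1}=b_j$; additivity follows by passing to a common denominator. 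This $f$ has $f(1)=f(1/0!)=b$, so $b$ lies in the image. Equivalently, one may identify $\Q$ with the inductive limit $\varinjlim\bigl(\Z\overset{1}{\rarrow}\Z\overset{2}{\rarrow}\Z\overset{3}{\rarrow}\dotsb\bigr)$ and apply its universal property, divisibility of $B_\div$ furnishing the needed compatible family of maps $\Z\rarrow B$.

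Finally, by definition $B$ is reduced iff $B_\div=0$, which by the first part holds iff the image of $\Hom_\Z(\Q,B)\rarrow B$ is zero. If $\Hom_\Z(\Q,B)=0$ this image is trivially zero; conversely, if the image is zero then $B_\div=0$, so $B$ has no nonzero divisible subgroup, whence $f(\Q)=0$ for every $f\:\Q\rarrow B$ (the subgroup $f(\Q)$ being divisible), i.e.\ $\Hom_\Z(\Q,B)=0$. There is no genuine obstacle in this argument; the only place calling for a bit of care is verifying that the assignment $k/n!\longmapsto k\.b_n$ is a well-defined additive map, everything else being immediate.
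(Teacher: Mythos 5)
Your proposal is correct and follows essentially the same route as the paper: given $b\in B_\div$, one inductively divides it (the paper by successive primorials $2,\,2\cdot3,\,2\cdot3\cdot5,\dots$, you by factorials) and assembles a homomorphism $\Q\rarrow B$ with $f(1)=b$ from the presentation of $\Q$ as an inductive limit (union) of copies of $\Z$, the easy inclusion $\im\subset B_\div$ and the reducedness criterion being immediate in both treatments. The only difference is cosmetic (choice of cofinal system of denominators and the explicit well-definedness check, which the paper leaves implicit).
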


\begin{proof}
 Essentially, the claim is that for every element $b\in B_\div$
there is a homomorphism $f\:\Q\rarrow B$ such that $b=f(1)$.
 There are several ways to explain why this is true, the simplest of
them being, because $\Z$ is a countable integral domain
(for another approach, see Theorem~\ref{S-divisible}(a) below).
 E.~g., one can choose an element $b_1\in B_\div$ such that
$b=2\cdot b_1$, then an element $b_2\in B_\div$ such that $b_1=2\cdot 3
\cdot b_2$, an element $b_3\in B_\div$ such that $b_2=2\cdot 3\cdot 5
\cdot b_3$, etc.
 After all such choices have been made, one obtains the desired
homomorphism from the group
$$
 \Q=\varinjlim\, (\Z\overset 2\lrarrow\Z\overset{2\cdot3}\lrarrow\Z
 \overset{2\cdot3\cdot5}\lrarrow\Z\lrarrow\dotsb)
$$
into~$B$.
\end{proof}

 The classification of divisible abelian groups is provided by
Theorem~\ref{injectives-classification}: an abelian group $B$ is
divisible if and only if it is isomorphic to a group of the form
$$
 \Q^{(X)}\,\oplus\,\bigoplus\nolimits_p(\Q_p/\Z_p)^{(X_p)},
$$
where $X$ and $X_p$ are some sets (whose cardinalities are uniquely
determined by the group $B$) and the direct sum is taken over
all the prime numbers~$p$.

 For any abelian group $C$ and an integer $s\in\Z$, we consider
the group $\Delta_s(C)$ and the adjunction morphism
$\delta_{s,C}\:C\rarrow\Delta_s(C)$ constructed in
Theorem~\ref{delta-s-theorem}.

\begin{lem} \label{delta-abelian-groups-lemma}
 For any abelian group $C$, one has \par
\textup{(a)} $\Delta_0(C)=C$ and\/ $\delta_{0,C}=\id_C$; \par
\textup{(b)} $\Delta_1(C)=0$; \par
\textup{(c)} for any integer $s\ne0$, there is a natural isomorphism
$$
 \Delta_s(C)\simeq\bigoplus\nolimits_{p|s}\Delta_p(C),
$$
where the direct sum is taken over all the prime numbers~$p$
dividing~$s$.
 The components of the map\/ $\delta_{s,C}$ with respect to this
direct sum decomposition are equal to\/~$\delta_{p,C}$.
\end{lem}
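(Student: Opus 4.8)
The plan is to reduce everything to the explicit descriptions of the functor $\Delta_s$ and the adjunction morphism $\delta_{s,C}$ from Theorem~\ref{delta-s-theorem} and Remark~\ref{gamma-delta-zero-divisor-remark}; parts~(a) and~(b) are degenerate cases. For~(a) I would use the cokernel description~(i): when $s=0$ the morphism $\phi^0_C\:\prod_{n\ge1}C\to\prod_{n\ge0}C$ sends $(c_n)_{n\ge1}$ to $(0,c_1,c_2,\dotsc)$, i.e.\ it is the inclusion of $\prod_{n\ge1}C$ as the ``shifted'' subproduct; hence its cokernel is the $(n=0)$\+factor, canonically~$C$, and the adjunction morphism $\delta_{0,C}$ — being by construction the embedding of $C$ as the $(n=0)$\+factor followed by the projection onto the cokernel — becomes the identity. (Equivalently, $\Z[0^{-1}]=0$, so Remark~\ref{gamma-delta-zero-divisor-remark} gives $\Delta_0(C)=\Hom_{\D^\b(\Z\modl)}(\Z[1],C[1])=C$.) For~(b) it suffices to note that $\Z[1^{-1}]=\Z$, hence $\Z[1^{-1}]/\Z=0$ and $\Delta_1(C)=\Ext^1_\Z(0,C)=0$ by Theorem~\ref{delta-s-theorem}(iii) (or directly: $\phi^1_C$ is visibly surjective, the sequence $c_n=-(b_0+\dotsb+b_{n-1})$ solving $c_n-c_{n+1}=b_n$).

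For part~(c), the engine is the primary decomposition of $\Z[s^{-1}]/\Z$. Since $s\ne0$ is a nonzero\+divisor in $\Z$, Theorem~\ref{delta-s-theorem}(iii) identifies $\Delta_s(C)=\Ext^1_\Z(\Z[s^{-1}]/\Z,\.C)$, and likewise $\Delta_p(C)=\Ext^1_\Z(\Z[p^{-1}]/\Z,\.C)$ for a prime~$p$. The key algebraic fact is the natural isomorphism of abelian groups
$$
 \bigoplus\nolimits_{p\mid s}\Z[p^{-1}]/\Z\;\overset\sim\lrarrow\;\Z[s^{-1}]/\Z ,
$$
whose $p$\+component is induced by the inclusion $\Z[p^{-1}]\subset\Z[s^{-1}]$: it is injective because $\Z[p^{-1}]\cap\Z=\Z$ and the summands sit in distinct $p$\+primary components of $\Q/\Z$, and surjective by partial fractions, since every power of $s$ has only the primes dividing~$s$ in its denominator. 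Applying $\Ext^1_\Z({-},C)$, and using that a finite direct sum is a finite product, one obtains $\Delta_s(C)\simeq\prod_{p\mid s}\Delta_p(C)=\bigoplus_{p\mid s}\Delta_p(C)$, naturally in~$C$.

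It remains to identify the components of $\delta_{s,C}$. As in the proof of Corollary~\ref{s-contraadjusted-criterion}, under the identification $\Delta_s(C)=\Ext^1_\Z(\Z[s^{-1}]/\Z,C)$ the morphism $\delta_{s,C}$ is the connecting homomorphism $C=\Hom_\Z(\Z,C)\rarrow\Ext^1_\Z(\Z[s^{-1}]/\Z,C)$ of the six\+term exact sequence attached to $0\rarrow\Z\rarrow\Z[s^{-1}]\rarrow\Z[s^{-1}]/\Z\rarrow0$, and similarly for $\delta_{p,C}$. By naturality of connecting homomorphisms, the $p$\+component of $\delta_{s,C}$, obtained by restricting along the embedding $\Z[p^{-1}]/\Z\hookrightarrow\Z[s^{-1}]/\Z$, is the connecting map of the pullback extension $0\rarrow\Z\rarrow A_p\rarrow\Z[p^{-1}]/\Z\rarrow0$, where $A_p\subset\Z[s^{-1}]$ is the preimage of $\Z[p^{-1}]/\Z$. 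I would then check that $A_p=\Z[p^{-1}]$ — an element of $\Z[s^{-1}]$ whose class mod~$\Z$ lies in $\Z[p^{-1}]/\Z$ already lies in $\Z[p^{-1}]$ — and that the pulled\+back sequence is precisely the standard one $0\rarrow\Z\rarrow\Z[p^{-1}]\rarrow\Z[p^{-1}]/\Z\rarrow0$ defining $\delta_{p,C}$; this yields the claim. The only step requiring genuine care is this last piece of bookkeeping — tracing the pullback of $0\rarrow\Z\rarrow\Z[s^{-1}]\rarrow\Z[s^{-1}]/\Z\rarrow0$ and recognizing it as the extension defining $\delta_{p,C}$; the rest is routine.
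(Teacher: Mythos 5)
Your argument is correct and follows essentially the same route as the paper: the heart of part~(c) is exactly the identification $\Delta_s(C)=\Ext^1_\Z(\Z[s^{-1}]/\Z,\.C)$ from Theorem~\ref{delta-s-theorem}(iii) together with the decomposition $\Z[s^{-1}]/\Z\simeq\bigoplus_{p|s}\Z[p^{-1}]/\Z$, which is precisely what the paper uses. The only differences are cosmetic: the paper disposes of (a) and~(b) by noting that $R[0^{-1}]=0$ and $R[1^{-1}]=R$ (so the reflector is the identity, resp.\ zero) instead of computing $\phi^s_C$, and it leaves implicit the bookkeeping with connecting homomorphisms and the pullback $\Z[p^{-1}]\subset\Z[s^{-1}]$ that you carry out to identify the components of $\delta_{s,C}$ with~$\delta_{p,C}$.
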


\begin{proof}
 Part~(a): for any commutative ring $R$ and the element $s=0$, one
has $R[s^{-1}]=\nobreak0$, hence every $R$\+module is
a $0$\+contramodule and the reflector $\Delta_0\:R\modl\rarrow
R\modl_{0\ctra}$ is the identity functor.
 Part~(b): for any commutative ring $R$ and the element $s=1$, one
has $R[s^{-1}]=R$, hence the only $1$\+contramodule $R$\+module is
the zero module.

 Part~(c): according to Theorem~\ref{delta-s-theorem}(iii), we have
$$
 \Delta_s(C)=\Ext_\Z^1(\Z[s^{-1}]/\Z,\.C).
$$
 The assertion now follows from the isomorphism
$\Z[s^{-1}]/\Z = \bigoplus\nolimits_{p|s}\Z[p^{-1}]/\Z$.
\end{proof}

 For any abelian group $C$, we consider the natural group
homomorphism
$$
 \delta_{\Z,C}=(\delta_{p,C})_p\,\:
 C\lrarrow\prod\nolimits_p\Delta_p(C),
$$
where the product is taken over all the prime numbers~$p$.
 The following theorem goes back to Nunke~\cite[Theorem~7.1]{Nun}.

\begin{thm} \label{abelian-groups-nunke-thm}
 For any abelian group $C$, one has \par
\textup{(a)} $\ker(\delta_{\Z,C})=C_\div$; \par
\textup{(b)} $C$ is reduced if and only if the map\/ $\delta_{\Z,C}$
is injective; \par
\textup{(c)} $\coker(\delta_{\Z,C})$ is a\/ $\Q$\+vector space; \par
\textup{(d)} $C$ is cotorsion if and only if the map\/ $\delta_{\Z,C}$
is surjective; \par
\textup{(e)} if $C$ is reduced, then the morphism\/ $\delta_{\Z,C}$ is
a cotorsion envelope of~$C$.
\end{thm}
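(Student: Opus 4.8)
The plan is to deduce all five assertions from the single short exact sequence $0\rarrow\Z\rarrow\Q\rarrow\Q/\Z\rarrow0$, once $\delta_{\Z,C}$ has been recognized as a connecting homomorphism.

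First I would set up that identification. Decompose $\Q/\Z=\bigoplus_p\Q_p/\Z_p$ into its $p$\+primary parts, where $\Q_p/\Z_p=\Z[p^{-1}]/\Z$; this gives $\Ext^1_\Z(\Q/\Z,C)=\prod_p\Ext^1_\Z(\Q_p/\Z_p,C)=\prod_p\Delta_p(C)$ by Theorem~\ref{delta-s-theorem}(iii). Applying $\Hom_\Z({-},C)$ to $0\rarrow\Z\rarrow\Q\rarrow\Q/\Z\rarrow0$ produces a connecting map $\partial\:C=\Hom_\Z(\Z,C)\rarrow\prod_p\Delta_p(C)$, and I claim $\partial=\delta_{\Z,C}$. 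Indeed, pulling the sequence back along the inclusion $\iota_p\:\Q_p/\Z_p\hookrightarrow\Q/\Z$ yields $0\rarrow\Z\rarrow\Z[p^{-1}]\rarrow\Q_p/\Z_p\rarrow0$ --- the preimage in $\Q$ of the $p$\+primary subgroup of $\Q/\Z$ being exactly $\Z[p^{-1}]$ --- whose connecting map $C\rarrow\Ext^1_\Z(\Q_p/\Z_p,C)=\Delta_p(C)$ is $\delta_{p,C}$ by Theorem~\ref{delta-s-theorem}(iii) (as in the proof of Corollary~\ref{s-contraadjusted-criterion}). Naturality of connecting homomorphisms with respect to the evident morphism of short exact sequences then gives $\mathrm{pr}_p\circ\partial=\delta_{p,C}$ for every~$p$, i.~e., $\partial=(\delta_{p,C})_p=\delta_{\Z,C}$.

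Granting this, parts (a)\+-(d) are read off from the exact sequence
\begin{multline*}
 0\lrarrow\Hom_\Z(\Q/\Z,C)\lrarrow\Hom_\Z(\Q,C)\lrarrow C
 \overset{\delta_{\Z,C}}{\lrarrow}\prod\nolimits_p\Delta_p(C) \\
 \lrarrow\Ext^1_\Z(\Q,C)\lrarrow0,
\end{multline*}
where the final $0$ reflects $\Ext^1_\Z(\Z,C)=0$. Thus $\ker(\delta_{\Z,C})$ is the image of $\Hom_\Z(\Q,C)\rarrow C$, which equals $C_\div$ by Lemma~\ref{hom-Q-reduced-groups}; this is~(a), and (b) is the case $C_\div=0$. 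Moreover $\coker(\delta_{\Z,C})\simeq\Ext^1_\Z(\Q,C)$, and the latter is a $\Q$\+vector space since multiplication by any $n\ge1$ is an automorphism of $\Q$, hence of $\Ext^1_\Z(\Q,C)$; this is~(c). Finally, $C$ is cotorsion iff $\Ext^1_\Z(\Q,C)=0$, so (d) follows from~(c).

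For (e), suppose $C$ is reduced. Each $\Delta_p(C)$ is a $p$\+contramodule, hence a $(p\Z)$\+contramodule $\Z$\+module (Theorem~\ref{ideal-contramodule-thm}), hence cotorsion by Theorem~\ref{noetherian-maximal-ideal-cotorsion}; the class of cotorsion groups being closed under products, $\prod_p\Delta_p(C)$ is cotorsion. By~(b) the morphism $\delta_{\Z,C}$ is injective, and by~(c) its cokernel is a $\Q$\+vector space, in particular torsion\+free, i.~e., flat, i.~e., in ${}^\perp(\Z\modl_\cot)$; so $\delta_{\Z,C}$ is a special $\Z\modl_\cot$\+preenvelope, and hence a $\Z\modl_\cot$\+preenvelope by Lemma~\ref{envelope-wakamatsu}(a). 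It remains to check the envelope condition: any endomorphism $u$ of $\prod_p\Delta_p(C)$ with $u\circ\delta_{\Z,C}=\delta_{\Z,C}$ is the identity. For $p\ne q$, the group $\Delta_p(C)$ is a module over the localization $\Z_{(p)}$ (Remark~\ref{contramodules-modules-over-localizations}) in which $q$ acts invertibly, while the $q$\+contramodule $\Delta_q(C)$ has no nonzero $q$\+divisible subgroups; hence $\Hom_\Z(\Delta_p(C),\Delta_q(C))=0$, and together with Lemma~\ref{hom-maximal-ideals-contramodules} (applied to the ideal $q\Z$ and the maximal ideals $p\Z$ with $p\ne q$) this gives $\Hom_\Z(\prod_p\Delta_p(C),\Delta_q(C))\simeq\Hom_\Z(\Delta_q(C),\Delta_q(C))\simeq\Hom_\Z(C,\Delta_q(C))$, the second isomorphism being precomposition with $\delta_{q,C}$ (the adjunction of Theorem~\ref{delta-s-theorem}). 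Taking the product over all~$q$, precomposition with $\delta_{\Z,C}$ becomes a bijection $\Hom_\Z(\prod_p\Delta_p(C),\prod_q\Delta_q(C))\simeq\Hom_\Z(C,\prod_q\Delta_q(C))$; so $u\circ\delta_{\Z,C}=\delta_{\Z,C}=\id\circ\delta_{\Z,C}$ forces $u=\id$, and $\delta_{\Z,C}$ is a cotorsion envelope.

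I expect the main obstacle to be the first step, namely verifying scrupulously that the connecting homomorphism of $0\rarrow\Z\rarrow\Q\rarrow\Q/\Z\rarrow0$ is literally $\delta_{\Z,C}$ (and not merely isomorphic to it), together with, in~(e), the bookkeeping needed to compute $\Hom$ out of the infinite product $\prod_p\Delta_p(C)$ so that the two vanishing lemmas apply coordinate by coordinate.
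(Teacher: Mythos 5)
Your proposal is correct and takes essentially the same route as the paper: the long exact sequence obtained by applying $\Hom_\Z({-},C)$ to $0\to\Z\to\Q\to\Q/\Z\to0$ together with Lemma~\ref{hom-Q-reduced-groups} for (a)--(b), the identification of the cokernel with $\Ext^1_\Z(\Q,C)$ for (c)--(d), and the special-preenvelope argument plus the $\Hom$ computation via Lemma~\ref{hom-maximal-ideals-contramodules} and the adjunction of Theorem~\ref{delta-s-theorem} for (e); your extra care in checking that the connecting homomorphism is literally $(\delta_{p,C})_p$ is a welcome refinement rather than a deviation. The only point you handle more briefly than the paper is the equivalence ``$C$ is cotorsion if and only if $\Ext^1_\Z(\Q,C)=0$'' invoked in (d), which the paper does not merely quote but justifies (either by the homological-dimension-one argument with the sequence $0\to F\to\Q\ot_\Z F\to\Q/\Z\ot_\Z F\to0$, or structurally via the sequence~\eqref{cotorsion-group-sequence}, Theorem~\ref{noetherian-maximal-ideal-cotorsion}, and closure of cotorsion groups under products and extensions); since this fact is recorded in the Introduction, this is a matter of citation rather than a gap.
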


\begin{proof}
 From the short exact sequence $0\rarrow\Z\rarrow\Q\rarrow\Q/\Z
\rarrow0$ together with the isomorphisms $\Q/\Z=\bigoplus_p
\Z[p^{-1}]/\Z$ and $\Delta_p(C)=\Ext^1_\Z(\Z[p^{-1}]/\Z,C)$, we
obtain the long exact sequence
\begin{multline*}
 0\lrarrow\Hom_\Z(\Q/\Z,\.C)\lrarrow\Hom_\Z(\Q,C) \\ \lrarrow C
 \lrarrow\prod\nolimits_p\Delta_p(C)\lrarrow
 \Ext^1_\Z(\Q,C)\lrarrow0.
\end{multline*}
 Hence the kernel of $\delta_{\Z,C}$ is equal to the image of
the map $\Hom_\Z(\Q,C)\rarrow C$, which coincides with $C_\div$
by Lemma~\ref{hom-Q-reduced-groups}.
 This proves part~(a), and part~(b) follows trivially.
 Furthermore, the group $\coker(\delta_{\Z,C})$ is identified
with the group $\Ext^1_\Z(\Q,C)$, where the field $\Q$ acts
via its action in the first argument.
 This proves part~(c).

 If the group $C$ is cotorsion, then $\Ext^1_\Z(\Q,C)=0$, so
the map $\delta_{\Z,C}$ is surjective.
 Conversely, if $\delta_{\Z,C}$ is surjective, then we have
a short exact sequence
\begin{equation} \label{cotorsion-group-sequence}
 0\lrarrow C_\div\lrarrow C\lrarrow\prod\nolimits_p\Delta_p(C)
 \lrarrow0.
\end{equation}
 Now the groups $\Delta_p(C)$ are always cotorsion by
Theorem~\ref{noetherian-maximal-ideal-cotorsion}, the group
$C_\div$ is cotorsion since it is injective, and it follows that
$C$ is cotorsion, because the class of cotorsion modules is closed
under infinite products and extensions.

 Alternatively, if $\delta_{\Z,C}$ is surjective, then
$\Ext^1_\Z(\Q,C)=0$.
 Given a torsion-free abelian group $F$, one considers
the short exact sequence
$0\rarrow F\lrarrow\Q\ot_\Z F\rarrow \Q/\Z\ot_\Z F\rarrow0$.
 Since the category of abelian groups has homological dimension~$1$,
i.~e., $\Ext^2_\Z(A,B)=0$ for any abelian groups $A$ and $B$,
from the corresponding long exact sequence we see that the map
$$
 \Ext^1_\Z(\Q\ot_\Z F,\>C)\lrarrow\Ext^1_\Z(F,C)
$$
is surjective for any group~$C$.
 Since the group $\Q\ot_\Z F$ is a $\Q$\+vector space, that is
a direct sum of copies of $\Q$, we conclude that $\Ext^1_\Z(\Q,C)=0$
implies $\Ext^1_\Z(F,C)=0$.
 This proves part~(d).

 When the group $C$ is reduced, we have a short exact sequence
\begin{equation} \label{reduced-group-sequence}
 0\lrarrow C\lrarrow\prod\nolimits_p\Delta_p(C)\lrarrow
 \Ext^1_\Z(\Q,C)\lrarrow0,
\end{equation}
where the middle term $\prod_p\Delta_p(C)$ is cotorsion and
the rightmost term $\Ext^1_\Z(\Q,C)$ is flat.
 So the morphism $\delta_{\Z,C}$ is a special cotorsion preenvelope,
hence a cotorsion preenvelope, of the group~$C$.
 To check that it is an envelope, one computes
\begin{setlength}{\multlinegap}{0pt}
\begin{multline*}
 \Hom_\Z\bigl({\textstyle\prod_p\Delta_p(C)},\>
 {\textstyle\prod_p\Delta_p(C)}\bigr) =
 {\textstyle\prod_p\Hom_\Z(\Delta_p(C),\Delta_p(C))} \\ =
 {\textstyle\prod_p\Hom_\Z(C,\Delta_p(C))} =
 \Hom_\Z\bigl(C,\>{\textstyle\prod_p\Delta_p(C)}\bigr)
\end{multline*}
by Lemma~\ref{hom-maximal-ideals-contramodules}
(cf.\ Example~\ref{z-envelope-example}).
 Thus the equation $u\delta_{\Z,C}=\delta_{\Z,C}$ for an endomorphism
$u\:\prod_p\Delta_p(C)\rarrow\prod_p\Delta_p(C)$ implies $u=1$,
proving~(e).
\end{setlength}
\end{proof}

\begin{cor} \label{nonreduced-cotorsion-envelope-cor}
 Choosing a splitting $t_C\:C\rarrow C_\div$ of
the embedding $C_\div\rarrow C$, one can construct a cotorsion
envelope of an arbitrary abelian group $C$ as the map
$$
 (t_C,\.\delta_{\Z,C})\: C\lrarrow C_\div\,\oplus\,
 \prod\nolimits_p\Delta_p(C).
$$
\end{cor}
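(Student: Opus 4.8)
The plan is to reduce to the reduced case, which is Theorem~\ref{abelian-groups-nunke-thm}(e), by using the splitting $t_C$ to break $C$ into its divisible and reduced parts. First I would note that the inclusion $C_\div\hookrightarrow C$, its retraction $t_C$, and the quotient map $C\rarrow C_\red$ together exhibit an isomorphism $C\simeq C_\div\oplus C_\red$, where $\ker t_C$ maps isomorphically onto $C_\red$. The divisible group $C_\div$ is an injective $\Z$\+module, hence a cotorsion $\Z$\+module, so the identity map $\id_{C_\div}\:C_\div\rarrow C_\div$ is a (trivial) cotorsion envelope of $C_\div$. By Theorem~\ref{abelian-groups-nunke-thm}(e), the morphism $\delta_{\Z,C_\red}\:C_\red\rarrow\prod_p\Delta_p(C_\red)$ is a cotorsion envelope of $C_\red$.

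The key step is to identify the morphism $(t_C,\delta_{\Z,C})$ with the direct sum $\id_{C_\div}\oplus\delta_{\Z,C_\red}$. Since $\Delta_p$ is a left adjoint (Theorem~\ref{delta-s-theorem}), it preserves finite direct sums, and $\Delta_p(C_\div)=\Ext^1_\Z(\Z[p^{-1}]/\Z,\.C_\div)=0$ because $C_\div$ is injective; hence the maps induced by the splitting give $\prod_p\Delta_p(C)\simeq\prod_p\Delta_p(C_\red)$. Using the naturality of the transformation $\delta_{\Z,-}$ with respect to the inclusion $C_\div\rarrow C$, the retraction $t_C$, and the induced section $C_\red\rarrow C$, one checks that under the isomorphisms $C\simeq C_\div\oplus C_\red$ and $\prod_p\Delta_p(C)\simeq\prod_p\Delta_p(C_\red)$ the morphism $(t_C,\delta_{\Z,C})$ becomes precisely $\id_{C_\div}\oplus\delta_{\Z,C_\red}$; in particular the $C_\div$\+component of $\delta_{\Z,C}$ vanishes since $\delta_{\Z,C_\div}$ lands in $\prod_p\Delta_p(C_\div)=0$.

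With this identification in hand, I would invoke Lemma~\ref{direct-sum-cover-envelope}(b): the class of cotorsion $\Z$\+modules is closed under finite direct sums, so the direct sum of the cotorsion envelope $\id_{C_\div}$ of $C_\div$ and the cotorsion envelope $\delta_{\Z,C_\red}$ of $C_\red$ is a cotorsion envelope of $C_\div\oplus C_\red\simeq C$, namely the map $(t_C,\delta_{\Z,C})$. Alternatively, one can verify directly that $(t_C,\delta_{\Z,C})$ is a special cotorsion preenvelope: it is injective (its two components vanish simultaneously only on $C_\div\cap\ker t_C=0$, using Theorem~\ref{abelian-groups-nunke-thm}(a)), its target $C_\div\oplus\prod_p\Delta_p(C)$ is cotorsion (a sum of an injective group and a product of the cotorsion groups $\Delta_p(C)$, cotorsion by Theorem~\ref{noetherian-maximal-ideal-cotorsion}), and its cokernel is isomorphic to $\coker\delta_{\Z,C_\red}$, which is a $\Q$\+vector space by Theorem~\ref{abelian-groups-nunke-thm}(c) and hence flat; the envelope property then follows from the $\Hom$\+computation in the proof of Theorem~\ref{abelian-groups-nunke-thm}(e) together with the vanishing $\Hom_\Z(C_\div,\prod_p\Delta_p(C))=0$ (a divisible group admits no nonzero homomorphism into a $p$\+contramodule).

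The step I expect to demand the most care is not a deep theorem but the bookkeeping in the second paragraph: pinning down the conventions for the splitting $t_C$ and chasing the naturality squares for $\delta_{\Z,-}$ in order to see that $(t_C,\delta_{\Z,C})$ really is $\id_{C_\div}\oplus\delta_{\Z,C_\red}$, where the vanishing $\Delta_p(C_\div)=0$ does the essential work. Everything else is either immediate or quoted from the preceding results.
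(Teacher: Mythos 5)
Your proposal is correct and follows essentially the same lines as the paper: the paper's proof verifies directly that $(t_C,\delta_{\Z,C})$ is a special cotorsion preenvelope with cokernel $\Ext^1_\Z(\Q,C)$ and then checks the envelope property via the $\Hom$\+computation, the vanishing of $\Hom_\Z(C_\div,\prod_p\Delta_p(C))$, and a triangular-matrix argument, closing with exactly your main route, namely that the direct sum of two cotorsion envelopes is a cotorsion envelope (Lemma~\ref{direct-sum-cover-envelope}). You merely reverse the emphasis (leading with the decomposition $C\simeq C_\div\oplus C_\red$, \ $\Delta_p(C_\div)=0$, and Lemma~\ref{direct-sum-cover-envelope}(b), with the direct verification as the alternative), and both arguments are sound.
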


\begin{proof}
 Clearly, the map $(t_C,\delta_{\Z,C})$ is injective with the cokernel
isomorphic to $\Ext^1_\Z(\Q,C)$; so it is a special cotorsion
preenvelope of the group~$C$.
 To check that it is an envelope, one can use the computation of
$\Hom_\Z\bigl(\prod_p\Delta_p(C),\>\prod_p\Delta_p(C)\bigr)$ in
the above proof together with the observation that
$\Hom_\Z(C_\div,\>\prod_p\Delta_p(C))=0$.
 So endomorphisms~$u$ of the group $C_\div\oplus\prod_p\Delta_p(C)$
are represented by triangular matrices with the entries
$u_{dd}\:C_\div\rarrow C_\div$, \ $u_{cc}\:\prod_p\Delta_p(C)\rarrow
\prod_p\Delta_p(C)$, and $u_{dc}\:\prod_p\Delta_p(C)\rarrow C_\div$.
 The equation $u(t_C,\delta_{\Z,C})=(t_C,\delta_{\Z,C})$ implies
$u_{cc}=1$ and $u_{dd}=1$; and a triangular matrix with invertible
diagonal entries is invertible.

 More generally, the direct sum of any two cotorsion envelopes is
a cotorsion envelope (see Lemma~\ref{direct-sum-cover-envelope}).
\end{proof}

\begin{cor} \label{cotorsion-abelian-groups}
 An abelian group $C$ is cotorsion if and only if it is isomorphic
to a group of the form
$$
 D\,\oplus\,\prod\nolimits_p C_p,
$$
where the product is taken over all the prime numbers, $D$ is
a divisible group, and $C_p$ are $p$\+contramodule abelian groups.
 The groups $D$ and $C_p$ are uniquely determined by the group~$C$.
\end{cor}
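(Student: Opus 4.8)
The plan is to deduce the corollary from Theorem~\ref{abelian-groups-nunke-thm} together with the elementary properties of the reflectors $\Delta_p$, treating existence (both implications) and then uniqueness. For the ``only if'' implication: if $C$ is cotorsion, then $\delta_{\Z,C}$ is surjective by Theorem~\ref{abelian-groups-nunke-thm}(d), so together with part~(a) one obtains the short exact sequence~\eqref{cotorsion-group-sequence}
$$
 0\lrarrow C_\div\lrarrow C\lrarrow\prod_p\Delta_p(C)\lrarrow0 .
$$
Since $C_\div$ is divisible, hence an injective $\Z$\+module, this sequence splits and $C\simeq C_\div\oplus\prod_p\Delta_p(C)$. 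Each $\Delta_p(C)$ is a $p$\+contramodule abelian group by Theorem~\ref{delta-s-theorem}, so putting $D=C_\div$ and $C_p=\Delta_p(C)$ gives a decomposition of the asserted form.

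For the ``if'' implication, given $C\simeq D\oplus\prod_pC_p$ with $D$ divisible and each $C_p$ a $p$\+contramodule, I would note that $D$ is injective, hence cotorsion; that each $C_p$ is cotorsion by Theorem~\ref{noetherian-maximal-ideal-cotorsion}, since $p\Z$ is a maximal ideal of the Noetherian ring~$\Z$; and that the class of cotorsion $\Z$\+modules is closed under infinite products and extensions. It follows that $C$ is cotorsion.

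For uniqueness, the first step is to observe that $\prod_pC_p$ is reduced: any divisible subgroup $B\subseteq\prod_pC_p$ has, for each prime $q$, a $q$\+divisible image under the projection to $C_q$, which vanishes because a $q$\+contramodule has no nonzero $q$\+divisible subgroup; as the projections separate points, $B=0$. Hence the maximal divisible subgroup of $C\simeq D\oplus\prod_pC_p$ is exactly $D$, so $D\simeq C_\div$ is recovered from~$C$. To recover the $C_p$, I would apply $\Delta_p$ to the decomposition. Being a left adjoint (Theorem~\ref{delta-s-theorem}), $\Delta_p$ is additive and so preserves the finite direct sum $\prod_qC_q\simeq C_p\oplus\prod_{q\ne p}C_q$. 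Here $\Delta_p(D)=\Ext^1_\Z(\Q_p/\Z_p,D)=0$ since $D$ is injective; $\Delta_p(C_p)=C_p$ since $C_p$ is already a $p$\+contramodule; and $\Delta_p\bigl(\prod_{q\ne p}C_q\bigr)=0$ because multiplication by $p$ acts invertibly on each $C_q$ with $q\ne p$ — by Remark~\ref{contramodules-modules-over-localizations}, $C_q$ is a module over $(1+q\Z)^{-1}\Z$, in which $p$ is a unit — hence on the product, which is then $p$\+torsion-free, so that $\Delta_p$ and $\Lambda_p$ agree on it by Corollary~\ref{delta-s-to-lambda-s}(b), while the $p$\+adic completion of a $p$\+divisible group is zero. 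Combining these, $\Delta_p(C)\simeq C_p$, which shows that each $C_p$ is determined by~$C$.

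The only step that is not essentially formal is the uniqueness of the factors $C_p$, and within it the vanishing $\Delta_p\bigl(\prod_{q\ne p}C_q\bigr)=0$. The difficulty is that a left adjoint need not commute with infinite products, so one cannot simply pull $\Delta_p$ inside the product; the argument instead rests on the fact that a $q$\+contramodule with $q\ne p$ is a $\Z[p^{-1}]$\+module, which is precisely the content of Remark~\ref{contramodules-modules-over-localizations}.
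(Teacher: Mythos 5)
Your proposal is correct and follows essentially the paper's own route: the split short exact sequence~\eqref{cotorsion-group-sequence} gives the decomposition for a cotorsion $C$, and the converse uses Theorem~\ref{noetherian-maximal-ideal-cotorsion} together with closure of cotorsion groups under products and extensions, exactly as the paper indicates. Your explicit uniqueness argument (recovering $D\simeq C_\div$ and $C_p\simeq\Delta_p(C)$ from an arbitrary decomposition, using that a $q$\+contramodule is a $\Z_{(q)}$\+module so that $\Delta_p$ kills $\prod_{q\ne p}C_q$) just spells out the detail the paper compresses into the remark that the sequence is functorial in~$C$.
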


\begin{proof}
 Follows from the short exact
sequence~\eqref{cotorsion-group-sequence} (which is even functorial,
so the groups $C_p$ and $D$ are functors of a cotorsion group~$C$)
together with Theorem~\ref{noetherian-maximal-ideal-cotorsion} and
the arguments in the (first) proof of
Theorem~\ref{abelian-groups-nunke-thm}(d).
\end{proof}

 Let $\C\subset\Ab$ denote the full subcategory of reduced cotorsion
abelian groups.

\begin{cor} \label{reduced-cotorsion-groups}
\textup{(a)} The full subcategory\/ $\C$ is closed under the kernels,
cokernels, extensions, and infinite products in\/~$\Ab$.
 In particular, $\C$ is an abelian category and its embedding\/
$\C\rarrow\Ab$ is an exact functor. \par
\textup{(b)} The functor $(C_p)_p\longmapsto\prod_pC_p$ establishes
an equivalence between the Cartesian product of the abelian
categories\/ $\Z\modl_{p\ctra}$ of $p$\+contramodule abelian groups,
taken over all the prime numbers~$p$, and the category\/~$\C$.
\end{cor}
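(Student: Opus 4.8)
The plan is to reduce both parts to results already in hand. For part~(a), the first step is to recognize the full subcategory $\C\subset\Ab$ of reduced cotorsion abelian groups as the $\Ext^{0,1}$\+orthogonality class of the $\Z$\+module $\Q$: an abelian group $C$ is reduced precisely when $\Hom_\Z(\Q,C)=0$ by Lemma~\ref{hom-Q-reduced-groups}, and it is cotorsion precisely when $\Ext^1_\Z(\Q,C)=0$, as recalled in the Introduction. Since $\Q$ has projective dimension $\le1$ over $\Z$, I would then invoke Theorem~\ref{ext-0-1-orthogonal}(a) with $U=\Q$ to conclude at once that $\C$ is closed under kernels, cokernels, extensions, and infinite products in $\Ab$, hence is an abelian category with an exact embedding functor into $\Ab$. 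This part is essentially immediate.

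For part~(b), I would first note that each category $\Z\modl_{p\ctra}$ is abelian by Theorem~\ref{ext-0-1-orthogonal}(a) (applied with $U=\Z[p^{-1}]$), so the Cartesian product $\prod_p\Z\modl_{p\ctra}$ is abelian, and the assignment $\Phi\:(C_p)_p\longmapsto\prod_pC_p$ is a well-defined additive functor. The first thing to verify is that $\Phi$ takes values in $\C$: any $p$\+contramodule abelian group is reduced (a divisible subgroup is $p$\+divisible, hence zero) and is cotorsion by Theorem~\ref{noetherian-maximal-ideal-cotorsion}, and $\C$ is closed under products by part~(a). Essential surjectivity of $\Phi$ is then a reformulation of the structure theory already proved: for $C\in\C$, Theorem~\ref{abelian-groups-nunke-thm}(a),(d) shows that $\delta_{\Z,C}\:C\rarrow\prod_p\Delta_p(C)$ is injective because $C$ is reduced and surjective because $C$ is cotorsion, hence an isomorphism, with each $\Delta_p(C)$ a $p$\+contramodule, so $C\simeq\Phi((\Delta_p(C))_p)$; alternatively one cites Corollary~\ref{cotorsion-abelian-groups} and observes that the divisible direct summand vanishes for a reduced group.

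The remaining, slightly more substantial, step is full faithfulness. Here the plan is to compute, for families $(C_p)_p$ and $(D_q)_q$, the group $\Hom_\Z(\prod_pC_p,\,\prod_qD_q)=\prod_q\Hom_\Z(\prod_pC_p,\,D_q)$ and to show each factor reduces to $\Hom_\Z(C_q,D_q)$. Writing $\prod_pC_p\simeq C_q\oplus\prod_{p\ne q}C_p$, this amounts to the vanishing $\Hom_\Z(\prod_{p\ne q}C_p,\,D_q)=0$, which is exactly Lemma~\ref{hom-maximal-ideals-contramodules} applied to the ideal $I=q\Z$ (for which the relevant set of maximal ideals is $\{p\Z:p\ne q\}$, and each $C_p$ with $p\ne q$ is a $p$\+contramodule while $D_q$ is a $q$\+contramodule). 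Taking the product over $q$ produces a natural isomorphism $\Hom_\C(\Phi((C_p)),\Phi((D_q)))\simeq\prod_q\Hom_\Z(C_q,D_q)$ which is precisely the map induced by $\Phi$, so $\Phi$ is fully faithful; being also essentially surjective, it is an equivalence.

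I expect that there is no genuine obstacle here: all the real work sits in Theorems~\ref{ext-0-1-orthogonal}, \ref{noetherian-maximal-ideal-cotorsion}, \ref{abelian-groups-nunke-thm} and Lemma~\ref{hom-maximal-ideals-contramodules}, and the only thing requiring care is bookkeeping — invoking Lemma~\ref{hom-maximal-ideals-contramodules} with the correct ideal and carrying out the reduction of $\prod_pC_p$ against a $q$\+contramodule target so that the Hom computation, and with it the natural identification with the Hom groups of the product category, comes out right.
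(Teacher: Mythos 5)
Your proposal is correct and follows essentially the same route as the paper: part~(a) is the $\Ext^{0,1}$\+orthogonality class of $\Q$ via Lemma~\ref{hom-Q-reduced-groups} and Theorem~\ref{ext-0-1-orthogonal}(a), and part~(b) reduces to Corollary~\ref{cotorsion-abelian-groups} (equivalently Theorem~\ref{abelian-groups-nunke-thm}) for essential surjectivity together with Lemma~\ref{hom-maximal-ideals-contramodules} for the Hom computation. Your expanded bookkeeping for full faithfulness --- splitting off the $q$\+th factor and applying the lemma with $I=q\Z$ --- is exactly the argument the paper leaves implicit.
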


\begin{proof}
 Part~(a): by Lemma~\ref{hom-Q-reduced-groups}, an abelian group $C$
is reduced if and only if $\Hom_\Z(\Q,C)=0$.
 We have also seen that an abelian group $C$ is cotorsion if and only
if $\Ext^1_\Z(\Q,C)=0$.
 Hence the assertions of part~(a) follow from
Theorem~\ref{ext-0-1-orthogonal}(a).
 Part~(b) claims that an abelian group $C$ is reduced cotorsion if
and only if it can be presented in the form $C=\prod_pC_p$ with
$C_p\in\Z\modl_{p\ctra}$, and
$$
 \Hom_\Z\bigl({\textstyle\prod_pB_p},\>{\textstyle\prod_pC_p}\bigr)
 =\textstyle\prod_p\Hom_\Z(B_p,C_p)
$$
for any $B_p$ and $C_p\in\Z\modl_{p\ctra}$.
 The former assertion is provided by
Corollary~\ref{cotorsion-abelian-groups}, and the latter one
by Lemma~\ref{hom-maximal-ideals-contramodules}.
\end{proof}

\begin{exs} \label{contra-categorical-coproduct-examples}
(1)~Fix a prime number~$p$, and let $B_\alpha$ be a family of
$p$\+contra\-module abelian groups indexed by some set of
indices~$\{\alpha\}$.
 Then the group
$$
 B=\bigoplus\nolimits_\alpha B_\alpha
$$
is \emph{not} a $p$\+contramodule in general, but it is
$q$\+divisible for all the primes $q\ne p$ and has no $p$\+divisible
subgroups.
 Hence one has $B_\div=0$ and $\Delta_q(B)=0$ for all $q\ne p$.
 Applying Theorem~\ref{abelian-groups-nunke-thm}(e), we conclude
that
$$
 \delta_{p,B}\:B\rarrow\Delta_p(B)
$$
is a cotorsion envelope of~$B$.

 The functor $\Delta_p$, being a left adjoint functor to the embedding
$\Z\modl_{p\ctra}\rarrow\Z\modl$, preserves categorical coproducts.
 Hence it takes the coproduct of abelian groups $B=\bigoplus_\alpha
B_\alpha$ to the coproduct of the objects $B_\alpha=\Delta_p(B_\alpha)$
taken in the category of $p$\+contramodule abelian groups
$\Z\modl_{p\ctra}$.
 We have explained that the group $\Delta_p(B)$ is
the coproduct of the $p$\+contramodule abelian groups $B_\alpha$
taken in the category of $p$\+contramodule abelian groups.

\medskip

(2)~Consider the particular case when all the groups $B_\alpha$
are copies of~$\Z_p$.
 As it was explained at the end of
Section~\ref{free-contramodules-secn}, we have
$\Delta_p(\Z_p^{(X)})=\Z_p[[X]]$.
 So $\Z_p^{(X)}\rarrow\Z_p[[X]]$ is a cotorsion envelope of
the group $\Z_p^{(X)}$.

 In particular, $\Delta_p(\bigoplus_{n=0}^\infty\Z_p)$ is
the group $C$ of all the sequences of $p$\+adic integers converging
to zero in the topology of $\Z_p$ from
Example~\ref{s-contra-adjusted-counterex}\,(1).
 So $\bigoplus_{n=0}^\infty \Z_p\rarrow C$ is a cotorsion envelope
of the group $\bigoplus_{n=0}^\infty\Z_p$.

\medskip

(3)~Now let us consider the group $B=\bigoplus_{n=1}^\infty\Z/p^n\Z$.
 It is explained in~\cite[Section~1.5]{Prev} that the coproduct
of the groups $\Z/p^n\Z$, \ $n\ge1$ in the category $\Z\modl_{p\ctra}$
is the non-$p$-separated $p$\+contramodule abelian group $C/E$
from Example~\ref{s-contra-adjusted-counterex}\,(1).
 So we have $\Delta_p(B)=C/E$, and $B\rarrow C/E$ is a cotorsion
envelope of the group~$B$.
\end{exs}

\begin{rem}
 It is probably impossible to describe cotorsion modules over
commutative rings much more complicated than~$\Z$.
 The intuition seems to be that torsion-free abelian groups are
``many'', so cotorsion groups are relatively ``few''.
 But over more complicated rings, flat modules are ``few'', so
cotorsion modules must be ``many''.
 Nevertheless, we will see in the next section that much of
the theory of this section can be extended to Noetherian rings
of Krull dimension~$1$.
\end{rem}

 It remains to say a few words about contraadjusted abelian groups.
 Here we follow~\cite[Section~4]{Sl} and~\cite[Example~5.2]{ST}.

 Clearly, an abelian group $C$ is contraadjusted if and only if
the group $C_\red$ is contraadjusted.
 The group $C_\red$ is a subgroup in the product $\prod_p\Delta_p(C)$
over all the prime numbers~$p$.
 According to Corollary~\ref{s-contraadjusted-criterion}(b),
a group $C$ is $s$\+contraadjusted if and only if the map
$\delta_{s,C}\:C\rarrow\Delta_s(C)$ is surjective for every $s\in\Z$.
 By Lemma~\ref{delta-abelian-groups-lemma}(c), we conclude that
an abelian group $C$ is contraadjusted if and only if
the composition
$$
 C_\red\lrarrow\prod\nolimits_p\Delta_p(C)\lrarrow\bigoplus
 \nolimits_{p\in P}\Delta_p(C)
$$
is surjective for any finite set of prime numbers~$P$.

\begin{exs}
(1)~The groups $\bigoplus_p\Z_p$ and $\bigoplus_p\Z/p\Z$, where
the direct sums are taken over all (or any infinite subset of)
the prime numbers~$p$, are contraadjusted, but not cotorsion.

\medskip

(2)~According to~\cite[Lemma~1.7.3 or Theorem~1.7.6]{Pcosh}
(see also~\cite[Lemma~2.3(ii)]{ST}), for any multiplicative set
$S\subset\Z$ containing infinitely many prime numbers,
the group $S^{-1}\Z$ is flat, but not very flat.
\end{exs}

\begin{rem}
 We are not aware of any explicit nontrivial example of
a short exact sequence~\eqref{special-precover}
or~\eqref{special-preenvelope} in the very flat cotorsion theory
in the category of abelian groups.
 Neither do we know anything about how the very flat contraadjusted
abelian groups might look like
(cf.\ Theorem~\ref{flat-cotorsion-classification}).
\end{rem}

\Section{Noetherian Rings of Krull Dimension~$1$}
\label{krull-dim-1-secn}

 In this section, $R$ is a Noetherian ring of Krull dimension~$\le1$.
 This means that every prime ideal in $R$ is either minimal or
maximal.
 As in any Noetherian ring, the set of all minimal prime ideals in
$R$ is finite.

 It is possible, however, that some prime ideals in $R$ are
simultaneously minimal and maximal.
 Such prime ideals correspond to isolated points of the topological
space $\Spec R$, i.~e., to Artinian ring direct summands of
the ring~$R$.
 In order to apply the theory developed in this section, one has
to mark such prime ideals in $R$ for being considered either on par
with the other minimal ideals, or on par with the other maximal ideals.
 In other words, we presume that the set of all prime ideals in $R$
has been divided into two disjoint subsets,
$$
 \Spec R = P_0\sqcup P_1,
$$
where all the prime ideals $\q\in P_0$ are minimal and all the prime
ideals $\p\in P_1$ are maximal.
 Given a prime ideal that is both minimal and maximal, one has to decide
whether to put it into $P_0$ or into $P_1$, but not into both.

 Generally speaking, the set of all zero-divisors in a Noetherian
ring is the union of the associated primes of the zero ideal.
 All the minimal primes belong to this set of associated
primes $\Ass_R(0)$, and there may be also a finite set of
nonminimal prime ideals belonging to $\Ass_R(0)$.
 We denote the intersection $P_1\cap\Ass_R(0)$
by $P_a\subset P_1$.
 In particular, all the minimal prime ideals belonging to $P_1$
are in~$P_a$.
 So the set of all zero-divisors in $R$ is
$$
 \bigcup\nolimits_{\q\in P_0}\q\,\cup\,\bigcup\nolimits_{\p\in P_a}\p,
$$
while the nilradical of $R$ is
$$
 \bigcap\nolimits_{\q\in P_0}\q\,\cap\,\bigcap\nolimits_{\p\in P_a}\p
$$
(where it, of course, suffices to intersect the minimal
prime ideals of~$R$).

 Let $S\subset R$ denote the multiplicative set
$$
 S=R\,\setminus\,\bigcup\nolimits_{\q\in P_0}\q.
$$
 All nonzero-divisors in $R$ always belong to~$S$; conversely, all
the elements of $S$ are nonzero-divisors if and only if
$P_a=\varnothing$.
 When $P_0$ is the set of all minimal prime ideals in $R$, one can
describe $S$ is the set of all elements in $R$ whose images are
nonzero-divisors in the quotient ring of $R$ by its nilradical.

 For every prime ideal $\m\in P_1$, there exists $s\in S$ such that
$s\in\m$.
 So prime ideals of the ring $S^{-1}R$ correspond bijectively to
the prime ideals $\q\in P_0$ in the ring~$R$.
 Hence the ring $S^{-1}R$ is zero-dimensional Noetherian, and
consequently Artinian.
 When $S$ consists of nonzero-divisors in $R$, or in other words, when
the map $R\rarrow S^{-1}R$ is injective, one says that \emph{$S^{-1}R$
is an Artinian classical ring of fractions of~$R$}.

 For every element $s\in S$, prime ideals of the quotient ring $R/(s)$
correspond bijectively to those prime ideals $\m\subset R$ that
contain~$s$.
 All such prime ideals in $R$ belong to~$P_1$ (in particular, they
are maximal), so the ring $R/(s)$ is Artinian, too.
 We denote the finite set of all prime ideals in $R$
containing~$s$ by $P(s)\subset P_1$.

 As usually, for any $R$\+module $M$ and a prime ideal $\p\subset R$,
we denote by $M_\p$ the localization $(R\setminus\p)^{-1}M=
R_\p\ot_R M$ of the module $M$ at the prime ideal~$\p$.
 Similarly, we use the notation $S^{-1}M=S^{-1}R\ot_RM$ for
the localization with respect to~$S$.
 We start with the following decomposition lemma
(cf.~\cite[Theorem~3.1]{Mat2}).

\begin{lem} \label{maximal-ideals-decomposition-lemma}
 Let $M$ be an $R$\+module such that $S^{-1}M=0$.
 Then the map $M\rarrow\prod_{\p\in P_1}M_\p$ is injective with
the image coinciding with\/ $\bigoplus_{\p\in P_1}M_\p\subset
\prod_{\p\in P_1}M_\p$, so we have a natural isomorphism
$$
 M\simeq\bigoplus\nolimits_{\p\in P_1} M_\p.
$$
 The $R$\+module $M_\p$ is\/ $\p$\+torsion
(in the sense of the definition in
Section~\ref{functor-delta-I-second-secn}).
\end{lem}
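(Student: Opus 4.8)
The strategy is to reduce everything to the structure theory of Artinian rings and their modules. The only consequence of the hypothesis $S^{-1}M=0$ that I will use is that every element $x\in M$ is annihilated by some $s\in S$. By the discussion preceding the lemma, every prime ideal of $R$ containing such an $s$ lies in $P_1$ and is maximal, so $R/(s)$ is a zero-dimensional Noetherian ring, hence Artinian; and $M_\q=0$ for $\q\in P_0$, since $S\cap\q=\varnothing$ exhibits $M_\q$ as a further localization of $S^{-1}M=0$. Thus only the maximal ideals in $P_1$ are relevant.

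The first step is to record the Artinian decomposition. Fix $x\in M$ and $s\in S$ with $sx=0$, and let $P(s)\subset P_1$ be the finite set of maximal ideals containing $s$. The cyclic submodule $Rx\cong R/\Ann(x)$ is a module over the Artinian ring $R/(s)=\prod_{\p\in P(s)}(R/(s))_\p$; with $e_\p$ the corresponding orthogonal idempotents, we obtain $Rx=\bigoplus_{\p\in P(s)}e_\p(Rx)$, where $e_\p(Rx)$ may be identified with the localization $(Rx)_\p$ regarded as a submodule of $Rx$. Since localization is exact, $(Rx)_\p$ embeds into $M_\p$; and for $\p\in P_1\setminus P(s)$ the element $s$ is a unit in $R_\p$ while $sx=0$, so $x$ maps to $0$ in $M_\p$. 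Consequently the natural homomorphism $\mu\colon M\to\prod_{\p\in P_1}M_\p$ takes values in $\bigoplus_{\p\in P_1}M_\p$, since $x/1$ can be nonzero only in the finitely many factors indexed by $P(s)$.

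Next I would show that $\mu$ is a bijection onto $\bigoplus_\p M_\p$. For injectivity, if $x\neq0$ choose a maximal ideal $\m\supseteq\Ann(x)$; then $\m\supseteq(s)$, so $\m\in P(s)\subset P_1$, and the image of $x$ in $(Rx)_\m\cong R_\m/\Ann(x)R_\m$ is the nonzero class of $1$, whence $x/1\neq0$ in $M_\m$. For surjectivity, by additivity it suffices to realize an arbitrary single-factor element: given $\p_0\in P_1$ and $y=m/t\in M_{\p_0}$ with $t\notin\p_0$, $sm=0$, $s\in S$, we may assume $\p_0\in P(s)$ (otherwise $s$ is invertible in $R_{\p_0}$ and $y=0$). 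I would then use the idempotent component $m_{\p_0}=e_{\p_0}m\in(Rm)_{\p_0}$: it satisfies $m_{\p_0}/1=m/1$ in $M_{\p_0}$ (the remaining components of $m$ localize to $0$ there) and $m_{\p_0}/1=0$ in $M_\p$ for every $\p\in P_1\setminus\{\p_0\}$ (killed by $s$ when $\p\notin P(s)$, killed by an orthogonal idempotent when $\p\in P(s)$). Since $t$ is a unit of the Artinian local ring $(R/(s))_{\p_0}$ it acts invertibly on $(Rm)_{\p_0}$, so one can solve $tx=m_{\p_0}$ for some $x\in(Rm)_{\p_0}\subset M$; this $x$ maps to $y$ in the $\p_0$\+factor and to $0$ in all others, so $\mu(x)$ is the required element. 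Finally, for the $\p$\+torsion assertion: every element of $M_\p$ is, up to a unit of $R_\p$, the image of some $m\in M$ with $s_0m=0$, $s_0\in S$; if $s_0\notin\p$ the element is zero, and if $s_0\in\p$ then $(R/(s_0))_\p$ is Artinian local with nilpotent maximal ideal, so $(\p R_\p)^N\subseteq(s_0)R_\p$ for some $N$, and $s^N$ annihilates the element for every $s\in\p$.

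I do not foresee a genuine obstacle here; the only external input is the elementary structure theory of Artinian rings, applied to the quotients $R/(s)$ for $s\in S$. The point that needs care is that the Artinian decompositions attached to different elements must be mutually compatible, and the cleanest way to ensure this is to run the whole argument through the single global homomorphism $\mu$ — verifying injectivity and surjectivity onto $\bigoplus_\p M_\p$ — rather than by assembling the decompositions of the finitely generated submodules of $M$ into a colimit, which would require checking coherence of those decompositions separately.
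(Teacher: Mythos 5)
Your proposal is correct and runs along essentially the same lines as the paper's proof: the containment of the image in the direct sum and the injectivity (via a maximal ideal over $\Ann(x)$, necessarily in $P_1$ since it contains $s\in S$) are identical, and the surjectivity and the $\p$\+torsion claim both rest, as in the paper, on the decomposition of the Artinian quotient $R/(s)$ (resp.\ $R/\Ann(M)$) into a product of local rings at ideals of $P_1$. The only difference is cosmetic: you argue surjectivity element-by-element with explicit idempotents and by inverting $t$ in $(R/(s))_{\p_0}$, whereas the paper reduces to a finitely generated module and decomposes it at once over the Artinian ring $R/\Ann(M)$.
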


\begin{proof}
 By assumption, for every element $m\in M$ there exists $s\in S$
such that $sm=0$.
 We have a finite set $P(s)\subset P_1$ of all prime ideals
$\p\subset R$ such that $s\in\p$.
 For every $\p\in P_1\setminus P(s)$, the image of the element~$m$
is zero in $M_\p$.
 Hence the image of the map $M\rarrow\prod_{\p\in P_1}M_\p$ is
contained in $\bigoplus_{\p\in P_1}M_\p$.

 Furthermore, if $m\ne0$ then there exists a maximal ideal
$\p\in P_1$ containing the annihilator ideal $\Ann_R(m)\subset R$
of the element~$m$.
 The image of the element $m$ in $M_\p$ is nonzero.
 Hence the map $M\rarrow\bigoplus_{\p\in P_1}M_\p$ is injective.

 In order to check that this map is surjective, it suffices to
consider the case of a finitely generated module~$M$.
 Then the annihilator ideal $I$ of $M$ has a nonempty intersection
with~$S$.
 Hence $I$ is not contained in any prime ideal $\q\in P_0$,
and the prime ideals of the ring $R/I$ correspond to a finite
subset of prime ideals $P(I)\subset P_1$ in~$R$.
 The ring $R/I$ is Artinian, so it is a finite direct sum of
the Artinian local rings indexed by the maximal ideals of $R/I$,
that is $R/I=\bigoplus_{\p\in P(I)}R_\p/R_\p I$.
 Thus every $R/I$\+module $M$ is the direct sum of the
$R_\p/R_\p I$\+modules $M_\p$.

 Finally, we have $S^{-1}M_\p=0$, hence for every $m\in M_\p$
there exists $s\in S$ such that $sm=0$.
 Clearly, $s\in\p$ if $m\ne 0$.
 The quotient ring $R_\p/R_\p s$ is an Artinian local ring with
the maximal ideal $R_\p\p/R_\p s$, so every module over it is
$\p$\+torsion.
\end{proof}

 Denote by $K^\bu=K^\bu_R$ the two-term complex of $R$\+modules
$R\rarrow S^{-1}R$ with the term $R$ placed in the cohomological
degree~$-1$ and the term $S^{-1}R$ in the cohomological degree~$0$.
 When $S^{-1}R$ is an Artinian classical ring of fractions of $R$,
one can use the quotient module $(S^{-1}R)/R$ in lieu of
the complex~$K^\bu$.

 The following theorem is a key technical result.

\begin{thm} \label{complex-k-decomposition}
 The complex $K^\bu$ is naturally isomorphic to the direct sum\/
$\bigoplus_{\p\in P_1}K^\bu_\p$ in the derived category of
complexes of $R$\+modules\/ $\D^\b(R\modl)$.
\end{thm}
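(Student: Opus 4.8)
The plan is to pin down both $K^\bu$ and $\bigoplus_{\p\in P_1}K^\bu_\p$ in $\D^\b(R\modl)$ by their cohomology modules together with the single obstruction class living in an $\Ext^2$, and to check that these data agree. First I would record that $K^\bu$ is concentrated in the cohomological degrees $-1$ and $0$, with $H^{-1}(K^\bu)=\ker(R\rarrow S^{-1}R)=\Gamma_S(R)$ and $H^0(K^\bu)=(S^{-1}R)/R$; both are $R$\+modules $M$ with $S^{-1}M=0$, so Lemma \ref{maximal-ideals-decomposition-lemma} applies and yields canonical isomorphisms $H^i(K^\bu)\simeq\bigoplus_{\p\in P_1}H^i(K^\bu)_\p$ with each $H^i(K^\bu)_\p$ a $\p$\+torsion $R_\p$\+module. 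Since localization is exact, $H^i(K^\bu)_\p=H^i(K^\bu_\p)$, so the cohomology of $\bigoplus_{\p\in P_1}K^\bu_\p$ (which lies in $\D^\b$, all summands having amplitude in $[-1,0]$) is canonically identified with that of $K^\bu$. The crucial extra observation is that $H^{-1}(K^\bu)=\Gamma_S(R)$ is an ideal of the Noetherian ring $R$, hence finitely generated; thus $H^{-1}(K^\bu)_\p=0$ for all but a finite set $F\subset P_1$, and each nonzero $G_\p:=H^{-1}(K^\bu)_\p$ is a finitely generated $\p$\+torsion module, hence annihilated by some power $\p^{n}$.

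Next I would invoke the standard fact that an object $C$ of $\D^\b(R\modl)$ with cohomology only in degrees $-1$ and $0$ is, up to isomorphism, determined by the pair $(H^{-1}(C),H^0(C))$ together with the class $\kappa(C)\in\Ext^2_R(H^0(C),H^{-1}(C))$ classifying the truncation triangle $H^{-1}(C)[1]\rarrow C\rarrow H^0(C)\rarrow H^{-1}(C)[2]$; more precisely, two such objects with cohomology modules matched by isomorphisms compatible with their $\kappa$\+classes are isomorphic. Moreover $\kappa$ is natural with respect to exact functors (in particular with respect to localization $(-)_\p\colon\D^\b(R\modl)\rarrow\D^\b(R_\p\modl)$, which commutes with truncations and connecting maps), and it sends direct sums to direct sums. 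Writing $Q=H^0(K^\bu)=\bigoplus_{\p\in P_1}Q_\p$ and $G=H^{-1}(K^\bu)=\bigoplus_{\p\in F}G_\p$, I would then compute $\Ext^2_R(Q,G)$: it commutes with the \emph{finite} direct sum in the second argument and turns the coproduct in the first argument into a product, so everything reduces to the cross terms $\Ext^2_R(Q_{\p'},G_\p)$ with $\p'\neq\p$. These vanish, because an element $s\in\p\setminus\p'$ acts nilpotently on $G_\p$ (as $\p^nG_\p=0$) and invertibly on $Q_{\p'}$ (which is an $R_{\p'}$\+module), hence simultaneously nilpotently and invertibly on $\Ext^2_R(Q_{\p'},G_\p)$. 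This gives $\Ext^2_R(Q,G)\simeq\bigoplus_{\p\in F}\Ext^2_R(Q_\p,G_\p)$, and one checks (routinely, using that each $\Ext^2_R(Q_\p,G_\p)$ is already an $R_\p$\+module since $\p^nG_\p=0$) that the $\p$\+projection of this decomposition agrees with the localization map $\Ext^2_R(Q,G)\rarrow\Ext^2_{R_\p}(Q_\p,G_\p)$.

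It then remains to observe that, under this identification, both $\kappa(K^\bu)$ and $\kappa\bigl(\bigoplus_{\p}K^\bu_\p\bigr)$ have $\p$\+component equal to $\kappa(K^\bu_\p)$: for the direct sum because $\kappa$ of a direct sum is the direct sum of the $\kappa$'s (the off\+diagonal components of the classifying map automatically lying in the vanishing groups $\Ext^2_R(Q_{\p'},G_\p)$, $\p'\neq\p$), and for $K^\bu$ because localization at $\p$ carries $\kappa(K^\bu)$ to $\kappa(K^\bu_\p)$. Hence $\kappa(K^\bu)=\kappa\bigl(\bigoplus_{\p}K^\bu_\p\bigr)$ under the canonical cohomology identifications, and therefore $K^\bu\simeq\bigoplus_{\p\in P_1}K^\bu_\p$ in $\D^\b(R\modl)$, by an isomorphism inducing the decompositions of Lemma \ref{maximal-ideals-decomposition-lemma} on cohomology. (In the degenerate case $\Gamma_S(R)=0$, i.e.\ $S$ consisting of nonzero-divisors, one has $G=0$ and $\kappa=0$, so $K^\bu\simeq(S^{-1}R/R)[0]$ and the statement is immediate from the Lemma; the $\Ext^2$ argument is only doing work in the presence of embedded primes.) The step I expect to be the main obstacle is precisely the $\Ext^2$ reduction above — in particular, exploiting finite generation of $\Gamma_S(R)$ to make the decomposition of $\Ext^2_R(Q,G)$ and the vanishing of the cross terms genuinely work, rather than having to contend with whether $\Ext^2_R(Q,-)$ respects the possibly infinite direct sum $G$ would be were it not finitely generated.
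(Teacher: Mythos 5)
Your proof is essentially correct, but it takes a genuinely different route from the paper. The paper reduces the statement to a more general fact (its Lemma~\ref{two-term-complexes-decomposition}) about an arbitrary two-term complex $f\:M\rarrow N$ with $S^{-1}\ker(f)=0=S^{-1}\coker(f)$ and $\ker(f)_\p=0$ for all but finitely many $\p\in P_1$, and proves it by an explicit zig-zag of quasi-isomorphisms $(M\to N)\rarrow\bigl(\prod_\p M_\p\to N^+\bigr)\llarrow\bigl(\bigoplus_\p M_\p\to\bigoplus_\p N_\p\bigr)$, where $N^+$ is the fibered product of $\prod_\p N_\p\to\prod_\p\coker(f_\p)$ and $\bigoplus_\p\coker(f_\p)\to\prod_\p\coker(f_\p)$; both arrows are quasi-isomorphisms by Lemma~\ref{maximal-ideals-decomposition-lemma} together with the finiteness of the support of $\ker(f)$, and no obstruction theory is needed. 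You instead classify objects of $\D^\b(R\modl)$ with cohomology in degrees $-1,0$ by the pair of cohomology modules plus the $\Ext^2$ class of the truncation triangle, and kill the cross terms $\Ext^2_R(Q_{\p'},G_\p)$, $\p'\ne\p$, via the simultaneous nilpotent and invertible action of an element $s\in\p\setminus\p'$; this is a valid computation, and your exploitation of the finite generation of $\ker(R\to S^{-1}R)$ plays the same role as the corresponding hypothesis in the paper's lemma (the identification of the $\p$\+projection with localization is essentially the content of an argument like Lemma~\ref{ext-from-torsion-maximal-ideal}). What the paper's route buys is that all the maps involved are canonical, so the isomorphism is genuinely natural, and the auxiliary lemma is reused verbatim for Corollary~\ref{complex-r-s-minus-1-decomposition}; your obstruction-theoretic isomorphism is determined only up to an automorphism of $\bigoplus_\p K^\bu_\p$ inducing the identity on cohomology (an ambiguity measured by $\Ext^1_R(Q,G)$\+type groups), so it realizes the word ``naturally'' only in the weaker sense of inducing the canonical decompositions of Lemma~\ref{maximal-ideals-decomposition-lemma} on cohomology — enough for the projective dimension bound and for Theorem~\ref{S-divisible}, though the compatibility asserted in Theorem~\ref{delta-m-theorem} comes out more cleanly from the paper's explicit zig-zag. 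What your route buys is a conceptual explanation of why the decomposition cannot fail: the only possible obstruction lives in cross-term $\Ext^2$ groups, which vanish for support reasons.
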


\begin{proof}
 Let $H\subset R$ denote the kernel of the map $R\rarrow S^{-1}R$.
 Then $S^{-1}H=0$ and $H$ is a finitely generated $R$\+module, so
$H$ is isomorphic to the direct sum of its localizations $H_\p$
over some finite set of prime ideals $\p\in P_1$ in~$R$.
 In fact, for a prime ideal $\p\in P_1$ one has $H_\p\ne0$ if
and only if $\p\in P_a=\Ass_R(0)\cap P_1$, so
$$
 H=\bigoplus\nolimits_{\p\in P_a} H_\p.
$$
 Besides, let $G$ denote the cokernel of the map $R\rarrow S^{-1}R$.
 Then, of course, we have $S^{-1}G=0$.
 The assertion of the theorem now reduces to the next lemma.
\end{proof}

\begin{lem} \label{two-term-complexes-decomposition}
 Let $f\:M\rarrow N$ be a two-term complex of $R$\+modules such
that $S^{-1}\ker(f)=0=S^{-1}\coker(f)$ and\/
$\ker(f)_\p=0$ for all but a finite set of ideals\/ $\p\in P_1$.
 Then the complex $M\rarrow N$ is naturally isomorphic to
the complex\/ $\bigoplus_{\p\in P_1}M_\p\rarrow\bigoplus_{p\in P_1} N_\p$
in the derived category\/ $\D^\b(R\modl)$.
\end{lem}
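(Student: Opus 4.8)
The plan is to realize the required isomorphism inside the bounded derived category by refining the localization maps; the whole difficulty is that the obvious comparison map only lands in the infinite \emph{product}, and one must work in $\D^\b(R\modl)$ to squeeze it into the \emph{direct sum}. Write $C^\bu$ for the two-term complex $M\xrightarrow{f}N$ with $M$ in cohomological degree $-1$ and $N$ in degree $0$, and $C^\bu_\p$ for $M_\p\xrightarrow{f_\p}N_\p$, so that $\bigoplus_{\p\in P_1}C^\bu_\p$ is the complex in the statement. Since $\ker f$ and $\coker f$ are $S$-torsion by hypothesis, Lemma~\ref{maximal-ideals-decomposition-lemma} gives that the natural maps $\ker f\to\bigoplus_{\p\in P_1}(\ker f)_\p$ and $\coker f\to\bigoplus_{\p\in P_1}(\coker f)_\p$ are isomorphisms; as localization is exact and commutes with direct sums, the cohomology of $\bigoplus_{\p\in P_1}C^\bu_\p$ is $\bigoplus_\p(\ker f)_\p$ in degree $-1$ and $\bigoplus_\p(\coker f)_\p$ in degree $0$.

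Next I would form the honest localization morphism of complexes $\ell\colon C^\bu\to\prod_{\p\in P_1}C^\bu_\p$ and the quotient complex $D^\bu=\bigl(\prod_\p C^\bu_\p\bigr)\big/\bigl(\bigoplus_\p C^\bu_\p\bigr)$. The crucial point is that $H^{-1}(D^\bu)=0$: a cycle is represented by a family $(m_\p)\in\prod_\p M_\p$ with $f_\p(m_\p)=0$, hence $m_\p\in(\ker f)_\p$, for all but finitely many $\p$; but $(\ker f)_\p=0$ for all but finitely many $\p$ by hypothesis, so such a family already lies in $\bigoplus_\p M_\p$ and represents $0$. Thus $D^\bu$ is quasi-isomorphic to the module $H^0(D^\bu)$ placed in degree $0$.

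I would then check that the composite $C^\bu\xrightarrow{\ell}\prod_\p C^\bu_\p\twoheadrightarrow D^\bu$ is zero in $\D^\b(R\modl)$. By a routine truncation argument---$C^\bu$ is concentrated in degrees $\le 0$, $D^\bu$ is quasi-isomorphic to $H^0(D^\bu)$ in degree $0$, and $\Hom_{\D^\b(R\modl)}(A^\bu,B^\bu)=0$ when $A^\bu$, $B^\bu$ are concentrated in degrees $\le n$, $\ge n+1$ respectively---passage to $H^0$ identifies $\Hom_{\D^\b(R\modl)}(C^\bu,D^\bu)$ with $\Hom_R(\coker f,H^0(D^\bu))$, so the composite vanishes if and only if it does on $H^0$. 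Its effect on $H^0$ is the localization $\coker f\to\prod_\p(\coker f)_\p$ followed by the projection to $\prod_\p(\coker f)_\p/\bigoplus_\p(\coker f)_\p$, and this is zero because the localization map has image exactly $\bigoplus_\p(\coker f)_\p$ by Lemma~\ref{maximal-ideals-decomposition-lemma}. Applying $\Hom_{\D^\b(R\modl)}(C^\bu,-)$ to the distinguished triangle $\bigoplus_\p C^\bu_\p\to\prod_\p C^\bu_\p\to D^\bu\to(\bigoplus_\p C^\bu_\p)[1]$ therefore produces a morphism $\alpha\colon C^\bu\to\bigoplus_{\p\in P_1}C^\bu_\p$ with $\bigl(\bigoplus_\p C^\bu_\p\hookrightarrow\prod_\p C^\bu_\p\bigr)\circ\alpha=\ell$; it is unique since $\Hom_{\D^\b(R\modl)}(C^\bu,D^\bu[-1])=0$ (the same truncation bound), hence natural in the complex $f\colon M\to N$.

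It remains to see that $\alpha$ is an isomorphism. Taking cohomology in $\bigl(\bigoplus_\p C^\bu_\p\hookrightarrow\prod_\p C^\bu_\p\bigr)\circ\alpha=\ell$ and using that the inclusion $\bigoplus_\p\hookrightarrow\prod_\p$ is injective on $H^{-1}$ and $H^0$, one finds that $H^{-1}(\alpha)$ and $H^0(\alpha)$ are precisely the isomorphisms $\ker f\xrightarrow{\sim}\bigoplus_\p(\ker f)_\p$ and $\coker f\xrightarrow{\sim}\bigoplus_\p(\coker f)_\p$ of Lemma~\ref{maximal-ideals-decomposition-lemma}. Hence $\alpha$ is a quasi-isomorphism, i.e. an isomorphism in $\D^\b(R\modl)$. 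The main obstacle is entirely this $\bigoplus$-versus-$\prod$ discrepancy: the honest map of complexes cannot be improved without passing to the derived category, and the hypothesis that $(\ker f)_\p$ vanishes for all but finitely many $\p$ enters only through $H^{-1}(D^\bu)=0$---consistently with the fact that no finiteness is imposed on $\coker f$, which is allowed to be supported at infinitely many maximal ideals.
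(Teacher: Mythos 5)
Your argument is correct, and it reaches the conclusion by a genuinely different route than the paper. The paper's proof stays entirely at the chain level: it introduces the auxiliary complex $\prod_{\p}M_\p\rarrow N^+$, where $N^+\subset\prod_\p N_\p$ is the fibered product of $\prod_\p N_\p\rarrow\prod_\p\coker(f_\p)$ and $\bigoplus_\p\coker(f_\p)\rarrow\prod_\p\coker(f_\p)$, and exhibits an explicit zig-zag of two morphisms of two-term complexes $(M\to N)\rarrow(\prod_\p M_\p\to N^+)\larrow(\bigoplus_\p M_\p\to\bigoplus_\p N_\p)$, both checked to be quasi-isomorphisms; the finiteness hypothesis enters as the equality $\prod_\p\ker(f_\p)=\bigoplus_\p\ker(f_\p)$. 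You instead construct the comparison map abstractly inside $\D^\b(R\modl)$, lifting the honest localization morphism $\ell\:C^\bu\rarrow\prod_\p C^\bu_\p$ through the triangle $\bigoplus_\p C^\bu_\p\rarrow\prod_\p C^\bu_\p\rarrow D^\bu$, with the finiteness hypothesis entering as $H^{-1}(D^\bu)=0$ --- which is the same underlying fact --- and with existence, uniqueness and naturality of the lift secured by the truncation-type vanishing $\Hom_{\D^\b(R\modl)}(C^\bu,D^\bu[n])=0$ for the relevant shifts. What the paper's approach buys is elementarity and manifest naturality (everything is an explicit map of complexes, no computation of derived-category Hom groups is needed beyond recognizing quasi-isomorphisms); what yours buys is a canonical characterization of the isomorphism as the unique morphism compatible with localization, at the cost of invoking the triangulated formalism. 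One small point worth making explicit in your write-up: naturality of $\alpha$ for a morphism of two-term complexes follows from the injectivity of post-composition with $\bigoplus_\p C'^\bu_\p\rarrow\prod_\p C'^\bu_\p$ on $\Hom_{\D^\b(R\modl)}(C^\bu,{-})$, which is again the vanishing of $\Hom_{\D^\b(R\modl)}(C^\bu,D'^\bu[-1])$ for the target complex; as stated, you only gesture at this.
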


\begin{proof}
 Denote by $N^+$ the fibered product of the pair of morphisms
$\prod_\p N_\p\rarrow\prod_\p\coker(f_\p)$ and
$\bigoplus_\p\coker(f_\p)\rarrow\prod_\p\coker(f_\p)$.
 In other words, $N^+\subset\prod_\p N_\p$ consists of all
the collections of elements $(n_\p\in N_\p)_{\p\in P_1}$ such that
$n_\p$ belongs to the image of the morphism $M_\p\rarrow N_\p$
for all but a finite subset of the ideals $\p\in P_1$.
 Then we have a pair of morphisms of two-term complexes of
$R$\+modules
$$\textstyle
 (M\to N)\lrarrow\bigl(\prod_{\p\in P_1} M_\p\to N^+\bigr)
 \llarrow
 \bigl(\bigoplus_{\p\in P_1}M_\p\to\bigoplus_{\p\in P_1}N_\p\bigr).
$$
 Here a natural map $N\rarrow N^+$ exists, because the image
of the composition $N\rarrow\coker(f)\rarrow\prod_\p\coker(f_\p)$
is contained in $\bigoplus_\p\coker(f_\p)$.

 We claim that both these morphisms of complexes are
quasi-isomorphisms.
 Indeed, the kernel of the morphism $\prod_\p M_\p\rarrow N^+$
is equal to $\prod_\p\ker(f_\p)$, which coincides with
$\bigoplus_\p\ker(f_\p)$ by assumption.
 On the other hand, the cokernel of the morphism $\prod_\p M_\p
\rarrow N^+$ is equal to $\bigoplus_\p\coker(f_\p)$ by
construction.
\end{proof}

\begin{rem} \label{derived-direct-sum-decomposition-remark}
 A much more general version of Theorem~\ref{complex-k-decomposition}
can be obtained by combining
Lemma~\ref{maximal-ideals-decomposition-lemma} with the result
of~\cite[Theorem~6.6(a)]{PMat}.
 In fact, any complex $C^\bu\in\D(R\modl)$ for which the complex
$S^{-1}C^\bu$ is acyclic is naturally isomorphic to the direct sum
$\bigoplus_{\p\in P_1} C^\bu_\p$ as an object of $\D(R\modl)$.
 Indeed, according to~\cite[Theorem~6.6(a) and Remark~6.8]{PMat},
the full subcategory $\D_{S\tors}(R\modl)$ of complexes with
$S$\+torsion cohomology modules in $\D(R\modl)$ is equivalent to
the derived category $\D(R\modl_{S\tors})$ of the abelian category
of $S$\+torsion $R$\+modules (where an $R$\+module $M$ is said
to be \emph{$S$\+torsion} if $S^{-1}M=0$).
 This holds for any multiplicative subset $S$ in a Noetherian
commutative ring $R$, because the $S$\+torsion in $R$ is always
bounded in the Noetherian case.
 Now, in the situation at hand, the abelian category of
$S$\+torsion $R$\+modules is equivalent to the Cartesian product
of the categories of $\p$\+torsion $R_\p$\+modules over
the prime ideals $\p\in P_1$, the equivalence being provided
by the direct sum decomposition of
Lemma~\ref{maximal-ideals-decomposition-lemma}.
 This argument was suggested to the author by the anonymous referee.
\end{rem}

\begin{lem} \label{semilocal}
 Suppose that $R$ is a semilocal ring.  Then \par
\textup{(a)} there exists an element $s\in S$ such that $s\in\m$
for every maximal ideal\/ $\m\in P_1$; \par
\textup{(b)} the radical $\sqrt{(s)}$ of the principal ideal generated
by~$s$ in $R$ is equal to the intersection\/ $\bigcap_{\m\in P_1}\m$;
\par
\textup{(c)} one has $S^{-1}R=R[s^{-1}]$.
\end{lem}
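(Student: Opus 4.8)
The plan is to extract all three assertions from standard commutative algebra, the only real input being prime avoidance combined with the disjointness $\Spec R = P_0\sqcup P_1$ and the fact that both $P_0$ and $P_1$ are finite sets of primes (the former because it is contained in the finite set of minimal primes of the Noetherian ring $R$, the latter because $R$ is semilocal).

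For part~(a), I would set $J=\bigcap_{\m\in P_1}\m$ (with the convention $J=R$ if $P_1=\varnothing$), a finite intersection. The first step is to check $J\not\subset\q$ for every $\q\in P_0$: if $J\subset\q$, then since $\q$ is prime and $J$ contains the product of the finitely many maximal ideals in $P_1$, some $\m\in P_1$ has $\m\subset\q$, hence $\m=\q$ by maximality of~$\m$; but $\m\in P_1$ and $\q\in P_0$ with $P_0\cap P_1=\varnothing$, a contradiction. By prime avoidance applied to the finite family $\{\q\}_{\q\in P_0}$, the ideal $J$ is then not contained in $\bigcup_{\q\in P_0}\q$, so any $s\in J\setminus\bigcup_{\q\in P_0}\q$ lies in $S$ and in every $\m\in P_1$.

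For part~(b), I would just observe that $\sqrt{(s)}$ is the intersection of all primes of $R$ containing~$s$; since $\Spec R=P_0\sqcup P_1$, and by construction $s$ lies in every $\m\in P_1$ and (because $s\in S$) in no $\q\in P_0$, the set of primes containing $s$ is exactly $P_1$, whence $\sqrt{(s)}=\bigcap_{\m\in P_1}\m$. For part~(c), I would argue with the universal property of localization: since $s\in S$, the map $R\to S^{-1}R$ inverts $s$ and so factors uniquely through $R[s^{-1}]$; conversely, the primes of $R[s^{-1}]$ are exactly $\q R[s^{-1}]$ for $\q\in P_0$ (the primes of $R$ not containing~$s$), and for $t\in S$ none of these contains~$t$, so $t$ lies in no maximal ideal of $R[s^{-1}]$ and is therefore a unit there (note $R[s^{-1}]\ne0$ when $R\ne0$, as $s$ is not nilpotent). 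Hence $R\to R[s^{-1}]$ inverts all of $S$ and factors uniquely through $S^{-1}R$, and the two factorizations are mutually inverse, giving $S^{-1}R=R[s^{-1}]$.

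I do not expect a genuine obstacle here; the argument is routine. The one point that wants care is the bookkeeping around primes that are simultaneously minimal and maximal (the isolated points of $\Spec R$): it is precisely the stipulation that each such prime has been placed in exactly one of $P_0$ and $P_1$, so that $P_0\cap P_1=\varnothing$, that makes the non-containment step in part~(a) go through and keeps the prime count in part~(c) honest.
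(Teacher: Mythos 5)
Your proof is correct and follows essentially the same route as the paper: prime avoidance (applied to $J=\bigcap_{\m\in P_1}\m$ rather than to each $\m\in P_1$ separately, a cosmetic difference) gives~(a), the description of $\sqrt{(s)}$ as the intersection of the primes containing~$s$ gives~(b), and (c) comes down to the same observation that every $t\in S$ becomes invertible in $R[s^{-1}]$, which the paper extracts from $s^n\in(t)$ while you extract it from the fact that $t$ avoids every prime of $R[s^{-1}]$. The only blemish is the parenthetical claim that $s$ cannot be nilpotent: in the degenerate case $P_0=\varnothing$ one has $0\in S$ and $S^{-1}R=0=R[s^{-1}]$, so that claim can fail, but it is also unnecessary, since an element lying in no maximal ideal is a unit in any commutative ring, the zero ring included.
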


\begin{proof}
 Part~(a): as we have already mentioned, by prime avoidance for every
maximal ideal $\m\in P_1$ there exists an element $s_\m\in S\cap\m$.
 Take $s=\prod_{\m\in P_1}s_\m$.
 Part~(b): the radical $\sqrt{(s)}$ is the intersection of all
the prime ideals $\p\in\Spec R$ containing~$s$.
 Now one has $s\notin\q$ for every $\q\in P_0$ and $s\in\m$ for
every $\m\in P_1$, so the assertion follows.
 Part~(c): for every $t\in S$, we have $\sqrt{(s)}\subset
\sqrt{(t)}$.
 Hence there exists $n\ge1$ for which $s^n\in(t)$.
 Thus inverting~$s$ in $R$ also gets~$t$ inverted.
\end{proof}

\begin{cor} \label{complex-r-s-minus-1-decomposition}
 For every element $s\in S$, the complex $R\rarrow R[s^{-1}]$ is
naturally isomorphic to the direct sum\/
$\bigoplus_{\m\in P(s)}K_\m^\bu$ in the derived category\/
$\D^\b(R\modl)$.
\end{cor}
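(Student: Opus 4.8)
The plan is to deduce this from Lemma~\ref{two-term-complexes-decomposition}, applied to the two-term complex $f\:R\rarrow R[s^{-1}]$ with $M=R$ and $N=R[s^{-1}]$. First I would verify the hypotheses. Since $s\in S$, the element~$s$ already acts invertibly in $S^{-1}R$, so the localized morphism $S^{-1}f$ is an isomorphism $S^{-1}R\rarrow S^{-1}R$; hence $S^{-1}\ker(f)=0=S^{-1}\coker(f)$. Moreover $\ker(f)$ is finitely generated (as $R$ is Noetherian), and $S^{-1}\ker(f)=0$, so by Lemma~\ref{maximal-ideals-decomposition-lemma} it is a \emph{finite} direct sum $\bigoplus_{\p\in P_1}\ker(f)_\p$; in particular $\ker(f)_\p=0$ for all but finitely many $\p\in P_1$. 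Thus Lemma~\ref{two-term-complexes-decomposition} will yield a natural isomorphism $(R\rarrow R[s^{-1}])\simeq\bigoplus_{\p\in P_1}\bigl(R_\p\rarrow R_\p[s^{-1}]\bigr)$ in $\D^\b(R\modl)$.

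The remaining task is to identify the summands. For $\p\in P_1$ with $s\notin\p$, the element~$s$ is a unit in the local ring $R_\p$, so $R_\p\rarrow R_\p[s^{-1}]$ is an isomorphism and the corresponding summand is acyclic, hence vanishes in $\D^\b(R\modl)$; these indices can be discarded. For $\p\in P(s)$ I would show that $R_\p[s^{-1}]=(S^{-1}R)_\p$ canonically as $R_\p$\+algebras, compatibly with the structure maps out of~$R_\p$, so that the summand $R_\p\rarrow R_\p[s^{-1}]$ becomes the complex $R_\p\rarrow(S^{-1}R)_\p$, which is exactly $K_\p^\bu$. To see the identification, observe that the primes of $R_\p$ are $\p R_\p$ together with the ideals $\q R_\p$ for the minimal primes $\q\in P_0$ contained in~$\p$; since $s\in S$ we have $s\notin\q R_\p$, while $s\in\p R_\p$, so $\sqrt{(s)}=\p R_\p$ in $R_\p$. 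Repeating the argument of Lemma~\ref{semilocal}(b)\+-(c) for the semilocal ring $R_\p$ then shows that inverting~$s$ inverts the whole image of~$S$ in $R_\p$; i.~e., $R_\p[s^{-1}]=S^{-1}R_\p=(S^{-1}R)_\p$. Combining this with the previous paragraph gives the desired natural isomorphism $(R\rarrow R[s^{-1}])\simeq\bigoplus_{\p\in P(s)}K_\p^\bu$ in $\D^\b(R\modl)$.

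I expect the main obstacle to be the bookkeeping in this last identification: checking carefully that the two localizations of $R_\p$ agree as $R_\p$\+algebras and that the identification is the canonical one (so that naturality is preserved), including the degenerate case in which a prime $\p\in P_1$ is also minimal, where both $R_\p[s^{-1}]$ and $(S^{-1}R)_\p$ are the zero ring and $K_\p^\bu$ is just $R_\p$ placed in cohomological degree~$-1$. Everything else is a routine application of Lemmas~\ref{two-term-complexes-decomposition}, \ref{maximal-ideals-decomposition-lemma}, and~\ref{semilocal}. An alternative, avoiding Lemma~\ref{semilocal} altogether, would be to note directly that $\Spec R_\p[s^{-1}]$ and $\Spec(S^{-1}R)_\p$ both consist precisely of the minimal primes of $R_\p$, so that both rings are the localization of $R_\p$ obtained by inverting every element outside the union of its minimal primes.
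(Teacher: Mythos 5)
Your proposal is correct and takes essentially the same route as the paper: apply Lemma~\ref{two-term-complexes-decomposition} to the complex $R\rarrow R[s^{-1}]$, discard the acyclic summands at $\m\notin P(s)$, and identify $R_\m[s^{-1}]$ with $S^{-1}R_\m$ via (the argument of) Lemma~\ref{semilocal}(c) applied to the local ring~$R_\m$. The only difference is that you spell out the hypothesis verification and the identification of localizations that the paper dispatches as ``clearly satisfied''.
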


\begin{proof}
 The conditions of Lemma~\ref{two-term-complexes-decomposition}
are clearly satisfied for the complex $R\rarrow R[s^{-1}]$, so
we have a natural isomorphism
$$
 (R\rarrow R[s^{-1}])\,\simeq\,\bigoplus\nolimits_{\m\in P_1}
 (R_\m\rarrow R_\m[s^{-1}]).
$$
in $\D^\b(R\modl)$.
 Now the complex $R_\m\rarrow R_\m[s^{-1}]$ is acyclic when $s\notin\m$,
so we are reduced to a direct sum over the finite set of ideals
$\m\in P(s)$.
 Finally, for $\m\in P(s)$ we have $R_\m[s^{-1}]=S^{-1}R_\m$ by
Lemma~\ref{semilocal}(c) applied to the local ring~$R_\m$; so
the complex $R_\m\rarrow R_\m[s^{-1}]$ is isomorphic to~$K_\m^\bu$.
\end{proof}

 A finite complex of $R$\+modules $M^\bu$ is said to have
\emph{projective dimension~$\le d$} if one has
$\Hom_{\D^\b(R\modl)}(M^\bu,N[n])=0$ for all $R$\+modules $N$
and all the integers $n>d$.
 We will denote the projective dimension of a finite complex
$M^\bu$ by $\pd_RM^\bu$.
 For a nonacyclic complex $M^\bu$, one has $\pd_R M^\bu\in
\Z\cup\{\infty\}$; and the projective dimension of
an acyclic complex is equal to~$-\infty$.

 The next result is an extension of the classical theory of
\emph{Matlis domains}~\cite[Section~2]{Mat1},
\cite[Section~IV.4]{FS} to Noetherian rings of Krull dimension~$1$.

\begin{cor} \label{s-minus-1-r-projective-dimension}
 One has \par
\textup{(a)} $\pd_R(S^{-1}R)\le 1$; \par
\textup{(b)} $\pd_R K^\bu\le 1$.
\end{cor}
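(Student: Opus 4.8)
Proof of Corollary~\ref{s-minus-1-r-projective-dimension}. The plan is to deduce both parts from the decomposition $K^\bu\simeq\bigoplus_{\p\in P_1}K^\bu_\p$ of Theorem~\ref{complex-k-decomposition}, which reduces everything to the single-element situation, where an explicit two-term free resolution is at hand.

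First I would prove part~(b). Fix $\p\in P_1$. As $\p$ is maximal, it cannot be contained in the union $\bigcup_{\q\in P_0}\q$ of the minimal primes (by prime avoidance it would then equal one of them, contradicting the partition $\Spec R=P_0\sqcup P_1$), so there is an element $s\in S$ with $s\in\p$; thus $\p\in P(s)$. By Corollary~\ref{complex-r-s-minus-1-decomposition}, $K^\bu_\p$ is then a direct summand, in $\D^\b(R\modl)$, of the complex $R\rarrow R[s^{-1}]$. By Remark~\ref{gamma-delta-zero-divisor-remark}, this complex is quasi-isomorphic to $T^\bu(R;s)[1]$, i.e.\ to a complex of free $R$\+modules concentrated in the cohomological degrees~$-1$ and~$0$; for any such complex $P^\bu$ and any $R$\+module $N$ one has $\Hom_{\D^\b(R\modl)}(P^\bu,N[n])=0$ for all $n\ge2$ (as $\Hom_R^\bu(P^\bu,N)$ is concentrated in degrees~$0$ and~$1$). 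Hence $\pd_R(R\rarrow R[s^{-1}])\le1$, and therefore $\pd_R K^\bu_\p\le1$, since projective dimension does not increase on passing to a direct summand. Finally, mapping out of a coproduct in $\D^\b(R\modl)$ turns it into a product, so $\Hom_{\D^\b(R\modl)}(K^\bu,N[n])\simeq\prod_{\p\in P_1}\Hom_{\D^\b(R\modl)}(K^\bu_\p,N[n])$; combined with Theorem~\ref{complex-k-decomposition} this gives $\pd_R K^\bu=\sup_{\p\in P_1}\pd_R K^\bu_\p\le1$.

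For part~(a) I would exploit that $K^\bu$ is, by definition, the cone of the localization morphism $R\rarrow S^{-1}R$ (with $R$ placed in cohomological degree~$-1$), which gives a distinguished triangle $R\rarrow S^{-1}R\rarrow K^\bu\rarrow R[1]$ in $\D^\b(R\modl)$. Applying $\Hom_{\D^\b(R\modl)}({-},N[n])$ for an arbitrary $R$\+module $N$ and an integer $n\ge2$: the term $\Hom_{\D^\b(R\modl)}(K^\bu,N[n])$ vanishes by part~(b), while $\Hom_{\D^\b(R\modl)}(R,N[n])=\Ext^n_R(R,N)=0$ trivially; exactness of the resulting long sequence forces $\Ext^n_R(S^{-1}R,N)=\Hom_{\D^\b(R\modl)}(S^{-1}R,N[n])=0$, so $\pd_R(S^{-1}R)\le1$.

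No step is genuinely hard once Theorem~\ref{complex-k-decomposition} is available; the only points needing a little care are the (possibly infinite) coproduct over $P_1$, which is harmless because $\Ext$ carries coproducts in the first argument to products, and the non-finite generation of $S^{-1}R$ as an $R$\+module, which causes no trouble precisely because the derived decomposition replaces $S^{-1}R$ by complexes each admitting an evident two-term free resolution. One could alternatively deduce~(a) at once from the fact that $S^{-1}R$ is a flat module over a Noetherian ring of Krull dimension~$\le1$, but the argument above keeps the exposition self-contained.
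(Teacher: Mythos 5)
Your argument is correct and is essentially the paper's own proof: both reduce part~(b) via Theorem~\ref{complex-k-decomposition} to the summands $K^\bu_\m$, realize each $K^\bu_\m$ as a direct summand of the complex $R\rarrow R[s^{-1}]$ for some $s\in S\cap\m$ using Corollary~\ref{complex-r-s-minus-1-decomposition}, and bound the projective dimension of that complex by~$1$ (the paper cites $\pd_R R=0$, $\pd_R R[s^{-1}]\le1$, equivalently your two-term free complex $T^\bu(R;s)[1]$), and both deduce~(a) from~(b) via the distinguished triangle $R\rarrow S^{-1}R\rarrow K^\bu\rarrow R[1]$. Your write-up merely makes explicit the routine points (prime avoidance for $S\cap\m\ne\varnothing$, $\Ext$ turning the coproduct over $P_1$ into a product) that the paper leaves implicit.
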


\begin{proof}
 Part~(a) follows from part~(b), because $\pd_RR=0$.
 To prove part~(b), in view of Theorem~\ref{complex-k-decomposition},
it suffices to show that the projective dimension of the complex of
$R$\+modules $K_\m^\bu$ does not exceed~$1$ for every ideal $\m\in P_1$.
 Choose an element $s\in S\cap\m$.
 By Corollary~\ref{complex-r-s-minus-1-decomposition}, the complex
$K_\m^\bu$ is a direct summand of the complex $R\rarrow R[s^{-1}]$
as an object of the derived category $\D^\b(R\modl)$.
 Finally, the projective dimension of $R\rarrow R[s^{-1}]$ does
not exceed~$1$, e.~g., because $\pd_RR=0$ and $\pd_RR[s^{-1}]\le1$
(cf.\ the discussion of the complex $T^\bu(R;s)$ in
Sections~\ref{functor-delta-s-first-secn}\+-%
\ref{functor-delta-I-second-secn}).
\end{proof}

 Let $A$ be a commutative ring and $T\subset A$ a multiplicative set.
 An $A$\+module $B$ is called \emph{$T$\+divisible} if it is
$t$\+divisible for every $t\in T$.
 The class of $T$\+divisible $A$\+modules is closed under
the passages to quotient objects, extensions, infinite direct sums,
and infinite products.
 An $A$\+module $B$ is said to be \emph{$T$\+reduced} if it has no
$T$\+divisible submodules.
 The class of $T$\+reduced $A$\+modules is closed under
subobjects, extensions, infinite direct sums, and infinite products.
 Clearly, every $A$\+module $B$ has a unique maximal $T$\+divisible
submodule $B_\div$, equal to the sum of all the $T$\+divisible
submodules in~$B$.
 The quotient module $B_\red=B/B_\div$ is $T$\+reduced; it is
the (unique) maximal $T$\+reduced quotient module of~$B$.

 We will be interested in $S$\+divisible $R$\+modules.
 The notation $B_\div$ and $B_\red$ for an $R$\+module $B$ will stand
for the maximal $S$\+divisible submodule and the maximal
$S$\+reduced quotient module of $B$, respectively.
 Notice that for every maximal ideal $\m\in P_1$, every
$\m$\+contramodule $R$\+module is $S$\+reduced (because
there exists $s\in S\cap\m$, and $\m$\+contramodules contain
no $s$\+divisible submodules).

 Part~(a) of the following theorem is a version of the classical theory
of \emph{$h$\+divisible modules}~\cite{Mat1}, \cite[Section~VII.2]{FS}
for Noetherian rings of Krull dimension~$1$.

\begin{thm} \label{S-divisible}
\textup{(a)} Every $S$\+divisible $R$\+module is a quotient module
of an $(S^{-1}R)$\+module. \hbadness=1250 \par
\textup{(b)} Every $S$\+divisible $R$\+module is cotorsion.
\end{thm}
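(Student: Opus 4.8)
The argument splits into two parts, with~\textup{(b)} deduced from~\textup{(a)}.

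\emph{Proof of~\textup{(b)}, granting~\textup{(a)}.}
 Since $S^{-1}R$ is Artinian, every flat $(S^{-1}R)$\+module is projective by
Lemma~\ref{idempotent-lifting-argument}; hence every $(S^{-1}R)$\+module is
a cotorsion $(S^{-1}R)$\+module, and therefore a cotorsion $R$\+module by
Lemma~\ref{restrict-scalars-cotorsion}.
 Given an $S$\+divisible $R$\+module $B$, part~\textup{(a)} provides a surjection
$V\rarrow B$ from an $(S^{-1}R)$\+module $V$; put $W=\ker(V\to B)$.
 For any flat $R$\+module $F$, the short exact sequence
$0\rarrow W\rarrow V\rarrow B\rarrow0$ yields an exact fragment
$\Ext^1_R(F,V)\rarrow\Ext^1_R(F,B)\rarrow\Ext^2_R(F,W)$ whose left-hand term
vanishes ($V$ is cotorsion) and whose right-hand term vanishes because
a flat module over a Noetherian ring of Krull dimension~$\le1$ has
projective dimension~$\le1$ (a classical theorem of Jensen and
Raynaud--Gruson).
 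Thus $\Ext^1_R(F,B)=0$, so $B$ is cotorsion.

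\emph{Proof of~\textup{(a)}: reduction.}
 Let $\Gamma_S(B)\subset B$ be the submodule of all $S$\+torsion elements
(i.e.\ the kernel of $B\to S^{-1}B$).
 It is again $S$\+divisible: if $s_0x=0$ and $ty=x$ with $s_0$, $t\in S$, then
$s_0t\in S$ kills~$y$.
 The quotient $\bar B=B/\Gamma_S(B)$ is $S$\+torsion-free and $S$\+divisible,
so $s$ acts invertibly on it for every $s\in S$ and $\bar B$ is a module
over $S^{-1}R$.
 Granting for the moment that $\Gamma_S(B)$ is a quotient of a free
$(S^{-1}R)$\+module, say $\Gamma_S(B)=V_1/W_1$, choose a surjection
$\tilde V\rarrow\bar B$ from a free $(S^{-1}R)$\+module and form the pullback
$P=\tilde V\times_{\bar B}B$, which gives a short exact sequence
$0\rarrow\Gamma_S(B)\rarrow P\rarrow\tilde V\rarrow0$ with $P\rarrow B$
surjective.
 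This sequence splits over~$R$, because $\Ext^1_R(\tilde V,\Gamma_S(B))$ is
a product of copies of $\Ext^1_R(S^{-1}R,\Gamma_S(B))$, and in the exact
fragment $\Ext^1_R(S^{-1}R,V_1)\rarrow\Ext^1_R(S^{-1}R,\Gamma_S(B))\rarrow
\Ext^2_R(S^{-1}R,W_1)$ the left term vanishes ($S^{-1}R$ is flat, $V_1$ is
cotorsion) and the right term vanishes since $\pd_R(S^{-1}R)\le1$ by
Corollary~\ref{s-minus-1-r-projective-dimension}(a).
 Hence $P\simeq\Gamma_S(B)\oplus\tilde V=(V_1\oplus\tilde V)/W_1$ is a quotient
of an $(S^{-1}R)$\+module, and so is its quotient~$B$.
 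As a direct sum of quotients of free $(S^{-1}R)$\+modules is again one,
Lemma~\ref{maximal-ideals-decomposition-lemma} now reduces everything to
showing: for $\p\in P_1$, a $\p$\+torsion, $S$\+divisible $R_\p$\+module $M$
is a quotient of a free $(S^{-1}R)$\+module.

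\emph{Proof of~\textup{(a)}: the local case.}
 Fix $s\in S\cap\p$.
 Then $R_\p[s^{-1}]=S^{-1}R\ot_R R_\p$ is an $(S^{-1}R)$\+module
(Lemma~\ref{semilocal}(c) applied to~$R_\p$), the operator $s$ acts
surjectively on $M$ ($M$ is $S$\+divisible), and $M$ is $s$\+torsion (if
$\p^nx=0$ then $s^nx=0$ as $s^n\in\p^n$).
 I claim $\Ext^1_{R_\p}(R_\p[s^{-1}]/R_\p,\.M)=0$.
 Using $R_\p[s^{-1}]/R_\p=\varinjlim_{n\ge1}R_\p/s^nR_\p$ and the spectral
sequence $\varprojlim\nolimits^p_{n\ge1}\Ext^q_{R_\p}(R_\p/s^nR_\p,M)\Rightarrow
\Ext^{p+q}_{R_\p}(R_\p[s^{-1}]/R_\p,M)$, it is enough to check that
$\Ext^1_{R_\p}(R_\p/s^nR_\p,M)=0$ for all~$n$ and that
$\varprojlim\nolimits^1_{n\ge1}\Hom_{R_\p}(R_\p/s^nR_\p,M)=0$.
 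For the former, the sequence $0\rarrow s^nR_\p\rarrow R_\p\rarrow
R_\p/s^nR_\p\rarrow0$ together with $s^nR_\p\simeq R_\p/\Ann_{R_\p}(s^n)$
identifies $\Ext^1_{R_\p}(R_\p/s^nR_\p,M)$ with
$\bigl({}_{\Ann(s^n)}M\bigr)/s^nM$, which is zero since $s^nM=M$ while
${}_{\Ann(s^n)}M\subseteq M$.
 For the latter, $\Hom_{R_\p}(R_\p/s^nR_\p,M)={}_{s^n}M$ with transition maps
equal to multiplication by~$s$, which are surjective (again by
$s$\+divisibility), and a surjective inverse system indexed by the positive
integers has vanishing $\varprojlim^1$.
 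Consequently $\Ext^1_{R_\p}\bigl((R_\p[s^{-1}]/R_\p)^{(M)},\.M\bigr)=0$;
applying $\Hom_{R_\p}({-},M)$ to $0\rarrow R_\p^{(M)}\rarrow R_\p[s^{-1}]^{(M)}
\rarrow(R_\p[s^{-1}]/R_\p)^{(M)}\rarrow0$ shows that the tautological surjection
$R_\p^{(M)}\rarrow M$ extends to a (still surjective) homomorphism
$R_\p[s^{-1}]^{(M)}\rarrow M$.
 So $M$ is a quotient of a free $(S^{-1}R)$\+module, and the proof is complete.

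\emph{Main obstacle.}
 The crux is the last step: passing from the global multiplicative set $S$,
whose poset of principal ideals may be badly uncountable (so that naive
$\varprojlim^1$\+vanishing fails), to a single element~$s$ --- this is exactly
what forces the use of Lemma~\ref{maximal-ideals-decomposition-lemma} and the
localization at~$\p$.
 The other delicate point is the computation $\Ext^1_{R_\p}(R_\p/s^nR_\p,M)=0$
with no nonzero-divisor hypothesis on~$s$ (which is necessary, as $s$ can be
a zero-divisor when $\p\in P_a$).
 The only input taken from outside the present paper is the bound $\pd_RF\le1$
for a flat module $F$ over a one-dimensional Noetherian ring, used in
part~\textup{(b)}.
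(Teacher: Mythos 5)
Your route is genuinely different from the paper's. The paper applies $\Hom_{\D^\b(R\modl)}({-},B[n])$ to the triangle $R\rarrow S^{-1}R\rarrow K^\bu\rarrow R[1]$, uses Theorem~\ref{complex-k-decomposition} and Lemma~\ref{tor-ext-torsion-contra-lem} to see that $\Hom_{\D^\b(R\modl)}(K^\bu,B[1])$ is a product of $\m$\+contramodules and hence $S$\+reduced, so the connecting map from the $S$\+divisible module $B$ vanishes and $\Hom_R(S^{-1}R,B)\rarrow B$ is surjective; part~(b) then follows at once because $B$ is the cokernel of an injective morphism between the cotorsion modules $\Hom_{\D^\b(R\modl)}(K^\bu,B)$ and $\Hom_R(S^{-1}R,B)$, with no projective dimension input. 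You instead d\'evissage along $0\rarrow\Gamma_S(B)\rarrow B\rarrow\bar B\rarrow0$, treat the $S$\+torsion part locally at each $\m\in P_1$ by an explicit $\Ext$ computation (a Milnor $\varprojlim\nolimits^1$ sequence plus $s$\+divisibility), and glue via Lemma~\ref{maximal-ideals-decomposition-lemma}, $\pd_R(S^{-1}R)\le1$ and cotorsionness of $(S^{-1}R)$\+modules; this is more elementary for~(a), but your proof of~(b) invokes the Raynaud--Gruson bound $\pd_RF\le1$ for flat $F$, an external and difficult result that the paper deliberately avoids (and in fact recovers, in the Krull dimension~$1$ case, as a consequence of Theorem~\ref{weakly-cotorsion}).

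There is, however, one concrete flaw, and it sits exactly in the zero-divisor case you flagged as delicate. When $\p\in P_a$, the element $s\in S\cap\p$ is a zero-divisor in $R_\p$, so the map $R_\p\rarrow R_\p[s^{-1}]$ is not injective and your sequence $0\rarrow R_\p^{(M)}\rarrow R_\p[s^{-1}]^{(M)}\rarrow(R_\p[s^{-1}]/R_\p)^{(M)}\rarrow0$ is not exact as written; the correct sequence is $0\rarrow (R_\p/H_\p)^{(M)}\rarrow R_\p[s^{-1}]^{(M)}\rarrow Q^{(M)}\rarrow0$, where $H_\p\subset R_\p$ is the $s$\+power torsion and $Q=\coker(R_\p\to R_\p[s^{-1}])\simeq\varinjlim_{n\ge1}R_\p/s^nR_\p$ (your computation $\Ext^1_{R_\p}(Q,M)=0$ is fine, since it only uses this colimit description of the cokernel). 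Consequently the vanishing of $\Ext^1_{R_\p}(Q^{(M)},M)$ only lets you extend homomorphisms defined on $(R_\p/H_\p)^{(M)}$, so you must additionally check that the tautological surjection $R_\p^{(M)}\rarrow M$ kills $H_\p^{(M)}$, i.e.\ that $H_\p M=0$. Fortunately this is automatic: if $h\in H_\p$ with $s^nh=0$ and $m\in M$, then $s$\+divisibility gives $m=s^nm'$, whence $hm=(s^nh)m'=0$. With this observation inserted the local step goes through. Note also that the module $R_\p[s^{-1}]^{(M)}$ you produce is free over $R_\p[s^{-1}]=S^{-1}R_\p$, not over $S^{-1}R$; but being an $(S^{-1}R)$\+module is all that your splitting argument (cotorsionness of $V_1$ over $R$ together with $\pd_R(S^{-1}R)\le1$ from Corollary~\ref{s-minus-1-r-projective-dimension}(a)) actually uses, so that slip is harmless.
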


\begin{proof}
 Let $B$ be an $S$\+divisible $R$\+module.
 There is a natural distinguished triangle
$$
 R\lrarrow S^{-1}R\lrarrow K^\bu\lrarrow R[1]
$$
in the derived category $\D^\b(R\modl)$.
 Applying the functor $\Hom_{\D^\b(R\modl)}({-},B[*])$, we get
a fragment of the long exact sequence
\begin{multline*}
 \dotsb\lrarrow0\lrarrow \Hom_{\D^\b(R\modl)}(K^\bu,B)\lrarrow
 \Hom_R(S^{-1}R,B)\\ \lrarrow B\lrarrow
 \Hom_{\D^\b(R\modl)}(K^\bu,B[1])\lrarrow\dotsb
\end{multline*}

 According to Theorem~\ref{complex-k-decomposition}, we have
$K^\bu\simeq\bigoplus_{\m\in P_1}K^\bu_\m$ in $\D^\b(R\modl)$.
 By the last assertion of
Lemma~\ref{maximal-ideals-decomposition-lemma}, the cohomology
modules $H^{-1}(K^\bu_\m)$ and $H^0(K^\bu_\m)$ are $\m$\+torsion.
 Applying Lemma~\ref{tor-ext-torsion-contra-lem}, we conclude
that for every $n\in\Z$ the $R$\+module
$\Hom_{\D^\b(R\modl)}(K^\bu,B[n])$ is a product of $\m$\+contramodule
$R$\+modules over the maximal ideals $\m\in P_1$.

 In particular, the $R$\+module $\Hom_{\D^\b(R\modl)}(K^\bu,B[1])$ is
$S$\+reduced.
 Since $B$ is $S$\+divisible, it follows that the morphism
$B\rarrow\Hom_{D^\b(R\modl)}(K^\bu,B[1])$ vanishes and the morphism
$\Hom_R(S^{-1}R,B)\rarrow B$ is surjective.
 We have proved part~(a).

 Furthermore, every $(S^{-1}R)$\+module is cotorsion, because
$S^{-1}R$ is an Artinian ring.
 By Lemma~\ref{restrict-scalars-cotorsion}, every $(S^{-1}R)$\+module
is also a cotorsion $R$\+module.
 By Theorem~\ref{noetherian-maximal-ideal-cotorsion},
the $R$\+module $\Hom_{\D^\b(R\modl)}(K^\bu,B)$ is cotorsion, too.
 Since the class of cotorsion $R$\+modules is closed under
the cokernels of injective morphisms, it follows that
the $R$\+module $B$ is cotorsion.
\end{proof}

 Let us call an $R$\+module $C$ \emph{weakly cotorsion} (or
\emph{Matlis cotorsion}~\cite{Mat}) if $\Ext^1_R(S^{-1}R,C)=0$.
 Since the $R$\+module $S^{-1}R$ is flat, every cotorsion $R$\+module
is weakly cotorsion.
 Part~(b) of the next theorem provides the inverse implication,
extending to Noetherian rings of Krull dimension~$1$ the theory
of \emph{almost perfect domains} of Bazzoni and
Salce~\cite[Section~4]{BS}.

\begin{thm} \label{weakly-cotorsion}
\textup{(a)} The class of weakly cotorsion $R$\+modules is closed
under quotients (that is any quotient $R$\+module of a weakly
cotorsion $R$\+module is weakly cotorsion). \par
\textup{(b)} The classes of cotorsion $R$\+modules and
weakly cotorsion $R$\+modules coincide. \par
\textup{(c)} An $R$\+module $C$ is cotorsion if and only if
the $R$\+module $C_\red$ is cotorsion.
\end{thm}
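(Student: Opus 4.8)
To prove part~(a) I would only need that $\pd_R(S^{-1}R)\le1$ by Corollary~\ref{s-minus-1-r-projective-dimension}(a). Given a short exact sequence $0\rarrow C\rarrow D\rarrow E\rarrow0$, the fragment $\Ext^1_R(S^{-1}R,D)\rarrow\Ext^1_R(S^{-1}R,E)\rarrow\Ext^2_R(S^{-1}R,C)$ of the long exact sequence has vanishing outer terms whenever $D$ is weakly cotorsion, so $E$ is weakly cotorsion; hence the class of weakly cotorsion modules is closed under quotients.

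The inclusion ``cotorsion $\Rightarrow$ weakly cotorsion'' in part~(b) is immediate since $S^{-1}R$ is a flat $R$\+module. For the converse — the main point — and for part~(c), the plan is to pass to the maximal $S$\+reduced quotient. Given a weakly cotorsion $R$\+module $C$, the submodule $C_\div$ is $S$\+divisible, hence cotorsion by Theorem~\ref{S-divisible}(b); and $C_\red=C/C_\div$ is a quotient of $C$, hence weakly cotorsion by part~(a). Since cotorsion modules are closed under extensions, it will suffice to prove that the $S$\+reduced weakly cotorsion module $C_\red$ is cotorsion. The same circle of ideas — a quotient of a cotorsion (hence weakly cotorsion) module is weakly cotorsion by~(a), then apply the converse in~(b) to it — also settles part~(c): if $C$ is cotorsion then $C_\red$ is cotorsion, and conversely if $C_\red$ is cotorsion then $C$ is cotorsion as an extension of $C_\red$ by $C_\div$. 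Crucially, no splitting of $0\rarrow C_\div\rarrow C\rarrow C_\red\rarrow0$ is needed.

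So the heart of the matter is: an $S$\+reduced weakly cotorsion $R$\+module $C$ is cotorsion. Here I would first note that $\Hom_R(S^{-1}R,C)=0$, because the image of any homomorphism $S^{-1}R\rarrow C$ is an $S$\+divisible submodule of the $S$\+reduced module $C$, hence zero. Together with $\Ext^1_R(S^{-1}R,C)=0$ and $\pd_R(S^{-1}R)\le1$ this gives $\boR\Hom_R(S^{-1}R,C)=0$ in $\D(R\modl)$. Applying $\boR\Hom_R({-},C)$ to the distinguished triangle $R\rarrow S^{-1}R\rarrow K^\bu\rarrow R[1]$ (as in the proof of Theorem~\ref{S-divisible}) then yields $\boR\Hom_R(K^\bu,C)\simeq C[-1]$. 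Next I would invoke Theorem~\ref{complex-k-decomposition} to write $K^\bu\simeq\bigoplus_{\p\in P_1}K^\bu_\p$ in $\D^\b(R\modl)$, so that
$$
 \Hom_{\D(R\modl)}(K^\bu,C[n]) = \prod\nolimits_{\p\in P_1}
 \Hom_{\D(R\modl)}(K^\bu_\p,C[n]) \qquad\text{for all } n\in\Z,
$$
while the left-hand side equals $C$ for $n=1$ and vanishes for $n\ne1$. A product of abelian groups is zero only if each factor is, so $\Hom_{\D(R\modl)}(K^\bu_\p,C[n])=0$ for $n\ne1$ and
$$
 C\simeq\prod\nolimits_{\p\in P_1}C_\p, \qquad
 C_\p:=\Hom_{\D(R\modl)}(K^\bu_\p,C[1]).
$$
By Lemma~\ref{maximal-ideals-decomposition-lemma} the cohomology modules of $K^\bu_\p$ are $\p$\+torsion, so Lemma~\ref{tor-ext-torsion-contra-lem}(b) shows that each $C_\p$ is a $\p$\+contramodule $R$\+module. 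By Theorem~\ref{noetherian-maximal-ideal-cotorsion} every $C_\p$ is a cotorsion $R$\+module, and cotorsion modules are closed under infinite products, so $C$ is cotorsion, as desired.

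As for where the work sits: the $\Ext$\+bookkeeping in~(a) and the final assembly are routine, and the genuinely load-bearing inputs are Theorem~\ref{S-divisible}(b) (disposing of $C_\div$) together with the derived decomposition of Theorem~\ref{complex-k-decomposition} and the torsion-contramodule duality of Lemma~\ref{tor-ext-torsion-contra-lem} (reducing an $S$\+reduced weakly cotorsion module to a product of $\p$\+contramodules over the maximal ideals). The subtle point I would be most careful about is organizing the reduction to $C_\red$ so that it never uses a splitting of $0\rarrow C_\div\rarrow C\rarrow C_\red\rarrow0$, which may fail here since $S$\+divisible modules over a singular one-dimensional ring need not be injective; closedness of the cotorsion class under extensions is exactly what rescues the argument in that spot.
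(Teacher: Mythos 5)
Your proposal is correct and takes essentially the same route as the paper: both rest on the triangle $R\rarrow S^{-1}R\rarrow K^\bu\rarrow R[1]$, the decomposition of Theorem~\ref{complex-k-decomposition}, Lemma~\ref{tor-ext-torsion-contra-lem}, Theorem~\ref{noetherian-maximal-ideal-cotorsion}, and Theorem~\ref{S-divisible}(b), with closure of cotorsion modules under products and extensions. The only (harmless) difference is organizational: the paper reads parts~(b) and~(c) directly off the four-term exact sequence and the hereditariness of the cotorsion pair, whereas you first split off $C_\div$ via extension-closedness and then, in the $S$\+reduced case, obtain the cleaner isomorphism $C\simeq\prod_{\m\in P_1}\Delta_\m(C)$ (which is essentially Corollary~\ref{cotorsion-modules-described}).
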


\begin{proof}
 Part~(a) follows immediately from
Corollary~\ref{s-minus-1-r-projective-dimension}(a).
 To prove part~(b), let $C$ be a weakly cotorsion $R$\+module.
 Consider another fragment of the long exact sequence from
the proof of Theorem~\ref{S-divisible}
\begin{multline*}
 \dotsb\lrarrow\Hom_R(S^{-1}R,C)\lrarrow C \\ \lrarrow
 \Hom_{\D^\b(R\modl)}(K^\bu,C[1])\lrarrow\Ext^1_R(S^{-1}R,C)
 \lrarrow\dotsb
\end{multline*}
 Since $\Ext^1_R(S^{-1}R,C)=0$, the $R$\+module $C$ is an extension
of the $R$\+module $\Hom_{\D^\b(R\modl)}(K^\bu,C)$, which is a product
of $\m$\+contramodule $R$\+modules over the maximal ideals
$\m\in P_1$, and a quotient module of an $(S^{-1}R$)\+module
$\Hom_R(S^{-1}R,C)$.
 By Theorem~\ref{noetherian-maximal-ideal-cotorsion},
every product of $\m$\+contramodule $R$\+modules is cotorsion.
 By Theorem~\ref{S-divisible}(b), every quotient $R$\+module of
an $(S^{-1}R)$\+module is cotorsion (being clearly
$S$\+divisible).
 The assertion follows.

 Part~(c) holds because the $R$\+module $C_\div$ is always cotorsion
by Theorem~\ref{S-divisible}(b), and the class of cotorsion
$R$\+modules is closed under extensions and the cokernels of
injective morphisms.
\end{proof}

\begin{rem}
 By the famous result of Raynaud--Gruson~\cite[Corollaire~II.3.3.2]{RG},
the projective dimension of any flat module over a Noetherian
commutative ring $R$ does not exceed the Krull dimension of~$R$.
 This covers the result of our
Corollary~\ref{s-minus-1-r-projective-dimension}(a), and also
implies that the class of cotorsion $R$\+modules is closed
under quotients.
 We prefer not to use the difficult result of~\cite{RG} here, but
rather to have a self-contained exposition in our generality.
 In fact, we have proved in Theorem~\ref{weakly-cotorsion}(a\+b)
that the class of cotorsion $R$\+modules is closed under quotients,
and one can easily deduce from this the assertion that
the projective dimension of every flat $R$\+module does not
exceed~$1$.
 So we have obtained an independent proof of the Krull dimension~$1$
case of the Raynaud--Gruson theorem with our methods.

 On the other hand, the assertions of Theorem~\ref{S-divisible}(a\+b),
Theorem~\ref{weakly-cotorsion}(b), and
Corollaries~\ref{cotorsion-modules-described}\+-%
\ref{reduced-cotorsion-modules} below
do not seem to follow from known results.
\end{rem}

 Now we can deduce a corollary promised at the end of
Section~\ref{covers-envelopes-secn}.

\begin{cor}
 Let $R$ be a Noetherian ring with finite spectrum.
 Then the classes of cotorsion $R$\+modules and contraadjusted
$R$\+modules coincide.
 The classes of flat $R$\+modules and very flat $R$\+modules
coincide.
\end{cor}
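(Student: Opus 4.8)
The plan is to reduce the statement to the running setting of this section and then to quote Theorem~\ref{weakly-cotorsion}(b). First I would observe that a Noetherian ring with finite spectrum automatically has Krull dimension~$\le1$: if some prime~$\p$ had height~$\ge2$, then $R_\p$ would be Noetherian local of dimension~$\ge2$, and, passing to $R_\p$ modulo a minimal prime contained in~$\p$, one would obtain a Noetherian local domain of dimension~$\ge2$, which has infinitely many height-one primes (by prime avoidance and Krull's principal ideal theorem), contradicting finiteness of~$\Spec R$. Thus the theory developed in this section applies. Moreover $R$ has only finitely many maximal ideals, hence is semilocal, so Lemma~\ref{semilocal}(c) provides an element $s\in S$ with $S^{-1}R=R[s^{-1}]$.

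Next I would prove that the classes of cotorsion and contraadjusted $R$-modules coincide. One inclusion is trivial and valid over any commutative ring: for every $t\in R$ the localization $R[t^{-1}]$ is a flat $R$-module, so $\Ext^1_R(R[t^{-1}],C)=0$ whenever $C$ is cotorsion; hence every cotorsion $R$-module is $t$-contraadjusted for all $t\in R$, that is, contraadjusted. For the converse, let $C$ be a contraadjusted $R$-module. Then in particular $C$ is $s$-contraadjusted, i.e.\ $\Ext^1_R(R[s^{-1}],C)=0$; since $R[s^{-1}]=S^{-1}R$, this says precisely that $C$ is weakly cotorsion. By Theorem~\ref{weakly-cotorsion}(b), $C$ is cotorsion. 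So $R\modl_\ctaa=R\modl_\cot$.

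Finally I would deduce the statement about flat and very flat modules purely formally. By Example~\ref{flat-cotorsion-example} the pair $(R\modl_\fl,R\modl_\cot)$ is a cotorsion theory, and by Corollary~\ref{very-flat-cotorsion-theory} so is the pair $(R\modl_\vfl,R\modl_\ctaa)$; in particular $R\modl_\fl={}^\perp(R\modl_\cot)$ and $R\modl_\vfl={}^\perp(R\modl_\ctaa)$. Using $R\modl_\ctaa=R\modl_\cot$ established above, we obtain $R\modl_\vfl={}^\perp(R\modl_\ctaa)={}^\perp(R\modl_\cot)=R\modl_\fl$.

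The real input is Theorem~\ref{weakly-cotorsion}(b) (which rests on the decomposition of $K^\bu$ in Theorem~\ref{complex-k-decomposition} and on Theorem~\ref{noetherian-maximal-ideal-cotorsion}); everything in the present argument is either a standard dimension-theoretic observation or a formal manipulation of cotorsion pairs. Accordingly, I do not expect a genuine obstacle here beyond correctly carrying out the reduction to a semilocal ring of Krull dimension~$\le1$, so that $S^{-1}R$ becomes a principal localization and the criterion ``$\Ext^1_R(S^{-1}R,C)=0$'' is subsumed by contraadjustedness.
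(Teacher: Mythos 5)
Your proposal is correct and follows essentially the same route as the paper: reduce to Krull dimension~$\le1$, use semilocality and Lemma~\ref{semilocal} to get $S^{-1}R=R[s^{-1}]$, so contraadjusted implies weakly cotorsion, then invoke Theorem~\ref{weakly-cotorsion}(b), with flat${}={}$very flat following formally from the two cotorsion pairs. The only cosmetic difference is that you prove the dimension bound directly (the paper cites Kaplansky) and spell out the formal $\perp$-argument that the paper leaves implicit.
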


\begin{proof}
 As explained in~\cite[Theorem~144]{Kap} (cf.~\cite[Lemma~2.10]{ST}),
any Noetherian ring with finite spectrum has Krull dimension~$\le1$.
 Hence Theorem~\ref{weakly-cotorsion}(b) applies, and it suffices to
prove that the $R$\+module $S^{-1}R$ is very flat.
 The ring $R$ being semilocal, it remains to use
Lemma~\ref{semilocal}(a,c) in order to show that there exists
an element $s\in S$ such that $S^{-1}R=R[s^{-1}]$.
\end{proof}

\begin{cor} \label{cotorsion-modules-described}
 An $R$\+module $C$ is cotorsion if and only if it can be included
into a short exact sequence
$$
 0\lrarrow D\lrarrow C\lrarrow\prod\nolimits_{\m\in P_1} C_\m\lrarrow0
$$
where $D$ is an $S$\+divisible $R$\+module and $C_\m$ are\/
$\m$\+contramodule $R$\+modules.
 Both the short exact sequence and the direct product decomposition
of the rightmost term are uniquely defined and depend functorially
on a cotorsion $R$\+module~$C$.
\end{cor}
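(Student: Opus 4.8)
The plan is to combine the structural input of Theorem~\ref{weakly-cotorsion}(b) (cotorsion $=$ weakly cotorsion) with the long exact sequence already exhibited in the proof of Theorem~\ref{weakly-cotorsion}, and then to unwind the ``product of $\m$\+contramodules over $\m\in P_1$'' via the torsion-contramodule duality of Lemma~\ref{tor-ext-torsion-contra-lem} applied to the decomposition $K^\bu\simeq\bigoplus_{\m\in P_1}K^\bu_\m$ from Theorem~\ref{complex-k-decomposition}. First I would establish the ``only if'' direction. Let $C$ be cotorsion, hence weakly cotorsion, so $\Ext^1_R(S^{-1}R,C)=0$. From the distinguished triangle $R\to S^{-1}R\to K^\bu\to R[1]$ we get, as in the proof of Theorem~\ref{weakly-cotorsion}(b), a short exact sequence
$$
 0\lrarrow \coker\bigl(\Hom_R(S^{-1}R,C)\to C\bigr)\lrarrow
 \Hom_{\D^\b(R\modl)}(K^\bu,C[1])\lrarrow\Ext^1_R(S^{-1}R,C)=0.
$$
Setting $D=\im(\Hom_R(S^{-1}R,C)\to C)$, this is an $S$\+divisible $R$\+submodule (a quotient of an $(S^{-1}R)$\+module), and $C/D\simeq\Hom_{\D^\b(R\modl)}(K^\bu,C[1])$. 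Since $H^{-1}(K^\bu_\m)$ and $H^0(K^\bu_\m)$ are $\m$\+torsion by Lemma~\ref{maximal-ideals-decomposition-lemma}, Lemma~\ref{tor-ext-torsion-contra-lem}(b) applied to each summand $K^\bu_\m$ shows $\Hom_{\D^\b(R\modl)}(K^\bu_\m,C[1])$ is an $\m$\+contramodule; taking $C_\m$ to be this module, the direct sum decomposition $K^\bu\simeq\bigoplus_\m K^\bu_\m$ turns the $\Hom$ into the product $\prod_{\m\in P_1}C_\m$. This produces the required short exact sequence.

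Next the ``if'' direction: given such a sequence, $D$ is cotorsion by Theorem~\ref{S-divisible}(b), each $C_\m$ is cotorsion by Theorem~\ref{noetherian-maximal-ideal-cotorsion}, hence $\prod_\m C_\m$ is cotorsion (closure under products), and $C$ is an extension of a cotorsion module by a cotorsion module, hence cotorsion by closure under extensions. For uniqueness and functoriality, I would identify $D$ canonically: since each $C_\m$ is $S$\+reduced (there is $s\in S\cap\m$ and $\m$\+contramodules have no $s$\+divisible submodules), so is the product $\prod_\m C_\m$; therefore any $S$\+divisible submodule of $C$ maps to zero in $\prod_\m C_\m$, which forces $D=C_\div$ (the maximal $S$\+divisible submodule). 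Thus $D$ and the quotient $C/D=C_\red$ are canonical, and the whole sequence is the functorial sequence $0\to C_\div\to C\to C_\red\to 0$. The individual factors $C_\m$ are then recovered from $C_\red=\prod_\m C_\m$ by localization at $\m$, using Lemma~\ref{hom-maximal-ideals-contramodules} to see that an $\m$\+contramodule is a module over $R_\m$ and that $\Hom_R$ between products over distinct maximal ideals into a single $\m$\+contramodule factor collapses, so the product decomposition is uniquely determined.

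The main obstacle I anticipate is pinning down the identification $C/D\simeq\prod_{\m\in P_1}C_\m$ cleanly and canonically — that is, checking that the abstract isomorphism $\Hom_{\D^\b(R\modl)}(K^\bu,C[1])\simeq\prod_\m\Hom_{\D^\b(R\modl)}(K^\bu_\m,C[1])$ coming from Theorem~\ref{complex-k-decomposition} is compatible with the maps from $C$, so that the resulting extension is genuinely the functorial sequence $0\to C_\div\to C\to C_\red\to 0$ rather than merely an extension of the right abstract shape. Once functoriality of the triangle $R\to S^{-1}R\to K^\bu$ in the $C$-variable is invoked, this should reduce to the observation above that $\prod_\m C_\m$ is $S$\+reduced while $D$ is $S$\+divisible, forcing $D=C_\div$; everything else is routine bookkeeping with the closure properties of the relevant classes and with Lemma~\ref{hom-maximal-ideals-contramodules}.
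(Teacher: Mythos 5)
Your proposal is correct and takes essentially the same route as the paper: the paper's proof likewise extracts the short exact sequence from the long exact sequence in the proof of Theorem~\ref{weakly-cotorsion} (using Theorem~\ref{complex-k-decomposition} and Lemma~\ref{tor-ext-torsion-contra-lem}), identifies $D=C_\div$ because products of $\m$\+contramodules are $S$\+reduced, and gets uniqueness of the product decomposition from Lemma~\ref{hom-maximal-ideals-contramodules}. Only your passing suggestion to recover the factors ``by localization at~$\m$'' is shaky (localizing an infinite product need not isolate a factor); the Hom-vanishing argument you also give is the correct mechanism and is what the paper uses.
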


\begin{proof}
 We have already seen in the proof of Theorem~\ref{weakly-cotorsion}
that every cotorsion $R$\+module can be included into a such
exact sequence.
 The exact sequence is unique, because products of $\m$\+cotorsion
$R$\+modules are $S$\+reduced, so $D=C_\div$.
 The direct product decomposition is unique by
Lemma~\ref{hom-maximal-ideals-contramodules}.
\end{proof}

 We denote by $\C_R\subset R\modl$ the full subcategory of
$S$\+reduced cotorsion $R$\+modules.

\begin{cor} \label{reduced-cotorsion-modules}
\textup{(a)} The full subcategory\/ $\C_R$ is closed under the kernels,
cokernels, extensions, and infinite products in $R\modl$.
 In particular, $\C_R$ is an abelian category and its embedding\/
$\C_R\rarrow R\modl$ is an exact functor. \par
\textup{(b)} The functor\/ $(C_\m)_{\m\in P_1}\longmapsto
\prod_\m C_\m$ establishes an equivalence between the Cartesian
product of the abelian categories $R\modl_{\m\ctra}$ of\/
$\m$\+contramodule $R$\+modules, taken over the maximal ideals\/
$\m\in P_1$ of the ring $R$, and the category\/~$\C_R$.
\end{cor}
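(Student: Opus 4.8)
The plan is to identify $\C_R$ with the $\Ext^{0,1}$-orthogonality class of the single $R$-module $S^{-1}R$ and then read off both parts from the structural results already established in this section; the argument will parallel the abelian-group case, Corollary~\ref{reduced-cotorsion-groups}. First I would record the description
$$
\C_R = \{\,C\in R\modl \mid \Hom_R(S^{-1}R,C)=0=\Ext^1_R(S^{-1}R,C)\,\}.
$$
Here the vanishing of $\Ext^1_R(S^{-1}R,C)$ is weak cotorsionness, which coincides with cotorsionness by Theorem~\ref{weakly-cotorsion}(b). For the other half I would verify that an $R$-module $C$ is $S$-reduced if and only if $\Hom_R(S^{-1}R,C)=0$: the image of any $R$-module homomorphism $S^{-1}R\to C$ is a cyclic $(S^{-1}R)$-module, hence an $S$-divisible submodule of $C$, so this $\Hom$ vanishes when $C$ is $S$-reduced; conversely, a nonzero $S$-divisible submodule $D\subseteq C$ is a quotient of some free $(S^{-1}R)$-module $\bigoplus_i S^{-1}R$ by Theorem~\ref{S-divisible}(a), and the resulting nonzero $R$-module map $\bigoplus_i S^{-1}R\to C$ would be a nonzero element of $\prod_i\Hom_R(S^{-1}R,C)$, a contradiction. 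Part~(a) is then immediate: by Corollary~\ref{s-minus-1-r-projective-dimension}(a) one has $\pd_R(S^{-1}R)\le1$, so Theorem~\ref{ext-0-1-orthogonal}(a) applied with $U=S^{-1}R$ gives closedness of $\C_R$ under kernels, cokernels, extensions, and infinite products in $R\modl$, hence abelianness of $\C_R$ together with exactness of the embedding $\C_R\to R\modl$.

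For part~(b) I would proceed in three steps. First, the functor $(C_\m)_{\m\in P_1}\mapsto\prod_{\m\in P_1}C_\m$ takes values in $\C_R$, since each $C_\m$ is cotorsion by Theorem~\ref{noetherian-maximal-ideal-cotorsion} and is $S$-reduced (picking $s\in S\cap\m$, an $\m$-contramodule has no $s$-divisible submodule), and both classes are stable under infinite products in $R\modl$. Second, full faithfulness — noting that the $\Hom$'s in $\C_R$ and in the Cartesian product $\prod_{\m}R\modl_{\m\ctra}$ are simply $R$-module $\Hom$'s, the latter being $\prod_\m\Hom_R(B_\m,C_\m)$ — reduces to the identity
$$
\Hom_R\Bigl(\,\prod\nolimits_{\m'\in P_1}B_{\m'},\ \prod\nolimits_{\m\in P_1}C_\m\Bigr)
= \prod\nolimits_{\m\in P_1}\Hom_R(B_\m,C_\m),
$$
which I would obtain by moving the product out of the target, splitting off the $\m$-th factor as $\prod_{\m'}B_{\m'}=B_\m\times\prod_{\m'\ne\m}B_{\m'}$, and annihilating the cross term $\Hom_R(\prod_{\m'\ne\m}B_{\m'},C_\m)$ by Lemma~\ref{hom-maximal-ideals-contramodules} applied with the ideal $I=\m$ (every $\m'\in P_1$ with $\m'\ne\m$ is a maximal ideal not containing $\m$, the remaining factors being filled in by zero modules).

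Third, essential surjectivity is exactly the content of Corollary~\ref{cotorsion-modules-described}: for $C\in\C_R$ the submodule $C_\div$ occurring in the canonical short exact sequence there is $S$-divisible and sits inside the $S$-reduced module $C$, hence is zero, so $C\simeq\prod_{\m\in P_1}C_\m$ with $C_\m$ the $\m$-contramodules furnished by that corollary; this exhibits $C$ in the essential image, completing the proof that the functor is an equivalence. I expect the main point of substance to be the orthogonality reformulation of $\C_R$ in the first paragraph, and within it the equivalence ``$S$-reduced'' $\Leftrightarrow$ ``$\Hom_R(S^{-1}R,-)=0$'', which is where Theorem~\ref{S-divisible}(a) is genuinely needed; beyond that the corollary is a matter of bookkeeping with results already proved, and I do not anticipate a serious obstacle.
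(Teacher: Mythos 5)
Your proof is correct and takes essentially the same route as the paper: part~(a) via the $\Ext^{0,1}$-orthogonality description of $\C_R$ with respect to $S^{-1}R$ (Theorem~\ref{S-divisible}(a), Theorem~\ref{weakly-cotorsion}(b), Corollary~\ref{s-minus-1-r-projective-dimension}(a), and Theorem~\ref{ext-0-1-orthogonal}(a)), and part~(b) via Corollary~\ref{cotorsion-modules-described} together with the $\Hom$ computation based on Lemma~\ref{hom-maximal-ideals-contramodules}. One tiny slip: the image of an $R$-module map $S^{-1}R\rarrow C$ need not be an $(S^{-1}R)$-module (e.g.\ $\Q/\Z$ over $\Z$), but it is a quotient of the $S$-divisible module $S^{-1}R$ and hence $S$-divisible, which is all your argument actually uses.
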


\begin{proof}
 Part~(a): by Theorem~\ref{S-divisible}(a), an $R$\+module $C$ is
reduced if and only if $\Hom_R(S^{-1}R,C)=0$.
 By Theorem~\ref{weakly-cotorsion}(b), an $R$\+module $C$ is
cotorsion if and only if $\Hom_R(S^{-1}R,C)=0$.
 It remains to apply Theorem~\ref{ext-0-1-orthogonal}(a).
 Part~(b) follows from Corollary~\ref{cotorsion-modules-described};
see the proof of Corollary~\ref{reduced-cotorsion-groups}(b)
for a discussion.
\end{proof}

\begin{lem} \label{ext-from-torsion-maximal-ideal}
 Let\/ $\m$ be a maximal ideal in a Noetherian commutative ring $A$
and let $M$ be an\/ $\m$\+torsion $A$\+module.
 Then for every $A$\+module $C$ and all\/ $n\ge0$ the natural maps
$$
 \Ext_A^n(M,C)\lrarrow\Ext_A^n(M,C_\m)\lrarrow\Ext_{A_\m}^n(M,C_\m)
$$
are isomorphisms.
\end{lem}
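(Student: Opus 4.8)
The plan is to factor the stated composite of natural maps and treat the two halves separately: the map $\Ext^n_A(M,C_\m)\to\Ext^n_{A_\m}(M,C_\m)$ by a change-of-rings computation, and the map $\Ext^n_A(M,C)\to\Ext^n_A(M,C_\m)$ by an $\Ext$\+vanishing argument. The crucial preliminary observation is that an $\m$\+torsion module is automatically a module over $A_\m$: if $x\in M$ with $\m^kx=0$, then the image of any $s\in S:=A\setminus\m$ in the local ring $A/\m^k$ is a unit, so $asx=x$ for some $a\in A$, which shows $s$ acts surjectively on $M$; and $sx=0$ forces $x=asx=0$, so $s$ acts injectively as well. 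Hence $M=M\ot_AA_\m$, the module $C_\m=C\ot_AA_\m$ is an $A_\m$\+module, and all the maps in the statement make sense.

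For the second map I would use flatness of $A\to A_\m$: if $P_\bu\to M$ is a projective resolution over $A$, then $A_\m\ot_AP_\bu\to M$ is a projective resolution over $A_\m$, and Hom--tensor adjunction gives $\Hom_{A_\m}(A_\m\ot_AP_j,\.C_\m)\cong\Hom_A(P_j,C_\m)$ compatibly with the differentials, whence $\Ext^n_{A_\m}(M,C_\m)\cong\Ext^n_A(M,C_\m)$, and one checks this is the natural map; no finiteness or Noetherian hypothesis is needed here. For the first map, let $K$ and $Q$ be the kernel and cokernel of $C\to C_\m$. Since localization at $\m$ is exact and sends $C\to C_\m$ to the identity of $C_\m$ (as $C_\m$ is already an $A_\m$\+module), we get $K_\m=0=Q_\m$, i.e.\ $K$ and $Q$ are $S$\+torsion. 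Factoring $C\to C_\m$ through $0\to K\to C\to C'\to0$ and $0\to C'\to C_\m\to Q\to0$ with $C'=\im(C\to C_\m)$, and using the long exact $\Ext_A(M,-)$ sequences, it then suffices to prove that $\Ext^j_A(M,N)=0$ for all $j\ge0$ whenever $M$ is $\m$\+torsion and $N$ is $S$\+torsion.

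For $j=0$ this is immediate: given $f\:M\to N$ and $x\in M$ with $\m^kx=0$, choose $s\in S$ with $sf(x)=0$ and $a\in A$ with $asx=x$, so that $f(x)=as\.f(x)=0$. For $j\ge1$ the idea is to resolve $N$ by \emph{$S$\+torsion} injectives. Here the Noetherian hypothesis enters: an injective hull of an $S$\+torsion $A$\+module is again $S$\+torsion. Indeed, by Matlis' classification of injectives over the Noetherian ring $A$ (Theorem~\ref{injectives-classification}), $E_A(N)\simeq\bigoplus_\p E_A(A/\p)^{(X_\p)}$ with $X_\p\ne0$ only for $\p$ an associated prime of $N$; $S$\+torsionness of $N$ forces $\p\cap S\ne\varnothing$ for such $\p$, and $E_A(A/\p)$ is $\p$\+torsion (as recalled in Section~\ref{flat-cotorsion-secn}), hence annihilated elementwise by powers of a chosen element of $\p\cap S$, hence $S$\+torsion; and direct sums of $S$\+torsion modules are $S$\+torsion. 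Since quotients of $S$\+torsion modules are $S$\+torsion, iterating produces an injective resolution $N\to J^\bu$ with every $J^j$ an $S$\+torsion injective $A$\+module; then $\Hom_A(M,J^\bu)=0$ by the $j=0$ case, so $\Ext^j_A(M,N)=H^j(0)=0$, completing the argument.

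I expect the $S$\+torsionness of injective hulls to be the only genuinely nontrivial point; it is the multiplicative-set analogue of Lemma~\ref{injective-gamma-noetherian-lem} and the sole place Noetherianness is used, while everything else is formal homological algebra resting on exactness of localization and on the observation that $\m$\+torsion modules are $A_\m$\+modules.
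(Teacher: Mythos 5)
Your proof is correct, but it takes a genuinely different route from the paper's. The paper reduces all degrees $n$ to $n=0$ in one stroke: over a Noetherian ring, localization at $\m$ takes injective $A$-modules to modules that are injective both over $A_\m$ and over $A$, so a single injective resolution of $C$ computes all three $\Ext$'s at once, and the degree-zero case is settled by noting that $\Hom_A(M,{-})=\Hom_A(M,\Gamma_\m({-}))$ for $\m$-torsion $M$ and that $\Gamma_\m(C)\rarrow\Gamma_\m(C_\m)$ is an isomorphism. You instead treat the two maps separately: the change-of-rings half by flat base change of a projective resolution of $M$ (correctly observing that no Noetherian hypothesis is needed there), and the other half by d\'evissage through the $S$-torsion kernel and cokernel of $C\rarrow C_\m$, reducing to the vanishing $\Ext^*_A(M,N)=0$ for $\m$-torsion $M$ and $S$-torsion $N$, which you prove by building $S$-torsion injective resolutions from Matlis' classification (Theorem~\ref{injectives-classification}); this is where Noetherianness enters for you, and, as you say, it is the multiplicative-set analogue of Lemma~\ref{injective-gamma-noetherian-lem}. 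The trade-off: the paper's argument is shorter and rests on the single input that localization preserves injectivity, while yours isolates the use of the Noetherian hypothesis, shows that $\Ext^n_A(M,C_\m)\simeq\Ext^n_{A_\m}(M,C_\m)$ holds over an arbitrary commutative ring, and yields the somewhat stronger intermediate fact that $\m$-torsion modules are $\Ext$-orthogonal to every module $N$ with $S^{-1}N=0$ (your direct degree-zero computation with $asx=x$ is exactly right, and the passage from essentiality to the condition $\p\cap S\ne\varnothing$ for the primes occurring in $E_A(N)$ is standard). Both proofs are sound; yours costs a little more machinery but localizes and clarifies where each hypothesis is actually used.
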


\begin{proof}
 First of all, $M$ is an $A_\m$\+module, so the rightmost $\Ext$
module is well-defined.
 All the $\Ext$ modules in question can be viewed as the derived
functors of the functor $\Hom$ with respect to its second argument with
the first argument $M$ fixed.
 Notice that the localization functor $C\longmapsto C_\m$ takes
injective $A$\+modules to injective $A_\m$\+modules, which are also
injective $A$\+modules.
 Hence it suffices to consider the case $n=0$.

 Now the map $\Hom_A(M,C_\m)\rarrow\Hom_{A_\m}(M,C_\m)$ is clearly
an isomorphism, and the map $\Hom_A(M,C)\rarrow\Hom_A(M,C_\m)$ is
an isomorphism because the map $\Gamma_\m(C)\rarrow\Gamma_\m(C_\m)$ is.
 To check the latter claim, one applies the functor of localization
at~$\m$ to the short exact sequence $0\rarrow\Gamma_\m(C)\rarrow
C\rarrow C/\Gamma_\m(C)\rarrow0$ and notices that $\Gamma_\m(C)$ is
an $A_\m$\+module, while the $A$\+module $C/\Gamma_\m(C)$ and
the $A_\m$\+module $(C/\Gamma_\m(C))_\m$ are $\m$\+torsion-free.
\end{proof}

\begin{thm} \label{delta-m-theorem}
 For any maximal ideal\/ $\m\in P_1$, there is an isomorphism
of functors
\begin{equation} \label{delta-m-identified}
 \Hom_{\D^\b(R\modl)}(K^\bu_\m,{-}[1])\simeq\Delta_\m\:
 R\modl\lrarrow R\modl_{\m\ctra}.
\end{equation}
 For every $R$\+module $C$, the product of the isomorphisms\/
$\Hom_{\D^\b(R\modl)}(K^\bu_\m,C[1])\simeq\Delta_\m(C)$ over
all\/ $\m\in P_1$ together with the isomorphism\/
$\bigoplus_{\m\in P_1}K^\bu_\m\simeq K^\bu$ transform the morphism
$C=\Hom_R(R,C)\rarrow\Hom_{\D^\b(R\modl)}(K^\bu,C[1])$ into
the morphism $C\rarrow\prod_\m\Delta_\m(C)$ whose components are
the adjunction morphisms\/ $\delta_{\m,C}\:C\rarrow\Delta_\m(C)$.
\end{thm}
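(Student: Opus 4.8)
The plan is to fix, for each maximal ideal $\m\in P_1$, an element $s=s_\m\in S\cap\m$ — such an element exists, since otherwise prime avoidance applied to $\m\subseteq\bigcup_{\q\in P_0}\q$ would force $\m=\q$ for some minimal prime $\q\in P_0$, contradicting $P_0\cap P_1=\varnothing$ — and to compare the functor $\Hom_{\D^\b(R\modl)}(K^\bu_\m,{-}[1])$ with $\Delta_s$, which by Remark~\ref{gamma-delta-zero-divisor-remark} equals $\Hom_{\D^\b(R\modl)}((R\to R[s^{-1}]),{-}[1])$. By Corollary~\ref{complex-r-s-minus-1-decomposition} there is a natural isomorphism $(R\to R[s^{-1}])\simeq\bigoplus_{\m'\in P(s)}K^\bu_{\m'}$ in $\D^\b(R\modl)$, and since $P(s)$ is finite this yields a natural finite direct sum decomposition
$$
 \Delta_s(C)\,\simeq\,\bigoplus\nolimits_{\m'\in P(s)}F_{\m'}(C),
 \qquad F_{\m'}(C):=\Hom_{\D^\b(R\modl)}(K^\bu_{\m'},C[1]).
$$
By the last assertion of Lemma~\ref{maximal-ideals-decomposition-lemma} the cohomology modules of $K^\bu_{\m'}$ are $\m'$\+torsion, so Lemma~\ref{tor-ext-torsion-contra-lem}(b) shows that each $F_{\m'}(C)$ is an $\m'$\+contramodule; in particular $F_\m$ takes values in $R\modl_{\m\ctra}$.

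Next I would identify $F_\m$ with the reflector $\Delta_\m$ through its universal property. Let $D$ be an $\m$\+contramodule $R$\+module; as $s\in\m$, $D$ is an $s$\+contramodule, so by Theorem~\ref{delta-s-theorem} precomposition with $\delta_{s,C}$ is an isomorphism $\Hom_R(\Delta_s(C),D)\simeq\Hom_R(C,D)$. Splitting off the summands of the finite direct sum above indexed by $\m'\neq\m$ and invoking Lemma~\ref{hom-maximal-ideals-contramodules} (with $I=\m$, using that $\m\not\subseteq\m'$ for every maximal ideal $\m'\neq\m$), those summands contribute zero, whence $\Hom_R(F_\m(C),D)\simeq\Hom_R(\Delta_s(C),D)\simeq\Hom_R(C,D)$. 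Unwinding the identifications, this composite is precomposition with the $\m$\+component $\eta_{\m,C}\colon C\rarrow F_\m(C)$ of $\delta_{s,C}$ — equivalently, with the map induced by the composite $K^\bu_\m\hookrightarrow(R\to R[s^{-1}])\to R[1]$. Hence $(F_\m,\eta)$ is left adjoint to the embedding $R\modl_{\m\ctra}\rarrow R\modl$, which gives the asserted natural isomorphism $F_\m\simeq\Delta_\m$ carrying $\eta_{\m,C}$ to $\delta_{\m,C}$; note $K^\bu_\m$ is defined intrinsically, so this is independent of the auxiliary choice of $s$.

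For the last assertion I would track the canonical morphism $C=\Hom_R(R,C)=\Hom_{\D^\b(R\modl)}(R[1],C[1])\rarrow\Hom_{\D^\b(R\modl)}(K^\bu,C[1])$ induced by the connecting morphism $K^\bu\rarrow R[1]$ of the triangle $R\rarrow S^{-1}R\rarrow K^\bu\rarrow R[1]$. By Theorem~\ref{complex-k-decomposition} and the fact (used already in the proof of Theorem~\ref{S-divisible}) that $\Hom_{\D^\b(R\modl)}(\bigoplus_{\m\in P_1}K^\bu_\m,\,C[1])=\prod_{\m\in P_1}F_\m(C)$, it suffices to check that for every $\m\in P_1$ the $\m$\+component of this morphism is $\eta_{\m,C}$ (which corresponds to $\delta_{\m,C}$ by the previous paragraph). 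I would do this by comparing the two distinguished triangles via the evident map of complexes $(R\to R[s_\m^{-1}])\rarrow(R\to S^{-1}R)=K^\bu$ — the identity on $R$ in degree $-1$ and the localization map in degree $0$, hence compatible with the projections onto $R[1]$. Localizing at each maximal ideal $\n$ and using Lemma~\ref{semilocal}(c) (which gives $R_\n[s_\m^{-1}]=S^{-1}R_\n$ whenever $s_\m\in\n$), one sees that under the $\D^\b(R\modl)$\+decompositions this map realizes the inclusion of the $P(s_\m)$\+indexed summands of $K^\bu$; therefore the $\m$\+component of $K^\bu\rarrow R[1]$ coincides with that of $(R\to R[s_\m^{-1}])\rarrow R[1]$, which upon applying $\Hom_{\D^\b(R\modl)}({-},C[1])$ is exactly $\eta_{\m,C}$.

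I expect the main obstacle to be precisely this last compatibility: verifying that the structural decompositions of $K^\bu$ and of $(R\to R[s^{-1}])$ in $\D^\b(R\modl)$ are compatible along the natural map of complexes between them, i.e.\ that it realizes the inclusion of summands, which forces one to descend to the localizations $R_\n$ and apply Lemma~\ref{semilocal} with some care. The remaining bookkeeping — working in $\D^\b(R\modl)$ rather than in the full $\D(R\modl)$, and the passage from a direct sum to a product of $\Hom$'s over the possibly infinite set $P_1$ — is routine and already appears in the proof of Theorem~\ref{S-divisible}.
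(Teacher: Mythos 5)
Your proposal is correct, but it follows a genuinely different route from the paper's. The paper factors the adjunction through the localization: it uses the chain $R\modl_{\m\ctra}\subset R_\m\modl\subset R\modl$, notes that $C\mapsto C_\m$ is left adjoint to the middle embedding, identifies $K^\bu_\m$ with $R_\m\to R_\m[s^{-1}]$ for an $s$ with $\sqrt{(s)}=R_\m\m$ (Lemma~\ref{semilocal} applied to $R_\m$, plus Remark~\ref{I-C-radical-ideal}), so that the reflector over the local ring is $\Hom_{\D^\b(R_\m\modl)}(K^\bu_\m,{-}[1])$ by Theorem~\ref{delta-s-theorem}(iii), and then removes the localization from the second argument via Lemma~\ref{ext-from-torsion-maximal-ideal} --- which is exactly why that lemma is placed immediately before the theorem; the second assertion is left to the reader. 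You instead stay over $R$: you split $\Delta_s=\Hom_{\D^\b(R\modl)}((R\to R[s^{-1}]),{-}[1])$ into the summands $\Hom_{\D^\b(R\modl)}(K^\bu_{\m'},{-}[1])$, $\m'\in P(s)$, via Corollary~\ref{complex-r-s-minus-1-decomposition}, check these are $\m'$\+contramodules by Lemmas~\ref{maximal-ideals-decomposition-lemma} and~\ref{tor-ext-torsion-contra-lem}(b), and isolate the $\m$\+summand's universal property using the $\Delta_s$\+adjunction together with the Hom\+orthogonality of contramodules at distinct maximal ideals (Lemma~\ref{hom-maximal-ideals-contramodules}); your bookkeeping of the unit ($g\pi_\m\delta_{s,C}=g\eta_{\m,C}$) is right, so $F_\m\simeq\Delta_\m$ follows by uniqueness of reflectors. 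What each route buys: the paper's argument is shorter for the isomorphism of functors and yields the extra formula $\Delta_\m(C)\simeq\Hom_{\D^\b(R_\m\modl)}(K^\bu_\m,C_\m[1])$ with the unit factored through $C\to C_\m$, whereas yours avoids Lemma~\ref{ext-from-torsion-maximal-ideal} and the detour through $R_\m\modl$ altogether, essentially proves Corollary~\ref{delta-s-delta-m-cor} en route, and --- unlike the paper --- treats the second assertion explicitly: your compatibility check of the two decompositions along the map of complexes $(R\to R[s^{-1}])\to K^\bu$ is sound, because the roof construction of Lemma~\ref{two-term-complexes-decomposition} is functorial in the two\+term complex, the map is the identity in degree~$-1$ (hence compatible with the connecting morphisms to $R[1]$), and localizing at each $\m'$ with Lemma~\ref{semilocal}(c) shows it is an isomorphism on the $P(s)$\+indexed summands and zero on the source elsewhere.
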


\begin{proof}
 By Remark~\ref{contramodules-modules-over-localizations},
the category of $\m$\+contramodule $R$\+modules is a full subcategory
of the category of $R_\m$\+modules, which is a full subcategory of
the category of all $R$\+modules,
$$
 R\modl_{\m\ctra}\subset R_\m\modl\subset R\modl.
$$
 The localization functor $C\longmapsto C_\m$ is left adjoint to
the embedding $R_\m\modl\rarrow R\modl$.
 Furthermore, according to Lemma~\ref{semilocal} applied to
the local ring $R_\m$, there exists an element $s\in S$ such that
$S^{-1}R_\m=R_\m[s^{-1}]$ and the radical of the ideal $(s)\subset R_\m$
is equal to~$R_\m\m$.
 So the complex $R_\m\rarrow R_\m[s^{-1}]$ is isomorphic to~$K_\m^\bu$.
 In view of Remark~\ref{I-C-radical-ideal} (or the second proof of
Theorem~\ref{ideal-contramodule-thm} in
Section~\ref{functor-delta-I-second-secn}), the full subcategories of
$s$\+contramodule $R_\m$\+modules and $(R_\m\m)$\+contramodule
$R_\m$\+modules in $R_\m\modl$ coincide.
 Therefore, the functor left adjoint to the embedding
$R\modl_{\m\ctra}=R_\m\modl_{(R_\m\m)\ctra}\rarrow R_\m\modl$ can be
computed as
$$
 D\longmapsto \Hom_{\D^\b(R_\m\modl)}(K^\bu_\m,D[1]), \quad
 D\in R_\m\modl
$$
(see Theorem~\ref{delta-s-theorem}(iii) and
Remark~\ref{gamma-delta-zero-divisor-remark}).
 It follows that the functor~$\Delta_\m$ left adjoint to the composition
of the two embeddings of categories can be obtained as the composition
$$
 C\longmapsto\Hom_{\D^\b(R_\m\modl)}(K^\bu_\m,C_\m[1]), \quad
 C\in R\modl,
$$
and the adjunction morphism is the composition
$$
 C\lrarrow C_\m\lrarrow\Hom_{\D^\b(R_\m\modl)}(K^\bu_\m,C_\m[1]).
$$

 Now, by Lemma~\ref{ext-from-torsion-maximal-ideal} we have
$$
 \Hom_{\D^\b(R\modl)}(K^\bu_\m,C[1])\simeq
 \Hom_{\D^\b(R_\m\modl)}(K^\bu_\m,C_\m[1]),
$$
which provides the desired isomorphism of
functors~\eqref{delta-m-identified}.
 We leave it to the reader to finish the proof of the second assertion
of the theorem.
\end{proof}

 In view of the result of Theorem~\ref{delta-m-theorem},
the long exact sequence from the proofs of
Theorems~\ref{S-divisible}\+-\ref{weakly-cotorsion} takes the form
\begin{equation}
 \dotsb\lrarrow\Hom_R(S^{-1}R,C)\lrarrow C\lrarrow
 \prod_{\m\in P_1}\Delta_\m(C)\lrarrow\Ext^1_R(S^{-1}R,C)
 \lrarrow0
\end{equation}
for every $R$\+module~$C$.
 The image of the morphism $\Hom_R(S^{-1}R,C)\rarrow C$ is the submodule
$C_\div\subset C$.
 So every $S$\+reduced $R$\+module $C$ is naturally a submodule in
the product $\prod_{\m\in P_1}\Delta_\m(C)$.

\medskip

 Recall that a ring is called \emph{reduced} if it contains no nonzero
nilpotent elements.
 In our setting, if the ring $R$ is reduced, then $S^{-1}R$ is a reduced
Artinian ring, that is a direct sum of a finite number of fields.

\begin{cor}
 If the ring $R$ is reduced and the $R$\+module $C$ is $S$\+reduced,
then $(\delta_{\m,C})_{\m\in P_1}\: C\rarrow\prod_{\m\in P_1}\Delta_\m(C)$
is a cotorsion envelope of the $R$\+module~$C$.
\end{cor}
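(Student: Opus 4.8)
The plan is to recognize the short exact sequence cut out of the long exact sequence displayed immediately before the statement as a special cotorsion preenvelope of $C$, and then to promote it to an envelope by the standard $\Hom$\+computation, in the spirit of Example~\ref{z-envelope-example} and the proof of Theorem~\ref{abelian-groups-nunke-thm}(e).

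First I would note that, $C$ being $S$\+reduced, every $R$\+module homomorphism $S^{-1}R\rarrow C$ has $S$\+divisible image, hence vanishes; that is, $\Hom_R(S^{-1}R,C)=0$ (cf.\ the proof of Corollary~\ref{reduced-cotorsion-modules}(a), which rests on Theorem~\ref{S-divisible}(a)). Therefore the long exact sequence collapses to a short exact sequence
\begin{equation*}
 0\lrarrow C\lrarrow\prod\nolimits_{\m\in P_1}\Delta_\m(C)
 \lrarrow\Ext^1_R(S^{-1}R,C)\lrarrow 0,
\end{equation*}
whose left-hand map is $(\delta_{\m,C})_{\m\in P_1}$ by the second assertion of Theorem~\ref{delta-m-theorem} (and which is injective because its kernel is the image of $\Hom_R(S^{-1}R,C)=0$). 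The middle term is cotorsion: each $\Delta_\m(C)$ is an $\m$\+contramodule $R$\+module, hence cotorsion by Theorem~\ref{noetherian-maximal-ideal-cotorsion}, and the class of cotorsion $R$\+modules is closed under infinite products. The cokernel $\Ext^1_R(S^{-1}R,C)$ is an $S^{-1}R$\+module, because $S$ acts invertibly on $S^{-1}R$ and hence on $\Ext^1_R(S^{-1}R,C)$; since $R$ is reduced, $S^{-1}R$ is a finite product of fields (as recalled just before the statement), so every $S^{-1}R$\+module is flat over $S^{-1}R$, and therefore flat over $R$, the ring $S^{-1}R$ being $R$\+flat. Thus the displayed sequence has the form~\eqref{special-preenvelope} for the flat cotorsion theory, so $(\delta_{\m,C})_{\m}$ is a special cotorsion preenvelope of $C$, and a cotorsion preenvelope by Lemma~\ref{envelope-wakamatsu}(a).

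It then remains to verify the envelope property. Here I would compute, for a fixed $\m\in P_1$, that the projection onto the $\m$\+component identifies $\Hom_R(\prod_{\m'\in P_1}\Delta_{\m'}(C),\.\Delta_\m(C))$ with $\Hom_R(\Delta_\m(C),\Delta_\m(C))$, since any homomorphism into the $\m$\+contramodule $\Delta_\m(C)$ kills the complementary factor $\prod_{\m'\ne\m}\Delta_{\m'}(C)$ by Lemma~\ref{hom-maximal-ideals-contramodules} (applied with $I=\m$, the superfluous maximal ideals being accommodated by padding the family with zero modules). Combining this with the adjunction isomorphism $\Hom_R(\Delta_\m(C),\Delta_\m(C))=\Hom_R(C,\Delta_\m(C))$, $g\mapsto g\,\delta_{\m,C}$, of Theorem~\ref{delta-m-theorem}, and taking products over $\m\in P_1$, one gets
\begin{equation*}
 \Hom_R\bigl({\textstyle\prod_{\m\in P_1}\Delta_\m(C)},\>{\textstyle\prod_{\m\in P_1}\Delta_\m(C)}\bigr)
 \;=\;\Hom_R\bigl(C,\>{\textstyle\prod_{\m\in P_1}\Delta_\m(C)}\bigr),
\end{equation*}
the identification being precomposition with $(\delta_{\m,C})_{\m}$. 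Hence any endomorphism $u$ of $\prod_{\m\in P_1}\Delta_\m(C)$ satisfying $u\,(\delta_{\m,C})_{\m}=(\delta_{\m,C})_{\m}$ equals the identity, and in particular is an automorphism. This shows $(\delta_{\m,C})_{\m}$ is a cotorsion envelope of~$C$.

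The main obstacle I anticipate is the flatness of the cokernel $\Ext^1_R(S^{-1}R,C)$: this is the one place where the hypothesis that $R$ be reduced is truly needed, since it is precisely reducedness that makes the Artinian ring $S^{-1}R$ semisimple, so that being an $S^{-1}R$\+module is enough to guarantee flatness. The bound $\pd_R(S^{-1}R)\le 1$ of Corollary~\ref{s-minus-1-r-projective-dimension}(a) places $\Ext^1_R(S^{-1}R,C)$ as a cokernel but gives no flatness on its own. The other ingredients — the vanishing of $\Hom_R(S^{-1}R,C)$, the transitivity-of-flatness step, and the bookkeeping in applying Lemma~\ref{hom-maximal-ideals-contramodules} — are routine.
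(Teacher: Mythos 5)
Your proposal is correct and follows essentially the same route as the paper's own proof: injectivity of $\delta_{S,C}$ from $S$\+reducedness, flatness of the cokernel $\Ext^1_R(S^{-1}R,C)$ from $S^{-1}R$ being a finite product of fields (this is exactly where reducedness of $R$ enters, as you note), and the envelope property via the $\Hom$\+computation based on Lemma~\ref{hom-maximal-ideals-contramodules} and the adjunction for~$\Delta_\m$. The only difference is that you spell out a few steps the paper leaves implicit (cotorsionness of the middle term, the padding-by-zeros in applying the lemma), which is harmless.
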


\begin{proof}
 In the assumptions of the corollary, the morphism
$\delta_{S,C}=(\delta_{\m,C})_{\m\in P_1}$ is injective and its cokernel
$\Ext^1_R(S^{-1}R,C)$ is an $(S^{-1}R)$\+module.
 Furthermore, every $(S^{-1}R)$\+module is a flat (and even projective)
$(S^{-1}R)$\+module and a flat $R$\+module.
 Hence the morphism~$\delta_{S,C}$ is a special cotorsion preenvelope.
 To prove that it is an envelope, argue as in the proof
of Theorem~\ref{abelian-groups-nunke-thm}(e): using
Lemma~\ref{hom-maximal-ideals-contramodules}, compute
\begin{setlength}{\multlinegap}{0pt}
\begin{multline*}
 \Hom_R\bigl({\textstyle\prod_\m\Delta_\m(C)},\>
 {\textstyle\prod_\m\Delta_\m(C)}\bigr) =
 {\textstyle\prod_\m\Hom_R(\Delta_\m(C),\Delta_\m(C))} \\ =
 {\textstyle\prod_\m\Hom_R(C,\Delta_\m(C))} =
 \Hom_R\bigl(C,\>{\textstyle\prod_\m\Delta_\m(C)}\bigr)
\end{multline*}
and conclude that the equation $u\delta_{S,C}=\delta_{S,C}$ for
an endomorphism $u\:\prod_\m\Delta_\m(C)\rarrow\prod_\m\Delta_\m(C)$
implies $u=1$.
\end{setlength}
\end{proof}

\begin{rem}
 It would be interesting to know how to construct cotorsion envelopes
of nonreduced modules over Noetherian domains of Krull dimension~$1$.
 The construction of Corollary~\ref{nonreduced-cotorsion-envelope-cor}
requires the embedding $C_\div\rarrow C$ to be split, so it only
seems to work for Dedekind domains.
\end{rem}

 Now we return to our usual setting of an arbitrary Noetherian ring
$R$ of Krull dimension~$1$.
 Our aim is to characterize $s$\+contraadjusted $R$\+modules.

\begin{cor} \label{delta-s-delta-m-cor}
 For any element $s\in S$ and every $R$\+module $C$, there is
a natural isomorphism
$$
 \Delta_s(C)\simeq\bigoplus\nolimits_{\m\in P(s)}\Delta_\m(C),
$$
where the direct sum is taken over the set $P(s)\subset P_1$ of all
prime ideals\/ $\m\subset R$ containing~$s$.
 The components of the map\/ $\delta_{s,C}\:C\rarrow\Delta_s(C)$ with
respect to this direct sum decomposition are equal to\/~$\delta_{\m,C}$.
\end{cor}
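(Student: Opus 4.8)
The plan is to obtain this statement by applying the contravariant functor $\Hom_{\D^\b(R\modl)}(-,C[1])$ to the direct sum decomposition of Corollary~\ref{complex-r-s-minus-1-decomposition}, in exactly the same way that Lemma~\ref{delta-abelian-groups-lemma}(c) was deduced in the case $R=\Z$ from the decomposition $\Z[s^{-1}]/\Z=\bigoplus_{p\mid s}\Z[p^{-1}]/\Z$. First I would recall from Remark~\ref{gamma-delta-zero-divisor-remark} the identification $\Delta_s(C)=\Hom_{\D^\b(R\modl)}\bigl((R\to R[s^{-1}]),\,C[1]\bigr)$, functorial in~$C$, together with the fact (from the proof of Corollary~\ref{s-contraadjusted-criterion}) that under this identification the adjunction morphism $\delta_{s,C}$ is the map $C=\Hom_{\D^\b(R\modl)}(R,C)\to\Hom_{\D^\b(R\modl)}\bigl((R\to R[s^{-1}]),C[1]\bigr)$ induced by the connecting morphism $w_s\:(R\to R[s^{-1}])\to R[1]$ of the distinguished triangle $R\to R[s^{-1}]\to(R\to R[s^{-1}])\to R[1]$. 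Likewise, by Theorem~\ref{delta-m-theorem} there is a functorial identification $\Delta_\m(C)\simeq\Hom_{\D^\b(R\modl)}(K^\bu_\m,C[1])$ for each $\m\in P_1$, under which $\delta_{\m,C}$ becomes the analogous map induced by a connecting morphism $w_\m\:K^\bu_\m\to R[1]$ (transported from the triangle over $R_\m$ along the localization isomorphism of Lemma~\ref{ext-from-torsion-maximal-ideal}).

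Next I would invoke Corollary~\ref{complex-r-s-minus-1-decomposition}: the finite set $P(s)\subset P_1$ of prime ideals of $R$ containing $s$ consists of maximal ideals (since $s\in S$ avoids every minimal prime), and $(R\to R[s^{-1}])\simeq\bigoplus_{\m\in P(s)}K^\bu_\m$ in $\D^\b(R\modl)$. Since $P(s)$ is finite, applying the additive contravariant functor $\Hom_{\D^\b(R\modl)}(-,C[1])$ turns this into $\Delta_s(C)\simeq\prod_{\m\in P(s)}\Hom_{\D^\b(R\modl)}(K^\bu_\m,C[1])\simeq\bigoplus_{\m\in P(s)}\Delta_\m(C)$, natural in $C$ because every step is. It then remains to verify that the $\m$\+component of $\delta_{s,C}$ under this decomposition is $\delta_{\m,C}$; unwinding the identifications above, this amounts to checking that the composite $K^\bu_\m\hookrightarrow(R\to R[s^{-1}])\overset{w_s}\to R[1]$ (the inclusion being the $\m$\+component of the decomposition isomorphism) coincides, after localization at $\m$, with $w_\m$. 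This I would handle by localizing the triangle $R\to R[s^{-1}]\to(R\to R[s^{-1}])\to R[1]$ at $\m$: its third term becomes $K^\bu_\m$ (the other summands $K^\bu_{\m'}$, $\m'\neq\m$, having $\m'$\+torsion cohomology, vanish upon localization at $\m$), so the localized triangle is the very triangle over $R_\m$ defining $w_\m$, which forces the two connecting morphisms to agree. Alternatively, the same compatibility can be read off from naturality of connecting maps exactly as in the proof of Lemma~\ref{delta-abelian-groups-lemma}(c).

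The routine parts — additivity of $\Hom_{\D^\b(R\modl)}(-,C[1])$ on finite direct sums, naturality in $C$ — are immediate. The one genuinely delicate point is the last step of matching the adjunction morphisms, i.e.\ the bookkeeping of connecting morphisms of distinguished triangles across the localization isomorphisms $\Hom_{\D^\b(R\modl)}(K^\bu_\m,C[1])\simeq\Hom_{\D^\b(R_\m\modl)}(K^\bu_\m,C_\m[1])$ of Lemma~\ref{ext-from-torsion-maximal-ideal} and $\bigoplus_{\m\in P(s)}K^\bu_\m\simeq(R\to R[s^{-1}])$ of Corollary~\ref{complex-r-s-minus-1-decomposition}; once the localization argument above is in place, everything falls out. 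As a sanity check, note that $\Delta_\m$ maps $R\modl$ into $R\modl_{\m\ctra}$, each of which lies in $R\modl_{s\ctra}$ since $s\in\m$, and the latter is closed under finite direct sums by Theorem~\ref{ext-0-1-orthogonal}(a); so the right\+hand side does land in $R\modl_{s\ctra}$, as it must.
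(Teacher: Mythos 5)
Your proposal is correct and follows essentially the same route as the paper: the paper's (very terse) proof likewise combines the decomposition $(R\to R[s^{-1}])\simeq\bigoplus_{\m\in P(s)}K^\bu_\m$ of Corollary~\ref{complex-r-s-minus-1-decomposition} with the identification $\Delta_\m(C)\simeq\Hom_{\D^\b(R\modl)}(K^\bu_\m,C[1])$ and the adjunction-morphism statement of Theorem~\ref{delta-m-theorem}, via the description $\Delta_s(C)=\Hom_{\D^\b(R\modl)}((R\to R[s^{-1}]),C[1])$ from Remark~\ref{gamma-delta-zero-divisor-remark}. Your localization-at-$\m$ check of the compatibility of connecting morphisms simply spells out details the paper leaves implicit.
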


\begin{proof}
 Follows from Corollary~\ref{complex-r-s-minus-1-decomposition} and
Theorem~\ref{delta-m-theorem}.
\end{proof}

\begin{cor} \label{krull-dim-1-s-contraadjusted}
 Let $C$ be an $R$\+module and $s\in S$ be an element.  Then \par
\textup{(a)} an $R$\+module $C$ is $s$\+contraadjusted if and only
if the $R$\+module $C_\red$ is $s$\+contraadjusted; \par
\textup{(b)} an $R$\+module $C$ is $s$\+contraadjusted if and only if
the map $C\rarrow\bigoplus_{\m\in P(s)}\Delta_\m(C)$ is surjective.
\end{cor}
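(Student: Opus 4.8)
The plan is to deduce part~(b) almost immediately from earlier results, and then obtain part~(a) by a short exact sequence argument.

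\emph{Part~(b).} By Corollary~\ref{s-contraadjusted-criterion}(b), the $R$\+module $C$ is $s$\+contraadjusted if and only if the adjunction morphism $\delta_{s,C}\:C\rarrow\Delta_s(C)$ is surjective. By Corollary~\ref{delta-s-delta-m-cor}, there is a natural isomorphism $\Delta_s(C)\simeq\bigoplus_{\m\in P(s)}\Delta_\m(C)$ under which $\delta_{s,C}$ becomes precisely the morphism $C\rarrow\bigoplus_{\m\in P(s)}\Delta_\m(C)$ with components $\delta_{\m,C}$. Hence $\delta_{s,C}$ is surjective exactly when the latter map is, which is the asserted equivalence.

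\emph{Part~(a).} If $C$ is $s$\+contraadjusted, then so is its quotient module $C_\red=C/C_\div$, because the class of $s$\+contraadjusted $R$\+modules is closed under quotients by Lemma~\ref{s-contraadjusted-closure-properties}. Conversely, assume $C_\red$ is $s$\+contraadjusted. Since $s\in S$, the maximal $S$\+divisible submodule $C_\div\subset C$ is in particular $s$\+divisible; being $S$\+divisible, it is a cotorsion $R$\+module by Theorem~\ref{S-divisible}(b), and since $R[s^{-1}]$ is a flat $R$\+module, every cotorsion $R$\+module is $s$\+contraadjusted. Thus $C_\div$ is $s$\+contraadjusted, and $C$, being an extension of $C_\red$ by $C_\div$, is $s$\+contraadjusted by closedness under extensions (again Lemma~\ref{s-contraadjusted-closure-properties}).

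There is no genuine obstacle here; the corollary is an assembly of Corollaries~\ref{s-contraadjusted-criterion}(b) and~\ref{delta-s-delta-m-cor} with Theorem~\ref{S-divisible}(b). If one prefers to bypass Theorem~\ref{S-divisible}(b) in the proof of~(a), one can argue directly: since $C_\div$ is $s$\+divisible, the $R$\+module $\Delta_s(C_\div)$ is $s$\+divisible (apply the right exact, $R$\+linear functor $\Delta_s$ to the surjection $s\:C_\div\rarrow C_\div$), yet it is an $s$\+contramodule, hence has no nonzero $s$\+divisible submodule, so $\Delta_s(C_\div)=0$; then right exactness of $\Delta_s$ makes $\Delta_s(C)\rarrow\Delta_s(C_\red)$ an isomorphism, and by naturality of $\delta_{s,-}$ it commutes with the surjection $C\rarrow C_\red$, so $\delta_{s,C}$ is surjective if and only if $\delta_{s,C_\red}$ is — which by Corollary~\ref{s-contraadjusted-criterion}(b) is equivalent to part~(a).
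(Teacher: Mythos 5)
Your proposal is correct and follows the paper's proof: part~(b) is exactly the paper's assembly of Corollaries~\ref{s-contraadjusted-criterion}(b) and~\ref{delta-s-delta-m-cor}, and your ``bypass'' argument for part~(a) (namely $\Delta_s(C_\div)=0$ because $C_\div$ is $s$\+divisible while $\Delta_s(C_\div)$ is an $s$\+contramodule, hence $\Delta_s(C)\simeq\Delta_s(C_\red)$) is precisely the argument the paper gives. Your first route for part~(a), via Theorem~\ref{S-divisible}(b) together with closure of $s$\+contraadjusted modules under quotients and extensions, is also valid, though it invokes the Krull dimension~$1$ machinery where the paper's purely formal argument suffices.
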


\begin{proof}
 Part~(b) follows from Corollaries~\ref{s-contraadjusted-criterion}
and~\ref{delta-s-delta-m-cor}.
 Part~(a) holds, because the $R$\+module $C_\div$ is $s$\+divisible,
hence $\Delta_s(C_\div)=0$ and $\Delta_s(C)=\Delta_s(C_\red)$, so
the map $C\rarrow\Delta_s(C)$ is surjective if and only if
the map $C_\red\rarrow\Delta_s(C_\red)$ is.
\end{proof}

\begin{cor} \label{S-contraadjusted}
 An $R$\+module $C$ is $s$\+contraadjusted for every element $s\in S$
if and only if for every finite subset $P\subset P_1$ the map
$C\rarrow\bigoplus_{\m\in P}\Delta_\m(C)$ is surjective.
\end{cor}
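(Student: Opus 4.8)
The plan is to reduce the whole statement to the single‑element case already settled in Corollary~\ref{krull-dim-1-s-contraadjusted}(b), using the direct sum decomposition of the functor $\Delta_s$ provided by Corollary~\ref{delta-s-delta-m-cor} together with a prime‑avoidance trick that connects an arbitrary finite subset $P\subset P_1$ with the sets $P(s)$ of maximal ideals containing a fixed element $s\in S$. Everything will then be bookkeeping with split direct sums.

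First I would dispatch the easy implication. Assume $C\rarrow\bigoplus_{\m\in P}\Delta_\m(C)$ is surjective for every finite $P\subset P_1$. Given $s\in S$, the set $P(s)\subset P_1$ is finite, so the hypothesis applies to $P=P(s)$; by Corollary~\ref{delta-s-delta-m-cor} this map is exactly the adjunction morphism $\delta_{s,C}\:C\rarrow\Delta_s(C)$, and Corollary~\ref{krull-dim-1-s-contraadjusted}(b) then says precisely that $C$ is $s$\+contraadjusted. This direction costs nothing beyond quoting the earlier results.

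For the converse, suppose $C$ is $s$\+contraadjusted for every $s\in S$ and fix a finite subset $P\subset P_1$. The key step is to produce a single element $s\in S$ with $P\subseteq P(s)$: for each $\m\in P$ there is an element $s_\m\in S\cap\m$ (the prime‑avoidance observation recorded just before Lemma~\ref{semilocal}, since $\m\in P_1$ is maximal and hence not contained in any minimal prime $\q\in P_0$, and $P_0$ is finite), and then $s=\prod_{\m\in P}s_\m$ lies in $S$, because $S$ is multiplicatively closed, and lies in every $\m\in P$, so $P\subseteq P(s)$. Now Corollary~\ref{krull-dim-1-s-contraadjusted}(b) combined with Corollary~\ref{delta-s-delta-m-cor} gives that $C\rarrow\bigoplus_{\m\in P(s)}\Delta_\m(C)$ is surjective; composing with the coordinate projection onto the block $\bigoplus_{\m\in P}\Delta_\m(C)$ — which on components is just $(\delta_{\m,C})_{\m\in P}$ — yields the desired surjectivity of $C\rarrow\bigoplus_{\m\in P}\Delta_\m(C)$.

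I do not expect a real obstacle here; the one point deserving a line of care is that one realizes $P$ only as a \emph{subset} of some $P(s)$ rather than as $P(s)$ itself (the latter need not be achievable when $\Spec R$ is infinite), and that this is harmless because the decomposition $\bigoplus_{\m\in P(s)}\Delta_\m(C)=\bigoplus_{\m\in P}\Delta_\m(C)\,\oplus\,\bigoplus_{\m\in P(s)\setminus P}\Delta_\m(C)$ is split, so the projection of a surjection is again a surjection. The remaining trivialities — that $\prod_{\m\in P}s_\m\in S$ and that $P(s_1s_2)\supseteq P(s_1)\cup P(s_2)$ — are immediate from the definitions.
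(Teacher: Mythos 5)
Your proposal is correct and follows essentially the same route as the paper: the paper's proof also reduces everything to Corollary~\ref{krull-dim-1-s-contraadjusted}(b) via Corollary~\ref{delta-s-delta-m-cor}, noting that for every finite $P\subset P_1$ there is an $s\in S$ with $P\subset P(s)$ (obtained exactly as you do, from elements $s_\m\in S\cap\m$). Your extra remarks about the split projection onto the block indexed by $P$ and the multiplicativity of $S$ simply spell out the details the paper leaves implicit.
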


\begin{proof}
 Follows from Corollary~\ref{krull-dim-1-s-contraadjusted}(b),
because for every finite subset $P\subset P_1$ there exists
an element $s\in S$ such that $P\subset P(s)$.
\end{proof}

\begin{rem}
 For a Noetherian domain $R$ of Krull dimension~$1$, the result
of Corollary~\ref{S-contraadjusted} says that an $R$\+module $C$
is contraadjusted if and only if for every finite set of maximal
ideals $P\subset P_1$ in $R$ the map $C\rarrow\bigoplus_{\m\in P}
\Delta_\m(C)$ is surjective.
 Indeed, in this case $S=R\setminus\{0\}$, and every $R$\+module
is $0$\+contraadjusted.
 This extends the characterization of contraadjusted modules over
Dedekind domains provided by~\cite[Corollary~4.13]{Sl} to
Noetherian domains of Krull dimension~$1$.
 It would be interesting to extend this kind of characterization
of contraadjusted modules further to Noetherian rings of Krull
dimension~$1$, but we do not know how to do it.
\end{rem}

\bigskip


\begin{thebibliography}{99}
\smallskip

\bibitem{Bas}
 H.~Bass.
   Finitistic dimension and a homological generalization of
semi-primary rings.
\textit{Transactions of the American Math.\ Society} \textbf{95},
p.~466--488, 1960.

\bibitem{BS}
 S.~Bazzoni, L.~Salce.
   Strongly flat covers.
\textit{Journ.\ of the London Math.\ Society} \textbf{66}, \#2,
p.~276--294, 2002.

\bibitem{BBE}
 L.~Bican, R.~El Bashir, E.~Enochs.
   All modules have flat covers.
\textit{Bulletin of the London Math.\ Society} \textbf{33}, \#4,
p.~385--390, 2001.

\bibitem{Bueh}
 T.~B\"uhler.
   Exact categories.
\textit{Expositiones Math.}\ \textbf{28}, \#1, p.~1--69, 2010.
\texttt{arXiv:0811.1480 [math.HO]}

\bibitem{ET}
 P.~C.~Eklof, J.~Trlifaj.
   How to make Ext vanish.
\textit{Bulletin of the London Math.\ Society} \textbf{33}, \#1,
p.~41--51, 2001.

\bibitem{Ba}
 R.~El~Bashir.
   Covers and directed colimits.
\textit{Algebras and Representation Theory} \textbf{9}, \#5,
p.~423--430, 2006.

\bibitem{En}
 E.~Enochs.
   Flat covers and flat cotorsion modules.
\textit{Proceedings of the American Math. Society} \textbf{92}, \#2,
p.~179--184, 1984.

\bibitem{Fuc}
 L.~Fuchs.
   Infinite abelian groups, vol.~I.
Academic Press, New York--San Francisco--London, 1970.

\bibitem{FS}
 L.~Fuchs, L.~Salce.
   Modules over non-Noetherian domains.
Mathematical Surveys and Monographs 84, American Math. Society, 2001.

\bibitem{GL}
 W.~Geigle, H.~Lenzing.
   Perpendicular categories with applications to representations
and sheaves.
\textit{Journ.\ of Algebra} \textbf{144}, \#2, p.~273--343, 1991.

\bibitem{GT}
 R.~G\"obel, J.~Trlifaj.
   Approximations and endomorphism algebras of modules.
Second Revised and Extended Edition.
De Gruyter Expositions in Mathematics 41,
De Gruyter, Berlin--Boston, 2012.

\bibitem{HRS}
 D.~Happel, I.~Reiten, S.~O.~Smal\o.
   Tilting in abelian categories and quasitilted algebras.
\textit{Memoirs of the American Math.\ Society} \textbf{120},
\#575, 1996. 

\bibitem{Harr}
 D.~K.~Harrison.
   Infinite abelian groups and homological methods.
\textit{Annals of Math.}\ \textbf{69}, \#2, p.~366--391, 1959.

\bibitem{Hart}
 R.~Hartshorne.
   Residues and duality.
\textit{Lecture Notes in Math.}\ \textbf{20}, Springer, 1966.

\bibitem{Kap}
 I.~Kaplansky.
   Commutative rings.  Revised edition.
University of Chicago Press, 1974.

\bibitem{Mat0}
 E.~Matlis.
   Injective modules over Noetherian rings.
\textit{Pacific Journ.\ of Math.}\ \textbf{8}, \#3, p.~511--528, 1958.

\bibitem{Mat1}
 E.~Matlis.
   Divisible modules.
\textit{Proceedings of the American Math. Society} \textbf{11}, \#3,
p.~385--391, 1960.

\bibitem{Mat}
 E.~Matlis.
   Cotorsion modules.
\textit{Memoirs of the American Math.\ Society} \textbf{49}, 1964.

\bibitem{Mat2}
 E.~Matlis.
   Decomposable modules.
\textit{Transactions of the American Math.\ Society} \textbf{125},
\#1, p.~147--179, 1966.

\bibitem{Nun}
 R.~J.~Nunke.
   Modules of extensions over Dedekind rings.
\textit{Illinois Journ.\ of Math.} \textbf{3}, \#2, p.~222--241, 1959.

\bibitem{PSY}
 M.~Porta, L.~Shaul, A.~Yekutieli.
   On the homology of completion and torsion.
\textit{Algebras and Representation Theory} \textbf{17}, \#1, p.~31--67,
2014.  \texttt{arXiv:1010.4386 [math.AC]}.
Erratum in \textit{Algebras and Representation Theory} \textbf{18},
\#5, p.~1401--1405, 2015.  \texttt{arXiv:1506.07765 [math.AC]}

\bibitem{PSY2}
 M.~Porta, L.~Shaul, A.~Yekutieli.
   Cohomologically cofinite complexes.
\textit{Communications in Algebra} \textbf{43}, \#2, p.~597--615, 2015.
\texttt{arXiv:1208.4064 [math.AC]}

\bibitem{Psemi}
 L.~Positselski.
   Homological algebra of semimodules and semicontramodules:
Semi-infinite homological algebra of associative algebraic structures.
 Appendix~C in collaboration with D.~Rumynin; Appendix~D in
collaboration with S.~Arkhipov.
 Monografie Matematyczne vol.~70, Birkh\"auser/Springer Basel, 2010. 
xxiv+349~pp. \texttt{arXiv:0708.3398 [math.CT]}

\bibitem{Pweak}
 L.~Positselski.
   Weakly curved A${}_\infty$-algebras over a topological local ring.
Electronic preprint \texttt{arXiv:1202.2697 [math.CT]}.

\bibitem{Pcosh}
 L.~Positselski.
   Contraherent cosheaves.
Electronic preprint \texttt{arXiv:1209.2995 [math.CT]}.

\bibitem{Prev}
 L.~Positselski.
   Contramodules.
Electronic preprint \texttt{arXiv:1503.00991 [math.CT]}.

\bibitem{Pmgm}
 L.~Positselski.
   Dedualizing complexes and MGM duality.
\textit{Journ.\ of Pure and Appl.\ Algebra} \textbf{220}, \#12,
p.~3866--3909, 2016.  \texttt{arXiv:1503.05523 [math.CT]}

\bibitem{PMat}
 L.~Positselski.
   Triangulated Matlis equivalence.
Electronic preprint \texttt{arXiv:1605.08018 [math.CT]},
to appear in \textit{Journ.\ of Algebra and its Appl.}
{\hbadness=2100\par}

\bibitem{PR}
 L.~Positselski, J.~Rosick\'y.
   Covers, envelopes, and cotorsion theories in locally presentable
abelian categories and contramodule categories.
\textit{Journ.\ of Algebra} \textbf{483}, p.~83--128, 2017.
\texttt{arXiv:1512.08119 [math.CT]}

\bibitem{RG}
 M.~Raynaud, L.~Gruson.
   Crit\`eres de platitude et de projectivit\'e: Techniques
de ``platification'' d'un module.
\textit{Inventiones Math.} \textbf{13}, \#1--2, p.~1--89, 1971.

\bibitem{Sal}
 L.~Salce.
   Cotorsion theories for abelian groups.
\textit{Symposia Math.}\ \textbf{XXIII},
Academic Press, London--New York, 1979, p.~11--32.

\bibitem{Sim}
 A.-M.~Simon.
   Approximations of complete modules by complete big
Cohen--Macaulay modules over a Cohen--Macaulay local ring.
\textit{Algebras and Representation Theory} \textbf{12}, \#2--5,
p.~385--400, 2009.

\bibitem{Sl}
 A.~Sl\'avik.
   Classes of modules arising in contemporary algebraic geometry.
Master Thesis, Department of Algebra, Faculty of Mathematics
and Physics, Charles University, Prague, 2015.

\bibitem{ST}
 A.~Sl\'avik, J.~Trlifaj.
   Very flat, locally very flat, and contraadjusted modules.
\textit{Journ.\ of Pure and Appl.\ Algebra} \textbf{220}, \#12,
p.~3910--3926, 2016.  \texttt{arXiv:1601.00783 [math.AC]}

\bibitem{Sto}
 J.~\v St'ov\'\i\v cek.
   Exact model categories, approximation theory, and cohomology
of quasi-coherent sheaves.
 In: Advances in Representation Theory of Algebras, EMS Series of
Congress Reports, European Math.\ Society, Zurich, 2013, 297--367.
\texttt{arXiv:1301.5206 [math.CT]}

\bibitem{Xu}
 J.~Xu.
   Flat covers of modules.
\textit{Lecture Notes in Math.} \textbf{1634}, Springer, 1996.

\bibitem{Yek0}
 A.~Yekutieli.
   On flatness and completion for infinitely generated modules over
noetherian rings.
\textit{Communications in Algebra} \textbf{39}, \#11,
p.~4221--4245, 2011.  \texttt{arXiv:0902.4378 [math.AC]}

\bibitem{Yek}
 A.~Yekutieli.
   A separated cohomologically complete module is complete.
\textit{Communications in Algebra} \textbf{43}, \#2, p.~616--622, 2015.
\texttt{arXiv:1312.2714 [math.AC]}

\end{thebibliography}
\end{document}